\pgfplotsset{compat=1.12}
\definecolor{babyblue}{rgb}{0.54, 0.81, 0.94}
\definecolor{rosebonbon}{rgb}{0.98, 0.26, 0.62}
\theoremstyle{plain}
\newtheorem{thm}{Theorem}[section]
\newtheorem{lemma}[thm]{Lemma}
\newtheorem{prop}[thm]{Proposition}
\theoremstyle{remark}
\newtheorem{rem}[thm]{Remark}
\def\B{\mathbb{B}}
\def\P{\mathbb{P}}
\def\R{\mathbb{R}}
\def \F {\mathcal{F}}
\def\E{\mathbb{E}}
\def \1nd{\mathds{1}}
\def \I{\mathbb{I}}
\def \dd{\textup{d}}
\def\e{\mathbf{e} }
\begin{document}

\begin{frontmatter}
\title{$L^p$ optimal prediction of the last zero \\of a spectrally negative L\'evy process}
\runtitle{$L^p$ optimal prediction of the last zero}

\begin{aug}
\author[A]{\fnms{Erik J.} \snm{Baurdoux}\ead[label=e1]{e.j.baurdoux@lse.ac.uk}},
\and
\author[B]{\fnms{Jos\'e M.} \snm{Pedraza}\ead[label=e2]{josemanuel.pedraza-ramirez@uwaterloo.ca}}
\address[A]{Department of Statistics, London School of Economics and Political Science, \printead{e1}}

\address[B]{Department of Statistics and Actuarial Science, University of Waterloo, \printead{e2}}
\end{aug}

\begin{abstract}
Given a spectrally negative L\'evy process $X$ drifting to infinity, (inspired on the early ideas of Shiryaev (2002)) we are interested in finding a stopping time that minimises the $L^p$ distance ($p>1$) with $g$, the last time $X$ is negative. The solution is substantially more difficult compared to the case $p=1$, for which it was shown by Baurdoux and Pedraza (2020) that it is optimal to stop as soon as $X$ exceeds a constant barrier. In the case of $p>1$ treated here, we prove that solving this optimal prediction problem is equivalent to solving an optimal stopping problem in terms of a two-dimensional strong Markov process that incorporates the length of the current positive excursion away from $0$. We show that an optimal stopping time is now given by the first time that $X$ exceeds a non-increasing and non-negative curve depending on the length of the current positive excursion away from $0$. We further characterise the optimal boundary and the value function as the unique solution of a non-linear system of integral equations within a subclass of functions. As examples, the case of a Brownian motion with drift and a Brownian motion with drift perturbed by a Poisson process with exponential jumps are considered.
\end{abstract}

\begin{keyword}[class=MSC]
\kwd[Primary ]{60G40}
\kwd{62M20}
\kwd[; secondary ]{60G51}
\end{keyword}

\begin{keyword}
\kwd{L{\'{e}}vy processes}
\kwd{optimal prediction}
\kwd{optimal stopping}
\end{keyword}

\end{frontmatter}

\tableofcontents


\section{Introduction}

In recent years, last passage times have received considerable attention in the literature. For instance, in risk theory, the capital of an insurance company over time is modelled by a stochastic process $X=\{X_t,t\geq 0\}$. In the classical risk theory, $X$ is modelled by the Cram\'er--Lundberg process, defined as a compound Poisson process with drift. This model assumes that a premium is collected continuously at rate $c>0$. In contrast, the claims arrive according to a Poisson process, with the claims sizes being independent and identically distributed. In more recent literature, $X$ is considered to be a more general L\'evy process (see e.g. \cite{huzak2004ruin} or \cite{kluppelberg2004ruin}) with the motivation that the surplus of the company is the superposition of several independent small and large claims, and that some uncertainty should be added in the premium income. A critical quantity of interest is the moment of ruin, which is classically defined as the first passage time below zero.\\

\cite{GERBER1990115} and \cite{chiu2005passage} propose the following extension by considering that $X$ represents the capital of an individual company portfolio. After the moment of ruin, the company may have funds to endure a negative surplus of this portfolio for some time (possibly with the influx of capital from other portfolios) with the hope of having a positive surplus in the long term. Then, the last passage time below level zero is regarded as the last recovery time, so after that, there will no longer be ruin. Note that prior knowledge of this last passage time may have important implications in the risk management of the company (or even a start-up company). Indeed, when launching a new product or portfolio, the insurance company naturally expects losses in the first few months (or even years), aiming that the project is profitable in the long term. Hence, after the last moment of ruin has occurred, fewer funds are needed on their reserves, and they can be destined for other projects or portfolios within the company. In \cite{chiu2005passage}, the Laplace transform of the last passage time is derived in this framework.\\

Secondly, \cite{paroissin2013first} consider spectrally positive L\'evy processes as a degradation model. In particular, the ageing of a device is modelled by a process $D=\{D_t,t\geq 0 \}$, given by a subordinator perturbed by a Brownian motion. In this framework, large values of $D$ represent a significant deterioration of the device, so the effect of the subordinator means constant degradation, whilst the Brownian motion component represents minor repairs made to the device. In a traditional setting, the failure time of a device is the first time the model hits a specific critical level $b$. However, another approach has been used in the literature. For example, \cite{BARKER200933} considered the failure time as the last time the process is below $b$. After the last passage time, the process can never go back to this level, meaning that the device is ``beyond repair''. \\

 Thirdly, \cite{egami2017time} studied the last passage time of a general time-homogeneous transient diffusion with applications to credit risk management. They proposed the leverage process (the ratio of a company asset process over its debt) as a geometric Brownian motion over a process that grows at a risk-free rate. It is shown there that the last passage time of the leverage ratio is equivalent to a last passage time of a Brownian motion with drift. In this setting, the last passage represents the situation where the company cannot recover to normal business conditions after this time has occurred. \\

	 An important feature of last passage times is that they are random times that are not stopping times. 
In the recent literature, the problem of finding a stopping time that approximates last passage times has been solved. There are, for example, various papers in which the approximation is in $L_1$ sense. To mention a few: \cite{du2008predicting} predicted the last zero of a Brownian motion with drift in a finite horizon setting; \cite{duToit2008} predicted the time of the ultimate maximum at time $t=1$ for a Brownian motion with drift is attained; \cite{shiryaev2009} focused on the last time of the attainment of the ultimate maximum of a Brownian motion and proceeded to show that it is equivalent to predicting the last zero of the process in this setting; \cite{glover2013three} predicted the time in which a transient diffusion attains its ultimate minimum; \cite{glover2014optimal} predicted the last passage time of a level $z > 0$ for an arbitrary non-negative time-homogeneous transient diffusion; \cite{baurdoux2014predicting} predicted the time at which a L\'evy process attains its ultimate supremum and \cite{baurdoux2016optimal} predicted when a positive self-similar Markov process attain its path-wise global supremum or infimum before hitting zero for the first time and \cite{baurdoux2018predicting} predicted the last zero of a spectrally negative L\'evy process.\\

Note that in \cite{Shiryaev2002}, the author states some general optimal prediction problems that are natural for the ``technical analysis'' of financial data. In particular, among other problems, it is proposed to predict the time in which a process attains its maximum (over a finite interval) in an $L_p$ sense. However, no solution to the problem is provided. Moreover, to the best of our knowledge, optimal prediction problems for last passage times have been only solved in an $L_1$ sense. Hence, inspired by this, we consider the problem of predicting the last zero of a spectrally negative L\'evy process (drifting to infinity) in an $L^p$ sense, i.e. we are interested in solving
\begin{align*}
\inf_{\tau \in \mathcal{T}} \E(|\tau-g|^p),
\end{align*}
%
where $g=\sup\{t\geq 0: X_t\leq 0 \}$ is the last time a spectrally negative L\'evy process drifting to infinity is below the level zero and $p> 1$. The case when $p=1$ was solved in \cite{baurdoux2018predicting} for the spectrally negative case. An optimal stopping time, in this case, is the first time the process crosses above a fixed level $a^*\geq 0$, which is characterised in terms of the distribution function of the infimum of the process. The case $p>1$ is substantially more complex, as an optimal stopping time now depends on the length of the current excursion above the level zero given by $U_t=t-\sup\{0\leq s\leq t: X_s\leq 0 \}$. The process $(U,X)$ is a Markov process taking values in $E=[(0,\infty)\times (0,\infty)] \cup [\{0 \} \times (-\infty,0)]$.\\

We show that an optimal stopping time (when $p>1$) is given by $\tau_D=\inf\{t>0: (U_t, X_t)\in D \}=\inf\{t\geq 0: X_t\geq b(U_t) \}$, where $b$ is a non-negative, non-increasing and continuous curve. That is, it is not optimal to stop when $(U, X)$ is in the (continuation) set $C:=E\setminus D$ whilst we should stop as soon as the process enters the (stopping) set $D$ (see Figure \ref{fig:stoppingsetinE}). In other words, given the strong dependence of $U$ on $X$, the latter has the following interpretation in terms of the sample paths of $X$: It is optimal to stop when $X$ is sufficiently large or has stayed for a sufficiently large  amount of time above zero, whereas we will never stop when $X$ is in the negative half-line (see Figure \ref{fig:brownianmotionoptimalboundary}). \\
 \begin{figure}
\begin{center}
\begin{tikzpicture}
\begin{axis}%
    [
        x=10mm,
        y=10mm,
        xticklabels={,,}
        xtick={},   
        xmin=-0.3,
        xmax=7,
        axis x line=middle,
        yticklabels={,,},
        ymin=-3,
        ymax=5,
        axis y line=middle,
        x label style={at={(axis description cs:0.96,0.35)},anchor=north},
        y label style={at={(axis description cs:-0.01,0.96)},anchor=south},
        xlabel={\tiny{$U$}},
        ylabel={\tiny{$X$}},
        no markers,
        samples=100,
        domain=-1:7,
        restrict y to domain=-3:5
    ]

\addplot[thick,name path=A, domain=0:7,samples=400] (x,{1/x});
\addplot[draw=none,name path=B,] {0}; 
\addplot[draw=none,name path=C,] {4.78}; 
\draw[fill=babyblue, opacity=0.3]  (0,0) -- (0,-3) -- (0.1,-3) -- (0.1,0) -- cycle;
\addplot[babyblue, opacity=0.3] fill between[of=B and C,soft clip={domain=0.01:0.243}];
\addplot[babyblue, opacity=0.3] fill between[of=A and B,soft clip={domain=0.243:7}];
\path[inner sep=0pt, node font=\footnotesize]
node[anchor=west] at (0.6,0.6){$C$}
node[anchor=west] at (2.5,2){$D$}
;
\end{axis} 
\end{tikzpicture}
\end{center}
\caption{Stopping and continuation set in the $(U,X)$ plane}
\label{fig:stoppingsetinE}
\end{figure}
In the figure below we include a plot of a sample path of $X_t$ and $b(U_t)$, where we calculated numerically the function $b$ for the Brownian motion with drift case (see Section \ref{sbsec:BMwdriftexampleLp} and Figure \ref{pic:BMexample}).\\
\begin{figure}
\centering
\includegraphics[scale=0.32]{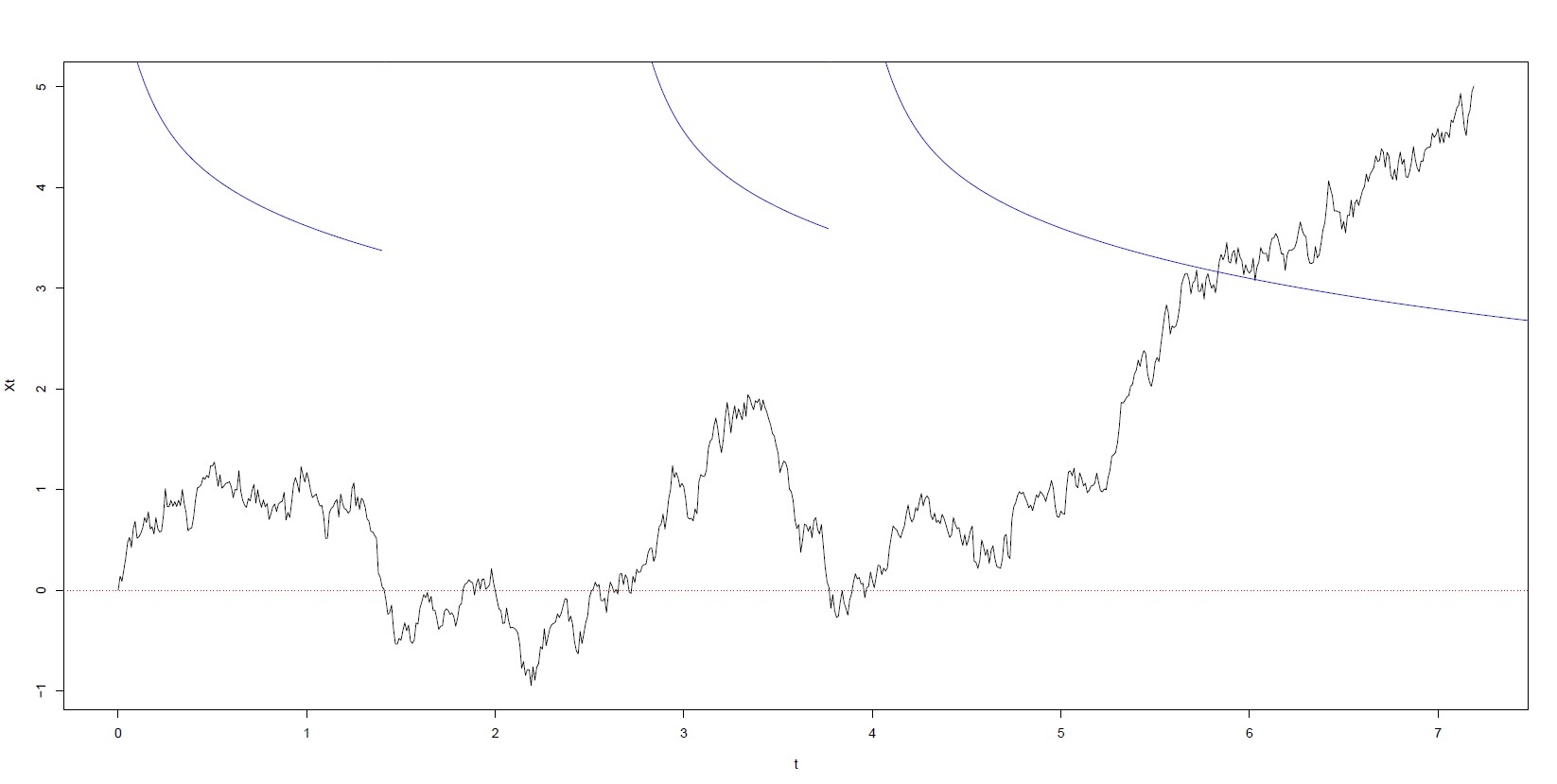}
\label{fig:brownianmotionoptimalboundary}
\caption{Black line: $t\mapsto X_t$; Blue line: $t\mapsto b(U_t)$.}
\end{figure}

The paper is organised as follows. Section \ref{sec:Preliminaries} gives a short overview of the main results and notation on the fluctuation theory of spectrally negative L\'evy processes. 
In Section \ref{sec:optimalpredictionproblem}, we formulate the optimal prediction problem, and we show that it is equivalent to an optimal stopping problem whose solution is described in Theorem \ref{thm:solutiontotheoptimalstoppingproblem}. Since the proof of Theorem \ref{thm:solutiontotheoptimalstoppingproblem} is rather long, we dedicate Section \ref{sec:Solutionoptimalstopping} for that purpose. In particular, we show that an optimal stopping time is given by the first time $X$ exceeds a boundary $b$, which depends on the length of the current excursion above zero. We derive various properties of $b$. For example, in Lemma \ref{lemma:continuous}, we show that $b$ is continuous and in Lemma \ref{lemma:smoothfit}, we show that smooth fit holds at the boundary. The main result of this paper is Theorem \ref{thm:characterisationofbandV}, which proof is devoted to Section \ref{sec:proofofmainmainthm}, providing a characterisation of $b$ and the value function of the optimal stopping problem. In Section \ref{sec:Examples}, we provide two numerical examples: Firstly, when $X$ is a Brownian motion with drift, and secondly, when $X$ is a Brownian motion perturbed by a compound Poisson process with exponential jumps. Finally, some of the more technical proofs are deferred to the Appendices \ref{sec:Appendix} and \ref{appendix:variationaliniequalities}.

\section{Preliminaries}
\label{sec:Preliminaries}
A L\'evy process $X=\{X_t,t\geq 0 \}$ is an almost surely c\`adl\`ag process that has independent and stationary increments such that $\P(X_0=0)=1$. We take it to be defined on a filtered probability space $(\Omega,\F, \mathbb{F}, \P)$ where $\mathbb{F}=\{\F_t,t\geq 0 \}$ is the filtration generated by $X$ which is naturally enlarged (see Definition 1.3.38 of \cite{bichteler2002stochastic}). From the stationary and independent increments property, the law of $X$ is characterised by the distribution of $X_1$. We hence define the characteristic exponent of $X$, $\Psi(\theta):=-\log(\E(e^{i\theta X_1}))$, $\theta\in\mathbb{R}$. The L\'evy--Khintchine formula guarantees the existence of constants, $\mu \in \R$, $\sigma\geq 0$ and a measure $\Pi$ concentrated in $\R\setminus \{0\}$ with the property that $\int_{\R} (1\wedge x^2) \Pi(\dd x)<\infty$ (called the L\'evy measure) such that for any $\theta \in \R$,

\begin{align*}
\Psi(\theta)= i \mu\theta +\frac{1}{2}\sigma^2 \theta^2-\int_{\R} (e^{i \theta y}-1-i \theta y\I_{\{|y|<1 \}})\Pi(\dd y).
\end{align*}
We now state some properties and facts about L\'evy processes. The reader can refer, for example, to \cite{bertoin1998levy}, \cite{sato1999levy} and \cite{kyprianou2014fluctuations} for more details. Every L\'evy process $X$ is also a strong Markov $\mathbb{F}$-adapted process. For all $x\in \R$, denote $\P_x$ as the law of $X$ when started at the point $x\in \R$, that is, $\E_x(\cdot)=\E(\cdot|X_0=x)$. Due to the spatial homogeneity of L\'evy processes, the law of $X$ under $\P_x$ is the same as that of $X+x$ under $\P$.\\

From the L\'evy--It\^o decomposition we can write for any $t\geq 0$,

\begin{align*}
X_t=\sigma B_t-\mu t+\int_{0}^t\int_{(-\infty,\infty)\setminus(-1,1)} x N(\dd s, \dd x)+\int_{0}^t\int_{(-1,1)} x( N(\dd s	, \dd x)-\dd s\Pi(\dd x)),
\end{align*}
where $B$ is a standard Brownian motion and $N$ is an independent Poisson random measure on $\R^+\times \R$ with intensity $\dd t \times \Pi(\dd x)$. Note that the Poisson random measure $N$ describes the jumps of the process $X$. \\

In the following sections, we often use the so-called compensation formula for Poisson random measures (see e.g. Theorem 4.4 in \cite{kyprianou2014fluctuations}). Let $\xi:[0,\infty)\times \R \times \Omega \mapsto [0,\infty)$ a measurable function such that: for each $t\geq 0$, the random variable $\xi(t,x)$ is $\B(\R)\times \F_t$-measurable, where $\B(\R)$ is the Borel sigma algebra on $\R$; for each $x\in \R$, the stochastic process $\{\xi(t,x), t\geq 0 \}$ is almost surely left continuous. Then we have that 
\begin{align}
\label{eq:compensationformulaPRM}
\E\left( \int_{0}^t \int_{\R} \xi(s,x) N(\dd s, \dd x)  \right)=\E\left( \int_{0}^t \int_{\R} \xi(s,x) \Pi(\dd x) \dd s  \right).
\end{align}
 
The process $X$ is a spectrally negative L\'evy process if it has no positive jumps ($\Pi(0,\infty)=0$) with no monotone paths. We now state some important properties and fluctuation identities of spectrally negative L\'evy processes, which will be useful to us later in this paper. We refer to \cite{bertoin1998levy}, Chapter VII or  Chapter 8 in \cite{kyprianou2014fluctuations} for details.\\

Due to the absence of positive jumps, we can define the Laplace transform of $X_1$. We denote $\psi(\beta)$ as the Laplace exponent of the process, that is, $\psi(\beta)=\log(\E(e^{\beta X_1}))$ for $\beta \geq 0$. For such $\beta$ we have that

\begin{align*}
\psi(\beta)=-\mu\beta +\frac{1}{2}\sigma^2 \beta^2+\int_{(-\infty,0)} (e^{\beta y}-1-\beta y\I_{\{y>-1 \}})\Pi(\dd y).
\end{align*}
The function $\psi$ is infinitely often differentiable and strictly convex function on $(0,\infty)$ with $\psi(\infty)=\infty$. In particular, $\psi'(0+)=\E(X_1)\in [-\infty,\infty)$ determines the behaviour of $X$ at infinity. When $\psi'(0+)>0$ the process $X$ drifts to infinity, i.e., $\lim_{t \rightarrow \infty} X_t=\infty$; when $\psi'(0+)<0$, $X$ drifts to minus infinity and the condition $\psi'(0+)=0$ implies that $X$ oscillates, that is, $\limsup_{t\rightarrow \infty} X_t =-\liminf_{t\rightarrow \infty} X_t=\infty$. We denote by $\Phi$ the right-inverse of $\psi$, i.e.
\begin{align*}
\Phi(q)=\sup\{\beta\geq 0: \psi(\beta)=q \}, \qquad q\geq 0.
\end{align*}
In the particular case that $X$ drifts to infinity, we have that $\psi'(0+)>0$ which implies that $\psi$ is strictly increasing and then $\Phi$ is the usual inverse with $\Phi(0)=0$.\\

It turns out that the path variation of L\'evy processes is characterised by its L\'evy triplet. Indeed, for each $t>0$, the paths of $X$ have finite variation on $(0,t]$, if and only if $\sigma=0$ and $\int_{(-1,0)} |x| \Pi(\dd x)<\infty$. Since the path variation of $X$ does not depend on $t> 0$, we just simply say that $X$ is of (in)finite variation. \\

Denote by $\tau_a^+$ the first passage time above the level $a>0$,
\begin{align*}
\tau_a^+=\inf\{t>0: X_t>a \}.
\end{align*}
The Laplace transform of $\tau_a^+$ is given by
\begin{align}
\label{eq:laplacetransformtau0}
\E(e^{-q \tau_a^+}\I_{\{\tau_a^+<\infty \}})=e^{-\Phi(q)a}, \qquad a> 0.
\end{align}

An important family of functions for spectrally negative L\'evy processes consists of the scale functions, usually denoted by $W^{(q)}$ and $Z^{(q)}$. For all $q\geq 0$, the scale function $W^{(q)}:\R \mapsto \R_+$ is such that $W^{(q)}(x)=0$ for all $x<0$ and it is characterised on the interval $[0,\infty)$ as the strictly increasing and continuous function with Laplace transform given by

\begin{align*}
\int_0^{\infty} e^{-\beta x} W^{(q)}(x)\dd x=\frac{1}{\psi(\beta)-q}, \qquad \text{ for } \beta >\Phi(q).
\end{align*}
The function $Z^{(q)}$ is defined for all $q\geq 0$ by

\begin{align*}
Z^{(q)}(x):= 1+q\int_0^x W^{(q)}(y) \dd y, \qquad \text{ for } x\in \R.
\end{align*}
For the case $q=0$ we simply denote $W=W^{(0)}$. When $X$ has paths of infinite variation, $W^{(q)}$ is continuous on $\R$ and $W^{(q)}(0)=0$ for all $q\geq 0$, otherwise $W^{(q)}(0)=1/d$ for all $q\geq 0$, where
\begin{align*}
d:=-\mu -\int_{(-1,0)} x\Pi(\dd x).
\end{align*}
Note that since processes with monotone paths are excluded from the definition of spectrally negative processes, we necessarily have that $d>0$ when $X$ is of bounded variation.\\

For all $q\geq 0$, $W^{(q)}$ has left and right derivatives. Moreover, when $X$ is of infinite variation we have that $W^{(q)}\in C^1((0,\infty))$ with right-derivative at zero given by $W^{(q)'}(0)=2/\sigma^2$. When $X$ is of finite variation $W^{(q)}\in C^1((0,\infty))$ when $\Pi$ has no atoms. Henceforth, we will assume that when $X$ is of finite variation, the L\'evy measure $\Pi$ has no atoms. Furthermore, for each $x\geq 0$ and $q\geq 0$, $W^{(q)}$ has the following representation
\begin{align}
\label{eq:convolutionrepresentationofWq}
W^{(q)}(x)=\sum_{k=0}^{\infty} q^k W^{*(k+1)}(x),
\end{align}
where $W^{*(k+1)}$ is the $(k+1)$-th convolution of $W$ with itself. Various fluctuation identities for spectrally negative L\'evy processes have been found in terms of the scale functions. Here we list some that will be useful in later sections. Denote by $\tau_x^-$ the first passage time below the level $x\leq 0$, i.e.,
\begin{align*}
\tau_x^-=\inf\{t>0: X_t<x \}.
\end{align*}
For any $q\geq 0$ and $x\leq a$ we have
\begin{align}
\label{eq:laplacetransformoftaua+beforecrossingthelevelzero}
\E_x\left(e^{-q \tau_a^+} \I_{\{\tau_0^->\tau_a^+ \}} \right)=\frac{W^{(q)}(x)}{W^{(q)}(a)}.
\end{align}
For any $x\in \R$ and $q\geq 0$,
\begin{align}
\label{eq:laplacetransoformoftau0minus}
\E_x(e^{-q \tau_0^-}\I_{\{\tau_0^-<\infty \}})=Z^{(q)}(x)-\frac{q}{\Phi(q)} W^{(q)}(x),
\end{align}
where we understand $q/\Phi(q)$ in the limiting sense when $q=0$. Since $X$ has only negative jumps, we have that it only creeps upwards, that is,
\begin{align}
    \P(X_{\tau_x^+}=x ,\tau_x^+<\infty )=1
\end{align}
for any $x>0$. Moreover, $X$ creeps downwards if and only if $\sigma>0$ with probability given by
\begin{align}
\label{eq:probabilityofcreepingdownwards}
    \P_x(X_{\tau_0^-}=0,\tau_0^-<\infty)=\frac{\sigma^2}{2}\left(W'(x)-\Phi(0)W(x) \right)
\end{align}
for any $x>0$.\\

Denote by $\underline{X}_t=\inf_{0\leq s\leq t} X_s$ and $\overline{X}_t=\sup_{0\leq s\leq t} X_s$ the running infimum and running maximum of the process $X$ up to time $t>0$, respectively. For $q\geq 0$, let $\e_q$ be an exponential random variable with mean $1/q$ independent of $X$, where we understand that $\e_q=\infty$ almost surely when $q=0$.  Then $ \overline{X}_{\e_q}$ is exponentially distributed with parameter $\Phi(q)$ and the Laplace transform of $\underline{X}_{\e_q}$ is given by
\begin{align}
\label{eq:laplacetransformofrunninginfimumexptime}
\E(e^{\beta \underline{X}_{\e_q}})=\frac{q}{\Phi(q)} \frac{\Phi(q)-\beta}{q-\psi(\beta)}, \qquad \beta\geq 0.
\end{align}
Denote by $\sigma_x^-$ the first time the process $X$ is below or equal to the level $x$, i.e.,

\begin{align}
\label{eq:definitionofsigmax-}
\sigma_x^-=\inf\{t>0: X_t \leq x \}.
\end{align}
It is easy to show that the mapping $x\mapsto \sigma_x^-$ is non-increasing, right-continuous with left limits. The left limit is given by $\lim_{h\downarrow 0} \sigma_{x-h}^-=\tau_x^-$ for all $x\in \R$. Moreover, since

\begin{align*}
\E(e^{-q \sigma_{x}^-}\I_{\{\sigma_x^- <\infty\}})=\P(\e_q >\sigma_x^-)=\P(\underline{X}_{\e_q} \leq -x)
\end{align*}
for all $x \leq 0$, and the fact that the random variable $\underline{X}_{\e_q}$ is continuous on $(-\infty,0)$, we have that the stopping times $\sigma_x^-$ and $\tau_x^-$ have the same distribution, for any $x>0$. When $X$ is of infinite variation, $X$ enters instantly to the set $(-\infty,0)$ whilst in the finite variation case, there is a positive time before the process enters it. That implies that in the infinite variation case, $\tau_0^-=\sigma_0^-=0$ almost surely. Note that in the finite variation case, since the time $t=0$ is excluded from the definition of $\sigma_0^-$, from the fact that $0$ is irregular for $(-\infty,0]$ (see discussion in \cite{kyprianou2014fluctuations} on p. 157) and due to equation \eqref{eq:probabilityofcreepingdownwards} we have that $\sigma_0^-=\tau_0^->0$ a.s. \\

 Let $q>0$ and $a\in \R$.
The $q$-potential measure of $X$ killed on exiting $[0,a]$,
\[\int_0^{\infty} e^{-qt} \P_x(X_t \in \dd y,t<\tau_a^+ \wedge \tau_0^-)\dd t\] is absolutely continuous with respect to Lebesgue measure and it has a density given by
\begin{align}
\label{eq:qpotentialdensitytkillingonexiting0,a}
\frac{W^{(q)}(x)W^{(q)}(a-y)}{W^{(q)}(a)} -W^{(q)}(x-y), \qquad x,y\in[0,a].
\end{align}
Similarly, the $q$-potential measure of $X$ killed on exiting $(-\infty,a]$ and the $q$-potential measure of $X$ are absolutely continuous with respect to Lebesgue measure with a density given by
\begin{align}
\label{eq:qpotentialdensitytkillingonexitinga}
 e^{-\Phi(q)(a-x)}W^{(q)}(a-y) -W^{(q)}(x-y), \qquad  x,y\leq a,
\end{align}
and
\begin{align}
\label{eq:qpotentialdensitywithoutkilling}
\Phi'(q)e^{-\Phi(q)(y-x)}-W^{(q)}(x-y), \qquad x,y\in\R,
\end{align}
respectively.
In the case when $X$ drifts to infinity these expression are also valid for $q=0$.\\

For any $t\geq 0$ and $x\in \R$, we denote by $g_t^{(x)}$ the last time that the process is below $x$ before time $t$, i.e.,
\begin{align}
\label{eq:gt}
g_t^{(x)}=\sup\{0\leq s\leq t: X_s \leq x \},
\end{align}
with the convention $\sup \emptyset=0$. We simply denote $g_t:=g_t^{(0)}$ for all $t\geq 0$.
Note that when $\mathbb{P}(X_t\geq 0)=\rho$ for some $\rho\in(0,1)$, then $g_t/t$ follows the generalised arcsine law with parameter $\rho$, see Theorem 13 in \cite{bertoin1998levy}.
The last-hitting time of zero is of key importance in the study of Az\'ema's martingale (see \cite{AzemaYor1989}).
We also define, for each $t\geq 0$, $U_t^{(x)}$ as the time spent by $X$ above the level $x$ before time $t$ since the last visit to the interval $(-\infty,x]$, i.e.,

\begin{align*}
U_t^{(x)}:=t-g_t^{(x)}, \qquad t\geq 0.
\end{align*}
It turns out that for our optimal prediction problem
\begin{align*}
\inf_{\tau \in \mathcal{T}} \E(|\tau-g|^p),
\end{align*}
for $p>1$, the process $U_t=U_t^{(0)}$ plays a vital role. 
It can be readily seen that for all $x\in \R$, the process $\{U_t^{(x)},t\geq 0\}$ is not a Markov process. We now list a number of results from \cite{baurdoux2019Itolastzero} concerning $U=\{U_t,t\geq 0 \}$. The strong Markov property holds for the two dimensional process $(U,X)=\{(U_t,X_t),t\geq 0 \}$ with respect to the filtration $\{\F_t,t\geq 0 \}$ and state space given by
\begin{align*}
E=\{(u,x): u>0 \text{ and } x>0 \}\cup \{(u,x): u=0 \text{ and } x\leq 0 \}.
\end{align*}
Then, there exists a family of probability measures  $\{ \P_{u,x},  (u,x)\in E\}$ such that for any $A \in \B(E)$, Borel set of $E$, we have that $\P_{u,x}((U_{\tau +s}, X_{\tau+s})\in A|\F_{\tau} )=\P_{U_{\tau},X_{\tau}}((U_s,X_s)\in A)$. For each $(u,x)\in E$, $\P_{u,x}$ can be written in terms of $\P_x$ via

\begin{align}
\label{eq:initialmeausrePux}
\E_{u,x}(h(U_s,X_s)):=\E_x(h(u+s, X_s)\I_{\{ \sigma_0^->s\}})+\E_x(h(U_s, X_s)\I_{\{ \sigma_0^-\leq s\}}),
\end{align}
for any positive measurable function $h$. Note that the stochastic process $(U,X)$ is a semimartingale so that It\^o formula is known (see e.g. Theorem IV.71 in \cite{protter2005}). However, given the strong dependence between $U$ and $X$, we can give a more explicit formula in terms of the dynamics of $X$ (see Theorem 3.3 in \cite{baurdoux2019Itolastzero}). Let $F:E\mapsto \R$ a continuous function that satisfies:

\begin{enumerate}
\item[i)] The mapping $x \mapsto F(0,x)$ is $C^{1}$ on $(-\infty,0)$ such that, when $X$ is of infinite variation, the second derivative $\frac{\partial^2}{\partial x^2 }F(0,x)$ exists and is continuous on $(-\infty,0)$;

\item[ii)] The mapping $(u,x)\mapsto F(u,x)$ is $C^{1,1}$ on $(0,\infty)\times(0,\infty)$ such that, when $X$ is of infinite variation, the second derivative $\frac{\partial^2}{\partial x^2 }F(u,x)$ exists and is continuous on $(0,\infty)$, for all $u\geq 0$;

\item[iii)] In the case that $\sigma>0$, $F$ is such that $\lim_{h\downarrow 0} F(u,h)=F(0,0)$ for all $u> 0$ and 
\begin{align}
\label{eq:pastingatzeroItoformula}
\frac{\partial}{\partial x}F(0,0+)=\frac{\partial}{\partial x}F(0,0-).
\end{align}
\end{enumerate}
Then we have the following version of It\^o formula

\begin{align}
&F(U_{t}, X_t)\nonumber\\
&=F(U_0,X_0)+\int_{0}^{t} \frac{\partial }{\partial u} F(U_s,X_{s}) \I_{\{X_s> 0 \}}\dd s \nonumber\\
&\qquad + \int_{0}^{t} \frac{\partial }{\partial x} F(U_{s-},X_{s-})\dd X_s+\frac{1}{2} \sigma^2 \int_{0}^{t } \frac{\partial^2 }{\partial x^2} F(U_{s},X_{s})\dd s \nonumber\\
\label{eq:ItoformulaforUtXt}
& \qquad +\int_{[0,t] } \int_{(-\infty,0)} \left( F(U_{s},X_{s^-}+y) -F(U_{s-},X_{s^-})-y\frac{\partial }{\partial x} F(U_{s-},X_{s^-}) \right)N(\dd s, \dd y)
\end{align}
Moreover, if in addition $F$ is a bounded function, the infinitesimal generator $\mathcal{A}_{U,X}$ of the process $(U,X)$ is given by
\begin{align}
&\mathcal{A}_{U,X}(F)(u,x) \nonumber\\
&=\frac{\partial }{\partial	u} F(u,x) \I_{\{x> 0 \}} -\mu \frac{\partial }{\partial	x} F(u,x) +\frac{1}{2} \sigma^2  \frac{\partial^2}{\partial x^2} F(u,x)\nonumber \\
& \qquad +\int_{(-\infty,0)} \left( F(u,x+y) -F(u,x)-y\I_{\{y>-1\}}\frac{\partial }{\partial x} F(u,x) \right)\I_{\{x+y >0 \}}\Pi(\dd y)\nonumber\\
& \qquad + \int_{(-\infty,0)} \left( F(0,x+y) -F(0,x)-y\I_{\{y>-1\}}\frac{\partial }{\partial x} F(0,x) \right)\I_{\{x\leq 0 \}}\Pi(\dd y)\nonumber\\
& \qquad +  \int_{(-\infty,0)} \left( F(0,x+y)-F(u,x)-y\I_{\{y>-1\}} \frac{\partial }{\partial x} F(u,x) \right)\I_{\{0<x <-y \}}\Pi(\dd y)\nonumber\\
&=\frac{\partial }{\partial	u} \widetilde{F}(u,x)  -\mu \frac{\partial }{\partial	x} \widetilde{F}(u,x) +\frac{1}{2} \sigma^2  \frac{\partial^2}{\partial x^2} \widetilde{F}(u,x)\nonumber \\
\label{eq:infinitesimalgeneratorUX}
& \qquad +\int_{(-\infty,0)} \left( \widetilde{F}(u,x+y) -\widetilde{F}(u,x)-y\I_{\{y>-1\}}\frac{\partial }{\partial x} \widetilde{F}(u,x) \right)\Pi(\dd y),
\end{align}
where $\tilde{F}$ is a function that extends $F$ to the set $\R_+ \times \R$ given by

\begin{align}
\label{eq:extensionofftoR+R}
\widetilde{F}(u,x)=\left\{
\begin{array}{cc}
F(u,x), & \text{for } u>0  \text{ and } x>0,\\
F(0,x), & \text{for } u\geq 0 \text{ and } x\leq 0,\\
F(0,0), & \text{for } u=0 \text{ and } x>0.
\end{array}
\right.
\end{align}

\noindent In addition, we provide a formula to calculate an integral involving the process $\{(U_t,X_t), t\geq 0 \}$ with respect to time in terms of the excursions of $X$ above and below zero (see Theorem 3.6 in \cite{baurdoux2019Itolastzero}). Let $K:E\mapsto \R$ be a left-continuous function in each argument. Assume that there exists a non-negative function $C:\R_+ \times \R \mapsto \R$ such that $u\mapsto C(u,x)$ is a monotone function for all $x\in \R$, $|K(u,x)|\leq C(u,x)$ and $\E_{u,x}\left( \int_0^{\infty} e^{-qr } C(U_r,X_r+y) \dd r\right)<\infty$ for all $(u,x)\in E$ and $y\in \R$. Then we have that 
\begin{align}
\label{eq:calculationofUsXsintegral}
\E\left(\int_0^{\infty}e^{-qr} K(U_r,X_r) \dd r \right)=   \lim_{\varepsilon \downarrow 0 }\frac{ \E_{\varepsilon}\left(  \I_{\{ \tau_0^-<\infty\}} e^{-q \tau_0^-} K^-(X_{\tau_0^-}-\varepsilon) \right)+K^+(0,\varepsilon) }{ \psi'(\Phi(q)) W^{(q)}(\varepsilon)},
\end{align}
where $K^+$ and $K^-$ are given by
\begin{align*}
K^+(u,x)&=\displaystyle{\E_x\left( \int_0^{\tau_0^-}e^{-qr}K(u+r,X_r)\dd r \right)},\\
  K^-(x)&=\displaystyle{\E_x\left( \int_0^{\tau_0^+}e^{-qr}K(0,X_r)\dd r \right)},
\end{align*}
for all $(u,x)\in E$. As a direct application of the aforementioned formula, we can calculate a density of the $q$-potential measure of $(U,X)$ (see Corollary 3.10 in \cite{baurdoux2019Itolastzero}). For any $v,y>0$ we have that
\begin{align}
\label{eq:potentialmeasureUXatzero}
\int_0^{\infty} e^{-qr }\P(U_r \in \dd v, X_r \in \dd y) \dd r
&=\Phi'(q)   \frac{y}{v} e^{-qv}\P(X_{v} \in \dd y) \dd v \\
& =\Phi'(q)e^{-qv}\P(\tau_y^+ \in \dd v)\dd y \nonumber,
\end{align}
where the last equality follows from Kendall's identity (see e.g. Exercise 6.10 in \cite{kyprianou2014fluctuations}).\\
%
%

We conclude this section by collecting some additional results about the last passage time
\begin{align}
\label{eq:lastzeroofX}
g=g_\infty=\sup\{t\geq 0 : X_t\leq 0\}.
\end{align}
%
The Laplace transform of $g$ was found in \cite{chiu2005passage} as
\begin{align}
\label{eq:laplacetransformofg}
\E_x(e^{-qg})=e^{\Phi(q)x}\Phi'(q)\psi'(0+)+\psi'(0+)(W(x)-W^{(q)}(x)), \qquad q\geq 0.
\end{align}
The distribution function of $g$ under $\P_x$ is found by observing that
\begin{align}
\P_x(g\leq \gamma)
&=\P_x(X_{u+\gamma} >0 \text{ for all } u\in(0,\infty))\nonumber\\
&=\E_x( \P_x(X_{u+\gamma} >0 \text{ for all } u\in(0,\infty)|\F_{\gamma}))\nonumber\\
&=\E_x( \P_{X_{\gamma}}( \sigma_0^-=\infty))\nonumber\\
&=\E_x( \P_{X_{\gamma}}( \tau_0^-=\infty))\nonumber\\
&=\E_x(\psi'(0+)W(X_{\gamma})),\label{distributiong}
\end{align}
where we used the tower property of conditional expectation in the second equality, the Markov property of L\'evy processes in the third and that $\sigma_0^-$ and $\tau_0^-$ have the same distribution (see discussion below equation \eqref{eq:definitionofsigmax-}). Note that the law of $g$ under $\P_x$ may have an atom at zero given by
\begin{align*}
\P_x(g=0)=\P_x(\sigma_0^-=\infty)=\P_x(\tau_0^-=\infty)=\psi'(0+)W(x).
\end{align*}

For our optimal prediction problem, we require the $p$-th moment of $g$ to be finite. 
The following result is from \cite{doney2004moments} (see Theorem 1, Theorem 4, Theorem 5 and Remark (ii)).
\begin{lemma}
\label{lemma:finitenessofEgn}
Let $X$ be a spectrally negative L\'evy process drifting to infinity. Then, for a fixed $p > 0$, the following are equivalent:
\begin{enumerate}
\item $\E_{x}(g^{p})<\infty$ for some (hence every) $x\leq 0$;
\item $\int_ {(-\infty,-1)} |x|^{1+p} \Pi(\dd x)<\infty$;
\item $\E((-\underline{X}_{\infty}^p))<\infty$;
\item $\E_x((\tau_0^+)^{p+1})<\infty$ for some (hence every) $x\leq 0$;
\item $\E_x((\tau_0^-)^{p}\I_{\{\tau_0^-<\infty\}})<\infty$ for some (hence every) $x\geq  0$.
\end{enumerate}
\end{lemma}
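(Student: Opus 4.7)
The plan is to chain the five equivalences together by combining classical Wiener--Hopf identities with Doney's moment estimates for first passage times, and finally to bring in the last zero $g$ via an excursion-theoretic reduction.

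First I would establish (2)$\iff$(3). Since $X$ drifts to $+\infty$, $-\underline{X}_\infty$ is finite a.s., and letting $q\downarrow 0$ in \eqref{eq:laplacetransformofrunninginfimumexptime} gives the Laplace transform of $-\underline{X}_\infty$ in closed form. A tail analysis (equivalently, an analysis of the descending ladder height measure, whose tail is controlled by $\overline{\Pi}(-\cdot):=\Pi((-\infty,-\cdot))$) then shows that the $p$-th moment of $-\underline{X}_\infty$ is finite if and only if $\Pi$ has a finite $(1+p)$-th moment on $(-\infty,-1)$.

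Next, for (2)$\iff$(4) and (2)$\iff$(5) I would invoke Doney's moment estimates for first passage times of a L\'evy process drifting to $+\infty$. The underlying intuition is that under $\P_x$ with $x\leq 0$, $\tau_0^+\sim |x|/\psi'(0+)$ by the strong law, and the fluctuations are driven by the large negative jumps; this yields $\E_x((\tau_0^+)^{p+1})<\infty$ iff (2), where the shift by one in the exponent reflects the fact that one is integrating a time that scales linearly with the starting distance $|x|$. For (5), a parallel analysis using \eqref{eq:laplacetransoformoftau0minus} together with the strong Markov property at $\tau_0^-$ under $\P_x$ for $x\geq 0$ shows that (5) is also equivalent to (2); no shift appears here because one conditions on $\{\tau_0^-<\infty\}$ and the overshoot $X_{\tau_0^-}$ has tails directly controlled by $\Pi$.

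The main obstacle is (1)$\iff$(5). I would decompose the path under $\P_x$ ($x\leq 0$) using It\^o excursion theory for the reflected process $\overline{X}-X$: the excursions during which $X$ dips below $0$ form a thinned Poisson point process on the local-time scale, and only finitely many such excursions occur a.s.\ because $X$ drifts to $+\infty$. The right endpoint of the last dipping excursion is exactly $g$. Since the length of a single ``deep'' excursion is comparable to $\tau_0^-$ started from the overshoot, a careful bookkeeping through the excursion measure together with the geometric structure of the number of dipping excursions yields two-sided bounds of the shape
\[
c_1\,\E_0\bigl((\tau_0^-)^p\I_{\{\tau_0^-<\infty\}}\bigr)\ \leq\ \E_x(g^p)\ \leq\ c_2\bigl(\E_x((\tau_0^+)^p)+\E_0((\tau_0^-)^p\I_{\{\tau_0^-<\infty\}})\bigr),
\]
which combined with the previous steps gives (1)$\iff$(5). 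The hard part is exactly this excursion-theoretic reduction: one has to control simultaneously the overshoot contribution and the $p$-th moment of the (geometric) number of dipping excursions, uniformly in the starting point. The full argument is carried out in \cite{doney2004moments} (Theorems 1, 4, 5 and Remark (ii)), whose strategy I would follow.
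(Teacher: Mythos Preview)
The paper does not actually prove this lemma: it is stated as a direct citation of \cite{doney2004moments} (Theorems 1, 4, 5 and Remark (ii)), with no argument given. Your proposal correctly identifies this reference and sketches the ideas behind it, which is more than the paper itself does; so there is nothing to compare against, and your approach is consistent with how the result is handled here.
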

The next lemma states that when $\tau_0^+$ has finite $p$-th moment under $\P_x$, then the function $ \E_x((\tau_0^+)^p)$ has a polynomial bound in $x$. It will be of use later to deduce a lower bound for the value function of our optimal prediction problem. Its proof can be found in Appendix \ref{sec:Appendix}.
\begin{lemma}
\label{lemma:boundaryformomentsoftau0+p}
Let $p> 0$ and suppose $\E_x((\tau_0^+)^{p+1})<\infty$ for some $x\leq 0$. Then, for each $0\leq r \leq p  $, there exist non-negative constants $A_r$ and $C_r$ such that
\begin{align*}
\E_x( (\tau_0^+)^r)\leq A_r+C_r|x|^{r} \qquad \text{and} \qquad \E_x (g^{r}) \leq 2^{r} [\E(g^{r})+A_{r} ] +2^{r} C_{r} |x|^{r}, \qquad x\leq 0.
\end{align*}
Here $\lfloor p \rfloor$ denotes the integer part of $p$.

\end{lemma}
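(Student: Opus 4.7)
The strategy is to exploit the spectrally negative structure of $X$: since $X$ has no positive jumps, $X_{\tau_a^+}=a$ a.s.\ for every $a\geq 0$ (creeping), and $\tau_a^+<\infty$ a.s.\ because $X$ drifts to $\infty$. Combined with spatial homogeneity, $\E_x((\tau_0^+)^r)=\E((\tau_{|x|}^+)^r)$, and the strong Markov property applied at each first passage time $\tau_k^+$, this yields the decomposition $\tau_n^+=T_1+\cdots+T_n$ for every integer $n\geq 1$, where $T_k:=\tau_k^+-\tau_{k-1}^+$ are i.i.d.\ with the law of $\tau_1^+$ under $\P$.

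First I would confirm $\E(T_1^r)<\infty$: by Lemma \ref{lemma:finitenessofEgn} the hypothesis holds for every $x\leq 0$, so taking $x=-1$ gives $\E(T_1^{p+1})<\infty$, whence $\E(T_1^r)<\infty$ for all $0\leq r\leq p+1$ by Jensen's inequality. Next I would bound $\E_x((\tau_0^+)^r)=\E((\tau_a^+)^r)$ with $a:=|x|$. For $a\leq 1$, monotonicity of $a\mapsto\tau_a^+$ gives the trivial bound $\E((\tau_a^+)^r)\leq\E(T_1^r)$. For $a>1$ and $r\geq 1$, set $n:=\lceil a\rceil\leq 2a$; monotonicity yields $\tau_a^+\leq\tau_n^+$, and the power-mean inequality $(t_1+\cdots+t_n)^r\leq n^{r-1}(t_1^r+\cdots+t_n^r)$ together with the i.i.d.\ decomposition gives $\E((\tau_n^+)^r)\leq n^r\E(T_1^r)\leq 2^r\E(T_1^r)a^r$. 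For $0<r<1$, I would instead use Jensen in the other direction, $\E((\tau_a^+)^r)\leq(\E\tau_a^+)^r$, invoke the $r=1$ bound just obtained and finish with subadditivity $(s+t)^r\leq s^r+t^r$, valid for $r\leq 1$. Patching these three regimes produces non-negative constants $A_r,C_r$ depending only on $r$ (and the law of $X$) that satisfy the first inequality.

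For the bound on $\E_x(g^r)$: creeping gives $X_{\tau_0^+}=0$ a.s.\ under $\P_x$ for $x<0$, so by strong Markov the process $(X_{\tau_0^++s})_{s\geq 0}$ is independent of $\mathcal{F}_{\tau_0^+}$ and distributed as $X$ under $\P$. Since $X_t\leq 0$ for every $t<\tau_0^+$, we have the pathwise identity $g=\tau_0^++g^*$, where $g^*:=\sup\{s\geq 0:X_{\tau_0^++s}\leq 0\}$ has the law of $g$ under $\P$ and is independent of $\tau_0^+$. The elementary inequality $(\alpha+\beta)^r\leq 2^r(\alpha^r+\beta^r)$ for $\alpha,\beta\geq 0$ and $r\geq 0$ then yields
\begin{align*}
\E_x(g^r)\leq 2^r\bigl(\E_x((\tau_0^+)^r)+\E(g^r)\bigr)\leq 2^r\bigl(\E(g^r)+A_r\bigr)+2^r C_r|x|^r.
\end{align*}
The only mild subtlety is combining the $a\leq 1$ and $a>1$ regimes and the $r\geq 1$ and $r<1$ cases into a single pair $(A_r,C_r)$; no substantial obstacle arises.
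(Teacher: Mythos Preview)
Your argument is correct, and for the first inequality it takes a genuinely different route from the paper. The paper differentiates the explicit Laplace transform $\E_x(e^{-\theta\tau_0^+})=e^{\Phi(\theta)x}$ via Fa\`a di Bruno's formula to obtain, for each integer $n\leq \lfloor p\rfloor+1$, a polynomial in $|x|$ of degree $n$ bounding $\E_x((\tau_0^+)^n)$; non-integer exponents are then handled by H\"older's inequality $\E_x((\tau_0^+)^r)\leq [\E_x((\tau_0^+)^{\lfloor r\rfloor+1})]^{r/(\lfloor r\rfloor+1)}$ together with $(a+b)^q\leq 2^q(a^q+b^q)$. Your approach instead uses creeping and the strong Markov property to write $\tau_n^+$ under $\P$ as a sum of $n$ i.i.d.\ copies of $\tau_1^+$, reducing the bound to the single finite constant $\E((\tau_1^+)^r)$ and a convexity/concavity argument. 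This is more elementary---it avoids the analytic machinery of $\Phi$ and Fa\`a di Bruno---and makes the probabilistic mechanism (additivity of first-passage times for spectrally negative processes) transparent; the paper's route, on the other hand, gives in principle the exact polynomial expression for integer moments. For the second inequality on $\E_x(g^r)$ your argument coincides with the paper's: both use $g=\tau_0^++g^*$ via strong Markov at $\tau_0^+$ and the inequality $(\alpha+\beta)^r\leq 2^r(\alpha^r+\beta^r)$.
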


The following lemma shows some properties of the function $x\mapsto \E_x(g^p)$. The proof is included in Appendix \ref{sec:Appendix}.

\begin{lemma}
\label{lemma:propertiesofExgn}
Let $p>0$ and assume that $\int_{(-\infty,-1)}|x|^{p+1} \Pi(dx)<\infty$. Then $x\mapsto \E_x(g^p)$ is a non-increasing, non-negative and continuous function. Moreover,

\begin{align*}
\lim_{x\rightarrow -\infty} \E_x(g^p)=\infty \qquad \text{and} \qquad \lim_{x\rightarrow \infty} \E_x(g^p)=0.
\end{align*}
\end{lemma}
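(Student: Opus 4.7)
The plan is to use spatial homogeneity to reduce everything to a single underlying process. Setting $G(a):=\sup\{t\geq 0:X_t\leq a\}$ under $\P$, the coupling gives $\E_x(g^p)=\E(G(-x)^p)$, and since pathwise $a\mapsto G(a)$ is non-decreasing, non-negativity and the non-increasing property of $x\mapsto\E_x(g^p)$ are immediate.

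For the two limits I would apply a monotone/dominated convergence argument under the coupling. As $x\to-\infty$, for any $T>0$ one has $-x>\sup_{s\leq T}X_s$ for all $x$ sufficiently negative, so $G(-x)\geq T$; hence $G(-x)\uparrow\infty$ a.s., and monotone convergence gives $\E(G(-x)^p)\uparrow\infty$. As $x\to\infty$, since $X$ drifts to $+\infty$ we have $\underline{X}_\infty>-\infty$ a.s., so as soon as $-x<\underline{X}_\infty$ the set $\{t:X_t\leq -x\}$ is empty and $G(-x)=0$; since additionally $G(-x)\leq G(0)=g$ with $\E(g^p)<\infty$ by Lemma \ref{lemma:finitenessofEgn}, dominated convergence yields $\E(G(-x)^p)\to 0$.

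The substance is continuity. I would use the distribution identity \eqref{distributiong}, which via coupling reads $\P_x(g\leq t)=\psi'(0+)\E[W(x+X_t)]$. Since $X$ drifts to infinity, $W$ is bounded by $W(\infty)=1/\psi'(0+)$. In the infinite variation case $W$ is continuous on $\R$, so dominated convergence inside the expectation immediately gives continuity of $x\mapsto\P_x(g\leq t)$ at every $x_0$. In the finite variation case $W$ has a single jump at $0$ (using the standing atomless-$\Pi$ assumption), so continuity of $x\mapsto W(x+X_t)$ at $x_0$ fails only on $\{X_t=-x_0\}$, which has positive probability for at most a Lebesgue-null set of $t$-values. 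Writing
\[
\E_x(g^p)=p\int_0^\infty t^{p-1}\,\P_x(g>t)\,dt,
\]
I would dominate $\P_x(g>t)$ on $x\in[x_0-1,x_0+1]$ by $\P_{x_0-1}(g>t)$ via monotonicity in $x$, whose integral against $pt^{p-1}$ equals $\E_{x_0-1}(g^p)<\infty$ by Lemma \ref{lemma:boundaryformomentsoftau0+p}. A second dominated convergence across the outer $t$-integral then gives $\E_{x_n}(g^p)\to\E_{x_0}(g^p)$ whenever $x_n\to x_0$.

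The main obstacle is the jump of $W$ at the origin in the finite variation case; the key observation that rescues the argument is that the resulting set of "bad" $t$-values is Lebesgue-null for any fixed $x_0$, so it does not contribute to the outer integral against $t^{p-1}$.
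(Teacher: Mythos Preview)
Your proposal is correct and follows essentially the same route as the paper: the coupling $\E_x(g^p)=\E(G(-x)^p)$ for monotonicity, the layer-cake formula $\E_x(g^p)=p\int_0^\infty t^{p-1}\P_x(g>t)\,dt$ combined with the identity $\P_x(g\leq t)=\psi'(0+)\E[W(x+X_t)]$ for continuity, and a convergence argument for the two limits. Your dominated convergence argument for $x\to\infty$ (dominating $G(-x)$ by $G(0)=g$) is in fact slightly more direct than the paper's, which goes via convergence in probability and uniform integrability; for continuity the paper applies a single dominated convergence to the product measure $\P\times\lambda$ whereas you layer two, but the substance is the same, including the handling of the jump of $W$ in the finite variation case via a Lebesgue-null exceptional set.
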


We state a basic inequality which is used throughout the paper. 

\begin{lemma}
\label{lemma:basicinequality}
Let $q>0$. Then we have that $(a+b)^q \leq 2^q( a^q+b^q)$ for any $a>0$ and $b>0$. 
\end{lemma}
\begin{proof}
Since the function $x\mapsto x^q$ is increasing on $(0,\infty)$, we have that 
\begin{align*}
(a+b)^q\leq (2\max\{a,b \})^q\leq (2\max\{a,b \})^q+(2\min\{a,b \})^q=2^q( a^q+b^q).
\end{align*}
The proof is complete.
\end{proof}
We conclude this section with a technical result extracted from \cite{baurdoux2014predicting} (see Lemma 5)  that will be useful later. 
\begin{lemma}
\label{lemma:technicalresultOS}
Let $X$ be any L\'evy process drifting to $-\infty$. Denote $T_+(0)=\inf\{t\geq 0: X_t\geq 0 \}$. Consider, for $a>0$ and $b<0$, the optimal stopping problem 
\begin{align*}
P(x)=\inf_{\tau \in \mathcal{T}} \E_x[a\tau +\I_{\{\tau \geq T_+(0) \}} b], \qquad \text{for } x\in \R.
\end{align*}
Then there is an $x_0\in (-\infty,0)$ so that $P(x)=0$ for all $x\leq x_0$. 
\end{lemma}

\section{Optimal prediction problem}
\label{sec:optimalpredictionproblem}

Denote by $V_*$ the value of the optimal prediction problem, i.e.,
\begin{align}
\label{eq:poptimalprediction}
V_*=\inf_{\tau \in \mathcal{T}} \E(|\tau-g|^p),
\end{align}
where $\mathcal{T}$ is the set of all stopping times with respect to $\mathbb{F}$, $p> 1$ and $g$ is the last zero of $X$ given in (\ref{eq:lastzeroofX}). Since $g$ is only $\F$ measurable standard techniques of optimal stopping times are not directly applicable. However, there is an equivalence between the optimal prediction problem (\ref{eq:poptimalprediction}) and an optimal stopping problem. The next lemma, inspired by the work of \cite{urusov2005property}, states such equivalence.

\begin{lemma}
\label{lemma:optimalstoppingequivalency}
Let $p>1$ and let $X$ be a spectrally negative L\'evy process drifting to infinity such that $\int_{(-\infty,-1)}|x|^{p+1}\Pi(\dd x)<\infty$. Consider the optimal stopping problem

\begin{align}
\label{eq:equivalentoptimalstoppingproblem}
V=\inf_{\tau \in \mathcal{T}}\E\left( \int_0^{\tau} G(s-g_s,X_s) \dd  s \right),
\end{align}
where the function $G$ is given  by

\begin{align*}
G(u,x)=u^{p-1}\psi'(0+)W(x)-\E_{x}(g^{p-1})
\end{align*}
for $u\geq 0$ and $x\in \R$. Then we have that $V_*=pV+\E(g^{p})$ and a stopping time minimises (\ref{eq:poptimalprediction}) if and only if it minimises (\ref{eq:equivalentoptimalstoppingproblem}).

\end{lemma}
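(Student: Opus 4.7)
The plan is to follow the Urusov-type trick of rewriting $|\tau-g|^{p}$ as a constant plus a stochastic integral whose integrand can be identified under $\F_{s}$. The pointwise identity, valid for every $\omega$ with $\tau(\omega)<\infty$,
\[
|\tau-g|^{p}=g^{p}+p\int_{0}^{\tau}|s-g|^{p-1}\sign(s-g)\,\dd s,
\]
follows from the fundamental theorem of calculus applied to $s\mapsto|s-g|^{p}$, whose derivative is $p|s-g|^{p-1}\sign(s-g)$. Under the assumption $\int_{(-\infty,-1)}|x|^{p+1}\Pi(\dd x)<\infty$, Lemma \ref{lemma:finitenessofEgn} guarantees $\E(g^{p})<\infty$, and it suffices to restrict the infima in both problems to stopping times with $\E(\tau^{p})<\infty$ (any other $\tau$ gives $\E|\tau-g|^{p}=\infty$ by the triangle inequality, so is dominated). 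With this restriction, the bound $|s-g|^{p-1}\leq(\tau+g)^{p-1}$ together with H\"older makes every integral in sight absolutely integrable, legitimising the Fubini arguments below.

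The crux is the computation
\[
\E\!\left[|s-g|^{p-1}\sign(s-g)\,\big|\,\F_{s}\right]=G(U_{s},X_{s}).
\]
I decompose $|s-g|^{p-1}\sign(s-g)=(s-g)^{p-1}\I_{\{g\leq s\}}-(g-s)^{p-1}\I_{\{g>s\}}$. On the event $\{g\leq s\}$ the last zero agrees with $g_{s}$, so $s-g=U_{s}$ is $\F_{s}$-measurable; moreover $\{g\leq s\}$ is the event that the post-$s$ process never revisits $(-\infty,0]$, so the Markov property together with the identity $\P_{x}(\tau_{0}^{-}=\infty)=\psi'(0+)W(x)$ used in \eqref{distributiong} shows that the first term contributes $U_{s}^{p-1}\psi'(0+)W(X_{s})$. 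For the second term, on $\{g>s\}$ the residual last-zero $g-s$ is precisely the last-zero functional of the shifted process $X_{s+\cdot}$ (and vanishes identically on the complement since $p>1$), so a second application of the Markov property gives
\[
\E\!\left[(g-s)^{p-1}\I_{\{g>s\}}\,\big|\,\F_{s}\right]=\E_{X_{s}}(g^{p-1}).
\]
Subtracting produces exactly $G(U_{s},X_{s})$.

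Taking $\E$ in the pointwise identity, exchanging $\E$ and $\int_{0}^{\tau}\!\cdot\,\dd s$ by Fubini (using $\{s<\tau\}\in\F_{s}$) and substituting the conditional expectation yields
\[
\E|\tau-g|^{p}=\E(g^{p})+p\,\E\!\int_{0}^{\tau}G(U_{s},X_{s})\,\dd s,
\]
from which $V_{*}=\E(g^{p})+pV$ and the equivalence of minimisers both follow immediately by taking $\inf_{\tau\in\mathcal{T}}$. The main delicacy I expect is the conditional expectation on $\{g>s\}$: identifying $g-s$ with the last-zero functional of $X_{s+\cdot}$ and invoking the Markov property for this $\sigma(X_{u},u\geq s)$-measurable functional requires a standard but nontrivial measurability argument, together with the polynomial moment bound of Lemma \ref{lemma:boundaryformomentsoftau0+p} to control $\E_{X_{s}}(g^{p-1})$ inside the outer expectation. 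The rest is a careful bookkeeping of Fubini and the triangle inequality.
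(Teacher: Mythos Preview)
Your proof is correct and follows essentially the same route as the paper's: both rewrite $|\tau-g|^{p}=g^{p}+\int_{0}^{\tau}\varrho(s-g)\,\dd s$ with $\varrho(x)=p|x|^{p-1}\sign(x)$, apply Fubini and the tower property, and evaluate $\E[\varrho(s-g)\mid\F_{s}]$ via the Markov property and the identity $\P_{x}(\tau_{0}^{-}=\infty)=\psi'(0+)W(x)$. The only differences are cosmetic---the paper phrases the split as $g=g_{s}\vee\sup\{r>s:X_{r}\le0\}$ instead of your indicator decomposition on $\{g\le s\}$ versus $\{g>s\}$---and your version is in fact slightly more careful about the Fubini justification (the paper applies it without comment).
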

\begin{proof}
Let $\tau \in \mathcal{T}$. Then the following equality holds

\begin{align}
\label{eq:integralrepofabsvalue}
|\tau-g|^p&= \int_0^{\tau}  \varrho(s-g)  \dd s +g^p,
\end{align}
where the function $\varrho$ is defined by

\begin{align*}
\varrho(x)= p\left[ \frac{(-x)^{p}}{x} \I_{\{x< 0 \}} +x^{p-1}\I_{\{x\geq 0 \}}\right].
\end{align*}
 Taking expectations in equation (\ref{eq:integralrepofabsvalue}), using Fubini's theorem and the tower property for conditional expectations we obtain

\begin{align*}
\E(|\tau-g|^p)
&= \int_0^{\infty}  \E\left(\varrho(s-g) \I_{\{ s\leq \tau\}}\dd s\right)+ \E(g^p) \\
&= \int_0^{\infty}  \E\left[\I_{\{ s\leq \tau\}} \E\left(\varrho(s-g) |\F_s\right) \dd s\right]+ \E(g^p)  \\
&=\E\left(\int_0^{\tau} \E\left( \varrho(s-g) |\F_s\right)\dd s \right)+ \E(g^p).
\end{align*}
To evaluate the conditional expectation inside the last integral, note that for all $t\geq 0$ we can write the time $g$ as
\begin{align*}
g=g_t \vee \sup\{s \in (t,\infty): X_s \leq  0 \},
\end{align*}
recalling that $g_t=g_t^{(0)}$ defined in (\ref{eq:gt}). Hence, using the Markov property for L\'evy processes and the fact that $g_s$ is $\F_s$ measurable we have that
\begin{align*}
\E(\varrho(s-g)|\F_s)
&=\E\left(\varrho \left(s- \left[g_s \vee \sup\{r \in (s,\infty): X_{r} \leq  0 \}\right] \right)|\F_s\right)\\
&=\varrho(s-g_s)\E( \I_{\{ X_{r} >  0 \text{ for all } r\in (s,\infty) \}} |\F_s)\\
&\qquad+\E(\varrho(s-\sup\{r \in (s,\infty): X_{r} \leq  0 \})  \I_{\{ X_{r} \leq  0 \text{ for some } r\in (s,\infty) \}}|\F_s )\\
&=\varrho(s-g_s)\P_{X_s}(g=0 )+\E_{X_s}(\varrho(-g) \I_{\{g>0\}})\\
&=p(s-g_s)^{p-1}\psi'(0+)W(X_s)-p\E_{X_s}(g^{p-1} ).
\end{align*}
Then we have that

\begin{align*}
\E(|\tau-g|^p)=p\E\left( \int_0^{\tau} G(s-g_s,X_s)\dd s\right)+\E(g^p).
\end{align*}

\end{proof}

\begin{rem}
A close inspection of the proof of Lemma \ref{lemma:optimalstoppingequivalency} tells us that the function $\varrho$ corresponds to the right derivative of the function $f(x)=|x|^p$. Therefore, using similar arguments we can actually extend the result to any convex function $d:\R_+ \times \R_+ \mapsto \R_+$. That is, under the assumption that $\E(d(0,g))<\infty$, the optimal prediction problem
\begin{align*}
V_d=\inf_{\tau \in \mathcal{T}} \E(d(\tau,g))
\end{align*}
is equivalent to the optimal stopping problem
\begin{align*}
\inf_{\tau \in \mathcal{T}} \E\left[ \int_0^{\tau} G_d(g_{s},s,X_{s}) \dd s \right],
\end{align*}
where $G_d(\gamma,t,x)=\varrho_d(s,\gamma)\psi'(0+)W(x)+\E_{x}(\varrho_d(s,g+s) \I_{\{g>0\}})$ and $\varrho_d$ is the right derivative with respect the first argument of $d$.

\end{rem}
The next theorem states the solution to the optimal prediction problem. Note that its proof is rather lengthy so the next section is entirely dedicated to that purpose.

\begin{thm}
\label{thm:solutiontotheoptimalstoppingproblem}
Let $p>1$ and let $X$ be a spectrally negative L\'evy process drifting to infinity such that $\Pi$ has no atoms and that $\int_{(-\infty,-1)}|x|^{p+1}\Pi(\dd x)<\infty$. Then there exists a non-decreasing and continuous function $b:(0,\infty)\mapsto [0,\infty)$ such that $b(u)\geq h(u):=\inf\{x\in \R: G(u,x)\geq 0 \}$ for all $u\geq 0$, $\lim_{u\downarrow 0} b(u)=\infty$, $\lim_{u\rightarrow \infty} b(u)=0$ and the infimum in (\ref{eq:equivalentoptimalstoppingproblem}) (and hence in (\ref{eq:poptimalprediction})) is attained by 
\begin{align}
    \tau_D=\inf\{ t>0: X_t\geq b(U_t)\}.
\end{align}
\end{thm}

Moreover, the function $b$ is uniquely characterised as in Theorem \ref{thm:characterisationofbandV}.

\section{Solution to the optimal stopping problem}
\label{sec:Solutionoptimalstopping}
Throughout this section we are going to assume that $p>1$ and that $X$ is a spectrally negative L\'evy process drifting to infinity such that $\Pi$ has no atoms and $\int_{(-\infty,-1)}|x|^{p+1}\Pi(\dd x)<\infty$. To solve the optimal stopping problem (\ref{eq:equivalentoptimalstoppingproblem}) using the general theory of optimal stopping (see e.g. \cite{peskir2006optimal}), we have to extend it to an optimal stopping problem driven by a strong Markov process. For every $(u,x)\in E$, we define the optimal stopping problem
\begin{align}
\label{eq:optimalstoppingproblem}
V(u,x)=\inf_{\tau \in \mathcal{T}} \E_{u,x}\left[ \int_0^{\tau} G(U_s,X_{s})\dd s \right],
\end{align}
where the function $G$ is given by $G(u,x)= u^{p-1} \psi'(0+)W(x)-\E_{x}(g^{p-1} )$ for any $u\geq 0$ and $x\in \R$. Therefore we have that $V_*=pV(0,0)+\E(g^p)$. Note that using the definition of $\E_{u,x}$ we have that (\ref{eq:optimalstoppingproblem}) takes the form
\begin{align}
\label{eq:VintermsofPx}
V(u,x)=\inf_{\tau \in \mathcal{T}}\E_x\left( \int_0^{\tau} \left\{ G(u+s,X_s)\I_{\{\sigma_0^->s\}}+G(U_s,X_s)\I_{\{\sigma_0^-\leq s\}} \right\} \dd s\right).
\end{align}

As a consequence of Lemma \ref{lemma:propertiesofExgn} we have the following behaviour of the function $G$. For all $x\in \R$, the function $u \mapsto G(u,x)$ is non-decreasing. In particular, when $x< 0$, $u \mapsto G(u,x)=-\E_x(g^{p-1})$ is a strictly negative constant. For fixed $u\geq 0$, $x\mapsto G(u,x)$ is a non-decreasing right-continuous function  which is continuous everywhere apart from possibly at $x=0$ (since $W$ is discontinuous at zero when $X$ is of finite variation) such that for all $u\geq 0$,

\begin{align*}
\lim_{x \rightarrow -\infty} G(u,x)=-\infty \qquad \text{ and } \qquad \lim_{x \rightarrow \infty} G(u,x)=u^{p-1} \geq 0.
\end{align*}
Moreover, we have that $\lim_{u \rightarrow \infty} G(u,x)=\infty$ and $G(0,x)=-\E_x(g^{p-1})<0$ for all $x\geq 0$. Recall that for any $u\geq 0$,
\begin{align}
\label{eq:functionh}
h(u)&=\inf\{x\in \R: G(u,x)\geq 0 \}.
\end{align}
From the description of $G$ above we have that $h$ is a non-negative and non-increasing function such that $h(u)<\infty$ for all $u\in (0,\infty)$, $h(0)=\infty$ and $\lim_{u\rightarrow \infty}h(u)=0$. Moreover, since $W$ is strictly increasing on $(0,\infty)$, the function
\begin{align*}
T(x):= \frac{\E_x(g^{p-1})}{\psi'(0+)W(x)}
\end{align*}
is continuous and strictly decreasing on $[0,\infty)$. Then, there exists an inverse function $T^{-1}$ which is continuous and strictly decreasing on $(0, u_h^*]$ with
\begin{align}
\label{eq:ustar}
u_h^*:=\frac{\E(g^{p-1})}{\psi'(0+)W(0)},
\end{align}
where we understand $1/0=\infty$ when $X$ is of infinite variation. Hence, we can write
\begin{align*}
h(u)= \left\{
\begin{array}{lr}
T^{-1}(u^{p-1}), & u< (u_h^*)^{\frac{1}{p-1}},\\
0, & u\geq (u_h^*)^{\frac{1}{p-1}}.
\end{array}
\right. 
\end{align*}
Therefore, since $T^{-1}(u_h^*-)=0$, we conclude that $h$ is a continuous function on $[0,\infty)$. From the definition of $h$ we clearly have that $G(u,x)\geq 0$ if and only if $x\geq h(u)$.\\

 The facts above give us some intuition about the optimal stopping rule for the optimal stopping problem (\ref{eq:optimalstoppingproblem}). Since we are dealing with a minimisation problem, before stopping, we want the process $(U, X)$ to be in the set in which $G$ is negative as much as possible. Then, the fact that $G(U_t, X_t)$ is strictly negative when $X_t<h(U_t)$ suggests that it is never optimal to stop on this region. When $X_t>h(U_t)$, we have that $G(U_t,X_t)\geq 0$ but with strictly positive probability $(U,X)$ can enter the set in which $G$ is negative. Moreover, $t\mapsto U_t$ is strictly increasing when $X$ is in the positive half line so that $t\mapsto h(U_t)$ gets closer to zero when the current excursion away from $(-\infty,0]$ is sufficiently large. Then, $G(U_t,X_t)\geq 0$ even when $X_t$ is relatively close to zero. That suggests that stopping is optimal when the current excursion away from $(-\infty,0]$ is large, or $X$ takes sufficiently large values. Then we infer the existence of a non-negative curve $b \geq h$ such that it is optimal to stop when $X$ crosses above $b(U_t)$. We will formally show in the next Lemmas the existence of such a boundary. \\

Note that if there exists a stopping time $\tau$ for which the expectation of the right-hand side of (\ref{eq:optimalstoppingproblem}) is minus infinity, then $V$ would also be minus infinity. The following Lemma provides the finiteness of a lower bound of $V$ that will ensure that $V$ only takes finite values. Its proof is included in Appendix \ref{sec:Appendix}.

\begin{lemma}
\label{lemma:finitenessofintExgn}
We have that

\begin{align*}
0\leq \E_x\left( \int_0^{\infty} \E_{X_s} (g^{p-1}) \dd s\right)<\infty \qquad \text{for all } x\in \R.
\end{align*}
\end{lemma}
We now prove the finiteness of the function $V$.
\begin{lemma}
\label{lemma:finitnessofVandnegativevaluesh}
For every $(u,x)\in E$ we have that $V(u,x) \in (-\infty,0]$. In particular, $V(u,x)<0$ for $(u,x)\in B:=\{(u,x)\in E: x<h(u)\} $, where $h$ is defined in (\ref{eq:functionh}).
\end{lemma}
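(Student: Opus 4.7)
The plan is to split the statement into three parts: the upper bound $V(u,x) \leq 0$, the lower bound $V(u,x) > -\infty$, and finally the strict inequality $V(u,x) < 0$ on $U$.

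For the upper bound I would simply choose the trivial stopping time $\tau \equiv 0 \in \mathcal{T}$: this yields $\E_{u,x}[\int_0^0 G(U_s, X_s)\,ds] = 0$ and hence $V(u,x) \leq 0$.

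For the lower bound I would decompose $G(u,x) = u^{p-1}\psi'(0+)W(x) - \E_x(g^{p-1})$ and observe that, since $\psi'(0+)>0$ (as $X$ drifts to infinity) and $u^{p-1}W(x) \geq 0$ on $E$, the pathwise inequality $G(U_s,X_s) \geq -\E_{X_s}(g^{p-1})$ holds. Consequently, for any $\tau \in \mathcal{T}$,
\[
\E_{u,x}\!\left[\int_0^\tau G(U_s,X_s)\,ds\right]\geq -\E_{u,x}\!\left[\int_0^\infty \E_{X_s}(g^{p-1})\,ds\right]=-\E_x\!\left[\int_0^\infty \E_{X_s}(g^{p-1})\,ds\right],
\]
where the last identity uses \eqref{eq:initialmeausrePux}, since under $\P_{u,x}$ the process $X$ has the same law as under $\P_x$. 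The right-hand side is finite by Lemma \ref{lemma:finitenessofintExgn}, so $V(u,x) > -\infty$.

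For the strict inequality, given $(u,x) \in U$ the defining property of $h$ yields $G(u,x) < 0$. Setting $\varepsilon := -G(u,x)/2 > 0$, I would take the stopping time $\tau := \inf\{s \geq 0 : G(U_s,X_s) \geq -\varepsilon\} \wedge 1$ and show that $\tau > 0$ $\P_{u,x}$-a.s., so that
\[
V(u,x) \leq \E_{u,x}\!\left[\int_0^\tau G(U_s,X_s)\,ds\right] \leq -\varepsilon\,\E_{u,x}[\tau] < 0.
\]
To verify $\tau > 0$ a.s., it suffices to check right-continuity of $s \mapsto G(U_s,X_s)$ at $s=0$. When $x \neq 0$ this follows readily from the right-continuity of $X$, the short-time description of $U$ under $\P_{u,x}$ via \eqref{eq:initialmeausrePux} (linear growth while $X>0$, identically zero while $X\leq 0$), the continuity of $W$ on $(0,\infty)$, and the continuity of $x \mapsto \E_x(g^{p-1})$ established in Lemma \ref{lemma:propertiesofExgn}.

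The main obstacle is the corner case $(u,x)=(0,0)$, where $W$ may jump at zero. I would handle this by splitting into regimes: in the infinite-variation case $W(0)=0$, so $U_s^{p-1}W(X_s) \to 0$ as $s\downarrow 0$ because both factors vanish; in the finite-variation case $\tau_0^- > 0$ $\P_0$-a.s., so for sufficiently small $s$ we have $X_s > 0$ and $U_s = s \downarrow 0$, forcing $U_s^{p-1}W(X_s) \to 0\cdot(1/d)=0$ by right-continuity of $W$ at the origin. Combined with continuity of $x \mapsto \E_x(g^{p-1})$, this gives $G(U_s,X_s) \to G(0,0) < 0$, finishing the argument.
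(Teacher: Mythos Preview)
Your proof is correct and follows essentially the same strategy as the paper. The upper and lower bounds are handled identically (trivial stopping time for $V\leq 0$; the pointwise bound $G(u,x)\geq -\E_x(g^{p-1})$ together with Lemma~\ref{lemma:finitenessofintExgn} for $V>-\infty$). For the strict negativity on $U$, the paper uses the first exit time $\tau_{U^*}=\inf\{t\geq 0:(U_t,X_t)\in E\setminus U\}$ and simply asserts $\tau_{U^*}>0$ under $\P_{u,x}$, whereas you use the closely related stopping time $\inf\{s\geq 0:G(U_s,X_s)\geq -\varepsilon\}\wedge 1$ and justify its positivity via right-continuity of $s\mapsto G(U_s,X_s)$ at $s=0$, including a careful treatment of the corner $(0,0)$; your argument is thus slightly more detailed but not genuinely different.
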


\begin{proof}
By taking the stopping time $\tau =0$ we deduce that for all $(u,x)\in E$, $V(u,x)\leq 0$. In order to check that $V(u,x)>-\infty$ we use that $G(u,x)\geq -\E_x(g^{p-1})$ to get
\begin{align*}
V(u,x)=\inf_{\tau \in \mathcal{T}} \E_{u,x}\left[ \int_0^{\tau} G(U_s,X_{s})\dd s \right]\geq - \sup_{\tau \in \mathcal{T}} \E_{x}\left[ \int_0^{\tau} \E_{X_s}(g^{p-1}) \dd s \right],
\end{align*}
for all $(u,x)\in E$. Hence by Lemma \ref{lemma:finitenessofintExgn} we have that
\begin{align}
\label{eq:lowerboundforVintermsofgp}
V(u,x) \geq -  \E_{x}\left[ \int_0^{\infty} \E_{X_s}(g^{p-1})\dd s \right]>-\infty
\end{align}
for all $(u,x)\in E$. Using standard arguments we can prove that $V(u,x)<0$ when $(u,x)\in B$. Indeed, from the definition of $h$, we have that if $(u,x) \in B$ then $G(u,x)<0$. Take $(u,x)\in B$ and consider the stopping time
\begin{align*}
\tau_{B}:=\inf\{t\geq 0: (U_t,X_t) \in E \setminus B \}.
\end{align*}
Note that under the measure $\P_{u,x}$ we have $\tau_{B}>0$. Then, for all $s<\tau_{B}$ we have that $(U_s, X_s) \in B$ which implies that $G(U_s,X_s)<0$. Hence, by the definition of $V$, we see that
\begin{align*}
V(u,x)\leq  \E_{u,x}\left[ \int_0^{\tau_{B}} G(U_s,X_{s})\dd s \right]<0.
\end{align*}
\end{proof}
\begin{rem}
\label{rem:boundsforV}
Note that we have that $h(0)=\infty$, which implies that $(0,0)\in B$ and then, from the Lemma above, $V(0,0)<0$. Moreover, from Lemma \ref{lemma:optimalstoppingequivalency} we have that $pV(0,0)+\E(g^{p-1})=V_* \geq 0$ which implies that 
\begin{align*}
-\frac{\E(g^{p-1})}{p} \leq V(0,0)< 0.
\end{align*}
\end{rem}

Now we prove some basic properties of $V$.
\begin{lemma}
\label{lemma:propertiesofVgeneralOS}
We have the following monotonicity property of $V$. For all $(u,x), (v,y)\in E$ such that $u\leq v$ and $x\leq y$ we have that $V(u,x)\leq V(v,y)$.
\end{lemma}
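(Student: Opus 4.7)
My plan is to prove the monotonicity by a pathwise coupling that exploits the spatial homogeneity of spectrally negative L\'evy processes. Fix a reference spectrally negative L\'evy process $Y$ with $Y_0=0$ defined on some probability space, and set $X^{(x)}_s:=x+Y_s$ and $X^{(y)}_s:=y+Y_s$; these have the right laws under $\P_x$ and $\P_y$ respectively, they are measurable with respect to the same filtration (that of $Y$), and since $x\le y$ we have $X^{(x)}_s\le X^{(y)}_s$ pathwise for every $s\ge 0$. Because both coupled processes generate the common natural filtration $\mathbb{F}^Y$, the admissible classes of stopping times appearing in $V(u,x)$ and $V(v,y)$ coincide.

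The crucial step is to verify that the associated excursion-length processes $U^{(u,x)}$ and $U^{(v,y)}$ built from these coupled paths via \eqref{eq:initialmeausrePux} satisfy $U^{(u,x)}_s\le U^{(v,y)}_s$ for every $s\ge 0$. Writing $\sigma^{-,x}_0=\inf\{s\ge 0:X^{(x)}_s\le 0\}$ and likewise for $y$, the inequality $X^{(x)}\le X^{(y)}$ yields both $\sigma^{-,x}_0\le\sigma^{-,y}_0$ and the set inclusion $\{s:X^{(y)}_s\le 0\}\subseteq\{s:X^{(x)}_s\le 0\}$. I would then split $[0,\infty)$ into three phases: on $[0,\sigma^{-,x}_0)$ the definition gives $U^{(u,x)}_s=u+s\le v+s=U^{(v,y)}_s$ by $u\le v$; on $[\sigma^{-,x}_0,\sigma^{-,y}_0)$ one has $U^{(u,x)}_s\le s\le v+s=U^{(v,y)}_s$; and on $[\sigma^{-,y}_0,\infty)$ both reduce to $U^{(u,x)}_s=s-g^{(x)}_s$ and $U^{(v,y)}_s=s-g^{(y)}_s$, so the set inclusion above forces $g^{(y)}_s\le g^{(x)}_s$ and the desired inequality follows. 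This phase-by-phase bookkeeping, which depends on whether $X$ has yet visited $(-\infty,0]$, is the main technical obstacle of the proof.

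Once the pathwise domination of $U$ is in hand, I invoke the monotonicity of $G$ in each argument, already recorded in the discussion preceding Lemma \ref{lemma:finitnessofVandnegativevaluesh}: $u\mapsto G(u,x)$ is non-decreasing because $p>1$, $\psi'(0+)>0$ and $W\ge 0$, while $x\mapsto G(u,x)$ is non-decreasing because $W$ is non-decreasing and $x\mapsto\E_x(g^{p-1})$ is non-increasing by Lemma \ref{lemma:propertiesofExgn}. Combining these two monotonicities with the coupling yields the pointwise bound
\[
G(U^{(u,x)}_s,X^{(x)}_s)\le G(U^{(v,y)}_s,X^{(y)}_s)\qquad\text{for all }s\ge 0.
\]
For any $\tau\in\mathcal{T}$ (viewed as a stopping time of $\mathbb{F}^Y$), integrating over $[0,\tau]$ and taking expectations under the coupling measure preserves the inequality; both sides are finite thanks to Lemma \ref{lemma:finitenessofintExgn} and Lemma \ref{lemma:finitnessofVandnegativevaluesh}. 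Taking the infimum over $\tau$ on the right and then on the left delivers $V(u,x)\le V(v,y)$, which is the claim.
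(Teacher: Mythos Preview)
Your argument is correct and is essentially the same as the paper's own proof. The paper rewrites $V(u,x)$ under $\P$ via the shifted process $X_s+x$ and the auxiliary integrand
\[
G'(u,x)=G(u+s,X_s+x)\I_{\{\sigma_{-x}^->s\}}+G(U_s^{(-x)},X_s+x)\I_{\{\sigma_{-x}^-\le s\}},
\]
then asserts that ``analysing by cases'' shows $G'$ is non-decreasing in each argument; your three-phase decomposition according to whether $s<\sigma^{-,x}_0$, $\sigma^{-,x}_0\le s<\sigma^{-,y}_0$, or $s\ge\sigma^{-,y}_0$ is precisely that case analysis made explicit, and your coupling via $Y$ is exactly the paper's passage to the measure $\P$.
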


\begin{proof}
From equation (\ref{eq:VintermsofPx}) we have that
\begin{align*}
V(u,x)&= \inf_{\tau \in \mathcal{T}}\E_x\left( \int_0^{\tau} \left\{ G(u+s,X_s)\I_{\{\sigma_0^->s\}}+G(U_s,X_s)\I_{\{\sigma_0^-\leq s\}} \right\}\dd s\right) \\
&= \inf_{\tau \in \mathcal{T}}\E\left( \int_0^{\tau} \left\{ G(u+s,X_s+x)\I_{\{\sigma_{-x}^->s\}}+G(U_{s}^{(-x)},X_s+x)\I_{\{\sigma_{-x}^-\leq s\}} \right\}\dd s\right),
\end{align*}
where $\sigma_{-x}^-=\inf \{t\geq 0: X_t\leq  -x\}$ and $U_s^{(-x)}=s-\sup\{0\leq t\leq s: X_t\leq  -x \}$. Recall that for all $s\geq 0$, $x\mapsto U_s^{(-x)}$ and $x\mapsto \sigma_{-x}^-$ are non-decreasing and that the function $G$ is non-decreasing in each argument. Define the function
\begin{align*}
 G^*(u,x):=G(u+s,X_s+x)\I_{\{\sigma_{-x}^->s \}}+G(U_s^{(-x)},X_s+x)\I_{\{\sigma_{-x}^-\leq s \}}.
\end{align*}
We show by cases that the function $G^*$ is non-decreasing in each argument. Take $ x\leq y$ and $0\leq u\leq v$. First, we suppose that $\omega \in  \{\sigma_{-x}^->s\} \subset \{\sigma_{-y}^->s\}$. Since $ G$ is non-decreasing in each argument we then have
\begin{align*}
 G^*(u,x)(\omega)=G(u+s,X_s(\omega)+x)\leq  G(v+s,X_s(\omega)+y)=G^*(v,y)(\omega).
\end{align*}
Similarly, if $\omega \in  \{\sigma_{-x}^-\leq s\}\cap \{\sigma_{-y}^- \leq s\}$ we have that
\begin{align*}
G^*(u,x)(\omega)=G(U_s^{(-x)}(\omega),X_s(\omega)+x)\leq G(U_s^{(-y)}(\omega),X_s(\omega)+y)=G^*(v,y)(\omega).
\end{align*}
Lastly, take $\omega \in  \{\sigma_{-x}^-\leq s\} \cap \{\sigma_{-y}^->s\}$. Then using the fact that $U_s^{(-x)}=s-g_s^{(-x)}\leq s \leq v+s$ and the monotonicity of $G$ we get
\begin{align*}
G^*(u,x)(\omega)=G(U_s^{(-x)}(\omega),X_s(\omega)+x) \leq  G(v+s,X_s(\omega)+y)=G^*(v,y)(\omega).
\end{align*}
All this together implies that the function $ G^*(u,x)$ is non-decreasing in each argument for all $u\geq 0$ and $x\in \R$, in particular for all $(u,x)\in E$. Hence, the claim on $V$ holds.
\end{proof}
In the following Lemma, we give an expression for $V(0,x)$ when $x<0$, in terms of $V(0,0)$, and we use it to give a lower bound for $V$.
\begin{lemma}
For any $x\leq 0$ we have that 
\begin{align}
V(0,x)&=\E_{x}\left( \int_0^{ \tau_0^+} G(0,X_s) \dd s\right)+V(0,0)\nonumber\\
\label{eq:expressionforV0xnegative}
&=-\int_{0}^{-x} \int_{0}^{\infty}  \E_{-u-z}(g^{p-1})W'(u)\dd u  \dd z +V(0,0).
\end{align}
Moreover, for all $(u,x)\in E$ we have that there exist non-negative constants $A'_{p-1}$ and $C'_{p-1}$ such that
\begin{align}
\label{eq:lowerboundforV}
V(u,x)\geq -A'_{p-1}-C'_{p-1} |x|^p+V(0,0).
\end{align}
\end{lemma}
\begin{proof}
Let $x<0$, using the Markov property and a dynamic programming argument we can write for all $x<0$,

\begin{align*}
V(0,x)&=\inf_{\tau \in \mathcal{T}}  \E_{x}\left( \int_0^{\tau \wedge \tau_0^+} G(0,X_s) \dd s +\I_{\{\tau_0^+<\tau \}} \int_{\tau_0^+}^{\tau} G(U_s,X_s)\dd s  \right)\\
&=\inf_{\tau \in \mathcal{T}}  \E_{x}\left( \int_0^{\tau \wedge \tau_0^+} G(0,X_s) \dd s +\I_{\{\tau_0^+<\tau \}} V(0,0)   \right)\\
&=  \E_{x}\left( \int_0^{\tau_0^+} G(0,X_s) \dd s     \right)+V(0,0),
\end{align*}
where the last equality follows since $G(0,x)\leq 0$ for all $x\leq 0$ and $V(0,0)\leq 0$, and hence the infimum is attained for any $\tau \geq \tau_0^+$. Using the fact that $G(0,x)=-\E_x(g^{p-1})$, for all $x<0$, and Fubini's theorem we get that
\begin{align*}
V(0,x)&=-\E_x\left(\int_0^{\tau_0^+} \E_{X_s}(g^{p-1})\dd s\right)+V(0,0)\\
&=-\int_{(-\infty,0)} \E_z(g^{p-1})\int_0^{\infty} \P_x(X_s\in \dd z,s<\tau_0^+)\dd s +V(0,0).
\end{align*}
Using the $0$-potential measure of $X$ killed on exiting the interval $(-\infty,0]$ (see equation (\ref{eq:qpotentialdensitytkillingonexitinga})) and Fubini's theorem, we obtain that
\begin{align*}
V(0,x)&=-\int_0^{\infty} \E_{-z}(g^{p-1})[W(z)-W(x+z)] \dd z +V(0,0)\nonumber\\
&=-\int_0^{\infty} \E_{-z}(g^{p-1}) \int_{x+z}^{z}W'(u)\dd u \dd z +V(0,0)\nonumber\\
&=- \int_{0}^{\infty} W'(u) \dd u \int_{u}^{u-x} \E_{-z}(g^{p-1}) \dd z +V(0,0)\nonumber \\
&=-\int_{0}^{-x} \int_{0}^{\infty}  \E_{-u-z}(g^{p-1})W'(u) \dd u  \dd z +V(0,0).
\end{align*}
From equation (\ref{eq:expressionforV0xnegative}) and the fact that $x\mapsto \E_x(g^{p-1})$ is non-increasing and bounded from above by a polynomial (see Lemmas \ref{lemma:boundaryformomentsoftau0+p} and \ref{lemma:propertiesofExgn}) we have the inequalities for $x<0$,
\begin{align*}
 V(0,x) &\geq x \int_{0}^{\infty}  \E_{x-u}(g^{p-1})W'(u) \dd u  +V(0,0) \\
 &\geq \frac{1}{\psi'(0+)}  2^{p-1}[\E(g^{p-1})+A_{p-1} ]x+\frac{1}{\psi'(0+)}2^{p-1} C_{p-1} x \E(|x+\underline{X}_{\infty}|^{p-1})\\
 &\qquad +V(0,0) \\
  &\geq  \frac{1}{\psi'(0+)} 2^{p-1}[\E(g^{p-1})+A_{p-1}+2^{p-1} C_{p-1} \E((-\underline{X}_{\infty})^{p-1}) ]x\\
  &\qquad -\frac{1}{\psi'(0+)} 2^{p-1} C_{p-1} |x|^p+V(0,0),
 \end{align*}
where we used that $\P(-\underline{X}_{\infty} \in \dd u)=\psi'(0+)W'(u) \dd u$. Hence (\ref{eq:lowerboundforV}) follows for $x<0$. The general statement holds since $V$ is non-decreasing in each argument.
\end{proof}
Define the set $D:=\{(u,x)\in E: V(u,x)=0 \}$. From Lemma \ref{lemma:finitnessofVandnegativevaluesh} we know that $V(u,x)<0$ for all $(u,x)\in E$ such that $x<h(u)$. Hence if $(u,x)\in D$ we have that $x\geq h(u)\geq 0$. We then define the function $b:(0,\infty) \mapsto \R$ by
\begin{align*}
b(u)=\inf\{x>0 : V(u,x)=0 \},
\end{align*}
where $\inf \emptyset=\infty$ and $\inf (0,\infty)=0$. Then it directly follows that $b(u)\geq h(u)\geq 0$ for all $u>0$. Moreover, since $h(0)=\infty$ we have that $\lim_{u\downarrow 0} b(u)=\infty$. Furthermore, since $V$ is monotone in each argument we deduce that $u\mapsto b(u)$ is non-increasing and $V(u,x)=0$ for all $x>b(u)$.	We then have the following Lemma.

\begin{lemma}
\label{lemma:basicpropertiesofb}
The function $b:\R_+ \mapsto \R$ is non-increasing with $0\leq h(u) \leq b(u)$. We have that $\lim_{u \downarrow 0}b(u)=\infty$ and $b(u)<\infty$ for all $u>0$. 
\end{lemma}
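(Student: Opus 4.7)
The lemma assembles order, bound, and limit properties of $b$; the monotonicity and the inequality $b\geq h$ are immediate, the $u\downarrow 0$ limit follows from $h$'s behaviour, and the genuine work lies in showing $b(u)<\infty$ and $\lim_{u\to\infty}b(u)=0$.

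\emph{Monotonicity, $b\geq h\geq 0$, and $\lim_{u\downarrow 0}b(u)=\infty$.} The monotonicity of $V$ in both arguments from Lemma \ref{lemma:propertiesofVgeneralOS} yields $b$ non-increasing at once: if $u_1<u_2$ and $V(u_1,x)=0$, then $0\geq V(u_2,x)\geq V(u_1,x)=0$, so $V(u_2,x)=0$ and hence $b(u_2)\leq b(u_1)$. Lemma \ref{lemma:finitnessofVandnegativevaluesh} gives $V(u,x)<0$ for $x<h(u)$, so $b\geq h\geq 0$. The explicit representation $h(u)=T^{-1}(u^{p-1})$ established just before the lemma, together with $T^{-1}(v)\to\infty$ as $v\downarrow 0$, gives $b(u)\geq h(u)\to\infty$ as $u\downarrow 0$.

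\emph{Finiteness $b(u)<\infty$.} Fix $u>0$; I aim to produce $x_\ast$ with $V(u,x_\ast)=0$. Since $V\leq 0$, it suffices to argue that for $x$ large every $\tau\in\mathcal{T}$ satisfies $\E_{u,x}[\int_0^\tau G(U_s,X_s)\,ds]\geq 0$. Splitting at $\sigma_0^-$ and applying the strong Markov property with the definition of $V(0,\cdot)$ gives
\[
\E_{u,x}\!\left[\int_0^\tau G(U_s,X_s)\,ds\right]\geq \E_x\!\left[\int_0^{\tau\wedge\sigma_0^-}G(u+s,X_s)\,ds\right]+\E_x\!\left[\I_{\{\tau>\sigma_0^-\}}V(0,X_{\sigma_0^-})\right].
\]
The second summand is controlled via the bound $|V(0,y)|\leq\E_y[\int_0^\infty\E_{X_r}(g^{p-1})\,dr]$ (polynomially bounded thanks to Lemmas \ref{lemma:finitenessofintExgn}--\ref{lemma:boundaryformomentsoftau0+p}), combined with $\P_x(\sigma_0^-<\infty)\downarrow 0$ as $x\to\infty$ (drift to $+\infty$) and tightness of the overshoot distribution: its magnitude vanishes as $x\to\infty$ uniformly in $\tau$. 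For the first summand, using $G(u+s,\cdot)\geq G(u,\cdot)$ and the fact that $G(u,\cdot)\geq 0$ on $[h(u),\infty)$ while $|G(u,\cdot)|\leq\E_0(g^{p-1})$ on $(0,h(u))$, the $a\to\infty$ limit of \eqref{eq:qpotentialdensitytkillingonexiting0,a} yields the uniform in $\tau$ negative-part bound $-\E_0(g^{p-1})\int_0^{h(u)}[W(x)-W(x-y)]\,dy$, which also tends to $0$ as $x\to\infty$. Combining these estimates shows $V(u,x)\to 0$ as $x\to\infty$; to upgrade this to $V(u,x_\ast)=0$ at a finite $x_\ast$, invoke the supermartingale structure of $V$: in the continuation region $\{V<0\}$ we have $\mathcal{A}_{U,X}V+G=0$, so since $G(u,x)\to u^{p-1}>0$ as $x\to\infty$ while $V(u,x)$ and its derivatives must vanish, the operator balance forces the continuation region to be bounded in $x$ for fixed $u$, giving $b(u)<\infty$.

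\emph{Limit at infinity.} Since $b$ is non-increasing and $b\geq h\geq 0$, $L:=\lim_{u\to\infty}b(u)$ exists in $[0,\infty)$ and $L\geq \lim_{u\to\infty}h(u)=0$. Argue by contradiction: suppose $L>0$, so $V(u,L/2)<0$ for every $u$. Repeating the above decomposition with $x=L/2$ fixed and $u\to\infty$: since $G(u+s,y)\geq G(u,y)\to+\infty$ as $u\to\infty$ for every $y>0$, the first summand grows at least like $u^{p-1}\psi'(0+)\E_{L/2}[\int_0^{\sigma_0^-}W(X_s)\I_{\{X_s>h(u)\}}\,ds]$, whose leading factor is strictly positive and independent of $u$ (for $u$ large enough that $h(u)<L/4$, it dominates the integral over $\{X_s>L/4\}$), whereas the correction and the negative contributions remain uniformly bounded in $u$. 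Consequently $V(u,L/2)\geq 0$ for $u$ large, contradicting $V(u,L/2)<0$, so $L=0=\lim_{u\to\infty}h(u)$. The main obstacle in both non-trivial steps is converting pointwise positivity of $G$ on the good region $\{x>h(u)\}$ into uniform in $\tau$ expectation lower bounds; the potential-density formula \eqref{eq:qpotentialdensitytkillingonexiting0,a} and the integrability from Lemma \ref{lemma:finitenessofintExgn} are the essential tools.
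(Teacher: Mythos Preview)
Your treatment of monotonicity, the bound $b\geq h$, and $\lim_{u\downarrow 0}b(u)=\infty$ is fine and matches the paper.

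The finiteness argument has a real gap. You argue (somewhat loosely) that $V(u,x)\to 0$ as $x\to\infty$, but then the upgrade step---``the operator balance forces the continuation region to be bounded''---does not work. At this stage of the paper the equation $\mathcal{A}_{U,X}V+G=0$ has not been established (Lemma \ref{lemma:variationalinequalities} comes later and only in the sense of distributions), and even granting it, the heuristic ``$V$ and its derivatives must vanish'' does not rigorously rule out $b(u)=\infty$. Showing $V(u,x)\to 0$ is simply not the same as exhibiting a finite $x_\ast$ with $V(u,x_\ast)=0$. The paper's argument is much cleaner and avoids this trap: assume $b(u)=\infty$ for some $u$, set $u_0=\sup\{v:b(v)=\infty\}>0$, and apply Fatou directly to the representation \eqref{eq:Vwhenxpositive}. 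As $x\to\infty$ one has $\sigma_{-x}^-\to\infty$ while $\tau_b^{u,x}\to u_0-u>0$ (since $b$ is infinite on $(0,u_0)$ and finite beyond), so the integrand limit $\int_0^{u_0-u}(u+s)^{p-1}\,ds>0$ contradicts $V\leq 0$. No convergence-and-upgrade is needed.

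For the limit $\lim_{u\to\infty}b(u)=0$, your contradiction argument is in the right spirit but is sloppy in one place: the ``first summand'' you bound integrates up to $\sigma_0^-$, whereas the actual term is $\int_0^{\tau_D\wedge\sigma_0^-}$, and $\tau_D$ can occur before $\sigma_0^-$. This is fixable (observe $\tau_D\wedge\sigma_0^-\geq \tau_L^+\wedge\sigma_0^-$ under $\P_{L/2}$, and the latter is independent of $u$ and strictly positive), but as written the claim is not justified. The paper instead introduces the exit time $\sigma_\ast=\inf\{t:X_t\notin(h^\ast,b^\ast)\}$ and applies the strong Markov property at $\sigma_\ast$; this gives an exact decomposition of $V(u,x)$ into an integral on $[0,\sigma_\ast]$ plus bounded remainder terms, so that Fatou on $G(u+s,X_s)\to\infty$ immediately yields $V(u,x)\to\infty$, a contradiction.
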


\begin{proof}
%
We show that for each $u>0$, $b(u)<\infty$. Fix $u>0$ and take $x>y>0$. By a dynamic programming argument we obtain that 
\begin{align*}
V(&u,x)\\
&=\inf_{\tau \in \mathcal{T}} \E_{u,x} \left( \int_0^{\tau \wedge \sigma_y^-} G(u+s,X_s)\dd s+\I_{\{ \sigma_y^-<\tau \} }V(U_{\sigma_y^-}, X_{\sigma_y^-} )\right)\\
&\geq \inf_{\tau \in \mathcal{T}} \E_{x} \left( \int_0^{\tau \wedge \sigma_y^-} G(u+s,X_s)\dd s+\I_{\{ \sigma_y^-<\tau \}}V(0,0 )+\I_{\{ \sigma_y^-<\tau, X_{\sigma_y^-}\leq 0  \} }V(0, X_{\sigma_y^-} )\right),
\end{align*}
where the inequality follows since $V$ is non-positive and non-decreasing. By the compensation formula for Poisson random measures (see \eqref{eq:compensationformulaPRM}), we have that for any stopping time $\tau$ (we assume without loss of generality that $\tau<\infty$ a.s.),
\begin{align*}
 \E_{x} &\left( \I_{\{ \sigma_y^-<\tau, X_{\sigma_y^-}<0  \} }V(0, X_{\sigma_y^-} )\right)\\
 & \qquad= \E_{x} \left( \int_{0}^{\infty} \int_{(-\infty,0)} V(0, X_{s-}+z ) \I_{\{X_{s-}+z\leq  0 \}}\I_{\{s\leq \tau\wedge \sigma_y^- \}} N(\dd s,\dd z) \right)\\
 & \qquad =\E_{x} \left( \int_{0}^{\tau\wedge \sigma_y^- } \int_{(-\infty,0)} V(0, X_{s}+z ) \I_{\{X_{s}+z\leq  0 \}} \Pi(\dd z) \dd s \right).
\end{align*}
Hence, from the equation above, since $G$ and $V$ are non-decreasing in each argument, $V\leq 0$ and $X_s> y$ for all $s<\sigma_y^-$ we have that 
\begin{align*}
V(u,x)\geq \inf_{\tau \in \mathcal{T}} \E_{x} \left( (\tau \wedge \sigma_y^-)\left[ G(u,y)+  \int_{(-\infty,-y)} V(0,z )\Pi(\dd z) \right]+  \I_{\{ \sigma_y^-<\tau \}}V(0,0 )\right).
\end{align*}
Note that from equation (\ref{eq:lowerboundforV}) and Lemma \ref{lemma:finitenessofEgn}, the integral with respect to $\Pi(\dd z)$ above is finite so we can choose $y$ sufficiently large such that $a:=G(u,y)+  \int_{(-\infty,-y)} V(0,z )\Pi(\dd z)\geq 0$. Take any stopping time $\tau \in \mathcal{T}$, since $\tau \wedge \sigma_y^-$ is also a stopping time we have that
\begin{align*}
\inf_{\tau \in \mathcal{T}} \E_{x}& \left(a \tau +  \I_{\{ \tau \geq \sigma_y^-  \}}V(0,0 )\right)\\
&\leq \E_{x} \left( a(\tau \wedge \sigma_y^-)+\I_{\{ \tau \wedge \sigma_y^- \geq \sigma_y^-  \}}V(0,0 )\right)\\
&\leq \E_{x} \left( a(\tau \wedge \sigma_y^-)+\I_{\{ \tau > \sigma_y^-  \}}V(0,0 )\right),
\end{align*}
where the last inequality follows since $V(0,0)\leq 0$. Hence, we deduce that for $x>y>0$ and $u>0$ sufficiently large,
\begin{align*}
V(u,x)&\geq \inf_{\tau \in \mathcal{T}} \E_{x} \left(a (\tau \wedge \sigma_y^-)+  \I_{\{ \sigma_y^-<\tau \}}V(0,0 )\right)\\
&\geq \inf_{\tau \in \mathcal{T}} \E_{x}\left(a \tau +  \I_{\{ \tau \geq \sigma_y^-  \}}V(0,0 )\right).
\end{align*}

Then from Lemma \ref{lemma:technicalresultOS} we have that (since $V(0,0)\leq 0$ and $-X$ drifts to $-\infty$) there exists a value $x_0(u)<0$ such that the right-hand side of the equation above vanishes for all $y-x \leq x_0(u)$. Hence, we have that $V(u,x)=0$ for all $x\geq y-x_0(u)$ and then $b(u)<\infty$.

\end{proof}
Let $(u,x)\in E$. We define, under the measure $\P_{u,x}$, the stopping times
\begin{align}
\tau_D&=\inf\{t\geq 0: (U_t,X_t)\in D \}=\inf\{ t\geq 0 : X_t \geq b(U_t) \}\nonumber,\\
\label{eq:auxiliarystoppingtimetaugamma}
\tau_b^{v,y}&=\inf\{t>0: X_t+y\geq b(v+t) \}, \qquad v>0 \text{ and } y\in \R,
\end{align}
and for any $x\in \R$, under the measure $\P_x$, the stopping time
\begin{align}
\label{eq:auxiliarystoppingtimetaug}
\tau_b^{g,y}&=\inf\{t>0: X_t+y\geq b(U_t^{(-y)}) \}, \qquad  \text{ } y\in \R.
\end{align}
Note that for any $y\in \R$ and $v>0$, the stopping time $\tau_b^{v,y}$ does not depend on the process $U$ and hence for any measurable function $f$, we have that $\E_{u,x}(f(\tau_b^{v,y}))=\E_x(f(\tau_b^{v,y}))$. Hence, for any $(u,x)\in E$ and any measurable function $f$, it can be seen that 
\begin{align*}
\E_{u,x}(f(\tau_D))
&=\E_{x}( f(\tau_b^{u,0}) \I_{\{ \tau_b^{u,0}\leq \sigma_0^- \}})+\E_{x}(f(\tau_b^{g,0}) \I_{\{ \tau_b^{u,0}> \sigma_0^- \}}).
\end{align*}
Now we introduce a technical lemma that ensures that the stopping time $\tau_D$ has moments of order $p$. The proof can be found in Appendix \ref{sec:Appendix}.

\begin{lemma}
\label{lemma:finitenesofEtauD}
For all $(u,x)\in E$ we have that
\begin{align*}
\E_{u,x}((\tau_{D})^p)<\infty.
\end{align*}
\end{lemma}

Now we are ready to show (using the general theory of optimal stopping) that $\tau_D$ is an optimal stopping time for \eqref{eq:optimalstoppingproblem}.
 	\begin{lemma}
An optimal stopping time for (\ref{eq:optimalstoppingproblem}) is given by $\tau_D$, the first entrance of $(U,X)$ to the closed set $D$, i.e.,
\begin{align*}
\tau_D=\inf\{ t\geq 0: (U_t,X_t)\in D\}.
\end{align*}
Then the function $V$ takes the form
\begin{align*}
V(u,x)=\E_{u,x}\left( \int_0^{\tau_D} G(U_s,X_s)\dd s\right), \qquad (u,x) \in E.
\end{align*}
\end{lemma}
\begin{proof}
Note that it follows from Lemma \ref{lemma:finitenesofEtauD} that $\P_{u,x}(\tau_D<\infty)=1$ for all $(u,x)\in E$. Then, using a dynamic programming argument, we deduce that 
\begin{align*}
V(u,x)
&=\inf_{\tau \in \mathcal{T}} \E_{u,x}\left( \int_0^{\tau\wedge \tau_D }G(U_s,X_s) \dd s  + \I_{\{\tau_D <\tau \}} V(U_{\tau_D}, X_{\tau_D} ) \right)\\
&=\inf_{\tau \in \mathcal{T}} \E_{u,x}\left( \int_0^{\tau\wedge \tau_D }G(U_s,X_s) \dd s   \right),
\end{align*}
where in the last equality we used that $V(u,x)=0$ on $D$. \\

Since $W(x)\leq 1/\psi'(0+)$ for all $x\in \R$, we have that $|G(u,x)|\leq u^{p-1}+\E_x(g^{p-1})$. Then, for any $(u,x)\in E$, we deduce that
\begin{align*}
\E_{u,x}&\left[ \sup_{t\geq 0}\left| \int_0^{t\wedge \tau_D} G(U_s,X_s) \right| \dd s \right]\\
&\qquad\leq \E_{u,x}\left[ \int_0^{\tau_D} [(U_s)^{p-1} +\E_{X_s} (g^{p-1}) \dd s \right]	\\
&\qquad\leq \E_{u,x}\left(  \int_0^{\tau_D} (u+s)^{p-1}\dd s\right)+ \E_{u,x}\left( \int_0^{\tau_D}  \E_{X_s} (g^{p-1}) \dd s\right)\\
&\qquad\leq 2^{p-1}[u^{p-1}+\frac{1}{p}\E_{u,x}[(\tau_D)^p]]+\E_x\left( \int_0^{\infty}  \E_{X_s} (g^{p-1}) \dd s\right)\\
&\qquad <\infty,
\end{align*}
where in the second inequality we used that $U_s\leq u+s$ for all $s\geq 0$, under the measure $\P_{u,x}$, and on the last equality follows from Lemmas \ref{lemma:finitenessofintExgn} and \ref{lemma:finitenesofEtauD}. \\

From equation (\ref{eq:expressionforV0xnegative}) we have that $V(0,x)$ is continuous on $(-\infty,0)$ and left continuous at $0$ (and hence upper semi-continuous on $(-\infty,0)$). Next, we show that $V$ is upper semi-continuous on $([0,\infty)\times [0,\infty))\cap E$. Note that since $V$ is non-decreasing in each argument, we have that for any $u>0$ and $x>0$ or $u=0=x$,
\begin{align*}
\limsup_{ E\ni (v,y)\rightarrow (u,x)} V(v,y) &=\lim_{\varepsilon \downarrow 0}[\sup\{ V(v,y): (v,y)\in  \mathbf{B}((u,x),\varepsilon) \cap E \}]\\
&\leq \lim_{\varepsilon \downarrow 0} V(u+\varepsilon,x+\varepsilon),
\end{align*}
where $\mathbf{B}((u,x),\varepsilon)$ is the ball with center $(u,x)$ with radius $\varepsilon$. Hence, it suffices to show that 
\begin{align*}
\lim_{\varepsilon \downarrow 0} V(u+\varepsilon,x+\varepsilon) \leq V(u,x)
\end{align*}
for all $(u,x)\in ([0,\infty)\times [0,\infty))\cap E$. Take $\delta>0$ and define the stopping time
\begin{align*}
\tau_{\delta}(x):=\inf\{t\geq \delta: X_t\geq b(\delta)+x \}=\inf\{t\geq 0: X_{t+\delta}\geq b(\delta)+x \}+\delta.
\end{align*}
Note that, by conditioning at the filtration at time $\delta$ and from Lemmas \ref{lemma:boundaryformomentsoftau0+p} and \ref{lemma:basicinequality}, for any $x\in \R$,
 
\begin{align*}
\E_x(\tau_{\delta}(0)^p)=\E_x( \E_{X_{\delta}}[ (\tau_{b(\delta)}^++\delta)^p])\leq 2^p( A_p+\delta) +2^pC_p \E(|X_{\delta}+x -b(\delta)|^p)<\infty,
\end{align*}
where $A_p$ and $C_p$ are non-negative constants and the last inequality follows from Theorem 3.8 in \cite{kyprianou2014fluctuations} (since $x^p\vee 1$ is sub-multiplicative), and by assumption $\int_{(-\infty,-1)} |y|^{p+1}\Pi(\dd y)<\infty$. Using once again a dynamic programming argument we have that for any $u>0$ and $x>0$,
\begin{align}
&V(u,x)\nonumber\\
&=\inf_{\tau \in \mathcal{T}} \E_{u,x} \bigg( \int_0^{\tau \wedge \sigma_{0}^- \wedge \tau_{\delta}(0)} G(U_s,X_s)\dd s+\I_{\{ \sigma_{0}^-<\tau \} }\I_{\{ \sigma_{0}^-<\tau_{\delta}(0) \} }V(0, X_{\sigma_{0}^-} )\nonumber  \\
&\qquad\qquad\qquad \qquad+ \I_{\{ \tau_{\delta}(0)<\tau \} }\I_{\{ \tau_{\delta}(0)<\sigma_{0}^- \} }V(U_{\tau_{\delta}(0) }, X_{\tau_{\delta}(0) } )\bigg)\nonumber\\
\label{eq:representationforVtoshowusc}
&=\inf_{\tau \in \mathcal{T}} \E \bigg( \int_0^{\tau \wedge \sigma_{-x}^- \wedge \tau_{\delta}(-x)} G(u+s,X_s+x)\dd s+\I_{\{ \sigma_{-x}^-<\tau \} }\I_{\{ \sigma_{-x}^-<\tau_{\delta}(-x) \} }V(0, X_{\sigma_{-x}^-}+x ) \bigg),
\end{align}
where in the last equality we used that, on the event $\{\tau_{\delta}(0)<\sigma_{0}^-\}$, we have $(U_{\tau_{\delta}(0)} , X_{\tau_{\delta}(0)} )=(u+\tau_{\delta}(0),X_{\tau_{\delta}(0) })\in D$ due to the fact that $u+\tau_{\delta}(0)\geq u+\delta\geq \delta$ and then $X_{\tau_{\delta}(0) }\geq b(\delta) \geq b(u+\tau_{\delta}(0))$. Recall that $G(u,x)\leq u^{p-1}$ and note that for any $\tau \in \mathcal{T}$, 
\begin{align*}
\E\left( \int_0^{\tau \wedge \sigma_{-x}^- \wedge \tau_{\delta}(-x)} (u+s)^{p-1}\dd s\right)&\leq \frac{1}{p}[\E( (\tau_{\delta}(-x)+u)^p-u^p]<\infty.
\end{align*}
Hence, by Fatou's Lemma, we see that for any $(u,x)\in ([0,\infty)\times [0,\infty))\cap E$ and $\tau \in \mathcal{T}$,
\begin{align*}
&\lim_{\varepsilon\downarrow 0} V(u+\varepsilon,x+\varepsilon)\\
&\leq \limsup_{\varepsilon \downarrow 0}  \E \bigg( \int_0^{\tau \wedge \sigma_{-x-\varepsilon}^- \wedge \tau_{\delta}(-x-\varepsilon)} G(u+\varepsilon+s,X_s+x+\varepsilon)\dd s\\
&\qquad\qquad\qquad+\I_{\{ \sigma_{-x+\varepsilon}^-<\tau \} }\I_{\{ \sigma_{-x-\varepsilon}^-<\tau_{\delta}(-x-\varepsilon) \} }V(0, X_{\sigma_{-x-\varepsilon}^-}+x+\varepsilon ) \bigg)\\
&\leq   \E \bigg(  \int_0^{\tau } \limsup_{\varepsilon \downarrow 0} G(u+\varepsilon+s,X_s+x+\varepsilon)\I_{\{s< \sigma_{-x-\varepsilon}^-\}} \I_{\{ s<\tau_{\delta}(-x-\varepsilon)\}}\dd s\\
&\qquad\qquad\qquad+\limsup_{\varepsilon \downarrow 0}\I_{\{ \sigma_{-x-\varepsilon}^-<\tau \} }\I_{\{ \sigma_{-x-\varepsilon}^--\tau_{\delta}(-x-\varepsilon)<0 \} }V(0, X_{\sigma_{-x-\varepsilon}^-}+x+\varepsilon ) \bigg).
\end{align*}
It is easy to show that for any $x\in \R$, we have $ \tau_{\delta}(-x-\varepsilon) \uparrow \tau_{\delta}(-x)$ and $\sigma_{-x-\varepsilon}^-\downarrow \tau_{-x}^-=\sigma_{-x}^- $ a.s., whenever $\varepsilon \downarrow 0$ (so that $\sigma_{-x-\varepsilon}^--\tau_{\delta}(-x-\varepsilon) \downarrow \sigma_{-x}^--\tau_{\delta}(-x)$ a.s.). Hence, by continuity of $y\mapsto V(0,y)$, right-continuity of $X$ and of the mappings $u,x\mapsto G(u,x)$ and $y\mapsto \I_{\{y<T \}}$, for any $T\in \R$, we have that   
\begin{align*}
&\lim_{\varepsilon\downarrow 0} V(u+\varepsilon,x+\varepsilon)\\
&\leq   \E \bigg(  \int_0^{\tau \wedge \sigma_{-x}^- \wedge \tau_{\delta}(-x)}  G(u+s,X_s+x)\dd s
+\I_{\{ \sigma_{-x}^-<\tau \} }\I_{\{ \sigma_{-x}^-<\tau_{\delta}(-x) \} }V(0, X_{\sigma_{-x}^-}+x) \bigg).
\end{align*}
Thus, taking infimum over all $\tau \in \mathcal{T}$ in the equation above and from \eqref{eq:representationforVtoshowusc} we deduce that
\begin{align*}
\lim_{\varepsilon\downarrow 0} V(u+\varepsilon,x+\varepsilon)\leq V(u,x) 
\end{align*}
for any $(u,x)\in ([0,\infty)\times [0,\infty))\cap E$. Therefore, we conclude that $V$ is upper semi-continuous, and from general results of optimal stopping (see e.g. Corollary 2.9 in \cite{peskir2006optimal}), we deduce that $\tau_D$ is an optimal stopping time for $V$. The proof is complete.

\end{proof}

Using the fact that $\tau_D$ is optimal, we can then give a representation of $V$ in terms of the measure $\P$ and the stopping times $\tau_b^{u,x}$ and $\tau_b^{g,x}$, defined in (\ref{eq:auxiliarystoppingtimetaugamma}) and (\ref{eq:auxiliarystoppingtimetaug}), respectively. For any $(u,x)\in E$ we can write
\begin{align}
V(u,x)&=\E_{u,x} \left(\int_{0}^{\tau_{D}} G(U_s,X_{s})\dd s \right)\nonumber\\
&=\E \left(\int_0^{\sigma_{-x}^- \wedge \tau_b^{u,x}} G(u+s,X_{s}+x)\dd s +\I_{\{	\sigma_{-x}^- \leq \tau_b^{u,x} \}} \int_{\sigma_{-x}^-}^{\tau_{b}^{g,x}} G(U_s^{(-x)},X_{s}+x)\dd s \right)\nonumber\\
\label{eq:Vwhenxpositive}
&=\E \left(\int_0^{\sigma_{-x}^- \wedge \tau_b^{u,x}} G(u+s,X_{s}+x)\dd s +\I_{\{	\sigma_{-x}^- \leq \tau_b^{u,x} \}} V(0,X_{\sigma_{-x}^-}+x) \right).
\end{align}

Note that in the last equality, we no longer have explicitly the process $\{ U_t^{(-x)}, t\geq 0\}$. This alternative representation of $V$ in terms of the original measure $\P$ will be useful to prove further properties of $b$ and $V$. \\	
The next lemma describes the limit behaviour of the function $b$.

\begin{lemma}
\label{lemma:behaviourofbatinfinity}
We have that 
\begin{align*}
\lim_{u \rightarrow \infty} b(u)=0.
\end{align*}
\end{lemma}

\begin{proof}
Note that, since the curve $b$ is non-increasing and it is bounded from below by $\lim_{u\rightarrow \infty} h(u)= 0$, the limit $b^*:=\lim_{u\rightarrow \infty} b(u)$ exists and $b^*\geq 0$. We prove by contradiction that $b^*=0$. Suppose $b^*>0$ and define the stopping time
\begin{align*}
\sigma_*=\inf\{ t\geq 0: X_{t}\notin (0,b^*) \}.
\end{align*}
Take $u> 0$ and $x\in (0,b^*)$. From the fact that $ b(u)\geq b^*>0$ we have that $\sigma_*\leq \tau_D \wedge \sigma_0^-$ under $\P_{u,x}$. Then we have that
\begin{align}
\label{eq:provinglimitofb}
V(u,x)&=\E_{u,x}\left( \int_0^{\tau_D} G(U_s,X_{s})\dd s \right)\nonumber\\
&=\E_{x}\left( \int_0^{\sigma_*} G(u+s,X_{s} )\dd s \right)+\E_{u,x}\left( V(U_{\sigma_*},X_{\sigma_*}) \right)\nonumber\\
&=\E_{x}\left( \int_0^{\sigma_*} G(u+s,X_{s} )\dd s \right)+\E_{x}\left( V(u+\sigma_*,X_{\sigma_*})\I_{\{X_{\sigma_*}>0 \}} \right)\nonumber\\
&\qquad +\E_{x}\left( V(0,X_{\sigma_*})\I_{\{X_{\sigma_*}\leq 0 \}} \right),
\end{align}
where in the last equality we used the Markov property of the two-dimensional process $\{(U_t,X_{t}), t\geq 0\}$. For a fixed $x\in \R$, the function $u \mapsto V(u,x)$ is non-decreasing and bounded from above by zero, thus we have that $\lim_{u\rightarrow \infty} V(u,x)$ exists and $-\infty < \lim_{u\rightarrow \infty} V(u,x)\leq 0$ for all $x\in \R$. By the dominated convergence theorem, we also conclude that $-\infty<\lim_{u\rightarrow \infty} \E_{x}\left( V(u+\sigma_*,X_{\sigma_*})\I_{\{X_{\sigma_*}> 0 \}} \right)\leq 0 $. Moreover, using Fatou's lemma (take $G(u+s,X_s)-G(0,0)\geq 0$ for all $s<\sigma_*$ and note that $\E(\sigma_*)<\E(\tau_D)<\infty$) and the fact that $\lim_{u\rightarrow \infty} G(u,x)=\infty$ we deduce that

\begin{align*}
\liminf_{u\rightarrow \infty } \E_{x}\left( \int_0^{\sigma_*} G(u+s,X_{s} )\dd s \right)\geq \E_{x}\left( \int_0^{\sigma_*} \liminf_{u\rightarrow \infty } G(u+s,X_{s} )\dd s \right) =\infty.
\end{align*}
Hence, taking $u\rightarrow \infty$ in (\ref{eq:provinglimitofb}) we get that

\begin{align*}
\lim_{u\rightarrow \infty } V(u,x)=\infty.
\end{align*}
Which yields the desired contradiction. Therefore, we conclude that $b^*=0$.
\end{proof}
In the following, we proceed to analyse the continuity properties of $b$ and $V$. Note that, by using standard arguments (from the fact that $D$ is a closed set), we can show that the function $b$ is right continuous. It turns out that $b$ is continuous, the proof of this fact makes use of a variational inequality and will be proved later.\\



Now we are ready to show the continuity of the value function $V$. The proof is rather long and technical so is included in Appendix \ref{sec:Appendix}.
\begin{lemma}
\label{lemma:continuityofV0x}
The function $V$ is continuous on $E$. Moreover, in the case that $X$ is of infinite variation we have that
\begin{align*}
\lim_{h\downarrow 0} V(u,h)=V(0,0)
\end{align*}
for all $u>0$.

\end{lemma}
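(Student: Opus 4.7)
The plan rests on three ingredients: monotonicity of $V$ (Lemma \ref{lemma:propertiesofVgeneralOS}), the representation \eqref{eq:VintermsofPx} of $V$ under $\P$, and an $\varepsilon$-optimal stopping time comparison.

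For continuity of $V$ on $E$, I fix $(u_0,x_0)\in E$ and a sequence $(u_n,x_n)\to(u_0,x_0)$ in $E$; by monotonicity in each argument the problem reduces to showing $V(u_n,x_0)\to V(u_0,x_0)$ for monotone $u_n$ (and symmetrically in $x$). For $u_n\downarrow u_0$, I pick an $\varepsilon$-optimal $\tau^\varepsilon$ for $V(u_0,x_0)$ and apply it inside \eqref{eq:VintermsofPx}:
\[
V(u_n,x_0)\leq \E_{x_0}\left[\int_0^{\tau^\varepsilon}\bigl\{G(u_n+s,X_s)\I_{\{\sigma_0^->s\}}+G(U_s,X_s)\I_{\{\sigma_0^-\leq s\}}\bigr\}\dd s\right].
\]
Dominated convergence, with the pointwise bound $|G(u+s,X_s)|\leq (u+s)^{p-1}\psi'(0+)W(X_s)+\E_{X_s}(g^{p-1})$ and integrability from Lemmas \ref{lemma:finitenessofintExgn} and \ref{lemma:boundaryformomentsoftau0+p}, lets $n\to\infty$ and yields $\limsup V(u_n,x_0)\leq V(u_0,x_0)+\varepsilon$; monotonicity provides the reverse inequality. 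The case $u_n\uparrow u_0$ is symmetric, and continuity in $x$ at interior points is obtained by the same device after using spatial homogeneity to express the integrand as a functional of $X+x_0$. On the boundary $\{u=0,x\leq 0\}$ the representation simplifies since $\sigma_0^-=0$ under $\P_x$ for $x\leq 0$, and the same argument works.

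Once $V$ is continuous, right-continuity of $b$ follows from the closedness of $D=\{V=0\}$ in $E$: for $u_n\downarrow u$ and any $x>b(u+)$, eventually $V(u_n,x)=0$, which by continuity gives $V(u,x)=0$, so $x\geq b(u)$. Letting $x\downarrow b(u+)$ and combining with the monotone inequality $b(u+)\leq b(u)$ yields $b(u+)=b(u)$.

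For the limit at the origin when $X$ is of infinite variation, I apply \eqref{eq:Vwhenxpositive}, reading its final term as $V(0,X_{\sigma_{-h}^-}+h)$ after the strong Markov reset:
\[
V(u,h)=\E\left[\int_0^{\sigma_{-h}^-\wedge\tau_b^{u,h}}G(u+s,X_s+h)\dd s+\I_{\{\sigma_{-h}^-\leq\tau_b^{u,h}\}}V(0,X_{\sigma_{-h}^-}+h)\right].
\]
For infinite variation $X$, $\tau_0^-=0$ a.s.\ and the preliminary identity $\lim_{h\downarrow 0}\sigma_{-h}^-=\tau_0^-$ gives $\sigma_{-h}^-\downarrow 0$ a.s., so the first integral vanishes. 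A quasi-left-continuity argument shows the overshoot $X_{\sigma_{-h}^-}+h\to 0$ in probability, since the probability of a jump of size $\geq\delta$ in $[0,\sigma_{-h}^-]$ vanishes with $\sigma_{-h}^-$. Combined with continuity of $V(0,\cdot)$ at $0$ (established in the preceding step) and uniform integrability from Lemma \ref{lemma:finitenessofintExgn}, the second term converges to $V(0,0)$. The main obstacle is this overshoot control, which is precisely where the infinite variation hypothesis enters.
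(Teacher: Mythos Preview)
Your overall strategy---reducing joint continuity to separate continuity via the monotonicity of Lemma~\ref{lemma:propertiesofVgeneralOS}, then handling each coordinate with an $\varepsilon$-optimal comparison---is sound in outline, and the infinite-variation limit at the origin proceeds along the same lines as the paper. However, two steps are not adequately justified.

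\textbf{Integrability in the $u$-direction.} Your dominated-convergence step bounds the integrand by $(u+s)^{p-1}\psi'(0+)W(X_s)+\E_{X_s}(g^{p-1})$, but the first summand is \emph{not} controlled by Lemmas~\ref{lemma:finitenessofintExgn} or~\ref{lemma:boundaryformomentsoftau0+p}: you need $\E_{x_0}\bigl[\int_0^{\tau^\varepsilon}(u+s)^{p-1}\,\dd s\bigr]<\infty$, i.e.\ $\E_{x_0}[(\tau^\varepsilon)^p]<\infty$, and nothing guarantees this for an arbitrary $\varepsilon$-optimal $\tau^\varepsilon$. The paper circumvents this by taking the \emph{optimal} stopping time $\tau^*_{(u_1,x)}$ (already known to exist at this stage) and observing that the $u$-dependent part of the integrand lives on $\{s<\sigma_0^-\wedge\tau_b^{u_1,0}\}\subset\{s<\tau_{b(u_1)}^+\}$; since $\E[(\tau_{b(u_1)}^+)^p]<\infty$ by Lemma~\ref{lemma:finitenessofEgn}, the bound closes. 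Your argument can be repaired in the same way, but not with the references you cite.

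\textbf{The $x$-direction is only sketched.} Saying ``the same device after spatial homogeneity'' hides real work: the integrand under $\P$ depends on $x$ through $\sigma_{-x}^-$ and $U_s^{(-x)}$, and their continuity in $x$ is exactly what must be checked. The paper avoids this by treating the two half-lines separately. For $x\leq 0$ it derives the explicit formula \eqref{eq:expressionforV0xnegative} via the $0$-potential density, which makes continuity (and later the left derivative) immediate. For $x>0$ it uses the optimal-stopping representation \eqref{eq:Vwhenxpositive} and argues directly that $\sigma_{-x}^-$ and $\tau_b^{u,x}$ are a.s.\ continuous in $x$ for $x>0$. This explicit formula for $x\leq 0$ is not incidental: it is used repeatedly downstream (Corollary~\ref{cor:lowerboundforV}, Lemma~\ref{cor:valueofub}, Theorem~\ref{lemma:smoothfit}).

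Two minor points. Your ``quasi-left-continuity'' step for the overshoot is unnecessary: once $\sigma_{-h}^-\to 0$ a.s., right-continuity of paths gives $X_{\sigma_{-h}^-}\to X_0=0$ a.s.\ directly. Finally, continuity of $V$ on $E$ also requires $\lim_{(u,x)\to(0,0)^+}V(u,x)=V(0,0)$ in the finite-variation case, which the paper handles by a separate argument (using $\lim_{x\downarrow 0}\sigma_{-x}^-=\tau_0^->0$ and $\lim_{(u,x)\to(0,0)^+}\tau_b^{u,x}=\tau_b^{0,0}$); your sketch does not cover this.
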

We know that $D$ is a closed set, so $b$ is a right-continuous function. To show left continuity, we use a variational inequality that is satisfied by the value function $V$. We will dedicate the upcoming paragraphs to introducing that.\\

It is well known that for every optimal stopping problem, there is an associated free boundary problem, which is stated in terms of the infinitesimal generator (see e.g. \cite{peskir2006optimal}, Chapter III). In this case, provided that the value function is smooth enough, we have that $V$ solves the Dirichlet/Poisson problem. That is,

\begin{align*}
\mathcal{A}_{U,X}( V)=\frac{\partial}{\partial u} \widetilde{V}+\mathcal{A}_{X}( \widetilde{V}) =-G \qquad \text{in } E\setminus D,
\end{align*}
where $\mathcal{A}_{U,X}$ corresponds to the infinitesimal generator of the process $(U,X)$ given in (\ref{eq:infinitesimalgeneratorUX}) and $\mathcal{A}_X$ is the infinitesimal generator of $X$ given by
\begin{align*}
\mathcal{A}_X(\widetilde{V})&=  -\mu \frac{\partial }{\partial	x} \widetilde{V}(u,x) +\frac{1}{2} \sigma^2  \frac{\partial^2}{\partial x^2} \widetilde{F}(u,x)\\
&\qquad  +\int_{(-\infty,0)} \left( \widetilde{V}(u,x+y) -\widetilde{V}(u,x)-y\I_{\{y>-1\}}\frac{\partial }{\partial x} \widetilde{V}(u,x) \right)\Pi(\dd y),
\end{align*}
whilst $\widetilde{V}$ is the extension of $V$ to the set $\R_+ \times \R$ given by
\begin{align}
\label{eq:extensionofVtoR+R}
\widetilde{V}(u,x)=\left\{
\begin{array}{cc}
V(u,x), & \text{for } u>0  \text{ and } x>0,\\
V(0,x), & \text{for }u\geq 0 \text{ and } x\leq 0,\\
V(0,0), & \text{for } u=0 \text{ and } x>0.
\end{array}
\right.
\end{align}
However, in our setting, it turns out to be challenging to show the smoothness of the function $V$. Indeed, from \cite{garroni2002second} it could be checked that when the diffusion component is non-degenerate (that is $\sigma>0$), the function $V$ is $C^{1,2}$ on the set $C^+=\{ (u,x)\in E: 0<x<b(u)\}$ (cf. \cite{bayraktar2012regularity}). However, for the case $\sigma=0$ differentiability of $V$ might fail, even in the finite variation case (see e.g. \cite{cont2005integro}). It turns out that \cite{lamberton2008critical} showed that we could state an analogous (in)equality in the sense of distributions.\\

In Appendix \ref{appendix:variationaliniequalities} we recall some facts and notation from the theory of distributions (see also \cite{friedlander1998introduction} or \cite{rudin1991functional} for further details). Since $V$ is continuous on $E$ we have that $\widetilde{V}$ is a locally integrable function in $\R_+\times \R$ (note that $\widetilde{V}$ may be discontinuous at points of the form $(u,0)$ for $u>0$ when $X$ is of finite variation) so we can define $\widetilde{V}$ as a distribution in any open set $O \subset \R_+ \times \R$ via the functional
\begin{align*}
\varphi\mapsto \langle \widetilde{V}, \varphi \rangle= \int_{\R_+} \int_{\R} \widetilde{V}(u,x)\varphi(u,x)\dd x \dd u,
\end{align*}
where $\varphi$ is taken from the set of test functions with compact support in $O$. The derivatives of the distribution $\widetilde{V}$ are defined as
\begin{align*}
\varphi \mapsto \langle \frac{\partial^{i+j}}{\partial u^i \partial x^j}\widetilde{V}, \varphi \rangle= (-1)^{i+j}\int_{\R_+} \int_{\R} \widetilde{V}(u,x) \frac{\partial^{i+j}}{\partial u^i \partial x^j} \varphi(u,x)\dd x \dd u.
\end{align*}
Moreover, provided that the function $(u,x) \mapsto \int_{(-\infty,-1)}\widetilde{V}(u,x+y)\Pi(\dd y)$ is locally integrable in $\R_+ \times \R$, the functional $B_X(\widetilde{V})$ defined by 
\begin{align*}
\varphi \mapsto \langle B_X(\widetilde{V}), \varphi \rangle =\int_{\R_+} \int_{\R} \widetilde{V}(u,x)  B_X^*(\varphi)(u,x) \dd x \dd u,
\end{align*}
defines a distribution on $O$ (see Lemma \ref{lemma:distributionofthejumpspart}), where
\begin{align*}
 B_X^*(\varphi)(u,x)=\int_{(-\infty,0)} [\varphi(u,x-y)-\varphi(u,x)+y \frac{\partial}{\partial x}\varphi(u,x) \I_{\{ y>- 1\}}]\Pi(\dd y).
\end{align*}
We have the following Lemma that ensures that the integrability conditions for $\widetilde{V}$ are satisfied so then $B_X(\widetilde{V})$ is indeed a distribution.

\begin{lemma}
\label{lemma:intVislocallyintegrable}
The function 
\begin{align*}
(u,x) \mapsto \int_{(-\infty,-1)}\widetilde{V}(u,x+y)\Pi(\dd y)
\end{align*}
is locally integrable in $\R_+\times \R$.
\end{lemma}
\begin{proof}
First note that from equation (\ref{eq:lowerboundforV}) we have that for any $x\leq 0$, 
\begin{align*}
\int_{(-\infty,-1)} V(0,x+y) \Pi(\dd y)
&\geq -A'_{p-1}\Pi(-\infty,-1]-C'_{p-1} \int_{(-\infty,-1)}|x+y|^p \Pi(\dd y)\\
&\qquad+V(0,0) \Pi(-\infty,-1)\\
&>-\infty,
\end{align*}
where we used the fact that $\Pi(-\infty,-1)<\infty$ and Lemma \ref{lemma:finitenessofEgn}. Moreover, since $V$ is non-decreasing in each argument we have that for any $u>0$ and $x>0$ that
\begin{align*}
\int_{(-\infty,-1)}  \widetilde{V}(u,x+y) \Pi(\dd y)\geq \int_{(-\infty,-1)}  V(0,y) \Pi(\dd y)>-\infty.
\end{align*}
Hence we conclude that $\int_{(-\infty,-1)}  \widetilde{V}(u,x+y) \Pi(\dd y)>-\infty$ for any $(u,x)\in \R_+\times \R$. Since $V$ is continuous on $E$ and the definition of $\widetilde{V}$ we have that the mapping $(u,x) \mapsto \int_{(-\infty,-1)}\widetilde{V}(u,x+y)\Pi(\dd y)$ is locally integrable. 
\end{proof}
Hence, we can define the operator $\mathcal{A}_X$ in the sense of distributions by 
\begin{align*}
\mathcal{A}_X(\widetilde{V})=-\mu \frac{\partial }{\partial	x} \widetilde{V} +\frac{1}{2} \sigma^2  \frac{\partial^2}{\partial x^2} \widetilde{V}+ B_X(\widetilde{V}).
\end{align*}
The next lemma is an extension of Proposition 2.5 (see also Theorem 2.8) in \cite{lamberton2008critical}. 

\begin{lemma}
\label{lemma:variationalinequalities}
The distribution $\frac{\partial}{\partial u} \widetilde{V}+\mathcal{A}_{X}(\widetilde{V})+G$ is a non-negative distribution on $(0,\infty)\times (0,\infty)$. Moreover, we have $\frac{\partial}{\partial u} \widetilde{V}+\mathcal{A}_{X}(\widetilde{V})+G=0$ on the set $C^+:=\{(u,x) \in(0,\infty)\times (0,\infty): 0<x<b(u) \}$ and $\mathcal{A}_X(V(0,\cdot))+G(0,\cdot)=0$ on $(-\infty,0)$ in the sense of distributions. 
\end{lemma}

\begin{proof}

From the general theory of optimal stopping we have that (see \cite{peskir2006optimal}, Theorem 2.4) for every $(u,x)\in E$, the stochastic process $\{ Z_t, t\geq 0\}$ is a sub-martingale under the measure $\P_{u,x}$, where 
\begin{align*}
Z_t=V(U_t,X_t)+\int_0^{t}G(U_s,X_s)\dd s. 
\end{align*}
Moreover, we have that the stopped process $\{Z_{t\wedge \tau_D}, t\geq 0 \}$ is a martingale under $\P_{u,x}$ for all $(u,x)\in E$. Then, from Doob's stopping time theorem, we have that for every $(u,x)\in E$, the process $\{ Z_{t\wedge \sigma_0^-}, t\geq 0\}$ is a sub-martingale and $\{ Z_{t\wedge \tau_D \wedge \sigma_0^-}, t\geq 0 \}$ is a martingale under $\P_{u,x}$. From the fact that $U_t=0$ if and only if $X_t\leq 0$ we have that, under $\P_{u,x}$,
\begin{align*}
Z_{t\wedge \sigma_0^-}&=V(U_{t\wedge\sigma_0^-},X_{t\wedge\sigma_0^-})+\int_0^{t\wedge\sigma_0^-}G(U_s,X_s)\dd s\\
&=V(u+t,X_{t})\I_{\{t<\sigma_0^-\}}+V(0,X_{\sigma_0^-})\I_{\{\sigma_0^- \leq t\}}+\int_0^{t\wedge\sigma_0^-}G(u+s,X_s)\dd s\\
&=\widetilde{V}(u+t,X_{t})\I_{\{t<\sigma_0^-\}}+\widetilde{V}(u+\sigma_0^-,X_{\sigma_0^-})\I_{\{\sigma_0^- \leq t\}}+\int_0^{t\wedge\sigma_0^-}G(u+s,X_s)\dd s\\
&=\widetilde{V}(u+t\wedge\sigma_0^-,X_{t \wedge \sigma_0^-})+\int_0^{t\wedge\sigma_0^-}G(u+s,X_s)\dd s
\end{align*}
for every $u>0$ and $x>0$. Hence, using an analogous argument as in Proposition 2.5 in \cite{lamberton2008critical} (see also Proposition \ref{prop:variationalinequality}), we have that $\frac{\partial}{\partial u } \widetilde{V}+\mathcal{A}_X(\widetilde{V})+G$ is a non-negative distribution on $(0,\infty)\times (0,\infty)$. Similarly, we have that for any $u>0$ and $x>0$ such that $x<b(u)$, 
\begin{align*}
Z_{t\wedge \sigma_0^- \wedge \tau_D}
&=\widetilde{V}(u+t\wedge\sigma_0^-\wedge \tau_D,X_{t \wedge \sigma_0^-\wedge \tau_D})+\int_0^{t\wedge\sigma_0^-\wedge \tau_D}G(u+s,X_s)\dd s.
\end{align*}
Therefore, from Proposition \ref{prop:variationalinequality}, we have that $\frac{\partial}{\partial u } \widetilde{V}+\mathcal{A}_X(\widetilde{V})+G=0$ on $C^+$ in the sense of distributions. Lastly, since $b$ is non-negative, we have that $\tau_0^+\leq \tau_D$. Hence, under the measure $\P_{0,x}$, for any $x<0$, we have that $\{Z_{t\wedge \tau_0^+} , t\geq 0\}$ is a martingale. Moreover, since $X_t\leq 0$ for all $t<\tau_0^+$ we have that 
\begin{align*}
Z_{t\wedge  \tau_0^+}
&=V(0,X_{t\wedge  \tau_0^+})+\int_0^{t\wedge  \tau_0^+}G(0,X_s)\dd s.
\end{align*}
Then, as in Proposition 2.5 in \cite{lamberton2008critical}, from Proposition \ref{prop:variationalinequality} we deduce that $\mathcal{A}_X(V(0,\cdot))+G(0,\cdot)=0$ in the sense of distributions on the set $(-\infty,0)$. 
\end{proof}

\begin{rem}
\label{rem:variationalinequality}
\begin{itemize}
\item[i)]In \cite{lamberton2008critical} the definition of the infinitesimal generator in the sense of distributions assumes that the value function is a bounded Borel measurable function. In our setting such condition can be relaxed by the fact that $(u,x) \mapsto \int_{(-\infty,-1)} |\widetilde{V}(u,x+y)| \Pi(\dd y)$ is a locally integrable function on $\R_+ \times \R$. 
\item[ii)]  We note that similar as in (\ref{eq:infinitesimalgeneratorUX}) the infinitesimal generator of $(U,X)$ can be defined as $\mathcal{A}_{U,X}(V):= \partial/ \partial u \widetilde{V} +\mathcal{A}_{X}(\widetilde{V})$ in the sense of distributions, where $\mathcal{A}_{X}$ corresponds to the infinitesimal generator of $X$ (seen as a distribution).
\end{itemize}
\end{rem}
Let $\text{int}(D)$ be the interior of the set $D$. For $(u,x)\in \text{int}(D)$ we define the function 

\begin{align*}
\Lambda(u,x):=
\int_{(-\infty,0)} \widetilde{V}(u,x+y)\Pi(\dd y)+G(u,x).
\end{align*}

The next lemma states some basic properties of the function $\Lambda$.
\begin{lemma}
\label{lemma:auxiliaryfunctionLambda}
The function $\Lambda$ is such that $0<\Lambda (u,x)<\infty$ for all $(u,x)\in \text{int}(D)$. Moreover, it is strictly increasing in each argument and continuous in the interior of the set $D$. Furthermore, $\Lambda=\frac{\partial}{\partial u} \widetilde{V}+   \mathcal{A}_{X}(\widetilde{V})+G$ on $\text{int}(D)$ in the sense of distributions.

\end{lemma}
%

\begin{proof}
It follows from Lemma \ref{lemma:intVislocallyintegrable} and the fact that $V$ vanishes in $D$ that $|\Lambda(u,x)|<\infty$ for all $(u,x)\in E$. The fact that $\Lambda$ is continuous on $D$ follows from the continuity of $V$ and $G$, the dominated convergence theorem and the fact that $\Pi$ has no atoms. Moreover, $\Lambda$ is strictly increasing in each argument on $D$ since $V$ is non-decreasing in each argument and $G$ is strictly increasing in each argument on $D$. Then, we show that $\partial/ \partial u \widetilde{V}+ \mathcal{A}_{X}(\widetilde{V})+G=\Lambda$ on in the interior of $D$. Let $\varphi$ be a $C^{\infty}$ function with compact support on the interior of $D$. We have that
\begin{align*}
\langle  \frac{\partial}{\partial u}& \widetilde{V}+   \mathcal{A}_{X}(\widetilde{V})+G, \varphi \rangle\\
&=\int_{0}^{\infty}  \int_{-\infty}^{\infty} \widetilde{V}(u,x)[\mu \frac{\partial}{\partial x} \varphi(u,x)+\frac{1}{2}\sigma^2 \frac{\partial^2 }{\partial x^2} \varphi(u,x)+B_X^*(\varphi)(u,x)]\dd x  \dd u \\
&\qquad  +\int_0^{\infty} \int_{-\infty}^{\infty} G(u,x) \varphi(u,x)\dd x \dd u \\
&=\int_{0}^{\infty}  \int_{-\infty}^{b(u)} \widetilde{V}(u,x)[\mu \frac{\partial}{\partial x} \varphi(u,x)+\frac{1}{2}\sigma^2 \frac{\partial^2 }{\partial x^2} \varphi(u,x)+B_X^*(\varphi)(u,x)]\dd x  \dd u \\
&\qquad +\int_{0}^{\infty}  \int_{b(u)}^{\infty} \widetilde{V}(u,x)[\mu \frac{\partial}{\partial x} \varphi(u,x)+\frac{1}{2}\sigma^2 \frac{\partial^2 }{\partial x^2} \varphi(u,x)+B_X^*(\varphi)(u,x)]\dd x  \dd u \\
&\qquad  +\int_0^{\infty} \int_{-\infty}^{b(u)} G(u,x) \varphi(u,x)\dd x \dd u +\int_0^{\infty} \int_{b(u)}^{\infty} G(u,x) \varphi(u,x)\dd x \dd u\\
&= \int_{0}^{\infty}  \int_{-\infty}^{b(u)} \widetilde{V}(u,x) \int_{(-\infty,0)} \varphi(u,x-y)\Pi(\dd y) \dd x \dd u \\
&\qquad  +\int_0^{\infty} \int_{b(u)}^{\infty} G(u,x) \varphi(u,x)\dd x \dd u ,
\end{align*}
where the last equality follows since $V(u,x)=0$ for all $x\geq b(u)$, and since $\varphi$ has support on the interior of $D$, so that, for any $x\leq b(u)$ we have $\varphi(u,x)=\frac{\partial}{\partial x}\varphi(u,x)=\frac{\partial^2}{\partial x^2}\varphi(u,x)=0$ and
\begin{align*}
B_X^*(\varphi)(u,x)&= \int_{(-\infty,0)} [\varphi(u,x-y)-\varphi(u,x)+y \frac{\partial}{\partial x}\varphi(u,x) \I_{\{ y>- 1\}}]\Pi(\dd y)\\
&=\int_{(-\infty,0)} \varphi(u,x-y)\Pi(\dd y).
\end{align*}
Moreover, by the change of variable $z=x-y$, we see that for any $u>0$, 
\begin{align*}
\int_{-\infty}^{b(u)} \widetilde{V}(u,x) \int_{(-\infty,0)} \varphi(u,x-y)\Pi(\dd y) \dd x 
&=\int_{-\infty}^{\infty} \int_{(-\infty,0)} \widetilde{V}(u,x)  \varphi(u,x-y)\Pi(\dd y) \dd x\\
&=\int_{-\infty}^{\infty} \int_{(-\infty,0)} \widetilde{V}(u,z+y)  \varphi(u,z)\Pi(\dd y) \dd z\\
&=\int_{b(u)}^{\infty} \varphi(u,z) \int_{(-\infty,0)} \widetilde{V}(u,z+y)  \Pi(\dd y) \dd z,
\end{align*}
where we used again that $V$ vanishes on $D$ and $\varphi$ has support on the interior of $D$. Thus, we obtain that 
\begin{align*}
\langle  \frac{\partial}{\partial u} \widetilde{V}+   \mathcal{A}_{X}(\widetilde{V})+G, \varphi \rangle
&= \int_{0}^{\infty} \int_{b(u)}^{\infty}\varphi(u,z)  \int_{(-\infty,0)} \widetilde{V}(u,z+y) \Pi(\dd y) \dd z \dd u\\
&\qquad +\int_0^{\infty} \int_{b(u)}^{\infty} G(u,z) \varphi(u,x) \dd z \dd u \\
&=\int_0^{\infty} \int_{b(u)}^{\infty} \Lambda(u,x) \varphi (u,z)\dd z \dd u \\
&=\langle \Lambda,\varphi\rangle.
\end{align*}
Then we have that $\frac{\partial}{\partial u} \widetilde{V}+ \mathcal{A}_{X}(\widetilde{V})+G=\Lambda$ on in the interior of $D$. Moreover from Lemma \ref{lemma:variationalinequalities} and continuity of $\Lambda$ we conclude that $\Lambda(u,x)\geq 0$ for all $(u,x)\in \text{int}(D)$. In particular, is strictly positive in the interior of $D$ since it is strictly increasing in that set. 
\end{proof}

It turns out that the function $b$ is continuous, its proof is analogous to the one presented in \cite{lamberton2008critical} (see Theorem 4.2) in the American option setting.   
\begin{lemma}\label{lemma:continuous}
The function $b$ is continuous.
\end{lemma}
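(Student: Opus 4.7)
The plan is to reduce the claim to left-continuity of $b$---right-continuity being already supplied by Lemma \ref{lemma:continuityofV0x}, and monotonicity by Lemma \ref{lemma:basicpropertiesofb}---and to argue by contradiction. Suppose $b(u_0-)>b(u_0)$ at some $u_0>0$, and choose $b(u_0)<x_1<x_2<b(u_0-)$. By right-continuity of $b$ at $u_0$ together with its monotonicity, there exists $\eta>0$ small enough that the rectangles $R^-=(u_0-\eta,u_0)\times(x_1,x_2)\subset E\setminus D$ and $R^+=(u_0,u_0+\eta)\times(x_1,x_2)\subset D$. By Lemma \ref{lemma:auxiliaryfunctionLambda}, $\Lambda\equiv 0$ on $R^-$ while $\Lambda$ is continuous and strictly positive on $R^+$, so in particular $\Lambda(u_0+,x)=\Lambda(u_0,x)>0$ for every $x\in(x_1,x_2)$.

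Next, I fix a non-negative $\phi_2\in C_c^{\infty}((x_1,x_2))$ with $\int\phi_2>0$ and define $\Psi(u):=\int\phi_2(x)V(u,x)\,dx$ on $(u_0-\eta,u_0+\eta)$. Continuity of $V$ (Lemma \ref{lemma:continuityofV0x}) gives continuity of $\Psi$; vanishing of $V$ on $R^+$ together with $\mathrm{supp}\,\phi_2\subset(x_1,x_2)$ forces $\Psi(u)=0$ on $[u_0,u_0+\eta)$; monotonicity of $V$ in $u$ (Lemma \ref{lemma:propertiesofVgeneralOS}) combined with $\phi_2\geq 0$ makes $\Psi$ non-decreasing, so its left derivative at $u_0$ exists and satisfies $\Psi'(u_0-)\geq 0$.

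Now apply the distributional identity $\partial_u\widetilde V+\mathcal{A}_X(\widetilde V)+G=\Lambda$ from Lemma \ref{lemma:variationalinequalities} and Remark \ref{rem:variationalinequality}, integrated in $x$ against $\phi_2$. Transferring the spatial operator onto $\phi_2$ by integration by parts produces a distributional identity in the single variable $u$:
\[
\Psi'(u)=-F(u)+L(u),\qquad F(u):=\int \widetilde V(u,x)\,\mathcal{A}_X^{\ast}\phi_2(x)\,dx+\int\phi_2(x)\,G(u,x)\,dx,\quad L(u):=\int\phi_2(x)\Lambda(u,x)\,dx,
\]
where $F$ is finite and continuous in $u$ across $u_0$, while $L\equiv 0$ on $(u_0-\eta,u_0)$ and $L(u_0+)>0$. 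For $u\in(u_0,u_0+\eta)$, property $\Psi\equiv 0$ forces $\Psi'(u)=0$, so $F(u)=L(u)$, and letting $u\downarrow u_0$ gives $F(u_0)=L(u_0+)>0$; for $u\in(u_0-\eta,u_0)$ one has $\Psi'(u)=-F(u)$, so letting $u\uparrow u_0$ yields $\Psi'(u_0-)=-F(u_0)=-L(u_0+)<0$, contradicting $\Psi'(u_0-)\geq 0$.

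The main obstacle is the justification of the one-variable identity for $\Psi'$, and in particular showing that $F$ is finite and continuous across $u_0$. Transferring $\mathcal{A}_X$ onto $\phi_2$ must be done with care near the origin (because of the small-jump compensation in the integral term) and at $-\infty$, where $\mathcal{A}_X^{\ast}\phi_2$ inherits tails of $\Pi$ while $\widetilde V$ grows only polynomially. A pointwise Markov bound combining $|\widetilde V(u,x)|\leq C(1+|x|^p)$ from Corollary \ref{cor:lowerboundforV} with a crude tail estimate on $\Pi$ falls just short of integrability; instead one returns to the Fubini identity
\[
\int_{-\infty}^{0}(1+|x|^p)\,\Pi([x-x_2,x-x_1])\,dx<\infty,
\]
which is a direct consequence of the standing hypothesis $\int_{(-\infty,-1)}|x|^{p+1}\Pi(\dd x)<\infty$. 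Dominated convergence together with continuity of $V$ then delivers the continuity of $F$, closing the argument.
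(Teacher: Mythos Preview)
Your proof is correct and rests on the same engine as the paper's: the distributional identity $\partial_u\widetilde V+\mathcal{A}_X(\widetilde V)+G=\Lambda$ from Lemma~\ref{lemma:variationalinequalities} and Remark~\ref{rem:variationalinequality}, tested against a spatial test function supported in the gap $(b(u_0),b(u_0-))$, together with the strict positivity of $\Lambda$ on $D$ from Lemma~\ref{lemma:auxiliaryfunctionLambda}. Your integrability check for the tail of $\mathcal{A}_X^{\ast}\phi_2$ against the polynomial bound from Corollary~\ref{cor:lowerboundforV} is exactly what is needed to justify dominated convergence.

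The paper's route is a little shorter. Rather than introducing $\Psi(u)$ and comparing its one-sided derivatives on both sides of $u_0$, the paper works only on the left: on $(0,u_*)\times(b(u_*),b(u_*-))\subset E\setminus D$ one has $\mathcal{A}_X(\widetilde V)+G=-\partial_u\widetilde V\le 0$ (monotonicity of $V$ in $u$), so for any nonnegative test function $\psi$ supported in $(b(u_*),b(u_*-))$ the pairing $\langle \mathcal{A}_X(\widetilde V)+G,\psi\rangle\le 0$. Letting $u\uparrow u_*$, the local part of $\mathcal{A}_X^{\ast}\psi$ drops out (since $\widetilde V(u_*,\cdot)=0$ on the support of $\psi$) and only the nonlocal piece survives, which after a Fubini step becomes exactly $\int\psi(x)\Lambda(u_*,x)\,\dd x>0$, a contradiction. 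This avoids your detour through $R^+$ and the identification $F(u_0)=L(u_0+)$, but the two arguments are logically equivalent. One small point: your sentence ``its left derivative at $u_0$ exists'' is not justified by monotonicity alone; it follows only after you have $\Psi'=-F$ classically on $(u_0-\eta,u_0)$ with $F$ continuous. You do establish this later, so the argument closes, but the order of presentation could be tightened.
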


\begin{proof}
We already know, from the fact that $D$ is closed, that $b$ is right-continuous. We then show the left continuity of $b$. We proceed by contradiction. Suppose there is a point $u_*>0$ such that $b(u_*-):=\lim_{h\downarrow 0} b(u_*-h)>b(u_*)$. Then, since $b$ is non-decreasing, we have for all $(u,x) \in (0,u_*) \times (b(u_*),b(u_*-))$ that $V(u,x)<0$. Thus, $(0,u_*) \times (b(u_*),b(u_*-)) \subset C^+$. From Lemma \ref{lemma:variationalinequalities} we obtain that $\frac{\partial}{\partial u} \widetilde{V}+\mathcal{A}_X(\widetilde{V}) +G =0$ in $ (0,u_*) \times (b(u_*),b(u_*-))$. Hence, for any non-negative $C^{\infty}$ function $\varphi$ with compact support in $ (0,u_*) \times (b(u_*),b(u_*-))$, we have that
\begin{align*}
\langle \mathcal{A}_X(\widetilde{V}) +G ,\varphi \rangle &=-\langle \frac{\partial}{\partial u} \widetilde{V} ,\varphi \rangle\\
&=\int_{\R}\int_{(0,\infty)} \widetilde{V}(u,x) \frac{\partial}{\partial u} \varphi(u,x)\dd u \dd x \\
&=-\int_{\R}\int_{(0,\infty)} \widetilde{V}(\dd u,x)  \varphi(u,x) \dd x \\
&\leq 0,
\end{align*}
where we used the fact that for each $x>0$, $u\mapsto \widetilde{V}(u,x)=V(u,x)$ is non-decreasing. Hence, we conclude that $\mathcal{A}_X(\widetilde{V}) +G$ is a non-positive distribution on $ (0,u_*) \times (b(u_*),b(u_*-))$.
Thus, by continuity of $\widetilde{V}=V$ and $G$ on $(0,\infty)\times (0,\infty)$, we have for any $u\in (0,u_*)$ and any non-negative test function $\psi$ with compact support in $(b(u_*),b(u_*-))$ that
\begin{align*}
\int_{\R} \left\{ \widetilde{V}(u,x)\left[ -\mu \frac{\partial}{\partial x}\psi(x) +\frac{1}{2}\sigma^2 \frac{\partial^2}{\partial x^2} \psi(x)+ B_{X}^*(\psi)(x) \right]+G(u,x)\psi(x)\right\}\dd x \leq 0,
\end{align*}
where $B_X^*(\psi)(x)=\int_{(-\infty,0)} (\psi(x-y) -\psi(x)+y \frac{d}{dx} \psi(x) \I_{\{|y|\leq 1 \}} )\Pi(\dd y)$. Taking $u\uparrow u_*$ in the equation above, using the fact that $\widetilde{V}(u_*,x)=0$ for all $x \geq  b(u_*)$, and since $\psi$ has compact support in $(b(u_*),b(u_*-))$ we get that
\begin{align*}
0 & \geq \lim_{u \uparrow u_*} \int_{\R} \left\{ \widetilde{V}(u,x)\left[ -\mu \frac{\partial}{\partial x}\psi(x) +\frac{1}{2}\sigma^2 \frac{\partial^2}{\partial x^2} \psi(x)+ B_{X}^*(\psi)(x) \right]+G(u,x)\psi(x)\right\}\dd x\\
&= \int_{-\infty}^{b(u_*)}  \widetilde{V}(u_*,x)\int_{(-\infty,0)} \psi(x-y)\Pi(\dd y)\dd x +  \int_{b(u_*)}^{b(u_*-)}G(u_*,x)\psi(x)\dd x \\
&=\int_{b(u_*)}^{b(u_*-)} \psi(x) \int_{(-\infty,0)}\widetilde{V}(u_*,x+y) \Pi(\dd y)\dd x +  \int_{b(u_*)}^{b(u_*-)}G(u_*,x)\psi(x)\dd x\\
&= \int_{b(u_*)}^{b(u_*-)} \psi(x) \Lambda(u_*,x)\dd x\\
&>0,
\end{align*}
where the strict inequality follows from the fact that $\Lambda$ is strictly positive in each argument in $D$ (see Lemma \ref{lemma:auxiliaryfunctionLambda}). Hence we have got a contradiction and $b(u-)=b(u)$ for all $u>0$. Therefore $b$ is a continuous function.
\end{proof}

From Lemma \ref{lemma:behaviourofbatinfinity} we know that $b(u)$ converges to zero when $u$ tends to infinity. Moreover, from the discussion about $h$ after equation (\ref{eq:functionh}), we know that in case that $X$ is of finite variation, there exists a value $u_h^*<\infty$ for which $h(u)=0$ for all $u\geq u^*_h$. That suggests a similar behaviour for $b$. The next lemma addresses that conjecture.

\begin{lemma}
\label{cor:valueofub}
Define $u_b=\inf\{u>0: b(u)=0 \}$. If $X$ is of infinite variation or finite variation and infinite activity (that is $\Pi(-\infty,0)=\infty$) we have that $u_b=\infty$. Otherwise, $u_b=u^*$, where $u^*$ is the unique solution to
\begin{align}
\label{eq:definitionofub}
G(u,0)+\int_{(-\infty,0)} V(0,y)\Pi(\dd y)=0.
\end{align}
\end{lemma}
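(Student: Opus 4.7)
The strategy is to split the proof according to the jump activity of $X$, exploiting the strict positivity of $\Lambda$ on $D$ from Lemma \ref{lemma:auxiliaryfunctionLambda}. Since $V \equiv 0$ on $D$, for $(u,x) \in D$ one has the explicit form $\Lambda(u,x) = G(u,x) + \int_{(-\infty,-x]} V(0, x+y) \Pi(\dd y)$. Throughout I will use the auxiliary fact $V(0,0) < 0$: this follows by choosing $\tau \equiv \varepsilon$ and using continuity of $G$ with $G(0,0) = -\E_0(g^{p-1}) < 0$ to obtain $\E(\int_0^{\varepsilon} G(U_s, X_s)\,\dd s) < 0$ for $\varepsilon$ small.

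For the first assertion, when $X$ has infinite variation, Lemma \ref{lemma:continuityofV0x} yields $\lim_{h\downarrow 0} V(u,h) = V(0,0) < 0$ for every $u > 0$, so $V(u, h) < 0$ for small $h > 0$, forcing $b(u) > 0$. When $X$ has finite variation and infinite activity I argue by contradiction: if $u_b < \infty$, pick $u > u_b$, so $(u, x) \in D$ for every $x > 0$; since $V(0, z) \leq V(0, 0) < 0$ for $z \leq 0$, one obtains $\Lambda(u, x) \leq G(u, x) + V(0,0) \Pi((-\infty, -x])$, and infinite activity forces $\Pi((-\infty,-x]) \uparrow \Pi((-\infty,0)) = \infty$ as $x \downarrow 0$. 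Hence $\Lambda(u,x) \to -\infty$, contradicting $\Lambda > 0$.

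For the compound Poisson case ($\sigma = 0$ and $\lambda := \Pi((-\infty, 0)) < \infty$), the finiteness of $\Pi$ makes $\phi(u) := G(u, 0) + \int V(0, y) \Pi(\dd y)$ well-defined, continuous, and strictly increasing in $u$ (since $u^{p-1}$ is strictly increasing for $p > 1$), with $\phi(0) < 0$ and $\phi(\infty) = \infty$; so $u^*$ is its unique root. The direction $u_b \geq u^*$ follows by letting $x \downarrow 0$ in $\Lambda(u, x) > 0$ for $u > u_b$: dominated convergence (justified using the polynomial lower bound of Corollary \ref{cor:lowerboundforV} and the integrability assumption on $\Pi$) gives $\phi(u) \geq 0$, hence $u \geq u^*$.

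The reverse inequality $u_b \leq u^*$ is the main obstacle; I would prove it by contradiction. Suppose $u_b > u^*$. On the continuation region $\{0 < x < b(u)\}$ the equation $\mathcal{A}_{U,X} V + G = 0$ holds, and in the compound Poisson setting, letting $x \downarrow 0$ yields the boundary identity $\partial_u V(u, 0^+) + d \partial_x V(u, 0^+) - \lambda V(u, 0^+) + \phi(u) = 0$ for $u \in (u^*, u_b)$. Integrating over $(u^*, u_b)$ (interpreted distributionally since $V(\cdot, 0^+)$ is monotone and continuous), the four resulting terms are all non-negative: the first gives $V(u_b^-, 0^+) - V(u^*, 0^+) = -V(u^*, 0^+) > 0$ (using continuity of $V$ together with $b(u_b) = 0$ to give $V(u_b^-, 0^+) = 0$, and $b(u^*) > 0$ with monotonicity of $V$ in $x$ to give $V(u^*, 0^+) < 0$); the second is non-negative by monotonicity of $V$ in $x$; the third is strictly positive since $V(u, 0^+) < 0$ on $(u^*, u_b)$ and $\lambda > 0$; and the fourth is strictly positive since $\phi > 0$ on $(u^*, u_b)$. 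The sum being strictly positive contradicts the integrated equation being zero, so $u_b \leq u^*$, and combined with the other direction, $u_b = u^*$.
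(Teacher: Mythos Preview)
Your treatment of the infinite-variation case and the finite-variation, infinite-activity case is correct and close in spirit to the paper (you use $\lim_{h\downarrow 0}V(u,h)=V(0,0)<0$ for the former, while the paper simply quotes $b(u)\geq h(u)>0$; both work). The direction $u_b\geq u^*$ in the compound-Poisson case is also the same as the paper's.

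The genuine gap is in your argument for $u_b\leq u^*$. You assert that letting $x\downarrow 0$ in the equation $\mathcal{A}_{U,X}V+G=0$ produces a pointwise boundary identity
\[
\partial_u V(u,0^+)+d\,\partial_x V(u,0^+)-\lambda V(u,0^+)+\phi(u)=0,
\]
but Lemma~\ref{lemma:variationalinequalities} only gives this equation in the sense of distributions on the open set $E\setminus D$. Nothing in the paper establishes that $\partial_x V(u,0^+)$ exists (smooth fit is proved only at $x=b(u)$), that the distributional identity extends to the boundary $\{x=0\}$, or that the iterated limit $V(u_b^-,0^+)=0$ is legitimate. Your parenthetical ``interpreted distributionally'' does not resolve this: to make the argument work you would have to test against functions concentrating at $x=0^+$ and control the resulting boundary terms, which amounts to proving the very regularity you are assuming.

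The paper avoids all of this with a direct probabilistic argument. Assuming $b(u^*)>0$, one applies the strong Markov property at $\tau_D\wedge\tau_0^-$ and the compensation formula to write, for $0<x<b(u^*)$,
\[
V(u^*,x)=\E_{u^*,x}\!\left(\int_0^{\tau_D\wedge\tau_0^-}\!\Big[G(u^*+s,X_s)+\!\int_{(-\infty,0)}\!V(0,X_s+y)\I_{\{X_s+y<0\}}\Pi(\dd y)\Big]\dd s\right).
\]
Since $X_s>0$ and $s>0$ on the range of integration, monotonicity of $G$ and of $V(0,\cdot)$ shows the integrand is bounded below by $G(u^*,0)+\int_{(-\infty,0)}V(0,y)\,\Pi(\dd y)=0$, with strict inequality for $s>0$. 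As $\tau_D\wedge\tau_0^->0$ a.s.\ in the bounded-variation case, this forces $V(u^*,x)>0$, contradicting $V\leq 0$. Hence $b(u^*)=0$ and $u_b\leq u^*$. This argument uses only the optimal-stopping representation of $V$ and the compensation formula, and needs no regularity of $V$ beyond what is already known.
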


\begin{proof}
From the fact that $h(u)>0$ for all $u>0$ when $X$ is of infinite variation and the inequality $b(u)\geq h(u)$, we have that the assertion is true for this case. Suppose that $X$ has finite variation with infinite activity, that is, $\Pi(-\infty,0)=\infty$, and assume that $u_b<\infty$. Then, since $b$ is non-increasing, we have that $b(u)=0$ for all $u>u_b$ and thus $V(u,x)=0$ for all $x>0$ and $u>u_b$. From Lemma \ref{lemma:auxiliaryfunctionLambda} we deduce that
\begin{align*}
G(u,x) +\int_{(-\infty,-x)}V(0,x+y)\Pi(\dd y) \geq 0 
\end{align*}
for all $x>0$ and for all $u>u_b$. Taking $x\downarrow 0$ in the equation above and using the expression for $V(0,z)$ (when $z<0$) given in (\ref{eq:expressionforV0xnegative}) we have that for any $u>u_b$,

\begin{align*}
0&\leq G(u,0) -\lim_{x \downarrow 0} \int_{(-\infty,0)} \int_{0}^{-x+y} \int_{0}^{\infty}  \E_{-u-z}(g^{p-1})W'(u) \dd u  \dd z \Pi(\dd y)\\
&\qquad +\lim_{x\downarrow 0}V(0,0) \Pi(-\infty,-x)\\
& =-\infty
\end{align*}
which is a contradiction and then $u_b=\infty$. Next, assume that $X$ has finite variation with $\Pi(-\infty,0)<\infty$. Assume that $b(u^*)>0$, then $V(u^*,x)<0$ for $x\in (0,b(u^*))$. Moreover, since $V\leq 0$ and using the compensation formula for Poisson random measures (see \eqref{eq:compensationformulaPRM}) we have that for all $u>0$ and $x<b(u)$,

\begin{align*}
&\E_{u,x}(V(0,X_{\tau_0^-})\I_{\{\tau_0^-<\tau_D\}} )\\
&=\E_{u,x}\left(\int_{[0,\infty)} \int_{(-\infty,0)} V(0,X_{s-}+y)\I_{\{\underline{X}_{s-}>0,X_{s-}+y<0 \}}\I_{\{s\leq \tau_D \}} N(\dd s,\dd y)  \right)\\
&=\E_{u,x}\left(\int_{0}^{\infty} \int_{(-\infty,0)} V(0,X_{s}+y)\I_{\{\underline{X}_{s-}>0 \}}\I_{\{X_{s}+y<0 \}}\I_{\{s\leq \tau_D \}} \Pi(\dd y) \dd s  \right)\\
&=\E_{u,x}\left(\int_{0}^{\tau_0^-\wedge \tau_D} \int_{(-\infty,0)} V(0,X_{s}+y)\I_{\{X_{s}+y<0 \}} \Pi(\dd y) \dd s  \right).
\end{align*}
Then, from the Markov property we have that for any $0\leq x<b(u^*)$,
\begin{align*}
V(u^*,x)&=\E_{u^*,x}\left(\int_0^{\tau_D \wedge \tau_0^-} G(u^*+s,X_s)\dd s \right)+\E_{u^*,x}(V(0,X_{\tau_0^-})\I_{\{\tau_0^-<\tau_D\}} )\\
&=\E_{u^*,x}\left(\int_0^{\tau_D \wedge \tau_0^-} \left[G(u^*+s,X_s) + \int_{(-\infty,0)} V(0,X_{s}+y)\I_{\{X_{s}+y<0 \}} \Pi(\dd y)\right]\dd s  \right)\\
&>0,
\end{align*}
where the strict inequality follows from the fact that that $X$ is of finite variation and then $\tau_D \wedge \tau_0^->0$, the definition of $u^*$ and the fact that $G$ and $V$ are non-decreasing in each argument. Then, we are contradicting the fact that $V(u^*,x)<0$ and we conclude that $b(u^*)=0$ so that $u_b\leq u^*$. Moreover, from Lemma \ref{lemma:auxiliaryfunctionLambda} we know that for all $u>u_b$,
\begin{align*}
G(u,x) +\int_{(-\infty,-x)}V(0,x+y)\Pi(\dd y) \geq 0 \text{ for all } x>0.
\end{align*}
Taking $x\downarrow 0$ we get that for all $u\geq u_b$, $G(u,0) +\int_{(-\infty,0)}V(0,y)\Pi(\dd y) \geq 0$. The latter implies that $u^*\leq u_b$ (since $u\mapsto G(u,0)$ is strictly increasing). Therefore we conclude that $u^*=u_b$ and the proof is complete.
\end{proof}

As we mentioned before, proving the smoothness of $V$ is challenging. However, it is possible to show that the derivatives of $V$ at the boundary exist and are equal to zero. 
Recall from Lemma \ref{cor:valueofub} that when $X$ is of infinite variation or finite variation with infinite activity we have that $b(u)>0$ for all $u>0$. In the case that $X$ is of finite variation we have that $b(u)>0$ only if $u<u_b$ where $u_b$ is the solution to (\ref{eq:definitionofub}). In such cases, we can guarantee that the derivatives of $V$ exist at the boundary and are equal to zero, which is proven in the following Theorem. Since the proof is rather long and technical, it can be found in Appendix \ref{sec:Appendix}.

\begin{lemma}
\label{lemma:smoothfit}
Suppose that $u>0$ is such that $b(u)>0$. Then the first partial derivatives of $V(u,x)$ exist at the point $x=b(u)$ and
\begin{align*}
\frac{\partial }{\partial x} V(u, b(u))=0 \qquad \text{and} \qquad \frac{\partial }{\partial u} V(u, b(u))=0.
\end{align*}
\end{lemma}
Recall from equation (\ref{eq:expressionforV0xnegative}) that when $x<0$,
\begin{align*}
V(0,x)=-\int_{0}^{-x} \int_{0}^{\infty}  \E_{-u-z}(g^{p-1})W'(u) \dd u  \dd z +V(0,0).
\end{align*}
Note that the first term on the right-hand side of the equation above does not depend on the boundary $b$. Then, for $x<0$, the value function $V(0,x)$ is characterised by the value $V(0,0)$. Moreover, from Lemma \ref{cor:valueofub} we know that when $X$ is of finite variation with $\Pi(-\infty,0)<\infty$, the value $u_b$ is the unique solution to
\begin{align*}
G(u,0)-\int_{(-\infty,0)} \int_{0}^{-y} \int_{0}^{\infty}  \E_{-v-z}(g^{p-1})W'(v) \dd v  \dd z\Pi(\dd y)+ V(0,0)\Pi(-\infty,0)=0,
\end{align*}
otherwise, $u_b=\infty$. Then if $X$ is of finite variation with finite activity, $u_b$ is also characterised by the value $V(0,0)$, where we know from Remark \ref{rem:boundsforV} that 
\begin{align*}
-\frac{\E(g^{p-1})}{p} \leq V(0,0)< 0. 
\end{align*}

The next theorem gives a characterisation of the value function $V$ on the set $(0,\infty)\times (0,\infty)$, the boundary $b$ and the values $V(0,0)$ and $u_b$ as unique solutions of a system of non-linear integral equations within a class of functions. The method of proof is deeply inspired by the ideas of \cite{du2008predicting}. However, the presence of jumps adds an important level of difficulty. In particular, when $\Pi\neq 0$, the inequality (see Lemma \ref{lemma:auxiliaryfunctionLambda}) 

\begin{align*}
\Lambda(u,x)=G(u,x)+\int_{(-\infty,0)} \widetilde{V}(u,x+y)\Pi (\dd y)>0,
\end{align*}
for all $(u,x)\in \text{int}(D)$, is a necessary condition for the stochastic process $\{ V(U_t,X_t)+\int_0^t G(U_s,X_s) \dd s , t\geq 0\}$ to be a submartingale.

\begin{thm}
\label{thm:characterisationofbandV}
Let $p>1$ and $X$ be a spectrally negative L\'evy process drifting to infinity such that its L\'evy measure $\Pi$ has no atoms and $\int_{(-\infty,-1)} |x|^{p+1} \Pi(\dd x)<\infty$.  For all $u>0$ and $x>0$, the function $V$ can be written as
\begin{align}
&V(u,x)\nonumber\\
&=V(0,0)\frac{\sigma^2}{2}W'(x)\nonumber\\
&\qquad -\E_{x} \left(  \int_0^{ \tau_{0}^-} \int_{(-\infty,0)} V(u+s,X_s+y) \I_{\{0<X_s+y<b(u+s) \}} \Pi(\dd y)  \I_{\{ X_s>b(u+s)\}}\dd s \right) \nonumber\\
\label{eq:representationforVwithindicatorsfunc}
&\qquad +\E_{x}\left( \int_0^{ \tau_{0}^-} \left[ G(u+s,X_s) +\int_{(-\infty,-X_s)} V(0,X_s+y)  \Pi(\dd y)  \right] \I_{\{ X_s<b(u+s)\}} \dd s \right),
\end{align}
the value $V(0,0)$ satisfies 
\begin{align}
\label{eq:charactersationofV00}
V(0,&0)=-\frac{1}{\psi'(0+)}\int_0^{\infty} \E_{-z}(g^{p-1}) [1-\psi'(0+)W(z)]\dd z\nonumber\\
&\qquad +\frac{1}{\psi'(0+)}\E\left( \int_{0}^{\infty}   G(s,X_s)\frac{X_s}{s}  \I_{\{0<X_s<b(s) \}} \dd s \right) \nonumber\\
&\qquad-\frac{1}{\psi'(0+)} \E\left( \int_0^{\infty} \int_{(-\infty,0)} V(s,X_s+y)\I_{\{0<X_s+y<b(s) \}} \Pi(\dd y)\frac{X_s}{s} \I_{\{X_s>b(s) \}} \dd s \right)\nonumber\\
&\qquad-\frac{1}{\psi'(0+)} \E\left( \int_0^{\infty} \int_{(-\infty,0)} V(0,X_s+y)\I_{\{X_s+y\leq  0\}} \Pi(\dd y)\frac{X_s}{s} \I_{\{X_s>b(s) \}} \dd s \right),
\end{align}
whilst the curve $b$ satisfies the equation
\begin{align}
0&=V(0,0)\frac{\sigma^2}{2}W'(b(u)) \nonumber\\
&\qquad-\E_{b(u)} \left(  \int_0^{ \tau_{0}^-} \int_{(-\infty,0)} V(u+s,X_s+y) \I_{\{0<X_s+y<b(u+s) \}} \Pi(\dd y)  \I_{\{ X_s>b(u+s)\}}\dd s \right) \nonumber\\
\label{eq:characterisationofb}
&\qquad +\E_{b(u)}\left( \int_0^{ \tau_{0}^-} \left[ G(u+s,X_s) +\int_{(-\infty,-X_s)} V(0,X_s+y)  \Pi(\dd y)  \right] \I_{\{ X_s<b(u+s)\}} \dd s \right)
\end{align}
for all $u<u_b$, where for $x\leq 0$, the function $V(0,x)$ depends on $V(0,0)$ via (\ref{eq:expressionforV0xnegative}). For $u\geq u_b$, we have $b(u)=0$, where $u_b=\infty$ in the case $X$ is of infinite variation or finite variation with $\Pi(-\infty,0)=\infty$. Otherwise, $u_b$ is the unique solution to
\begin{align}
\label{eq:characterisationofub}
G(u,0)-\int_{(-\infty,0)} \int_{0}^{-y} \int_{0}^{\infty}  \E_{-u-z}(g^{p-1})W'(u) \dd u  \dd z\Pi(\dd y)+ V(0,0)\Pi(-\infty,0)=0.
\end{align}
Moreover, in the case that there is a Brownian motion component (i.e. $\sigma>0$) we have that (\ref{eq:charactersationofV00}) is equivalent to 
\begin{align}
\label{eq:derivativeofVat00}
\frac{\partial}{\partial x} V_+ (0,0) =\frac{\partial}{\partial x} V_- (0,0),
\end{align}
where $\frac{\partial}{\partial x} V_+ (u,0)$ and $\frac{\partial}{\partial x} V_- (0,0)$ are the right and left derivatives of $x\mapsto  V(u,x)$ and $x\mapsto V(0,x)$ at zero, respectively and $\frac{\partial}{\partial x} V_+ (0,0)=\lim_{u\downarrow 0} \frac{\partial}{\partial x} V_+ (u,0)$.\\

Furthermore, the quadruplet $(V,b,V(0,0),u_b)$ is uniquely characterised by the equations above, where $V$ is considered in the class of non-positive continuous functions such that 

\begin{align}
\label{eq:intV+Gispositive}
\int_{(-x-b(u),-x)} & V(u,b(u)+x+y)\Pi (\dd y)\nonumber\\
&\qquad \qquad +\int_{(-\infty,-x-b(u)]} V(0,b(u)+x+y)\Pi (\dd y)+G(u,x+b(u))\geq 0
\end{align}
for all $u<u_b$ and $x>0$ and $b$ is considered in the class of non-increasing functions with $b\geq h$ whereas $-\frac{1}{p}\E(g^p)\leq V(0,0)<0$.
\end{thm}

%

Since the proof of Theorem \ref{thm:characterisationofbandV} is rather long, we break it into a series of Lemmas. The next section is entirely dedicated to that purpose.

\section{Proof of Theorem \ref{thm:characterisationofbandV}}
\label{sec:proofofmainmainthm}
First, we show that the relevant quantities are integrable. The proof of the Lemma is long, and then it is included in Appendix \ref{sec:Appendix}.

\begin{lemma}
\label{lemma:integrativilityconditionsofGbandVb}
We have that for all $(u,x)\in E$, 
\begin{align}
\label{eq:integrabilityofGbelowb}
& \E_{u,x} \left( \int_0^{\infty}| G(U_s,X_s)| \I_{\{ X_s <b(U_s) \}}\dd s \right) <\infty, \\
\label{eq:integrabilityVPidy}
&\E_{u,x}\left(\int_0^{\infty} \int_{(-\infty,0)} \widetilde{V}(U_s,X_s+y)\Pi(\dd y) \I_{\{X_s >  b(U_s) \}}   \right)>-\infty.
\end{align}
Moreover, we have that 
\begin{align}
\label{eq:limitofEintV}
\lim_{u,x\rightarrow \infty} &\E_{u,x}\left(\int_0^{\infty} \int_{(-\infty,0)} \widetilde{V}(U_s,X_s+y)\Pi(\dd y) \I_{\{X_s >  b(U_s) \}}   \right)=0.
\end{align}
Furthermore, when $X$ is of finite variation with finite activity (that is, $\Pi(-\infty,0)<\infty))$ we have that
\begin{align}
\label{eq:limitEGbelowD}
\lim_{u,x \rightarrow \infty} & \E_{u,x} \left( \int_0^{\infty}G(U_s,X_s) \I_{\{ X_s <b(U_s) \}}\dd s \right) =0.
\end{align}
\end{lemma}

Next, we show that $V$ satisfies the alternative representation mentioned in the infinite variation case or finite variation case with infinite activity.
\begin{lemma}
\label{eq:alternativeequationinfinitevariationcase}
Suppose that $X$ is of infinite variation of finite variation with infinite activity. Then we have that $V$ and $b$ satisfy equations \eqref{eq:representationforVwithindicatorsfunc} and \eqref{eq:characterisationofb}. 
\end{lemma}
\begin{proof}
Recall that $V$ is continuous on $E$ and, when $X$ is of infinite variation, we have that for any $u>0$, $\lim_{x\downarrow 0} V(u,x)=V(0,0)$ (see Lemma \ref{lemma:continuityofV0x}), implying that $\widetilde{V}$ is continuous on $\R_+\times \R$.  We follow an analogous argument as \cite{Lamberton2013} (see Theorem 3.2). Let $\rho$ be a positive $C^{\infty}$ function with support in $[0,1]\times [0,1]$ and $\int_0^{\infty}\int_0^{\infty} \rho(v,y) \dd v \dd y=1$. For $n\geq 1$, define $\rho_n(v,y)=n^2\rho(nv,ny)$, then $\rho_n$ is $C^{\infty}$ and has compact support in $[0,1/n]\times[0,1/n]$. The function defined by $\widetilde{V}_n(u,x):=(\widetilde{V}* \rho_n)(u,x)=\int_0^{\infty}\int_0^{\infty} \widetilde{V}(u-v,x-y)\rho_n(v,y)\dd v \dd y$ is a $C^{1,2}(\R_+\times \R)$ function such that the derivatives of $\widetilde{V}_n$ and the function $(u,x)\mapsto \int_{(-\infty,-1)} \widetilde{V}_n(u,x+y) \Pi(\dd y)$ are bounded in the set $\R_+\times \R_+$. Moreover, we have that $ \widetilde{V}_n \uparrow \widetilde{V}$ on $E$ when $n\rightarrow \infty$. 

Similar as in \cite{lamberton2008critical} (see proof of Proposition 2.5), due to equation \eqref{eq:freeboundaryproblemconvolution}, we have that for all $(u,x)\in [(1/n,\infty	)\times (1/n,\infty	)] \cap C^+$,
\begin{align}
\label{eq:generatorUXappliedtoVrhon}
\frac{\partial}{\partial u}\widetilde{V}_n(u,x) +\mathcal{A}_{X}(\widetilde{V}_n )(u,x)=-(G*\rho_n)(u,x)  ,
\end{align}
where $\mathcal{A}_{X}$ is the infinitesimal generator of the process $X$. On the other hand, since $V$ vanishes on $D$, we have that $\widetilde{V}_n(u,x)=0$ for $(u,x)\in D$ and $n$ sufficiently large. Hence, since $\Pi$ has no atoms, we deduce that for any $(u,x)\in E$ and $n$ sufficiently large,
\begin{align*}
\frac{\partial}{\partial u}&\widetilde{V}_n(u,x) +\mathcal{A}_{X}(\widetilde{V}_n )(u,x)\\
&=\int_{(-\infty,0)}\widetilde{V}_n(u,x+y)\Pi(\dd y)\\
&=\int_{(-x,0)}\widetilde{V}_n(u,x+y)\Pi(\dd y)+\int_{(-\infty,-x)}\widetilde{V}_n(0,x+y)\Pi(\dd y).
\end{align*}
Therefore, by continuity of $V$ on $E$ and the dominated convergence theorem, we see that
\begin{align*}
\lim_{n \rightarrow \infty }\left[ \frac{\partial}{\partial u}\widetilde{V}_n(u,x) +\mathcal{A}_{X}(\widetilde{V}_n )(u,x)\right]=\int_{(-\infty,0)}\widetilde{V}(u,x+y)\Pi(\dd y)
\end{align*}
for any $(u,x)\in D$. \\

Next, let $u>0$ and $x>0$ fixed, and take $n>0$ and $k>0$ such that $u>1/n >0$ and $x>k\geq  1/n>0$. We apply It\^o formula to $\widetilde{V}_n(u+t\wedge \tau_{k-x}^-,X_{t\wedge \tau_{k-x}^-}+x)$ to get 

\begin{align*}
\widetilde{V}_n&(u+t\wedge  \tau_{k-x}^-,X_{t\wedge \tau_{k-x}^-}+x)\\
&= \widetilde{V}_n(u,x) + M_{t}+\int_0^{t \wedge \tau_{k-x}^-} \left[\frac{\partial}{\partial u}\widetilde{V}_n(u+s,X_s+x) +\mathcal{A}_{X}(\widetilde{V}_n )(u+s,X_s+x) \right]\dd s,
\end{align*}
where $\{ M_{t}, t\geq 0 \}$ is a zero mean martingale. Taking expectations and from \eqref{eq:generatorUXappliedtoVrhon} we get that
\begin{align*}
\E_{x}&\left(\widetilde{V}_n(u+t\wedge  \tau_{k}^-,X_{t\wedge \tau_{k}^-}) \right)\\
&= \widetilde{V}_n(u,x) + \E_{x}\left(\int_0^{t \wedge \tau_{k}^-} \left[\frac{\partial}{\partial u}\widetilde{V}_n(u+s,X_s) +\mathcal{A}_{X}(\widetilde{V}_n )(u+s,X_s) \right]\dd s\right) \\
&= \widetilde{V}_n(u,x) - \E_{x}\left(\int_0^{t \wedge \tau_{k}^-} (G*\rho_n )(u+s,X_s)\I_{\{X_s<b(u+s) \}}\dd s\right) \\
&\qquad +\E_{x}\left(\int_0^{t \wedge \tau_{k}^-} \left[\frac{\partial}{\partial u}\widetilde{V}_n(u+s,X_s) +\mathcal{A}_{X}(\widetilde{V}_n )(u+s,X_s) \right]\I_{\{X_s>b(u+s) \}}\dd s\right),
\end{align*}
where we used the fact that $b$ is finite for all $u>0$, and that $\P_x(X_s=b(u+s))=0$ for all $s>0$ and $x\in \R$ when $X$ is of infinite variation or finite variation with infinite activity (see \cite{sato1999levy}, Theorem 27.4). Since $X_{t} \geq \underline{X}_{\infty}$ for all $t>0$ and $V$ is non-decreasing in each argument we have that
\begin{align*}
0\geq \E_{x}\left(\widetilde{V}_n(u+t\wedge  \tau_{k}^-,X_{t\wedge \tau_{k}^-}) \right)
 \geq -A'_{p-1}-C'_{p-1} \E_{x-1}((-\underline{X}_{\infty})^p )+V(0,0)
&>-\infty,
\end{align*}
where the second inequality follows from equation (\ref{eq:lowerboundforV}) and the last quantity is finite by Lemma \ref{lemma:finitenessofEgn}. Therefore, by the dominated convergence theorem and letting $n,t \rightarrow \infty$ and $k\downarrow 0$, we deduce that
\begin{align}
\label{eq:alternativerepresentationofVformgeq0}
\E_{x}\left(\widetilde{V} (u+\tau_0^-, X_{ \tau_{0}^-}) \right)
&= V(u,x) - \E_{x}\left(\int_0^{ \tau_{0}^-} G(u+s,X_s)\I_{\{X_s<b(u+s) \}}\dd s\right) \nonumber\\
&\qquad +\E_{x}\left(\int_0^{\tau_{0}^-} \int_{(-\infty,0)} \widetilde{V}(u+s,X_s+y) \Pi(\dd y)\I_{\{X_s> b(u+s) \}}\dd s\right)
\end{align}
for all $u>0$ and $x>0$. Note that, since $b(u)<\infty$ for all $u>0$ and $\lim_{u\rightarrow \infty} b(u)=0$, we have that $\lim_{u,x\rightarrow \infty} \widetilde{V}(u,x)=\lim_{u,x\rightarrow \infty} V(u,x)=0$. Hence, since $\widetilde{V}(u,y)=V(0,y)$ for any $u\geq 0$ and $y\leq 0$, and $X$ drifts to infinity we get that
\begin{align*}
&\E_{x}\left(\widetilde{V} (u+\tau_0^-, X_{ \tau_{0}^-}) \right)\\
&=\E_{x}\left(V (0, X_{ \tau_{0}^-}) \I_{\{\tau_0^-<\infty \}}\right)\\
&=V (0,0) \P_x(X_{\tau_0^-}=0,\tau_0^-<\infty )+\E_{x}\left(V (0, X_{ \tau_{0}^-}) \I_{\{X_{ \tau_{0}^-}<0 \}}\right)\\
&=V (0,0) \frac{\sigma^2}{2} W'(x)+\E_{x}\left( \int_0^{\infty}\int_{(-\infty,0)}V (0, X_{s-}+y)\I_{\{X_{s-}+y <0 \}} \I_{\{\underline{X}_{s-}\geq 0	 \}} N(\dd s, \dd y)\right)\\
&=V (0,0) \frac{\sigma^2}{2} W'(x)+\E_{x}\left( \int_0^{\tau_0^-}\int_{(-\infty,0)}V (0, X_{s}+y)\I_{\{X_{s}+y <0 \}}  \dd s \Pi(\dd y)\right),
\end{align*}
where in the second last equality, we used the probability of creeping given in (\ref{eq:probabilityofcreepingdownwards}) (note that $\Phi(0)=0$ since $X$ drifts to infinity) and in the last equality, the compensation formula for Poisson random measures. Then, from above and equation (\ref{eq:alternativerepresentationofVformgeq0}) we see that for any $u>0$ and $x>0$, 
\begin{align*}
V&(u,x)\\
&=\E_{x}\left(\widetilde{V} (u+\tau_0^-, X_{ \tau_{0}^-}) \right)+\E_{x}\left(\int_0^{ \tau_{0}^-} G(u+s,X_s)\I_{\{X_s<b(u+s) \}}\dd s\right) \\
&\qquad -\E_{x}\left(\int_0^{\tau_{0}^-} \int_{(-\infty,0)} \widetilde{V}(u+x,X_s+y) \Pi(\dd y)\I_{\{X_s> b(u+s) \}}\dd s\right)\\
&=V(0,0)\frac{\sigma^2}{2}W'(x)\\
&\qquad -\E_{x} \left(  \int_0^{ \tau_{0}^-} \int_{(-\infty,0)} V(u+s,X_s+y) \I_{\{0<X_s+y<b(u+s) \}} \Pi(\dd y)  \I_{\{ X_s>b(u+s)\}}\dd s \right)\\
&\qquad +\E_{x}\left( \int_0^{ \tau_{0}^-} \left[ G(u+s,X_s) +\int_{(-\infty,-X_s)} V(0,X_s+y)  \Pi(\dd y)  \right] \I_{\{ X_s<b(u+s)\}} \dd s \right),
\end{align*}
where in the last equality we used that $V(u+s,X_s+y)=0$ when $X_s+y\geq b(u+s)$. Moreover, we have that (\ref{eq:characterisationofb}) follows directly from the equation above since $V(u,b(u))=0$ for all $u>0$.
\end{proof}

We define an auxiliary function. For all $(u,x)\in E$, let  
\begin{align}
R(u,x)&=\E_{u,x}\left(\int_0^{\infty} G(U_s,X_s)\I_{\{X_s<b(U_s) \}} \dd s \right)\nonumber\\
\label{eq:definitionofR}
&\qquad- \E_{u,x}\left(\int_0^{\infty} \int_{(-\infty,0)} \widetilde{V}(U_s,X_s+y) \Pi(\dd y)\I_{\{X_s>b(U_s) \}} \dd s \right).
\end{align}
Note from Lemma \ref{lemma:integrativilityconditionsofGbandVb} that $R$ is a well-defined function.
The following Lemma shows that $R$ coincides with $V$.
\begin{lemma}
For any $(u,x)\in E$ we have that 
\begin{align}
\label{eq:integralequationforVnonstopped}
V(u,x)
&=\E_{u,x}\left( \int_0^{\infty} G(U_s,X_s)\I_{\{X_s < b(U_s) \}} \dd s\right)\nonumber\\
&\qquad -\E_{u,x}\left( \int_0^{\infty} \int_{(-\infty,0)} \widetilde{V}(U_s,X_s+y)\Pi(\dd y)\I_{\{X_s > b(U_s) \}}  \dd s\right).
\end{align}
\end{lemma}
\begin{proof}
First, we assume that $X$ is of infinite variation or finite variation with $\Pi(-\infty,0)=\infty$. Let $(u,x)\in E$, from the Markov property applied to the stopping time $\tau_0^+$, the fact that $b$ is non-negative and equation (\ref{eq:expressionforV0xnegative}), we get that for all $x<0$,
\begin{align*}
R(0,x)=\E_x\left( \int_0^{\tau_0^+} G(0,X_s) \dd s \right)+R(0,0)=V(0,x)+R(0,0)-V(0,0).
\end{align*}
Similarly, using the Markov property at time $\tau_0^-$, we get that for any $u>0$ and $x>0$,
\begin{align}
R(u,x)&=\E_x(R(0,X_{\tau_0^-})\I_{\{\tau_0^-<\infty \}} ) +\E_x\left(\int_0^{\tau_0^-} G(u+s,X_s)\I_{\{X_s<b(u+s) \}} \dd s \right) \nonumber  \\
&\qquad- \E_x\left(\int_0^{\tau_0^-} \int_{(-\infty,0)} \widetilde{V}(u+s,X_s+y) \Pi(\dd y)\I_{\{X_s>b(u+s) \}} \dd s \right)\nonumber \\
&=V(u,x)+\E_x([R(0,X_{\tau_0^-})-V(0,X_{\tau_0^-})]\I_{\{\tau_0^-<\infty \}} ) \nonumber\\
\label{eq:expressionforRonC+}
&=V(u,x)+[R(0,0)-V(0,0)]\P_x(\tau_0^-<\infty),
\end{align}
where the second  equality follows from equation (\ref{eq:alternativerepresentationofVformgeq0}) and the last from the expression for $R(0,x)$ deduced above. Then, applying the strong Markov property at time $\tau_D$ to the definition of $R$ (see \eqref{eq:definitionofR}), and the fact that for any $s<\tau_D$ we have that $X_s<b(U_s)$, we get that for any $(u,x)\in E$ such that $x<b(u)$,
\begin{align*}
R(u,x)&=\E_{u,x}\left(\int_0^{\tau_D} G(U_s,X_s) \dd s \right)+\E_{u,x}(R(U_{\tau_D}, X_{\tau_D} ))\\
&=V(u,x)+\E_{u,x}(R(U_{\tau_D}, X_{\tau_D} )),
\end{align*}
where we used that $\tau_D$ is optimal for $V$. Since we are considering the infinite variation case or the finite variation case with infinite activity we have that $b(u)>0$ for all $u>0$ (see Lemma \ref{cor:valueofub}), so then $ X_{\tau_D}>0$ and $U_{\tau_D}>0$. Hence, from equation \eqref{eq:expressionforRonC+} and the equation above we deduce that for any $(u,x)\in E$ such that $x<b(u)$,

\begin{align*}
R(u,x)&=V(u,x)+[R(0,0)-V(0,0)]\E_{u,x}( \P_{X_{\tau_D}}(\tau_0^-<\infty)),
\end{align*}
where we used that $(U_{\tau_D}, X_{\tau_D})\in D$ and that $V$ vanishes on $D$. In particular, taking $u=0$ and $x=0$ in the equation above and rearranging the terms, we conclude that
\begin{align*}
0=[R(0,0)-V(0,0)]\E( \P_{X_{\tau_D}}(\tau_0^-=\infty)). 
\end{align*}
Since $b(u)>0$ for all $u>0$ and $\P_x(\tau_0^-=\infty)>0$ for all $x>0$, the equation above implies that $R(0,0)=V(0,0)$, and then $V(u,x)=R(u,x)$ in this case. For the finite variation with finite activity case, consider the sequence of stopping times, 
\begin{align*}
\tau_b^{(1)}=\inf\{t\geq 0: X_t \geq b(U_t)\},
\end{align*}
and for $k=1,2,\ldots$,
\begin{align*}
\sigma_b^{(k)}=\inf\{t\geq \tau_{b}^{(k)}: X_t < b(U_t) \},\\
\tau_b^{(k+1)}=\inf\{t\geq \sigma_b^{(k)}: X_t\geq b(U_t)  \}. 
\end{align*}
Since $X$ is of finite variation we have that $\tau_b^{(k)}<\sigma_b^{(k)}<\tau_b^{(k+1)}$ for all $k\geq 1$. Let $u>0$ and $x\geq b(u)$, by the Markov property applied to time $\tau_b^{(2)}$ we get that 	
\begin{align*}
R(u,x)&=-\E_{u,x}\left( \int_{0}^{\sigma_{b}^{(1)}} \int_{(-\infty,0)} \widetilde{V}(U_s,X_s+y)\Pi(\dd y)  \dd s\right)\\
&\qquad +\E_{u,x}\left(\I_{\{ \sigma_b^{(1)}<\infty \}} \int_{\sigma_b^{(1)}}^{\tau_b^{(2)}} G(U_s,X_s) \dd s\right)+ \E_{u,x}(R(U_{\tau_b^{(2)}},X_{\tau_b^{(2)}} ) \I_{\{\tau_b^{(2)}<\infty \}}) \\
&=-\E_{u,x}\left(  \int_{0}^{\sigma_{b}^{(1)}} \int_{(-\infty,0)} \widetilde{V}(U_s,X_s+y)\Pi(\dd y)  \dd s\right)\\
&\qquad+\E_{u,x}\left(   \I_{\{ \sigma_b^{(1)}<\infty \}} V(U_{\sigma_b^{(1)}},X_{\sigma_b^{(1)}})\right)+ \E_{u,x}(R(U_{\tau_b^{(2)}},X_{\tau_b^{(2)}} ) \I_{\{\tau_b^{(2)}<\infty \}}) \\
&=\E_{u,x}(R(U_{\tau_b^{(2)}},X_{\tau_b^{(2)}} ) \I_{\{\tau_b^{(2)}<\infty \}}),
\end{align*}
where in the second inequality, we used the Markov property at time $\sigma_b^{(1)}$, the definition of $V$ in terms of the stopping time $\tau_D$ and in the last equality, we used the compensation formula for Poisson random measures. Using an induction argument we can verify that for all $x\geq b(u)$ and $n\geq 1$,
\begin{align*}
R(u,x)&= \E_{u,x}(R(U_{\tau_b^{(n)}},X_{\tau_b^{(n)}} ) \I_{\{\tau_b^{(n)}<\infty \}}).
\end{align*}

It can be shown that for any $(u,x)\in E$, $\lim_{n \rightarrow \infty} \tau_b^{(n)}=\infty$ $\P_{u,x}$-a.s. Hence, by the dominated convergence theorem, the fact that $\lim_{u,x \rightarrow \infty }R(u,x)=0$ (see \eqref{eq:limitofEintV} and \eqref{eq:limitEGbelowD}), that $\lim_{t \rightarrow \infty} U_t=t-g_t\geq \lim_{t\rightarrow \infty } t-g=\infty$ and that $X$ drifts to infinity we get that 
\begin{align*}
R(u,x)=\lim_{n\rightarrow \infty } \E_{u,x}(R(U_{\tau_b^{(n)}},X_{\tau_b^{(n)}} ) \I_{\{\tau_b^{(n)}<\infty \}})=0
\end{align*}
for all $u>0$ and $x\geq b(u)$. Next, take $x<b(u)$, applying the strong Markov property and using that $\tau_b^{(1)}$ is optimal for $V$ we get that 
\begin{align*}
R(u,x)=\E_{u,x}\left(\int_0^{\tau_b^{(1)}} G(U_s,X_s)\dd s \right)+ \E_{u,x}(R(U_{\tau_b^{(1)}},X_{\tau_b^{(1)}}))=V(u,x).
\end{align*}
Hence, we conclude that for all $(u,x)\in E$,
\begin{align*}
V(u,x)=R(u,x).
\end{align*}
The proof is now complete.
\end{proof}
Now we are ready to show that in either case regarding the variation of $X$, the equations stated in Theorem \ref{thm:characterisationofbandV} hold.

\begin{lemma}
\label{lemma:alternativerepforV}
The quadruplet $(V,b,V(0,0),u_b)$ satisfy equations (\ref{eq:representationforVwithindicatorsfunc})-(\ref{eq:characterisationofub}) and equation (\ref{eq:intV+Gispositive}).
\end{lemma}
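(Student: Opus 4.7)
The plan is to derive \eqref{eq:representationforVwithindicatorsfunc} by applying a generalised It\^o-type decomposition to the process $V(U_t,X_t)$ between time $0$ and $\tau_0^- \wedge T$, exploiting the variational identity $\mathcal{A}_{U,X}(V)+G=\Lambda$ from Lemmas \ref{lemma:variationalinequalities} and \ref{lemma:auxiliaryfunctionLambda}. Since $V$ is only known to be continuous, I would first mollify $V$ in both variables, apply \eqref{eq:ItoformulaforUtXt} to the mollified functions, and then pass to the limit (the distributional identity for $\Lambda$ survives in the limit since $\Lambda$ is a locally finite measure supported on $D$). On the stochastic interval $[0,\tau_0^-)$, the process $U$ is deterministic with $U_t=u+t$, the local martingale contributions in It\^o's formula are genuine martingales up to $\tau_0^- \wedge T$ by Lemma \ref{lemma:finitenesofEtauD} combined with the polynomial lower bound in Corollary \ref{cor:lowerboundforV}, so taking expectations yields
\begin{align*}
V(u,x)=\E_x[V(U_{\tau_0^- \wedge T},X_{\tau_0^- \wedge T})]+\E_x\!\left[\int_0^{\tau_0^- \wedge T}\!\! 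G(u+s,X_s)\,\dd s\right]-\E_x\!\left[\int_0^{\tau_0^- \wedge T}\!\! \Lambda(u+s,X_s)\I_{\{X_s> b(u+s)\}}\,\dd s\right].
\end{align*}

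Next, I would let $T\to\infty$. On $\{\tau_0^-=\infty\}$ (which has positive probability since $X$ drifts to $+\infty$), $X_T\to\infty$ and hence $(U_T,X_T)$ eventually lies in $D$, forcing $V(U_T,X_T)\to 0$; the required uniform integrability follows from Corollary \ref{cor:lowerboundforV}. On $\{\tau_0^-<\infty\}$ we have $U_{\tau_0^-}=0$ and I split $\E_x[V(0,X_{\tau_0^-})\I_{\{\tau_0^-<\infty\}}]$ into a creeping and a jumping part: the classical creeping identity for spectrally negative L\'evy processes gives $\P_x(X_{\tau_0^-}=0,\tau_0^-<\infty)=\frac{\sigma^2}{2}W'(x)$ (and $0$ when $\sigma=0$), contributing $V(0,0)\frac{\sigma^2}{2}W'(x)$; the jumping part is handled by the compensation formula for the Poisson random measure $N$ to obtain
\begin{align*}
\E_x[V(0,X_{\tau_0^-})\I_{\{X_{\tau_0^-}<0\}}]=\E_x\!\left[\int_0^{\tau_0^-}\int_{(-\infty,0)}V(0,X_s+y)\I_{\{X_s+y<0\}}\Pi(\dd y)\,\dd s\right].
\end{align*}
On $D$, by Lemma \ref{lemma:auxiliaryfunctionLambda} and the definition of the extension $\widetilde V$ in \eqref{eq:extensionofftoR+R}, $\Lambda(u,x)=G(u,x)+\int_{(-\infty,0)}\widetilde V(u,x+y)\Pi(\dd y)$, which I split according to the sign of $X_s+y$ into a $V(u+s,X_s+y)\I_{\{X_s+y>0\}}$ piece and a $V(0,X_s+y)\I_{\{X_s+y<0\}}$ piece. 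Regrouping, the jumping boundary contribution cancels exactly against the $X_s+y<0$ part of $\Lambda$ outside $\{X_s>b(u+s)\}$, and the $G$ terms combine as $G(u+s,X_s)\I_{\{X_s<b(u+s)\}}$, producing \eqref{eq:representationforVwithindicatorsfunc}.

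Equation \eqref{eq:characterisationofb} follows by substituting $x=b(u)$ for $u<u_b$ into \eqref{eq:representationforVwithindicatorsfunc} and invoking $V(u,b(u))=0$, which holds by continuity of $V$ (Lemma \ref{lemma:continuityofV0x}) and of $b$ (Lemma \ref{lemma:continuous}). Equation \eqref{eq:characterisationofub} is immediate from Lemma \ref{cor:valueofub} once \eqref{eq:expressionforV0xnegative} is plugged into $\int_{(-\infty,0)}V(0,y)\Pi(\dd y)$. For the smooth-pasting identity \eqref{eq:derivativeofVat00} in the infinite variation case, I would differentiate \eqref{eq:expressionforV0xnegative} to obtain $\frac{\partial}{\partial x}V_-(0,0)=-\int_{[0,\infty)}\E_{-v}(g^{p-1})W(\dd v)$, and obtain the right derivative by differentiating \eqref{eq:representationforVwithindicatorsfunc} and sending $u\downarrow 0$, $x\downarrow 0$, using the convergence $\lim_{u\downarrow 0}V(u,h)\to V(0,0)$ (Lemma \ref{lemma:continuityofV0x}) together with the smooth fit at the moving boundary (Theorem \ref{lemma:smoothfit}) to cancel the boundary-dependent contributions; the matching of the two one-sided derivatives then reduces to an identity involving $W'$ and the potential of $\E_{\cdot}(g^{p-1})$.

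The main obstacle will be (i) making the It\^o--Meyer argument rigorous given that $V$ is only continuous, requiring careful mollification and a limit procedure that preserves the distributional identity supplied by Lemma \ref{lemma:auxiliaryfunctionLambda}, and (ii) justifying the exchange of limits in the $\Lambda$ and jump integrals as $T\to\infty$, which relies on the polynomial control in Corollary \ref{cor:lowerboundforV} and Lemma \ref{lemma:finitenesofEtauD}. The asymptotic matching at the corner point $(0,0)$ in \eqref{eq:derivativeofVat00} is a subtler piece, since the behaviour of $W'(x)$ and the inner jump integrals both contribute to the right derivative and their sum must match the left derivative obtained from \eqref{eq:expressionforV0xnegative}.
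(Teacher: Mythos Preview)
Your approach to \eqref{eq:representationforVwithindicatorsfunc}--\eqref{eq:characterisationofub} is essentially the paper's: mollify $V$, apply the It\^o formula \eqref{eq:ItoformulaforUtXt}, use $\mathcal{A}_{U,X}(V)+G=\Lambda$ a.e.\ (Lemma~\ref{lemma:auxiliaryfunctionLambda}), pass to the limit via dominated/monotone convergence, and then split $\E_x[V(0,X_{\tau_0^-})\I_{\{\tau_0^-<\infty\}}]$ into the creeping contribution $V(0,0)\tfrac{\sigma^2}{2}W'(x)$ and a jump part handled by the compensation formula. The algebraic regrouping you describe is exactly what the paper does (though your wording ``outside $\{X_s>b(u+s)\}$'' should read ``on $\{X_s>b(u+s)\}$'', since $\Lambda$ vanishes off $D$).

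Where you diverge from the paper is the derivation of \eqref{eq:derivativeofVat00} in the infinite variation case, and here your sketch has a genuine gap. You propose to differentiate \eqref{eq:representationforVwithindicatorsfunc} in $x$ and send $(u,x)\downarrow(0,0)$, invoking Theorem~\ref{lemma:smoothfit}. But smooth fit concerns $\partial_x V(u,b(u))=0$ at the \emph{upper} boundary, not at $x=0$; it gives no direct control on $\partial_x V_+(0,0)$, and it is far from clear how the $W'$, jump, and $G$ terms in \eqref{eq:representationforVwithindicatorsfunc} conspire to produce a limit matching $\partial_x V_-(0,0)$ (incidentally, from \eqref{eq:expressionforV0xnegative} one gets $\partial_x V_-(0,0)=+\int_{[0,\infty)}\E_{-v}(g^{p-1})W(\dd v)$, not with a minus sign). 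The paper instead takes $m\to\infty$ in the identity \eqref{eq:alternativerepresentationofVformgeq0} at $(u,x)=(0,0)$ and applies the excursion-type formula \eqref{eq:calculationofUsXsintegral} to the integrands $K_1(u,x)=G(u,x)\I_{\{x<b(u)\}}$ and $K_2(u,x)=\int\widetilde V(u,x+y)\Pi(\dd y)\I_{\{x>b(u)\}}$. That formula expresses $V(0,0)$ as a limit of ratios of the form $[K^-+K^+]/[\psi'(0+)W(\varepsilon)]$; recognising $K^-(x)=V(0,x)-V(0,0)$ and $K^+(u,x)=V(u,x)-\E_x[V(0,X_{\tau_0^-})\I_{\{\tau_0^-<\infty\}}]$ and using $W'(0)=2/\sigma^2$, the limit collapses to $\tfrac{\sigma^2}{2\psi'(0+)}[\partial_x V_+(0,0)-\partial_x V_-(0,0)]+V(0,0)$, which forces the two one-sided derivatives to agree. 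This excursion-theoretic step is the missing ingredient in your plan.
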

\begin{proof}
We know from Lemma \ref{eq:alternativeequationinfinitevariationcase} that equations (\ref{eq:representationforVwithindicatorsfunc}) and (\ref{eq:characterisationofb}) hold in the infinite variation case or in the finite variation case with finite activity. Then suppose that $X$ is of finite variation and $\Pi(-\infty,0)<\infty$. The strong Markov property applied at time $\tau_0^-$ in (\ref{eq:integralequationforVnonstopped}) implies that (\ref{eq:alternativerepresentationofVformgeq0}) also holds in this case. Then, proceeding as in Lemma \ref{eq:alternativeequationinfinitevariationcase}  (see argument below equation (\ref{eq:alternativerepresentationofVformgeq0})) we see that (\ref{eq:representationforVwithindicatorsfunc}) and (\ref{eq:characterisationofb}) also hold in the finite variation case with finite activity. Moreover, the assertions about $u_b$ and equation (\ref{eq:characterisationofub}) follow from Lemma \ref{cor:valueofub}, the lower bound for $V(0,0)$ follows from Remark \ref{rem:boundsforV} and (\ref{eq:intV+Gispositive}) holds due to Lemma \ref{lemma:auxiliaryfunctionLambda}. 
\\

We now proceed to show that (\ref{eq:charactersationofV00}) is satisfied for $V(0,0)$. Taking $u=x=0$ in (\ref{eq:integralequationforVnonstopped}) and using Fubini's theorem we have that 
\begin{align*}
V(0,0)
&=\E\left( \int_0^{\infty} G(U_s,X_s) \I_{\{X_s < b(U_s) \}} \dd s \right)\\
&\qquad-\E\left( \int_0^{\infty} \int_{(-\infty,0)} \widetilde{V}(U_s,X_s+y) \Pi(\dd y) \I_{\{X_s > b(U_s) \}} \dd s \right)\\
&=\E\left( \int_0^{\infty} G(0,X_s) \I_{\{X_s \leq  0 \}} \dd s \right)
+\E\left( \int_0^{\infty} G(U_s,X_s) \I_{\{0<X_s < b(U_s) \}} \dd s \right)\\
&\qquad-\E\left( \int_0^{\infty} \int_{(-\infty,0)} \widetilde{V}(U_s,X_s+y) \Pi(\dd y) \I_{\{X_s > b(U_s) \}} \dd s \right)\\
&=\int_{(-\infty,0]} G(0,z) \int_0^{\infty} \P(X_s \in \dd z) \dd s\\
&\qquad+\int_{(0,\infty)}\int_{(0,b(u))}   G(u,z)  \int_0^{\infty} \P(U_s \in \dd u, X_s \in \dd z) \dd s \\
&\qquad-  \int_{(0,\infty)} \int_{(b(u),\infty)} \int_{(-\infty,0)} \widetilde{V}(u,z+y) \Pi(\dd y) \int_0^{\infty} \P(U_s\in \dd u, X_s \in \dd z)\dd s ,
\end{align*}
where in the second equality we used the fact that $b$ is non-negative and that $U_s=0$ if and only if $X_s\leq 0$. From the fact that $G(0,z)=-\E_z(g^{p-1})$ for any $z< 0$ and the formulas for the $0$-potential density of $X$ and $(U,X)$ (see equations  (\ref{eq:qpotentialdensitywithoutkilling}) and (\ref{eq:potentialmeasureUXatzero})), respectively, we obtain that
\begin{align*}
V(0,0)
&=-\frac{1}{\psi'(0+)}\int_0^{\infty} \E_{-z}(g^{p-1}) [1-\psi'(0+)W(z)]\dd z\\
&\qquad	+\frac{1}{\psi'(0+)}\int_{0}^{\infty}\int_{(0,b(s))}   G(s,z)  \frac{z}{s}\P(X_s \in \dd z) \dd s \\
&\qquad-  \frac{1}{\psi'(0+)}\int_{0}^{\infty} \int_{(b(s),\infty)} \int_{(-\infty,0)} \widetilde{V}(s,z+y) \Pi(\dd y)\frac{z}{s}\P(X_s \in \dd z) \dd s \\
&=-\frac{1}{\psi'(0+)}\int_0^{\infty} \E_{-z}(g^{p-1}) [1-\psi'(0+)W(z)]\dd z \\
&\qquad+\frac{1}{\psi'(0+)}\E\left( \int_{0}^{\infty}   G(s,X_s)\frac{X_s}{s}  \I_{\{0<X_s<b(s) \}} \dd s \right) \\
&\qquad-\frac{1}{\psi'(0+)} \E\left( \int_0^{\infty} \int_{(-\infty,0)} \widetilde{V}(s,X_s+y) \Pi(\dd y)\frac{X_s}{s} \I_{\{X_s>b(s) \}} \dd s \right).
\end{align*}
Then, equation (\ref{eq:charactersationofV00}) holds by recalling that $\widetilde{V}(u,x)=V(u,x)$ when $u>0$ and $x>0$, and $\widetilde{V}(u,x)=V(0,x)$ when $x\leq 0$ for any $u\geq 0$.
\end{proof}
We finish the first part of the proof by showing that the derivative of $V$ at $(0,0)$ exists when there is a Brownian motion component.
\begin{lemma}
\label{lemma:proofofequationforV0}
The function $V$ satisfies equation (\ref{eq:derivativeofVat00}) when $\sigma>0$.
\end{lemma}

\begin{proof}
We first note that the strict inequalities in the indicator functions in Equation \eqref{eq:integralequationforVnonstopped} can be changed to inequalities when $x=u=0$. Indeed, due to \eqref{eq:potentialmeasureUXatzero} and Fubini's theorem we have   
\begin{align*}
\E&\left( \int_0^{\infty} G(U_s,X_s)\I_{\{ 0<X_s<b(U_s)\}}\dd s \right)\\
&=\int_{(0,\infty)}\int_{(0,\infty)}  G(v,y)\I_{\{ 0<y<b(v)\}} \int_0^{\infty} \P(U_s\in \dd v, X_s\in \dd y )\dd s\\
&=\int_{(0,\infty)}\int_{0}^{b(v)}  G(v,y)  \Phi'(0)\P(\tau_y^+ \in \dd v)\dd y\\  
&=\int_{(0,\infty)}\int_{(0,\infty)}  G(v,y)\I_{\{ 0<y\leq b(v)\}} \int_0^{\infty} \P(U_s\in \dd v, X_s\in \dd y )\dd s\\
&=\E\left( \int_0^{\infty} G(U_s,X_s)\I_{\{0< X_s\leq b(U_s)\}}\dd s \right).
\end{align*}
Similarly, we have that 
\begin{align*}
\E &\left(  \int_0^{\infty} \int_{(-\infty,0)} \widetilde{V}(U_s,X_s+y) \Pi(\dd y)  \I_{\{ X_s>b(U_s)\}}\dd s \right)\\
&=\E \left(  \int_0^{\infty} \int_{(-\infty,0)} \widetilde{V}(U_s,X_s+y) \Pi(\dd y)  \I_{\{ X_s\geq b(U_s)\}}\dd s \right).
\end{align*}
Hence, from equation (\ref{eq:integralequationforVnonstopped}) and the dominated convergence theorem (see Lemma \ref{lemma:integrativilityconditionsofGbandVb}) we obtain that 
\begin{align*}
V(0,0)
 &= \E\left( \int_0^{\infty} G(U_s,X_s)\I_{\{ X_s<b(U_s)\}}\dd s \right)\\
 &\qquad-\E \left(  \int_0^{\infty} \int_{(-\infty,0)} \widetilde{V}(U_s,X_s+y) \Pi(\dd y)  \I_{\{ X_s\geq b(U_s)\}}\dd s \right)\\
 &=\lim_{\delta \downarrow 0}\E\left( \int_0^{\infty}  G(U_s+\delta,X_s)\I_{\{ X_s<b(U_s+\delta)\}}\dd s \right)\\
 &\qquad-\lim_{\delta \downarrow 0}\E \left(  \int_0^{\infty} \int_{(-\infty,0)} \widetilde{V}(U_s+\delta,X_s+y) \Pi(\dd y)  \I_{\{ X_s\geq b(U_s+\delta)\}}\dd s \right),
\end{align*}
where we used that $G$ and $V$ are continuous (since $\sigma>0$ and then $W$ is continuous on $\R$), that the mappings $s \mapsto \I_{\{ x\geq s\}}$ and $s \mapsto \I_{\{ x<s\}}$ are left-continuous and that $b(U_s+\delta)\uparrow b(U_s)$ when $\delta \downarrow 0$, for any $s\geq 0$. Then, using once again \eqref{eq:potentialmeasureUXatzero}, Fubini's theorem and the equation above we can easily see that 
\begin{align*} 
V(0,0) &= \lim_{\delta \downarrow 0}\E\bigg( \int_0^{\infty} [K_1(U_s+\delta,X_s)+ K_2(U_s+\delta,X_s)]\dd s \bigg),
\end{align*}
where for any $(u,x)\in E$,  
\begin{align*}
K_1(u,x)&:=G(u,x)\I_{\{x\leq b(u) \}},\\
K_2(u,x)&:= -\int_{(-\infty,0)} \widetilde{V}(u,x+y)\Pi(\dd y)\I_{\{x>b(u) \}}.
\end{align*}
Therefore, from Lemma \ref{lemma:formulaforK1andK2} we deduce that
 \begin{align*}
V(0,0)= \lim_{\delta \downarrow 0} \lim_{\varepsilon \downarrow 0} \frac{ \E_{\varepsilon} \left(\I_{\{\tau_0^-<\infty \}}K^-(\delta, X_{\tau_0^-}-\varepsilon ) \right) +K^+(\delta,\varepsilon) }{\psi'(0+)W(\varepsilon)},
\end{align*}
where for all $\delta >  0$ and $x\leq 0$,
\begin{align*}
K^-(\delta, x)=\E_x\left(\int_0^{\tau_0^+} [K_1(\delta,X_r)+K_2(\delta,X_r)] \dd r \right)
\end{align*}
and for all $\delta,x>0$,
\begin{align*}
K^+(\delta,x)=\E_x\left(\int_0^{\tau_0^-} [K_1(\delta+s,X_r)+K_2(\delta+s,X_r)] \dd r \right).
\end{align*}
Using the fact that $b$ is non-negative and $W(x)=0$ for all $x<0$ (and then $G(\delta,x)=G(0,x)$ for all $x<0$), we have that for all $x<0$,
\begin{align*}
K^-(\delta, x)=\E_{x}\left(\int_0^{\tau_0^+} G(\delta,X_s) \dd s \right)=V(0,x)-V(0,0),
\end{align*}
where the last equality follows from the expression of $V$ in terms of the stopping time $\tau_D$. Moreover, from equation (\ref{eq:alternativerepresentationofVformgeq0}), we deduce that
\begin{align*}
K^+(\delta,\varepsilon)
&=V(\delta,\varepsilon)-\E_{\varepsilon} (V(0,X_{\tau_0^-}) \I_{\{\tau_0^-<\infty\}}),
\end{align*}
for all $\delta >0$ and $x>0$. Hence, by substituting the expressions for $K^+$ and $K^-$, rearranging the terms and by dominated convergence theorem we see that

\begin{align*}
V(0,0)&=\lim_{\delta \downarrow 0 }  \lim_{\varepsilon \downarrow 0 }  \frac{\E( V(0,X_{\tau_{-\varepsilon}^-})\I_{\{\tau_{-\varepsilon}^-<\infty\}})-\E(V(0,X_{\tau_{-\varepsilon}^-}+\varepsilon) \I_{\{\tau_{-\varepsilon}^-<\infty\}})}{ \psi'(0+)W(\varepsilon)}   \\
&\qquad+\lim_{\delta \downarrow 0 }  \lim_{\varepsilon \downarrow 0 } \frac{V(\delta,\varepsilon)-V(0,0)\P_{\varepsilon}(\tau_0^-<\infty) }{ \psi'(0+)W(\varepsilon)}\\
&= \lim_{\varepsilon \downarrow 0 }  \frac{\E( V(0,X_{\tau_{-\varepsilon}^-})\I_{\{\tau_{-\varepsilon}^-<\infty\}})-\E(V(0,X_{\tau_{-\varepsilon}^-}+\varepsilon) \I_{\{\tau_{-\varepsilon}^-<\infty\}})}{ \psi'(0+)W(\varepsilon)}\\
&\qquad   + \frac{\sigma^2}{2 \psi'(0+)}  \frac{\partial }{\partial x} V_+(0,0)+V(0,0),
\end{align*}
where in the last equality we used that $\P_{\varepsilon}(\tau_0^-<\infty)=1-\psi'(0+)W(\varepsilon)$ (see equation (\ref{eq:laplacetransoformoftau0minus})) and the fact that $W'(0)=2/\sigma^2$. 
Therefore, from Lemma \ref{lemma:convergencederivativeofVXtaue} we see that  
\begin{align*}
\lim_{\varepsilon \downarrow 0 }  \frac{\E( V(0,X_{\tau_{-\varepsilon}^-})\I_{\{\tau_{-\varepsilon}^-<\infty\}})-\E(V(0,X_{\tau_{-\varepsilon}^-}+\varepsilon) \I_{\{\tau_{-\varepsilon}^-<\infty\}})}{ \psi'(0+)W(\varepsilon)}&=-\frac{\sigma^2}{2 \psi'(0+)}  \frac{\partial }{\partial x} V_-(0,0).
\end{align*}
Then, we deduce that
\begin{align*}
V(0,0)= \frac{\sigma^2}{2 \psi'(0+)}\left[ -\frac{\partial }{\partial x} V_-(0,0)+ \frac{\partial }{\partial x} V_+(0,0)\right]+V(0,0),
\end{align*}
and we conclude that (\ref{eq:derivativeofVat00}) holds. The proof is now complete.
\end{proof}

Next, we show the uniqueness claim. Suppose that there exist continuous functions $H$ and $c$ on $E$ and $\R_+$, respectively, and real numbers $H_0<0$ and $u_H>0$ such that the conclusions of the theorem hold. Specifically, suppose that $H$ is a non-positive continuous real valued function on $E$, $c$ is a continuous real valued function on $(0,\infty)$ such that $c \geq h \geq 0$ and $H_0 \in (-\frac{1}{p}\E(g^p),0)$ such that equations (\ref{eq:representationforVwithindicatorsfunc})-(\ref{eq:characterisationofb}) hold. That is, we assume that $H$, $H_0$ and $c$ are solutions to the equations
\begin{align}
H&(u,x) \nonumber\\
&=H_0\frac{\sigma^2}{2}W'(x)\nonumber\\
&\qquad-\E_{x} \left(  \int_0^{ \tau_{0}^-} \int_{(-\infty,0)} H(u+s,X_s+y) \I_{\{0<X_s+y<c(u+s) \}} \Pi(\dd y)  \I_{\{ X_s>c(u+s)\}}\dd s \right) \nonumber\\
\label{eq:representationforHwithindicatorsfunc}
&\qquad +\E_{x}\left( \int_0^{ \tau_{0}^-} \left[ G(u+s,X_s) +\int_{(-\infty,-X_s)} H(0,X_s+y)  \Pi(\dd y)  \right] \I_{\{ X_s<c(u+s)\}} \dd s \right),
\end{align}
for $u>0$ and $x>0$, 
\begin{align}
\label{eq:Hat00}
H_0&=-\frac{1}{\psi'(0+)}\int_0^{\infty} \E_{-z}(g^{p-1}) [1-\psi'(0+)W(z)]\dd z\\
&\qquad +\frac{1}{\psi'(0+)}\E\left( \int_{0}^{\infty}   G(s,X_s)\frac{X_s}{s}  \I_{\{0<X_s<c(s) \}} \dd s \right) \nonumber\\
&\qquad-\frac{1}{\psi'(0+)} \E\left( \int_0^{\infty} \int_{(-\infty,0)} H(s,X_s+y)\I_{\{0<X_s+y<c(s) \}} \Pi(\dd y)\frac{X_s}{s} \I_{\{X_s>c(s) \}} \dd s \right)\nonumber\\
&\qquad-\frac{1}{\psi'(0+)} \E\left( \int_0^{\infty} \int_{(-\infty,0)} H(0,X_s+y)\I_{\{X_s+y\leq 0) \}} \Pi(\dd y)\frac{X_s}{s} \I_{\{X_s>c(s) \}} \dd s \right),
\end{align}
and
\begin{align}
0&=H_0\frac{\sigma^2}{2}W'(c(u))\nonumber\\
&\qquad-\E_{c(u)} \left(  \int_0^{ \tau_{0}^-} \int_{(-\infty,0)} H(u+s,X_s+y) \I_{\{0<X_s+y<c(u+s) \}} \Pi(\dd y)  \I_{\{ X_s>c(u+s)\}}\dd s \right) \nonumber\\
\label{eq:representationforHwithindicatorsfuncforc}
&\qquad +\E_{c(u)}\left( \int_0^{ \tau_{0}^-} \left[ G(u+s,X_s) +\int_{(-\infty,-X_s)} H(0,X_s+y)  \Pi(\dd y)  \right] \I_{\{ X_s<c(u+s)\}} \dd s \right),
\end{align}
for $u<u_H$, where for any $x\leq 0$, 
\begin{align}
\label{eq:Hwhenxnegative}
 H(0,x)= -\int_{0}^{-x} \int_{0}^{\infty}  \E_{-u-z}(g^{p-1})W'(u) \dd u  \dd z +H_0.
\end{align}
The value $u_H$ is such that $u_H=\infty$ when $X$ is of infinite variation or $X$ is of finite variation with infinite activity. Otherwise, let $u_H$ be the solution of
\begin{align}
\label{eq:definitionofuH}
G(u,0)-\int_{(-\infty,0)} \int_{0}^{-y}\int_{0}^{\infty} \E_{-u-z}(g^{p-1})W'(u) \dd u  \dd z\Pi(\dd y)+H_0\Pi(-\infty,0)=0.
\end{align}
\\
Moreover, assume that $c(u)>0$ for all $u<u_H$ and $c(u)=0$ for all $u\geq u_H$, and that 
\begin{align}
\label{eq:inequalityVPidyGgeq0}
\int_{(-\infty,-x)} \widetilde{H}(u,x+c(u)+y)\Pi(\dd y)+G(u,c(u)+x)\geq 0
\end{align}
for all $u<u_H$ and $x>0$, where $\widetilde{H}$ is the extension of $H$ to the set $\R_+\times \R$ as in (\ref{eq:extensionofftoR+R}). That is, 
\begin{align}
\label{eq:extensionofHtoR+R}
\widetilde{H}(u,x)=\left\{
\begin{array}{cc}
H(u,x), & \text{for } u>0  \text{ and } x>0,\\
H(0,x), & \text{for }u\geq 0 \text{ and } x\leq 0,\\
H(0,0), & \text{for }u=0 \text{ and } x>0.
\end{array}
\right.
\end{align}
Note that, using the same arguments as the ones used in Lemma \ref{eq:alternativeequationinfinitevariationcase} (see argument below Equation (\ref{eq:alternativerepresentationofVformgeq0})), we can see that (\ref{eq:representationforHwithindicatorsfunc}) and (\ref{eq:representationforHwithindicatorsfuncforc}) are equivalent to

\begin{align}
\label{eq:definitionofU}
H(u,x)&= \E_x(H(0, X_{\tau_0^-})\I_{\{\tau_0^-<\infty \}})+\E_{x}\left( \int_0^{ \tau_{0}^-} G(u+s,X_s)\I_{\{ X_s<c(u+s)\}}\dd s \right)\nonumber\\
&\qquad -\E_{x} \left(  \int_0^{ \tau_{0}^-} \int_{(-\infty,0)} \widetilde{H}(u+s,X_s+y)\I_{\{ X_s+y<c(u+s)\}} \Pi(\dd y)  \I_{\{ X_s>c(u+s)\}}\dd s \right)
\end{align}
for all $(u,x)\in E$, and
\begin{align}
\label{eq:Uandcattheboundary}
\E_{c(u)}&(H(0, X_{\tau_0^-})\I_{\{\tau_0^-<\infty \}})+\E_{c(u)}\left( \int_0^{ \tau_{0}^-} G(u+s,X_s)\I_{\{ X_s<c(u+s)\}}\dd s \right)\nonumber\\
&=\E_{c(u)} \left(  \int_0^{ \tau_{0}^-} \int_{(-\infty,0)} \widetilde{H}(u+s,X_s+y)\I_{\{ X_s+y<c(u+s)\}}\Pi(\dd y)  \I_{\{ X_s>c(u+s)\}}\dd s \right)
\end{align}
for any $u<u_H$. Following a similar proof than \cite{duToit2008} we show that $c=b$, which implies that $H=V$, $H_0=V(0,0)$ and $u_H=u_b$.\\

First, we show that $H$ has an alternative representation.

\begin{lemma}
\label{lemma:alternativerepresentationofH}
For all $(u,x)\in E$ we have that
\begin{align}
H(u,x)&=\E_{u,x}\left( \int_0^{\infty} G(U_s,X_s)\I_{\{ X_s<c(U_s)\}}\dd s \right)\nonumber\\
\label{eq:alternativerepresentationforH}
& \qquad -\E_{u,x} \left(  \int_0^{\infty} \int_{(-\infty,0)} \widetilde{H}(U_s,X_s+y)\I_{\{ X_s+y<c(U_s)\}} \Pi(\dd y)  \I_{\{ X_s>c(U_s)\}}\dd s \right).
\end{align}
Moreover, the same conclusion holds if, in the case that $\sigma>0$, instead of (\ref{eq:Hat00}) we assume that 
\begin{align}
\label{eq:derivativeofHwhensigmapos}
\frac{\partial}{\partial x} H_+ (0,0) =\frac{\partial}{\partial x} H _- (0,0),
\end{align}
where $\frac{\partial}{\partial x} H_+ (u,0)$ and $\frac{\partial}{\partial x} H_- (0,0)$ are the right and left derivatives of $x\mapsto  H(u,x)$ and $x\mapsto H(0,x)$ at zero, respectively and $\frac{\partial}{\partial x} H_+ (0,0)=\lim_{u\downarrow 0} \frac{\partial}{\partial x} H_+ (u,0)$.\\
\end{lemma}
\begin{proof}
Define for all $(u,x)\in E$ the function

\begin{align*}
K(u,x)&= \E_{u,x}\left( \int_0^{\infty} G(U_s,X_s)\I_{\{ X_s<c(U_s)\}}\dd s \right)\\
&\qquad-\E_{u,x} \left(  \int_0^{\infty} \int_{(-\infty,0)} \widetilde{H}(U_s,X_s+y) \I_{\{ X_s+y<c(U_s)\}}\Pi(\dd y)  \I_{\{ X_s>c(U_s)\}}\dd s \right)  .
\end{align*}
In a analogous way as in Lemma \ref{lemma:alternativerepforV}, from (\ref{eq:qpotentialdensitywithoutkilling}) and (\ref{eq:potentialmeasureUXatzero}), we have that for any spectrally negative L\'evy process $X$,

\begin{align*}
K(0,0)&= \E\left( \int_0^{\infty} G(U_s,X_s)\I_{\{ X_s<c(U_s)\}}\dd s \right)\\
&\qquad-\E_{u,x} \left(  \int_0^{\infty} \int_{(-\infty,0)} \widetilde{H}(U_s,X_s+y) \I_{\{ X_s+y<c(U_s)\}}\Pi(\dd y)  \I_{\{ X_s>c(U_s)\}}\dd s \right)  \\
&=\frac{1}{\psi'(0+)}\int_0^{\infty} \E_{-z}(g^{p-1}) [1-\psi'(0+)W(z)]\dd z\\
&\qquad+\frac{1}{\psi'(0+)}\E\left( \int_{0}^{\infty}   G(s,X_s)\frac{X_s}{s}  \I_{\{0<X_s<c(s) \}} \dd s \right) \nonumber\\
&\qquad-\frac{1}{\psi'(0+)} \E\left( \int_0^{\infty} \int_{(-\infty,0)} \widetilde{H}(s,X_s+y) \I_{\{X_s+y<c(s)\}} \Pi(\dd y)\frac{X_s}{s} \I_{\{X_s>c(s) \}} \dd s \right)\\
&=H_0\\
&=H(0,0).
\end{align*}
Moreover, by the Markov property, the fact that $X$ creeps upwards, $c$ is a non-negative curve and the definition of $H(0,x)$, for $x<0$ (see (\ref{eq:Hwhenxnegative})), we have that for $u=0$ and $x<0$,
\begin{align}
\label{eq:functionKforxnegative}
K(0,x)&= \E_x\left( \int_0^{\tau_0^+} G(U_s,X_s)\dd s\right)+K(0,0)=H(0,x).
\end{align}
Then, taking $u>0$ and $x>0$, by the strong Markov property at time $\tau_0^-$ and equation (\ref{eq:definitionofU}),
\begin{align*}
K(u,x)&=\E_{x}(K(0,X_{\tau_0^-})\I_{\{\tau_0^-<\infty \}} )+\E_{x}\left( \int_0^{\tau_0^-} G(u+s,X_s)\I_{\{ X_s<c(u+s)\}}\dd s \right) \nonumber \\
&\qquad -\E_{x} \left(  \int_0^{\tau_0^-} \int_{(-\infty,0)} \widetilde{H}(u+s,X_s+y) \I_{\{ X_s+y<c(u+s)\}}\Pi(\dd y)  \I_{\{ X_s>c(u+s)\}}\dd s \right) \nonumber \\
&=H(u,x).
\end{align*}
If, in the case that $\sigma>0$, we assume that $H$ and $c$ satisfy equations (\ref{eq:Hwhenxnegative}), (\ref{eq:definitionofU}), (\ref{eq:Uandcattheboundary})  and (\ref{eq:derivativeofHwhensigmapos}). From formula (\ref{eq:calculationofUsXsintegral}) (in a similar way as in the proof of Lemma \ref{lemma:alternativerepforV}) we obtain that 
\begin{align*}
K(0,0)&= \frac{\sigma^2}{2 \psi'(0+)}\left[  -\frac{\partial }{\partial x} H_-(0,0)+ \frac{\partial }{\partial x} H_+(0,0)\right]+H(0,0)\\
&=H(0,0).
\end{align*}
The rest of the proof remains unchanged.
\end{proof}
Define the set $D_c=\{ (u,x) \in E: x \geq c(u)\}$. We show in the following lemma that $H$ vanishes in $D_c$ so that $D_c$ corresponds to the ``stopping set'' of $H$.
\begin{lemma}
\label{lemma:HvanishesinDc}
We have that $H(u,x)=0$ for all $(u,t)\in D_c$.
\end{lemma}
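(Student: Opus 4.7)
The plan is to combine the boundary identity hypothesised in \eqref{eq:Uandcattheboundary} with a strong Markov argument applied to the alternative representation established in Lemma \ref{lemma:alternativerepresentationofH}. First, subtracting equation \eqref{eq:Uandcattheboundary} from equation \eqref{eq:definitionofU} evaluated at $x=c(u)$ gives immediately $H(u,c(u))=0$ for every $u\in(0,u_H)$. This pins $H$ to zero along the boundary curve.

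Next, I would fix $(u,x)\in D_c$ with $x>c(u)$ and set $\sigma:=\inf\{t>0:X_t\leq c(U_t)\}$. Under $\P_{u,x}$, on $[0,\sigma)$ one has $X_s>c(U_s)$ strictly, so in the representation of Lemma \ref{lemma:alternativerepresentationofH} the indicator $\I_{\{X_s<c(U_s)\}}$ vanishes while $\I_{\{X_s>c(U_s)\}}=1$ Lebesgue-almost-everywhere (the set $\{s:X_s=c(U_s)\}$ being of measure zero by the absolute continuity of the marginals of $X$). Applying the strong Markov property at $\sigma$ then yields
\begin{align*}
H(u,x) = -\E_{u,x}\left[\int_0^{\sigma}\int_{(-\infty,0)} \widetilde{H}(U_s,X_s+y)\I_{\{X_s+y<c(U_s)\}}\Pi(\dd y)\dd s\right] + \E_{u,x}\left[H(U_\sigma,X_\sigma)\I_{\{\sigma<\infty\}}\right].
\end{align*}

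Applying the compensation formula to the remaining L\'evy-measure integral, and noting that on $[0,\sigma]$ only the jump at time $\sigma$ itself can send $X$ strictly below the curve $c(U_{\cdot})$, the first term on the right becomes $\E_{u,x}[\widetilde{H}(U_{\sigma-},X_\sigma)\I_{\{\sigma<\infty,\,X_\sigma<c(U_\sigma)\}}]$. A case analysis on the sign of $X_\sigma$, using the extension definition \eqref{eq:extensionofftoR+R} and the fact that $U$ resets to $0$ precisely when $X$ becomes non-positive, shows $\widetilde{H}(U_{\sigma-},X_\sigma)=H(U_\sigma,X_\sigma)$ in every case. Substituting back gives
\begin{align*}
H(u,x) = \E_{u,x}\left[H(U_\sigma,X_\sigma)\I_{\{\sigma<\infty,\,X_\sigma=c(U_\sigma)\}}\right],
\end{align*}
and on the event $\{X_\sigma=c(U_\sigma)\}$ the first step gives $H(U_\sigma,X_\sigma)=0$, hence $H(u,x)=0$.

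The range $u\geq u_H$ is handled by the same argument: there $c$ vanishes and $\sigma$ reduces to $\sigma_0^-$, so the calculation collapses to $H(u,x)=H_0\,\P_{u,x}(X_{\sigma_0^-}=0)$; since $u_H<\infty$ only in the finite variation, finite activity regime, where $X$ has positive drift and only downward jumps of finite rate, the process cannot creep continuously to $0$ from above, and this probability vanishes. The principal technical hurdle will be making the compensation step rigorous, i.e.\ showing that the only crossing of $c(U_{\cdot})$ contributing to the compensated sum is the one occurring at time $\sigma$, together with the identification $\widetilde{H}(U_{\sigma-},X_\sigma)=H(U_\sigma,X_\sigma)$ in the delicate subcase where a single downward jump simultaneously crosses both $c$ and $0$.
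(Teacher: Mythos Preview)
Your proposal is correct and follows essentially the same route as the paper's proof: establish $H(u,c(u))=0$ on the boundary from \eqref{eq:definitionofU}--\eqref{eq:Uandcattheboundary}, apply the strong Markov property at the first exit time from $D_c$ to the representation of Lemma~\ref{lemma:alternativerepresentationofH}, and use the compensation formula to identify the $\Pi$-integral with the jump part of the boundary term, leaving only the creeping contribution which vanishes. The paper handles the creeping case slightly more succinctly by observing that $\P_x(X_{\sigma_c}=c(u+\sigma_c))>0$ forces $\sigma>0$, hence $u_H=\infty$ and $H(u,c(u))=0$ for \emph{all} $u>0$; your separate treatment of $u\geq u_H$ via the no-downward-creeping property in the bounded-variation, finite-activity regime reaches the same conclusion. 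Your explicit verification that $\widetilde{H}(U_{\sigma-},X_\sigma)=H(U_\sigma,X_\sigma)$ (splitting on whether the exit jump crosses zero) is a point the paper leaves implicit in its application of the compensation formula.
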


\begin{proof}
Note that, from equations (\ref{eq:definitionofU}) and (\ref{eq:Uandcattheboundary}), we know that $H(u,c(u))=0$ for all $u\in (0,u_H)$. Let $(u,x)\in D_c$ such that $x>c(u)$ and define $\sigma_c$ as the first time that $(U,X)$ exits $D_c$, that is,
\begin{align*}
\sigma_c=\inf\{s\geq 0: X_{s} < c(U_s) \}.
\end{align*}
From the fact that $X_{r}\geq c(U_r)$ for all $r< \sigma_c$, the Markov property and representation (\ref{eq:alternativerepresentationforH}) of $H$ we can see that 
\begin{align*}
H(u,x)&=\E_{u,x}(H(U_{\sigma_c}, X_{\sigma_c})\I_{\{ \sigma_c<\infty \}} )+\E_{u,x}\left( \int_0^{ \sigma_c} G(U_s,X_s)\I_{\{ X_s<c(U_s)\}}\dd s \right)\\
&\qquad -\E_{u,x} \left(  \int_0^{ \sigma_c} \int_{(-\infty,0)} \widetilde{H}(U_s,X_s+y) \I_{\{ X_s+y<c(U_s)\}}\Pi(\dd y)  \I_{\{ X_s>c(U_s)\}}\dd s \right)\\
&= \E_{u,x}(H(U_{\sigma_c}, X_{\sigma_c})\I_{\{ \sigma_c<\infty , X_{\sigma_c}< c(U_s) \}} )\\
&\qquad-\E_{u,x} \left(  \int_0^{ \sigma_c} \int_{(-\infty,0)} \widetilde{H}(U_s,X_s+y) \I_{\{ X_s+y<c(U_s)\}}\Pi(\dd y) \dd s \right),
\end{align*}
where the last equality follows from the fact that $\P_{x}(X_{\sigma_c}=c(u+\sigma_c))>0$ only when $\sigma>0$ and then $H(u,c(u))=0$ for all $u>0$ (since $u_H=\infty$ in this case). Then, since $H\leq 0$, applying the compensation formula for Poisson random measures and the fact that $\sigma_c\leq \tau_0^-$ (since $c(u)\geq 0$ for all $u>0$), we get
\begin{align*}
&\E_{u,x}(H(U_{\sigma_c}, X_{\sigma_c})\I_{\{ \sigma_c<\infty \}} \I_{\{ X_{\sigma_c}< c(U_s) \}} )\\
&=\E_{x}\left( \int_0^{\infty} \int_{(-\infty,0)} \I_{\{X_{r} \geq  c(u+r) \text{ for all } r<s \}}\I_{\{X_{s-}+y <  c(u+s) \}} \widetilde{H}(u+s,X_{s-}+y) N(\dd s,\dd y)\right)\\
&=\E_{x}\left( \int_0^{\infty} \int_{(-\infty,0)} \I_{\{X_{r} \geq  c(u+r) \text{ for all } r<s \}}\I_{\{X_{s-}+y <  c(u+s) \}} \widetilde{H}(u+s,X_{s-}+y)\Pi(\dd y)\dd s\right)\\
&=\E_{u,x}\left( \int_0^{\sigma_c} \int_{(-\infty,0)} \widetilde{H}(U_s,X_{s}+y)\I_{\{X_s+y<c(U_s) \}}\Pi(\dd y)\dd s\right).
\end{align*}
Hence, we have that $H(u,x)=0$ for all $(u,x)\in D_c$ as claimed.
\end{proof}

The following Lemma states that $H$ dominates the value function $V$.
\begin{lemma}
\label{lemma:HgeqV}
We have that $H(u,x)\geq V(u,x)$ for all $(u,x)\in E$.

\end{lemma}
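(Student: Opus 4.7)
The plan is to exhibit a stopping time $\tau^*$ for which $H(u,x)=\E_{u,x}\!\left[\int_0^{\tau^*}G(U_s,X_s)\,\dd s\right]$; since $V(u,x)$ is defined as the infimum over $\tau\in\mathcal{T}$ of these expectations, this immediately yields $V(u,x)\le H(u,x)$. The natural candidate is the entry time of $(U,X)$ into $D_c$,
\[
\sigma_c:=\inf\{t\ge 0:X_t\ge c(U_t)\}.
\]
If $(u,x)\in D_c$, then $\sigma_c=0$ under $\P_{u,x}$, $H(u,x)=0$ by Lemma \ref{lemma:HvanishesinDc}, while $V(u,x)\le 0$ by Lemma \ref{lemma:finitnessofVandnegativevaluesh}, and we are done.

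For $(u,x)\notin D_c$ I would first check that $\sigma_c<\infty$ a.s.\ (using $X_t\to\infty$ together with $c$ non-increasing and $\lim_{u\to\infty}c(u)=\lim_{u\to\infty}h(u)<\infty$) and that $(U_{\sigma_c},X_{\sigma_c})\in D_c$. For the latter, the absence of positive jumps of $X$ is crucial: at a downward jump time $t$ one has $X_t<X_{t-}$ and, since $U$ can only remain constant or jump down at $t$ (forcing $c(U_t)\ge c(U_{t-})$), one cannot have $X_{t-}<c(U_{t-})$ and $X_t\ge c(U_t)$ simultaneously. Hence $X$ creeps up to the boundary, $X_{\sigma_c}=c(U_{\sigma_c})$, and in particular $U_{\sigma_c}>0$, so $(U_{\sigma_c},X_{\sigma_c})\in D_c$ and Lemma \ref{lemma:HvanishesinDc} gives $H(U_{\sigma_c},X_{\sigma_c})=0$.

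Starting from the alternative representation of $H$ in Lemma \ref{lemma:alternativerepresentationofH}, I would split both integrals at $\sigma_c$. For $s<\sigma_c$ one has $X_s<c(U_s)$, so $\I_{\{X_s<c(U_s)\}}=1$ and $\I_{\{X_s>c(U_s)\}}=0$, which collapses the integral over $[0,\sigma_c)$ to $\E_{u,x}\!\left[\int_0^{\sigma_c}G(U_s,X_s)\,\dd s\right]$. For the integral over $[\sigma_c,\infty)$, the strong Markov property of $(U,X)$ (stated in Section \ref{sec:Preliminaries}) together with Lemma \ref{lemma:alternativerepresentationofH} applied at $(U_{\sigma_c},X_{\sigma_c})$ identifies the remainder as $\E_{u,x}[H(U_{\sigma_c},X_{\sigma_c})]=0$. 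Combining these two pieces gives
\[
H(u,x)=\E_{u,x}\!\left[\int_0^{\sigma_c}G(U_s,X_s)\,\dd s\right]\ge V(u,x),
\]
as desired.

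The main obstacle is technical rather than conceptual: one must verify that $(U_{\sigma_c},X_{\sigma_c})\in D_c$ (handled via spectral negativity and monotonicity of $c$, as above), and that the integrability required to split the expectation at $\sigma_c$ and invoke the strong Markov property is available; the latter is inherited from the integrability estimates already established in the proof of Lemma \ref{lemma:alternativerepforV} (bounding $|G|$ by $U_s^{p-1}+\E_{X_s}(g^{p-1})$, controlling the first term via the last passage time of $X$ above $c(\delta)$ for small $\delta>0$, and the second via Lemma \ref{lemma:finitenessofintExgn}).
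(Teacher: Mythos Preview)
Your proposal is correct and follows essentially the same route as the paper: both treat the case $(u,x)\in D_c$ via Lemma~\ref{lemma:HvanishesinDc}, and for $(u,x)\notin D_c$ use the strong Markov property at the first entry time $\tau_c=\sigma_c$ into $D_c$ together with the alternative representation of $H$ to obtain $H(u,x)=\E_{u,x}\bigl[\int_0^{\tau_c}G(U_s,X_s)\,\dd s\bigr]\ge V(u,x)$. Your discussion of why $(U,X)$ cannot jump into $D_c$ (spectral negativity plus monotonicity of $c$) and why $\sigma_c<\infty$ is somewhat more explicit than the paper's, which simply invokes that $X$ creeps upwards and $\tau_c<\infty$.
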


\begin{proof}
If $(u,x)\in D_c$ we have the inequality
\begin{align*}
H(u,x)=0\geq V(u,x).
\end{align*}
Now we show that the inequality also holds in $E\setminus D_c$. Consider the stopping time
\begin{align*}
\tau_c=\inf\{ s \geq 0 : X_{s} \geq  c(U_s) \}.
\end{align*}
Then, using the Markov property and equation (\ref{eq:alternativerepresentationforH}), we get that for all $(u,x)\in E$ with $x<c(u)$, 

\begin{align*}
H(u,x)&=\E_{u,x}\left( H(U_{\tau_c} , X_{\tau_c})\right)+ \E_{u,x}\left( \int_0^{\tau_c} G(U_s,X_s)\I_{\{ X_s<c(U_s)\}}\dd s \right)\\
&\qquad -\E_{u,x} \left(  \int_0^{\tau_c} \int_{(-\infty,0)} \widetilde{H}(U_s,X_s+y) \Pi(\dd y)  \I_{\{ X_s>c(U_s)\}}\dd s \right)  \\
&=\E_{u,x}\left( H(U_{\tau_c} , c(U_{\tau_c}))\right)+ \E_{u,x}\left( \int_0^{\tau_c} G(U_s,X_s)\dd s \right),
\end{align*}
where in the second equality we used the fact $X$ creeps upwards and $\tau_c<\infty$. Note that, since for any $t>0$, $X_t >0$ if and only if $U_t>0$ and $c(u)>0$ for all $u<u_H$, we have that $U_{\tau_c}<u_H$ so then  $c(U_{\tau_c})>0$, and hence $ H(U_{\tau_c} , c(U_{\tau_c}))=0$. Therefore

\begin{align*}
H(u,x)=\E_{u,x}\left( \int_0^{\tau_c} G(U_s,X_s)\dd s \right) \geq V(u,x),
\end{align*}
where the inequality follows from the definition of $V$ as per (\ref{eq:optimalstoppingproblem}).
\end{proof}
It turns out that the fact that $H$ dominates $V$ implies that $b$ dominates the curve $c$. This fact is shown in the following Lemma.
\begin{lemma}
\label{lemma:blesthanc}
We have that $b(u) \geq c(u)$ for all $u>0$.
\end{lemma}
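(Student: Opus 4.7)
The plan is to argue by contradiction. Suppose there exists $u_0>0$ with $b(u_0)<c(u_0)$ and choose some $x\in(b(u_0),c(u_0))$. Since $x>b(u_0)$, the point $(u_0,x)$ belongs to the stopping region $D=\{V=0\}$ of the optimal stopping problem \eqref{eq:optimalstoppingproblem}, so $V(u_0,x)=0$. Combining Lemma \ref{lemma:HgeqV} with the standing hypothesis that $H$ is non-positive forces
\begin{align*}
0=V(u_0,x)\leq H(u_0,x)\leq 0,
\end{align*}
so $H(u_0,x)=0$.

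Next I would reuse the identity established inside the proof of Lemma \ref{lemma:HgeqV}: for any $(u,y)\in E$ with $y<c(u)$,
\begin{align*}
H(u,y)=\E_{u,y}\left(\int_0^{\tau_c} G(U_s,X_s)\dd s\right),\qquad \tau_c:=\inf\{s\geq 0: X_s\geq c(U_s)\}.
\end{align*}
Applied at $(u_0,x)$ this forces $\E_{u_0,x}\left(\int_0^{\tau_c}G(U_s,X_s)\dd s\right)=0$. Note that $\tau_c>0$ $\P_{u_0,x}$-almost surely by right-continuity of $X$ at $0$ together with $x<c(u_0)$.

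The contradiction comes from showing that the integral is strictly positive. Since $x>b(u_0)\geq h(u_0)$, the definition of $h$ and the strict monotonicity of $y\mapsto G(u_0,y)$ on $(h(u_0),\infty)$ yield $G(u_0,x)>0$. By continuity of $G$ on $(0,\infty)\times(0,\infty)$, continuity of $h$ and $c$, and right-continuity of the paths of $(U,X)$ at $t=0$, one can exhibit an $\epsilon>0$ and an event $A$ with $\P_{u_0,x}(A)>0$ on which $\tau_c>\epsilon$ and $G(U_s,X_s)\geq G(u_0,x)/2$ for all $s\in[0,\epsilon]$. Integrating then gives
\begin{align*}
\E_{u_0,x}\left(\int_0^{\tau_c} G(U_s,X_s)\dd s\right)\geq \frac{\epsilon}{2}G(u_0,x)\P_{u_0,x}(A)>0,
\end{align*}
contradicting $H(u_0,x)=0$. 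Hence $b(u)\geq c(u)$ for every $u>0$.

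The main obstacle is constructing the event $A$ rigorously, i.e.\ arranging simultaneously that $(U_s,X_s)$ remains below the curve $c$ (so $\tau_c>\epsilon$) and stays in a neighbourhood of $(u_0,x)$ where $G$ is bounded away from $0$. This is handled by exploiting the strict inequalities $h(u_0)<x<c(u_0)$, the continuity of $h$, $c$ and $G$, and the fact that for any initial neighbourhood of $(u_0,x)$ in the interior of $E$, the two-dimensional process $(U,X)$ remains in that neighbourhood on $[0,\epsilon]$ with positive probability (e.g.\ on paths where no large negative jump occurs and the continuous martingale part stays small).
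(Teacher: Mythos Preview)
Your argument has a genuine gap in the displayed inequality
\[
\E_{u_0,x}\left(\int_0^{\tau_c} G(U_s,X_s)\,\dd s\right)\;\geq\;\frac{\epsilon}{2}\,G(u_0,x)\,\P_{u_0,x}(A).
\]
On the event $A$ you control $\int_0^{\epsilon} G(U_s,X_s)\,\dd s$ from below, but you say nothing about $\int_{\epsilon}^{\tau_c} G(U_s,X_s)\,\dd s$ on $A$, nor about $\int_0^{\tau_c} G(U_s,X_s)\,\dd s$ on $A^c$. Both of these contributions can be negative: after time $\epsilon$ (or on $A^c$) the process $X$ may jump deep into $(-\infty,0]$, where $G(0,\cdot)=-\E_{\cdot}(g^{p-1})$ is strictly negative and unbounded below, and it will then take time to climb back up to the curve $c$. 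Local positivity of $G$ near $(u_0,x)$ therefore does not by itself force the full expectation to be strictly positive. What you are implicitly trying to prove is that the stopping time $\tau_c>0$ is \emph{strictly} suboptimal for $V(u_0,x)$; this is exactly where one needs more than $G>0$ locally, namely the strict positivity of $\mathcal{A}_{U,X}(V)+G$ on $D$ (Lemma~\ref{lemma:auxiliaryfunctionLambda}).

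The paper's proof proceeds differently and avoids this issue. It chooses $x>c(u_0)$ (so that $H(u_0,x)=0$ by Lemma~\ref{lemma:HvanishesinDc}), stops $(U,X)$ at $\sigma_b=\inf\{s>0:X_s<b(U_s)\}$, and expands $H(u_0,x)$ via the representation~\eqref{eq:alternativerepresentationforH} together with the compensation formula. This produces the inequality
\[
0\;\geq\;\E_{u_0,x}\!\left(\int_0^{\sigma_b}\Big[G(U_s,X_s)+\int_{(-\infty,0)}\widetilde{H}(U_s,X_s+y)\,\Pi(\dd y)\Big]\I_{\{X_s<c(U_s)\}}\,\dd s\right),
\]
and the integrand is shown to be strictly positive on $\{b(U_s)<X_s<c(U_s)\}$ using $H\geq V$ and Lemma~\ref{lemma:auxiliaryfunctionLambda}. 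Crucially, the integrand here is $G+\int\widetilde{H}\,\Pi(\dd y)$, not $G$ alone; the jump term compensates precisely for the possibility of large downward excursions that breaks your inequality. If you want to salvage your approach, you would need to replace the raw $G$ by this corrected quantity (equivalently, work with the submartingale $V(U_t,X_t)+\int_0^t G\,\dd s$ and use the strict positivity of $\Lambda$ on $D$), at which point you are essentially reproducing the paper's argument.
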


\begin{proof}
Note that in the case that $X$ is of finite variation with $\Pi(-\infty,0)<\infty$, we have that $c(u)=0\leq b(u)$ for all $u>u_H$. We proceed by contradiction. Suppose that there exists $u_0>0$ such that $b(u_0)<c(u_0)$. Then, in the case that $X$ is of finite variation with $\Pi(-\infty,0)<\infty$, it holds that $u_0<u_H$. Take $x>c(u_0)$ and consider the stopping time
\begin{align*}
\sigma_b&=\inf\{s>0: X_s<b(U_s) \}.
\end{align*}
Then, from the Markov property and the representation of $H$ given in (\ref{eq:alternativerepresentationforH}), we have that
\begin{align*}
H(u_0,x)&=\E_{u_0,x}\left(H(U_{\sigma_b^-}, X_{\sigma_b^-} ) \right)+ \E_{u_0,x}\left(\int_0^{\sigma_{b}^-} G(U_s,X_s)\I_{\{X_s<c(U_s) \}}\dd s \right) \nonumber\\
&\qquad- \E_{u_0,x}\left(\int_0^{\sigma_{b}^-} \int_{(-\infty,0)} \widetilde{H}(U_s,X_s+y)\Pi(\dd y)\I_{\{X_s>c(U_s) \}}\dd s \right).
\end{align*}
Moreover, since $V(u,x)=0$ for $(u,x)\in D$ and $0\geq H\geq V$, we have that

\begin{align*}
\E_{u_0,x}\left(H(U_{\sigma_b^-}, X_{\sigma_b^-} ) \right)&=
\E_{u_0,x}\left(H(U_{\sigma_b^-}, X_{\sigma_b^-} )\I_{\{ X_{\sigma_b^-} <b(U_{\sigma_b^-}) \}} \right)\\
&=\E_{u_0,x}\left(\int_0^{\sigma_{b}^-} \int_{(-\infty,0)} \widetilde{H}(U_s,X_s+y)\I_{\{X_s+y \leq b(U_s) \}} \Pi(\dd y) \dd s \right)\\
&= \E_{u_0,x}\left(\int_0^{\sigma_{b}^-} \int_{(-\infty,0)} \widetilde{H}(U_s,X_s+y)\Pi(\dd y)\dd s \right),
\end{align*}
where the second equality follows from the compensation formula for Poisson random measures. Hence, combining the two equations above and from the fact that $x>c(u_0)$, then $H(u_0,x)=0$, we get

\begin{align*}
0= \E_{u_0,x}\left(\int_0^{\sigma_{b}^-} \left[ G(U_s,X_s) +\int_{(-\infty,0)} \widetilde{H}(U_s,X_s+y)\Pi(\dd y)\right]\I_{\{X_s<c(U_s) \}}\dd s \right).
\end{align*}
Due to the continuity of $b$ and $c$, there exists a value $u_1$ sufficiently small such that $c(v)>b(v)$ for all $v\in [u_0,u_0+u_1)$. Thus, from Lemma \ref{lemma:auxiliaryfunctionLambda}, the fact that $u \mapsto G(u,x)$ is strictly increasing when $x>0$ and the inequality $H\geq V$ (see Lemma \ref{lemma:HgeqV}), we have that for all $u>0$ and $x> b(u)$,
\begin{align*}
G(u,x)+\int_{(-\infty,0)} \widetilde{H}(u,x+y)\Pi(\dd y)\geq G(u,x)+\int_{(-\infty,0)} \widetilde{V}(u,x+y)\Pi(\dd y)>0,
\end{align*}
where the strict inequality follows from Lemma \ref{lemma:auxiliaryfunctionLambda}. Note that taking $x$ sufficiently large we have that, under the measure $\P_{u_0,x}$, $X$ spends a positive amount of time between the curves $b(u)$ and $c(u)$ for $u\in [u_0,u_0+u_1]$ with positive probability. Thus, since $\sigma_c <\tau_0^-$ the above facts imply that

\begin{align*}
0= \E_{u_0,x}\left(\int_0^{\sigma_{b}^-} \left[ G(U_s,X_s) +\int_{(-\infty,0)} \widetilde{H}(U_s,X_s+y)\Pi(\dd y)\right]\I_{\{X_s<c(U_s) \}}\dd s \right)>0.
\end{align*}
We have got a contradiction and then we have that $c(u)\leq b(u)$ for all $u>0$.
\end{proof}
Note that (\ref{eq:inequalityVPidyGgeq0}) and the definition of $u_H$ given in (\ref{eq:definitionofuH}) imply the inequality $G(u,x)+\int_{(-\infty,0)} \widetilde{H}(u,x+y)\Pi(\dd y)\geq 0$ for all $u>0$ and $x>c(u)$. It can be shown that such inequality guarantees that the process $\{ H(U_t,X_t)+\int_0^t G(U_s,X_s) \dd s, t\geq 0 \}$ is a $\P_{u,x}$-submartingale, for all $(u,x)\in E$. We finish the proof by showing that indeed $c$ corresponds to $b$.

\begin{lemma}
\label{lemma:uisb}
We have that $c(u)=b(u)$ for all $u>0$ and $V(u,x)=H(u,x)$ for all $(u,x)\in E$.
\end{lemma}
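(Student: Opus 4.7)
The plan is to deduce $c = b$ and $H = V$ from a single key ingredient: that
\begin{align*}
M_t := H(U_t,X_t) + \int_0^t G(U_s,X_s) \dd s
\end{align*}
is a $\P_{u,x}$-submartingale for every $(u,x) \in E$. Once the submartingale property is in hand, optional sampling together with $H \leq 0$ yields
\begin{align*}
H(u,x) \leq \E_{u,x}\left[\int_0^\tau G(U_s,X_s) \dd s\right]
\end{align*}
for every stopping time $\tau$, and taking infimum gives $H \leq V$. Combined with Lemma~\ref{lemma:HgeqV}, this proves $H = V$ on all of $E$.

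To obtain the submartingale property, I will first observe that the alternative representation of $H$ in Lemma~\ref{lemma:alternativerepresentationofH} together with the strong Markov property of $(U,X)$ makes
\begin{align*}
\tilde{M}_t := H(U_t,X_t) + \int_0^t F(U_s,X_s) \dd s
\end{align*}
an honest $\P_{u,x}$-martingale, where
\begin{align*}
F(u,x) := G(u,x)\I_{\{x<c(u)\}} - \int_{(-\infty,0)} \widetilde{H}(u,x+y)\I_{\{x+y<c(u)\}} \Pi(\dd y) \I_{\{x>c(u)\}}.
\end{align*}
The task then reduces to checking that $G - F \geq 0$ pointwise on $E$. On $\{x \leq c(u)\}$ the difference is easily $0$ (or $\geq 0$ at equality, since $c \geq h$), while on $\{x > c(u)\}$, using that $\widetilde{H}$ vanishes on $\{z \geq c(u)\}$ by Lemma~\ref{lemma:HvanishesinDc}, the difference equals $G(u,x) + \int_{(-\infty,0)} \widetilde{H}(u,x+y) \Pi(\dd y)$, which is strictly positive by Lemma~\ref{lemma:GintVispositive}. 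Integrating gives $\E_{u,x}[M_t] \geq \E_{u,x}[\tilde{M}_t] = H(u,x)$, and a standard localisation argument extends this to arbitrary stopping times.

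Once $H = V$ is established, the equality $c = b$ falls out immediately: for any $(u,x)$ with $x \geq c(u)$, Lemma~\ref{lemma:HvanishesinDc} gives $V(u,x) = H(u,x) = 0$, so $x \geq b(u)$. This yields $b \leq c$ on $(0,\infty)$, and combined with Lemma~\ref{lemma:blesthanc} delivers $c = b$.

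The main obstacle I anticipate is the limit passage from the bounded stopping time $\tau \wedge n$ back to a general $\tau$. Since $G(U_s,X_s)$ can grow in $s$ (because $u \mapsto G(u,x)$ is unbounded as $u \to \infty$) and $H$ can be unbounded below as $X_s \to -\infty$ (see Corollary~\ref{cor:lowerboundforV}), some care is required. To handle this I will exploit the uniform lower bound $G(u,x) \geq -\E_x(g^{p-1})$ together with Lemma~\ref{lemma:finitenessofintExgn} to dominate the negative part of the $G$-integral and pass to the limit via a combination of monotone convergence on $G_+$ and dominated convergence on $G_-$. The term $\E_{u,x}[H(U_{\tau \wedge n}, X_{\tau \wedge n})]$ only aids the inequality because $H \leq 0$, so no further control on it is needed.
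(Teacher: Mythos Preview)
Your proposal is correct and rests on the same key ingredient as the paper's proof, namely Lemma~\ref{lemma:GintVispositive}, but the packaging differs in an instructive way. The paper argues directly by contradiction: it assumes $c(u_0)<b(u_0)$ for some $u_0$, applies the Markov property for the representation \eqref{eq:alternativerepresentationforH} of $H$ at the specific stopping time $\tau_D$, uses $H(U_{\tau_D},X_{\tau_D})=0$ (because $X_{\tau_D}=b(U_{\tau_D})\geq c(U_{\tau_D})$), and compares with $V(u_0,x)=\E_{u_0,x}[\int_0^{\tau_D}G\,\dd s]$ to obtain
\[
0\geq V(u_0,x)-H(u_0,x)=\E_{u_0,x}\!\left[\int_0^{\tau_D}\Big(G(U_s,X_s)+\int_{(-\infty,0)}\widetilde H(U_s,X_s+y)\,\Pi(\dd y)\Big)\I_{\{X_s>c(U_s)\}}\,\dd s\right]>0,
\]
the strict positivity coming from Lemma~\ref{lemma:GintVispositive}. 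You instead promote this one-line computation to a general submartingale statement: the representation \eqref{eq:alternativerepresentationforH} makes $\tilde M_t=H(U_t,X_t)+\int_0^t F\,\dd s$ a martingale, and $G-F\geq 0$ (again by Lemma~\ref{lemma:GintVispositive} on $\{x>c(u)\}$) turns $M_t=H(U_t,X_t)+\int_0^t G\,\dd s$ into a submartingale, from which optional sampling and $H\leq 0$ yield $H\leq V$ directly. Your route is slightly more abstract and proves $H=V$ before $c=b$, whereas the paper reverses that order; but the two are logically interchangeable and the analytic content is identical. Your handling of the limit $n\to\infty$ via the uniform domination $G_-\leq \E_{X_s}(g^{p-1})$ and Lemma~\ref{lemma:finitenessofintExgn} is exactly the right way to close the argument.
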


\begin{proof}
Suppose that there exists $u>0$ such that $c(u)<b(u)$ and take $x\in (c(u),b(u))$. Then, by the Markov property and representation (\ref{eq:alternativerepresentationforH}) we have that

\begin{align}
H(u,x)&= \E_{u,x}\left(H(U_{\tau_D}, X_{\tau_D} )  \right)+ \E_{u,x}\left(\int_0^{\tau_D } G(U_s,X_s)\I_{\{X_s<c(U_s) \}}\dd s \right)\nonumber\\
\label{eq:HintermsoftauD}
&\qquad- \E_{u,x}\left(\int_0^{\tau_D} \int_{(-\infty,0)} \widetilde{H}(U_s,X_s+y)\Pi(\dd y)\I_{\{X_s>c(U_s) \}}\dd s \right),
\end{align}
where $\tau_D=\inf\{t>0: X_t\geq b(U_t) \}$. On the other hand, we have that

\begin{align*}
V(u,x)= \E_{u,x}\left(\int_0^{\tau_D } G(U_s,X_s)\dd s \right).
\end{align*}
Since $X_{\tau_D}=b(U_{\tau_D})\geq  c(U_{\tau_D})$ under $\P_{u,x}$, we deduce from Lemma \ref{lemma:HvanishesinDc} that $H(U_{\tau_D},X_{\tau_D})=0$. Hence, using the inequality $H\geq V$ (see Lemma \ref{lemma:HgeqV}) in \eqref{eq:HintermsoftauD}, we obtain that
\begin{align*}
0\geq  \E_{u,x}\left(\int_0^{\tau_D } \left[ G(U_s,X_s)+\int_{(-\infty,0)} \widetilde{H}(U_s,X_s+y)\Pi(\dd y) \right] \I_{\{X_s>c(U_s) \}}\dd s \right)>0,
\end{align*}
where the strict inequality follows by the inequality (\ref{eq:inequalityVPidyGgeq0}) and the continuity of $b$ and $c$. This contradiction allows us to conclude that $c(u)=b(u)$ for all $u>0$. Lastly, recall from the proof of Lemma \ref{lemma:HgeqV} that $H(u,x)=0=V(u,x)$ for all $x>c(u)=b(u)$, and 

\begin{align*}
H(u,x)=\E_{u,x}\left( \int_0^{\tau_c} G(U_s,X_s)\dd s \right)=\E_{u,x}\left( \int_0^{\tau_D} G(U_s,X_s)\dd s \right) = V(u,x)
\end{align*}
for all $(u,x)\in E$ such that $x<c(u)=b(u)$. The proof is now complete. 
\end{proof}

\begin{rem}
A close inspection of the proof tells us that the assumption $H\leq 0$ can be dropped when $\Pi\equiv 0$.
\end{rem}

\section{Examples}
\label{sec:Examples}
\subsection{Brownian Motion with drift example}
\label{sbsec:BMwdriftexampleLp}
Suppose that $X_t$ is given by
\begin{align*}
X_t=\mu t+\sigma B_t,
\end{align*}
where $\mu>0$, $\sigma>0$ and $B=\{ B_t ,t\geq 0\}$ is a standard Brownian motion. Here, we consider the case $p=2$. Then, for any $(u,x)\in E$ we have,
\begin{align*}
G(u,x)=u \psi'(0+) W(x)- \E_x( g).
\end{align*}
It is well known that for $\beta\geq 0$ and $q\geq 0$,
\begin{align*}
\psi(\beta)=\frac{\sigma^2}{2}\beta^2+\mu \beta \qquad \text{and} \qquad  \Phi(q)=\frac{1}{\sigma^2}\left[ \sqrt{\mu^2 +2\sigma^2 q} -\mu\right].
\end{align*}
Thus, $\psi'(0+)=\mu$, $\Phi'(0)=\frac{1}{\mu}$, $\Phi''(0)=-\frac{\sigma^2}{\mu^3}$ and $\Phi'''(0)=3\sigma^4/ \mu^5$. The scale function is (see e.g. \cite{kyprianou2011theory}, on p. 102) given by
%
%
\begin{align*}
W(x)=\frac{1}{\mu}(1-\exp(-2\mu x/\sigma^2)), \qquad x\geq 0.
\end{align*}
An easy calculation shows that $W^{*(2)}$ is given by
\begin{align*}
W^{*(2)}(x)
=\frac{1}{\mu^2} x[1+ \exp(-2\mu/\sigma^2 x)]-\frac{\sigma^2}{\mu^2}\frac{1}{\mu}(1-\exp(-2\mu/\sigma^2 x)), \qquad x\geq 0.
\end{align*}
For all $x\in \R$, the value $\E_x(g)$ can be calculated from (\ref{eq:laplacetransformofg}) via differentiation to have
\begin{align*}
\E_x(g)&=  -\psi'(0+)[\Phi''(0+)+x\Phi'(0)^2]+\psi'(0+)W^{*2}(x)\\
&=\left\{
\begin{array}{lc}
\frac{\sigma^2}{\mu^2}-\frac{x}{\mu},& x<0,\\
\frac{\sigma^2}{\mu^2} \exp(-2\mu/\sigma^2 x) +\frac{x}{\mu} \exp(-2\mu/\sigma^2 x), & x\geq 0.
\end{array}
\right.
\end{align*}
and $\E(g^2)=\Phi'''(0)\psi'(0+)=3(\sigma/\mu)^4$. Moreover, it is well known that $X_r \sim N(\mu r, \sigma^2 r)$ for any $r\geq 0$ and
\begin{align*}
\P_x( X_r \in \dd z, \underline{X}_r \leq   0 )
&=e^{-\frac{2\mu}{\sigma^2 } x} \frac{1}{\sqrt{\sigma^2 r}} \phi\left(\frac{z+x-\mu r}{\sqrt{\sigma^2 r}} \right) \dd z,
\end{align*}
for any $r\geq 0$, $x\geq 0$ and $z\geq 0$, where $\phi$ is the density of a standard normal distribution. Hence we have that for any $x\geq 0$ and $z\geq 0$,
\begin{align*}
\P_x( X_r \in \dd z, \underline{X}_r \geq  0 )=\frac{1}{\sqrt{\sigma^2 r}}\left[  \phi\left(\frac{z-x-\mu r}{\sqrt{\sigma^2 r}} \right)-e^{-\frac{2\mu}{\sigma^2 } x}  \phi\left(\frac{z+x-\mu r}{\sqrt{\sigma^2 r}} \right)\right] \dd z.
\end{align*}
Then, we calculate for any $u>0$ and $x>0$,
\begin{align*}
\E_{x}&\left(\int_0^{\tau_0^-} G(u+r,X_r)\I_{\{X_{r} <b(r+u) \}} \dd r \right)\\
&=\int_0^{\infty}  \int_0^{b(r+u)} [(r+u) \psi'(0+)W(z)- \E_{z}( g)]\P_x(X_r\in \dd z,\underline{X}_r \geq 0)  \dd r\\
&=\int_0^{\infty} \left\{ H(r,u,x,b(r+u)) -e^{-2\mu/\sigma^2 x}H(r,u,-x,b(r+u)) \right\} \dd r,
\end{align*}
where a lengthy but straightforward calculation gives
\begin{align*}
H(r,t,x,b)&=\int_0^{b} [(r+t) \psi'(0+)W(z)- \E_{z}( g)] \frac{1}{\sqrt{\sigma^2 r}} \phi\left(\frac{z-x-\mu r}{\sqrt{\sigma^2 r}} \right)dz\\
&=(r+t)\left[\Psi\left( \frac{b-x-\mu r}{\sigma \sqrt{r}}\right)-\Psi\left( \frac{-x-\mu r}{\sigma \sqrt{r}}\right) \right]\\
&\qquad  -\left[\frac{x}{\mu}+t+\frac{\sigma^2 }{\mu^2}  \right]  e^{-2\mu/\sigma^2 x}\left[\Psi\left( \frac{b-x+\mu r}{\sigma \sqrt{r}}\right)-\Psi\left( \frac{-x+\mu r}{\sigma \sqrt{r}}\right) \right]\\
&\qquad +\frac{\sigma \sqrt{r}}{\mu} e^{-2\mu/\sigma^2 x} \left[ \phi\left(\frac{b-x+\mu r}{\sigma \sqrt{r}} \right)-\phi\left(\frac{-x+\mu r}{\sigma \sqrt{r}} \right) \right].
\end{align*}
From formula (\ref{eq:expressionforV0xnegative}) we can deduce that
\begin{align*}
V(0,x)&=-\int_{0}^{-x} \int_{0}^{\infty}  \E_{-u-z}(g)W'(u) \dd u  \dd z +V(0,0)\\
&= \frac{3 \sigma^2}{2  \mu^3} x-\frac{1}{ 2\mu^2} x^2+V(0,0).
\end{align*}
Then,
\begin{align*}
\frac{\partial }{\partial x} V_-(0,0)= \frac{3 \sigma^2}{2  \mu^3}.
\end{align*}
From Theorem \ref{thm:characterisationofbandV} we have that for $u>0$ and $x>0$,
\begin{align*}
V(u,x)
&=
V(0,0)\exp(-2\mu x/\sigma^2) \\
&\qquad+\int_0^{\infty} \left\{ H(r,u,x,b(r+u)) -e^{-2\mu/\sigma^2 x}H(r,u,-x,b(r+u)) \right\} \dd r .
\end{align*}
Therefore, the curve $b(u)$ and $V(0,0)$ satisfy the equations
\begin{align*}
0&=V(0,0)\exp(-2\mu b(u)/\sigma^2) \\
&\qquad +\int_0^{\infty} \left\{ H(r,u,x,b(r+u)) -e^{-2\mu/\sigma^2 x}H(r,u,-x,b(r+u)) \right\} \dd r,\\
0&=\frac{3 \sigma^2}{2  \mu^3}-\frac{\partial}{\partial x}V_+(0,0),
\end{align*}
for all $u>0$, where
\begin{align*}
-\frac{3}{2}\frac{\sigma^4}{\mu^4}\leq V(0,0)<0.
\end{align*}
Note that $\frac{\partial}{\partial x}V_+(0,0)$ can be estimated via $[V(h_0,h_0)-V(0,0)]/h_0$ for $h_0$ sufficiently small. \\

We can approximate the integrals above by Riemann sums to implement a numerical approximation of $b$ and $V$. Take $u>0$ and $x>0$, by the dominated convergence theorem (see equations \eqref{eq:integrabilityofGbelowb} and \eqref{eq:integrabilityVPidy}) we have that 
\begin{align*}
V(u,x)
&=
V(0,0)\exp(-2\mu x/\sigma^2)\\
&\qquad+\lim_{T\rightarrow \infty}\int_0^{T-u} \left\{ H(r,u,x,b(r+u)) -e^{-2\mu/\sigma^2 x}H(r,u,-x,b(r+u)) \right\} \dd r \\
&=
V(0,0)\exp(-2\mu x/\sigma^2)\\
&\qquad+\lim_{T\rightarrow \infty}\int_u^{T} \left\{ H(r-u,u,x,b(r)) -e^{-2\mu/\sigma^2 x}H(r-u,u,-x,b(r)) \right\} \dd r \\
&=
V(0,0)\exp(-2\mu x/\sigma^2)\\
&\qquad+\lim_{T\rightarrow \infty} \lim_{n \rightarrow \infty} \sum_{i=k}^{n-1} \Big\{ H((i-k+1)h_n,u,x,b(ih_n))\\
&\qquad\qquad\qquad\qquad\qquad -e^{-2\mu/\sigma^2 x}H((i-k+1)h_n,u,-x,b(ih_n)) \Big\} h_n,
\end{align*}
where for $T>0$ and $n\in \mathbb{Z}_+$ fixed, we define $h_n:=T/n$, the value $k$ is an integer such that $u\in [kh_n,(k+1)h_n)$, and we used that for each $T-u>0$, the functions $r\mapsto  H(r,u,	x,b(r+u))$ and $r\mapsto  H(r,u,-x,b(r+u))$ are Riemann integrable on $[0,T-u]$.\\

 Hence, take $n \in \mathbb{Z}_+$ and $T>0$ sufficiently large such that $h_n=T/n$ is small.  For each $k\in \{0,1,2,\ldots,n\}$, we define $u_k=kh_n$. Then, from the equation above and continuity of $V$, we can approximate $V(u,x)$, for any $x>0$ and $u\in [u_k,u_{k+1})$, by 
\begin{align}
V_h(u_k,x)&=V(0,0)\exp(-2\mu x/\sigma^2) \nonumber\\
\label{eq:approximationofVBMcase}
&\qquad+\sum_{i=k}^{n-1} [H(u_{i-k+1},u_k,x,b(u_i))-e^{-2\mu/\sigma^2 x}H(u_{i-k+1},u_k,-x,b(u_i))]h_n.
\end{align}
In particular, taking $x=b(u_k)$ we have that 
\begin{align*}
0&\approx V(0,0)\exp(-2\mu b(u_k)/\sigma^2)\\
&\qquad+\sum_{i=k}^{n-1} [H(u_{i-k+1},u_k,b(u_k),b(u_i))-e^{-2\mu/\sigma^2 b(u_k)}H(u_{i-k+1},u_k,-b(u_k),b(u_i))]h_n.
\end{align*}
Moreover, from the definition of the right-derivative, we have 
\begin{align}
\label{eq:approximationforV0BMcase}
0\approx \frac{3 \sigma^2}{2  \mu^3} - \frac{V(h_n,h_n)-V(0,0)}{h_n} \approx \frac{3 \sigma^2}{2  \mu^3} - \frac{V_h(h_n,h_n)-V(0,0)}{h_n},
\end{align}
where $V_h$ is given in \eqref{eq:approximationofVBMcase}. Note that $V_h$ also depends on $V(0,0)$ and on the values $\{b(t_i), i=k,\ldots,n-1 \}$. Then, for each $k\in \{1,2,\ldots,n-1 \}$, equation \eqref{eq:approximationforV0BMcase} can be interpreted as a non-linear equation that depends on $V(0,0)$ and $\{b(u_i), i=k,\ldots,n-1 \}$. We then propose the following algorithm:

\begin{enumerate}
\item Take $n \in \mathbb{Z}_+$ and $T>0$ sufficiently large such that $h_n=T/n$ is small. For each $k\in \{0,1,2,\ldots,n\}$, define $u_k=kh_n$.
\item Take a value $V_0 \in [-\frac{3}{2}\frac{\sigma^4}{\mu^4},0)$.
\item Let $b_{n-1}>0$ be the solution to the equation 
\begin{align*}
0&= V_0\exp(-2\mu b_{n-1}/\sigma^2)\\
&\qquad+ [H(u_{1},u_{n-1},b_{n-1},b_{n-1})-e^{-2\mu/\sigma^2 b_{n-1}}H(u_{1},u_{n-1},-b_{n-1},b_{n-1})]h_n.
\end{align*}
\item For $1\leq k\leq n-2$, let $b_k>0$ be the solution to the equation 
\begin{align*}
0&= V_0\exp(-2\mu b_k/\sigma^2)\\
&\qquad+\sum_{i=k}^{n-1} [H(u_{i-k+1},u_k,b_k,b_i)-e^{-2\mu/\sigma^2 b_k}H(u_{i-k+1},u_k,-b_k,b_i)]h_n.
\end{align*}
Note that in this step we calculate backwards the values $\{ b_k, k=1,2,\ldots, n-2\}$. 
\item Calculate the quantity 
\begin{align*}
R^{V_0}&= \frac{3 \sigma^2}{2  \mu^3}+\frac{V_0}{h_n} - \frac{1}{h_n} \exp(-2\mu h_n/\sigma^2)\\
&\qquad- \frac{1}{h_n} \sum_{i=1}^{n-1} [H(u_{i},u_1,h_n,b_i)-e^{-2\mu/\sigma^2 h_n}H(u_{i},u_1,-h_n,b_i)]h_n. 
\end{align*}
If the value of $R^{V_0}\approx 0$ then stop the algorithm, otherwise, go back to step 2 with a different choice of $V_0$. 

\end{enumerate}

The sequence $\{ b_k, k=1,\ldots,n-1 \}$ is a numerical approximation of the sequence $\{b(t_k), k=1,\ldots,n-1 \}$, whereas $V_0$ is an approximation of $V(0,0)$. Note that our algorithm is a simple method to approximate the curve b and to illustrate how Theorem \ref{thm:characterisationofbandV} can be used for that purpose. Note that better methods are needed to achieve higher precision and shorter computation time. We show in Figure \ref{pic:BMexample} a numerical calculation of the optimal boundary and the value function using the method above. The case considered is when $\mu=1/2$ and $\sigma=1$.

\begin{figure}
\centering
\includegraphics[scale=0.28]{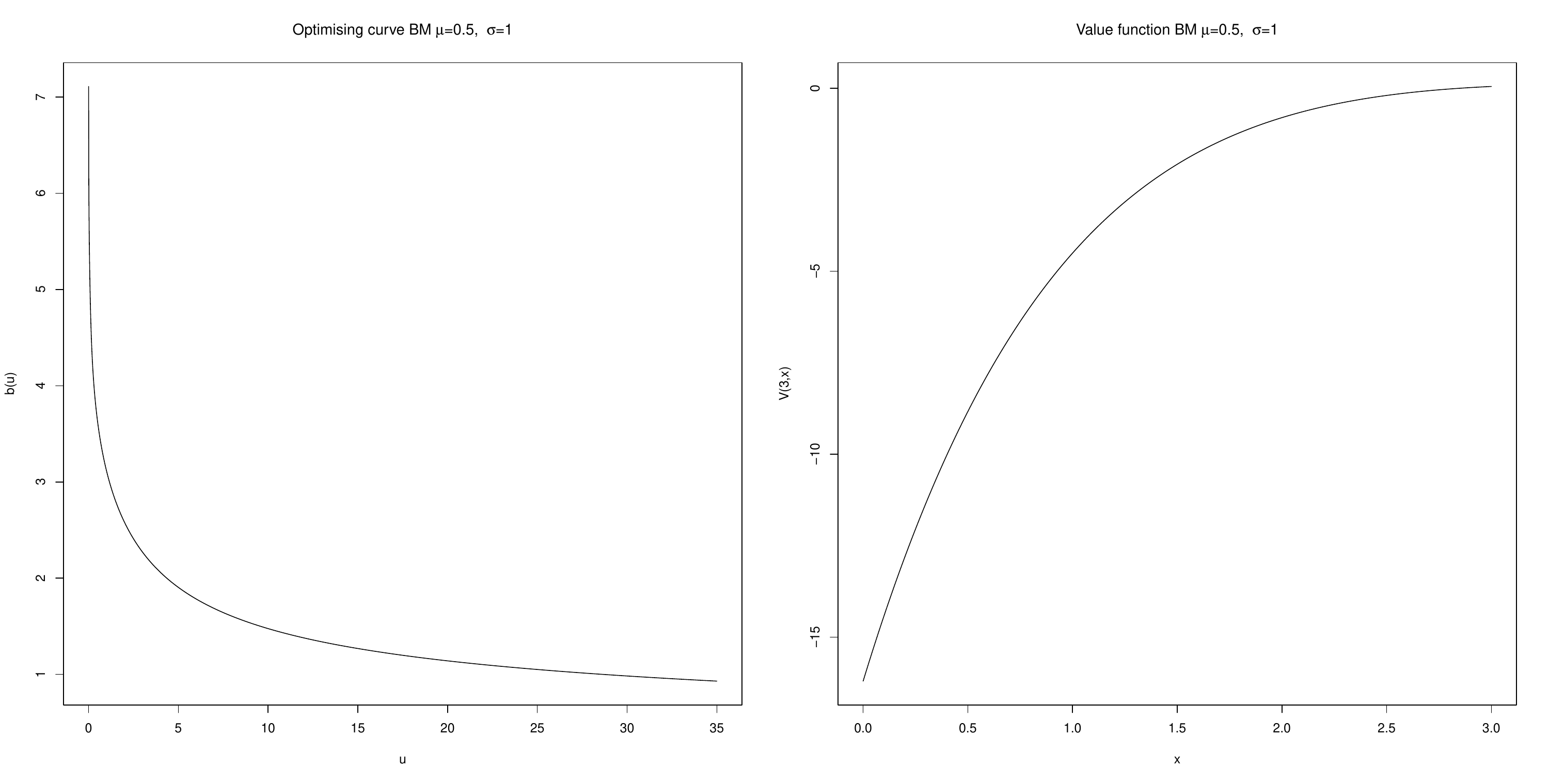}
\caption{Numeric calculation of the optimal boundary and value function $V$ for the Brownian motion with drift case.}
\label{pic:BMexample}
\end{figure}
\newpage

\subsection{Brownian motion with exponential jumps example}
Consider the case $p=2$ and when $X$ is a Brownian motion with drift and exponential jumps, i.e., $X=\{X_t,t\geq 0 \}$ with
\begin{align*}
X_t=\mu t+\sigma B_t-\sum_{i=1}^{N_t} Y_i, \qquad t\geq 0,
\end{align*}
where $\sigma> 0$, $\mu>0$, $B=\{ B_t,t\geq 0\}$ is a standard Brownian motion, $N=\{N_t,t\geq 0 \}$ is an independent Poisson process with rate $\lambda>0$ and $\{Y_i, i\geq 1\}$ is a sequence of independent exponential distributed random variables with parameter $\rho>0$ independent of $B$ and $N$. We further assume that $\mu \rho >\lambda$ so $X$ drifts to infinity. The Laplace exponent is given by for $\beta\geq 0$ by
%
\begin{align*}
\psi(\beta)=\mu \beta + \frac{\sigma^2 }{2}\beta^2-\frac{\lambda \beta}{\rho +\beta}.
\end{align*}
 In this case the L\'evy measure is given by $\Pi(\dd x)=\lambda \rho e^{\rho x} \dd x$, for all $x<0$. An easy calculation leads to $\psi'(0+)=\mu-\lambda/\rho$. Using the identity $\psi(\Phi(q))=q$, for any $q\geq 0$, we deduce that	
\begin{align*}
\Phi'(0+)&=\frac{\rho}{\mu \rho -\lambda }, \\
 \Phi''(0+)&=-\frac{\sigma^2\rho^3+2\lambda \rho}{\left[\mu\rho-\lambda\right]^3},\\
 \Phi'''(0+)&=3\frac{\rho[\sigma^2 \rho^2+2\lambda]^2}{[\mu \rho -\lambda]^5}+\frac{6\lambda \rho}{[\mu \rho -\lambda]^4}.
\end{align*}
%
%

It is know that (see e.g. \cite{kyprianou2011theory}, on p. 101) the scale function $W$ is given by
\begin{align*}
W(x)=\frac{1}{\psi'(0+)}+ \frac{e^{\zeta_1 x}}{\psi'(\zeta_1) }+\frac{e^{\zeta_2 x}}{\psi'(\zeta_2) }
\end{align*}
for $x\geq 0$, where
\begin{align*}
\zeta_1&=\frac{-\left(\frac{\sigma^2 }{2} \rho+\mu\right)+\sqrt{\left(\frac{\sigma^2 }{2} \rho-\mu\right)^2+2\sigma^2 \lambda }}{\sigma^2},\\
\zeta_2&=\frac{-\left(\frac{\sigma^2 }{2} \rho+\mu\right)-\sqrt{\left(\frac{\sigma^2 }{2} \rho-\mu\right)^2+2\sigma^2 \lambda }}{\sigma^2}.
\end{align*}
By differentiating \eqref{eq:laplacetransformofg} with respect to $q$ we obtain that
\begin{align*}
\E_x(g)&=  -\psi'(0+)[\Phi''(0)+x\Phi'(0)^2]+\psi'(0+)W^{*2}(x)\\
&=\left\{
\begin{array}{lr}
 \frac{\sigma^2\rho^2+2\lambda }{\left[\mu\rho-\lambda\right]^2}- \frac{\rho}{\mu \rho -\lambda }x,   &\qquad x<0,\\
\frac{\sigma^2\rho^2+2\lambda }{\left[\mu\rho-\lambda\right]^2}- \frac{\rho}{\mu \rho -\lambda }x  +(\mu-\lambda/\rho) W^{*2}(x),   &  \qquad  x\geq 0.
\end{array}
\right. 
\end{align*}
Moreover, taking $x=0$ and differentiating twice \eqref{eq:laplacetransformofg} with respect to the variable $q\geq 0$, we see that 
\begin{align*}
\E(g^2)=\psi'(0+)\Phi'''(0)=3\frac{[\sigma^2 \rho^2+2\lambda]^2}{[\mu \rho -\lambda]^4}+\frac{6\lambda }{[\mu \rho -\lambda]^3}.
\end{align*}
For $x<0$, the value function is then given by
\begin{align*}
V(0,x)&=-\int_{0}^{-x} \int_{0}^{\infty}  \E_{-u-z}(g)W'(u) \dd u  \dd z +V(0,0)\\
&=\int_{0}^{-x} \int_{0}^{\infty} \left[\Phi''(0+)+\Phi'(0)^2(-u-z) \right] \psi'(0+) W'(u) \dd u  \dd z +V(0,0)\\
&=\left[ \Phi''(0)(-x)+\Phi'(0)^2 \E(\underline{X}_{\infty})(-x)-\Phi'(0)^2 x^2/2\right]+V(0,0),
\end{align*}
where in the last equality we used that $\psi(0+)W(x)=\P_x(\tau_0^-=\infty)=\P(-\underline{X}_{\infty}\leq x )$ and hence $\psi'(0+) W'(u)$ is the density function of the random variable $\underline{X}_{\infty}$. From (\ref{eq:laplacetransformofrunninginfimumexptime}) we know that for any $\beta\geq 0$,
\begin{align*}
\E(e^{\beta \underline{X}_{\infty}}) =\psi'(0+) \frac{\beta}{\psi(\beta)}.
\end{align*}
Hence, by differentiating and using the fact that $\Phi'(q)=1/\psi'(\Phi(q))$, we can see that 
\begin{align*}
\E(\underline{X}_{\infty})=\frac{\Phi''(0)}{2\Phi'(0)^2}.
\end{align*}
Hence,
\begin{align*}
V(0,x)
&=-\left[ \frac{3}{2} \Phi''(0)x+\Phi'(0)^2 x^2/2\right]+V(0,0)
\end{align*}
for any $x<0$. %
Next, we calculate for any $x>0$,
\begin{align*}
\int_{(-\infty,-x)} &V(0,x+y)\Pi(\dd y)\\
 &=\int_{-\infty}^{-x}\left[-\frac{3}{2} \Phi''(0)(x+y)-\Phi'(0)^2 (x+y)^2/2+V(0,0) \right]\lambda \rho e^{\rho y}\dd y\\
&=\lambda e^{-\rho x} \int_{-\infty}^{0}\left[-\frac{3}{2}  \Phi''(0)y- \Phi'(0)^2 y^2/2+V(0,0) \right] \rho e^{\rho y}\dd y\\
&=\lambda e^{-\rho x} \left[\frac{3  \Phi''(0)}{2 \rho} -\frac{  \Phi'(0)^2}{\rho^2} +V(0,0) \right].
\end{align*}
Similarly, we have that for all $u>0$ and $x>b(u)$, 
\begin{align*}
\int_{(-\infty,0)} V(u,x+y)\I_{\{0<x+y<b(u) \}}\Pi(\dd y)&=e^{-\rho x}\int_{0}^{b(u)} V(u,y)\lambda \rho e^{\rho y} \dd  y \\
&=e^{-\rho (x-b(u))}\int_{(-b(u),0)} V(u,y+b(u))\Pi(\dd y).
\end{align*}
Then, since $G$ is non decreasing in each argument, condition \eqref{eq:intV+Gispositive} is satisfied if and only if 
\begin{align*}
\int_{(-b(u),0)} &V(u,y+b(u))\Pi(\dd y)\\
&\qquad+\lambda e^{-\rho b(u)} \left[\frac{3  \Phi''(0)}{2 \rho} -\frac{  \Phi'(0)^2}{\rho^2} +V(0,0) \right]+G(u,b(u))\geq 0
\end{align*}
for all $u>0$. On the other hand, for any $u,x>0$, equation (\ref{eq:representationforVwithindicatorsfunc}) reads as
\begin{align*}
&V(u,x)\\
&=V(0,0)\frac{\sigma^2}{2}W'(x)\\
&\qquad -\E_{x} \bigg( \int_0^{ \tau_{0}^-}  e^{-\rho (X_s-b(u+s))}\I_{\{X_s >b(u+s) \}} \\
&\qquad \qquad \qquad \qquad \times \int_{(-b(u+s),0)}V(u+s,y+b(u+s)) \Pi(\dd y)  \dd s \bigg)\\
&\qquad +\E_{x}\left( \int_0^{ \tau_{0}^-} \left[ G(u+s,X_s) +\int_{(-\infty,-X_s)} V(0,X_s+y)  \Pi(\dd y)  \right] \I_{\{ X_s<b(u+s)\}} \dd s \right)\\
&=V(0,0)\frac{\sigma^2}{2}W'(x) \\
&\qquad-\int_0^{ \infty} \int_{(-b(u+s),0)}V(u+s,y+b(u+s)) \Pi(\dd y) \\
&\qquad \qquad \qquad \qquad  \times \E_x\left( e^{-\rho (X_s-b(u+s))}\I_{\{X_s >b(u+s) , \underline{X}_s \geq 0 \}} \right)\dd s\\
&\qquad +\int_0^{\infty}  \E_{x}\left( \left[ G(u+s,X_s) +\int_{(-\infty,-X_s)} V(0,X_s+y)  \Pi(\dd y)  \right] \I_{\{ X_s<b(u+s) , \underline{X}_s \geq 0\}} \dd s \right)\\ 
&=V(0,0)\frac{\sigma^2}{2}W'(x)-\int_0^{ \infty}\mathcal{V}(u+s,b(u+s))  F_2(s,x,b(u+s))\dd s \\
&\qquad+\int_0^{\infty}  F_1(s,u,x,b(u+s),V(0,0)) \dd s,
\end{align*}
where for any $s,u,x,b>0$ we define
\begin{align*}
\mathcal{V}(u,b)&:=\int_{(-b,0)} V(u,y+b)\Pi( \dd y),\\
F_1(s,u,x,b,V_0)&:=\E \left( G(u+s,X_s+x)\I_{\{X_s+x <b, \underline{X}_s+x\geq 0 \}}\right) \\
&\qquad +\E\left(\lambda e^{-\rho (X_s+x))} \left[\frac{3  \Phi''(0)}{2 \rho} -\frac{  \Phi'(0)^2}{\rho^2} +V_0 \right]   \I_{\{X_s+x <b, \underline{X}_s+x\geq 0 \}}\right),\\
F_2(s,x,b)&:=\E\left(e^{-\rho (X_s+x-b)} \I_{\{ X_s+x>b, \underline{X}_s+x\geq 0\}} \right).
\end{align*}
In summary, we have that $V$, $b$ and $V(0,0)$ satisfy the equations
\begin{align*}
V(u,x)
&=V(0,0)\frac{\sigma^2}{2}W'(x)+\int_0^{\infty}  F_1(s,u,x,b(u+s),V(0,0)) \dd s\\
&\qquad-\int_0^{ \infty}\mathcal{V}(u+s,b(u+s))  F_2(s,x,b(u+s))\dd s,\\
0
&=V(0,0)\frac{\sigma^2}{2}W'(b(u))+\int_0^{\infty}  F_1(s,u,b(u),b(u+s),V(0,0)) \dd s\\
&\qquad -\int_0^{ \infty}\mathcal{V}(u+s,b(u+s))  F_2(s,b(u),b(u+s))\dd s,\\
0&=\frac{3}{2} \Phi''(0) +\frac{\partial}{\partial x}V_+(0,0),
\end{align*}
for all $u,x>0$.\\

In a similar way as in the previous section, we can approximate the integrals above by Riemann sums to implement a numerical approximation of $b$ and $V$. Take $u>0$ and $x>0$, by the dominated convergence theorem (see equations \eqref{eq:integrabilityofGbelowb} and \eqref{eq:integrabilityVPidy}) we have that  
\begin{align*}
V(u,x)
&=
V(0,0)\frac{\sigma^2}{2}W'(x)+\lim_{T\rightarrow \infty} \lim_{n \rightarrow \infty} \sum_{i=k}^{n-1} F_1((i-k+1)h_n,u,x,b(ih_n),V(0,0))h_n\\
&\qquad -\lim_{T\rightarrow \infty} \lim_{n \rightarrow \infty} \sum_{i=k}^{n-1} \mathcal{V}((i+1)h_n,b((i+1)h_n) )\\
&\qquad \qquad\qquad\qquad \qquad \times F_2((i-k+1)h_n,x,b(ih_n)) h_n,
\end{align*}
where $h_n:=T/n$, for $T>0$ and $n\in \mathbb{Z}_+$, the value $k$ is an integer such that $u\in [kh_n,(k+1)h_n)$, and we used that for each $T-u>0$, the functions $r\mapsto  F_1(r,u,x,b(r+u),V(0,0))$ and $r\mapsto \mathcal{V}(r+u,b(r+u))F_2(r,x,b(r+u))$ are Riemann integrable on $[0,T-u]$.\\

 Let $n \in \mathbb{Z}_+$ and $T>0$ sufficiently large such that $h_n=T/n$ is small, for each $k\in \{0,1,2,\ldots,n\}$, we define $u_k=k h_n$. Then, from the equation above and continuity of $V$, we can approximate $V(u,x)$, for any $x>0$ and $u\in [u_k,u_{k+1})$, by 
\begin{align}
V_h(u_k,x)&=V(0,0)\frac{\sigma^2}{2}W'(x)+\sum_{i=k}^{n-1} F_1(u_{i-k+1},u_k,x,b(u_i),V(0,0))h_n \nonumber\\
\label{eq:approximationofVBMwithexpjumpscase}
&\qquad -\sum_{i=k}^{n-1}\mathcal{V}(u_{i+1},b(u_{i+1})) F_2(u_{i-k+1},x,b(u_i)) h_n.
\end{align}
In particular, taking $x=b(u_k)$ we have that 
\begin{align*}
0&\approx V(0,0)\frac{\sigma^2}{2}W'(b(u_k))+\sum_{i=k}^{n-1} F_1(u_{i-k+1},u_k,b(u_k),b(u_i),V(0,0))h_n \\
&\qquad -\sum_{i=k}^{n-1}\mathcal{V}(u_{i+1},b(u_{i+1})) F_2(u_{i-k+1},b(u_k),b(u_i)) h_n.
\end{align*}
Moreover, from the definition of the right derivative we have that
\begin{align}
\label{eq:approximationforV0BMwithexpjumpscase}
0\approx \frac{3}{2} \Phi''(0) - \frac{V(h_n,h_n)-V(0,0)}{h_n} \approx \frac{3}{2} \Phi''(0) - \frac{V_h(h_n,h_n)-V(0,0)}{h_n},
\end{align}
where $V_h$ is given in \eqref{eq:approximationofVBMwithexpjumpscase}. Note that, for each $k\in \{1,2,\ldots,n-1\}$, $V_h(u_k,x)$ depends on $V(0,0)$ and on the values $\{b(t_i), i=k,\ldots,n \}$. Then, equation \eqref{eq:approximationforV0BMwithexpjumpscase} can be interpreted as a non-linear equation that depends on $V(0,0)$ and $\{b(u_i), i=k,\ldots,n \}$. The functions $F_1$ and $F_2$ can be estimated by simulating the process $\{ (X_t,\underline{X}_t), t \geq  0)\}$ (see e.g. \cite{kuznetsov2011wiener}, Theorem 4 and Remark 3). For $V_0<0$, $x\geq 0$, $\mathbf{a}_k=(a_i, i=k,\ldots,n)$ with $a_i\geq 0$ and $k\in \{1,2,\ldots,n\}$, we define the following auxiliary functions,
\begin{align*}
V_h^{0}(u_k,x,\mathbf{a}_k,V_0)&=V_0\frac{\sigma^2}{2}W'(x)+\sum_{i=k}^{n-1} F_1(u_{i-k+1},u_k,x,a_i,V_0)h_n \nonumber\\
&\qquad -\sum_{i=k}^{n-1}\mathcal{V}_h^0(u_{i+1},\mathbf{a}_{i+1},V_0) F_2(u_{i-k+1},x,a_i) h_n,\\
\mathcal{V}^{0}_h(u_n,\mathbf{a}_n,V_0)&=0,\\
\mathcal{V}^{0}_h(u_{k},\mathbf{a}_k,V_0)&=\sum_{j=1}^{\lfloor a_k/h_n \rfloor} V_h^{0}(u_{k},jh_n,\mathbf{a}_k,V_0 )\lambda \rho e^{\rho jh_n}h_n,
\end{align*}
where $\lfloor \cdot \rfloor$ is the floor function. We then propose the following algorithm:

\begin{enumerate}
\item Take $n \in \mathbb{Z}_+$ and $T>0$ sufficiently large such that $h_n=T/n$ is small. For each $k\in \{0,1,2,\ldots,n\}$, define $u_k=kh_n$.
\item Take a value $V_0 \in [-\frac{1}{2}\E(g^2),0)$.
\item Define $b_n=0$ and $\mathcal{V}(u_n,b_n)=0$. Let $b_{n-1}>0$ be the solution to the equation 
\begin{align*}
0&=V_0\frac{\sigma^2}{2}W'(b_{n-1})+F_1(u_{1},u_{n-1},b_{n-1},b_{n-1})h_n,
\end{align*}
subject to $V_h^{0}(u_{n-1},jh_n,\mathbf{b}_{n-1},V_0)\leq 0$ for $j=1,2,\ldots,\lfloor b_{n-1}/h_n \rfloor$ and that 
\begin{align*}
\mathcal{V}_h^0(u_{n-1},\mathbf{b}_{n-1},V_0)
+\lambda e^{-\rho b_{n-1}} \left[\frac{3  \Phi''(0)}{2 \rho} -\frac{  \Phi'(0)^2}{\rho^2} +V_0 \right]+G(u_{n-1},b_{n-1})\geq 0,
\end{align*}
where $\mathbf{b}_{n-1}=(b_{n-1},b_n)$.
\item For $1\leq k\leq n-2$, let $b_k>0$ be the solution to the equation 
\begin{align*}
0&= V_0\frac{\sigma^2}{2}W'(b_k)+\sum_{i=k}^{n-1} F_1(u_{i-k+1},u_k,b_k,b_i,V_0)h_n \nonumber\\
&\qquad -\sum_{i=k}^{n-1}\mathcal{V}(u_{i+1},b(u_{i+1})) F_2(u_{i-k+1},b_k,b_i) h_n,
\end{align*}
subject to $V_h^{0}(u_k,jh_n,\mathbf{b}_k,V_0)\leq 0$ for $j=1,2,\ldots,\lfloor b_k/h_n \rfloor$ and
\begin{align*}
\mathcal{V}_h(u_{k}, \mathbf{b}_{k},V_0)+\lambda e^{-\rho b_{k}} \left[\frac{3  \Phi''(0)}{2 \rho} -\frac{  \Phi'(0)^2}{\rho^2} +V_0 \right]+G(u_{k},b_{k})\geq 0, 
\end{align*}
where $\mathbf{b}_k=(b_i,i=k,\ldots,n)$. Note that in this step we calculate the values $b_{n-2}$, $ \mathcal{V}_h(u_{n-2},\textbf{b}_{n-2},V_0)$, $b_{n-3}$, $ \mathcal{V}_h(u_{n-3}$, $\textbf{b}_{n-3},V_0),\ldots, b_{1}$,$ \mathcal{V}_h(u_{1},\textbf{b}_{1},V_0)$.
\item Calculate the quantity 
\begin{align*}
R^{V_0}&= \frac{3}{2} \Phi''(0)+\frac{V_0}{h_n} -\frac{1}{h_n} V_h^0(u_1,h_n,\mathbf{b}_1,V_0) . 
\end{align*}
If the value of $R^{V_0}\approx 0$ stop the algorithm, otherwise, go back to step 2 with a different choice of $V_0$. 

\end{enumerate}

The sequence $\{ b_k, k=1,\ldots,n-1 \}$ is a numerical approximation of the sequence $\{b(t_k), k=1,\ldots,n-1 \}$, whereas $V_0$ is an approximation of $V(0,0)$. Note that our algorithm is a simple method to approximate the curve b and to illustrate how Theorem \ref{thm:characterisationofbandV} can be used for that purpose. Note that better methods are needed to achieve higher precision and shorter computation time. 
We show in Figure \ref{pic:BMwithexpjumpsexample} a numerical calculation of the optimal boundary and the value function using the parametrisation $\mu=3$, $\sigma=1$, $\lambda=1$ and $\rho=1$. The functions $F_1$ and $F_2$ above were estimated using Monte Carlo simulations accordingly to the algorithm given in \cite{kuznetsov2011wiener}.

\begin{figure}
\centering
\includegraphics[scale=0.28]{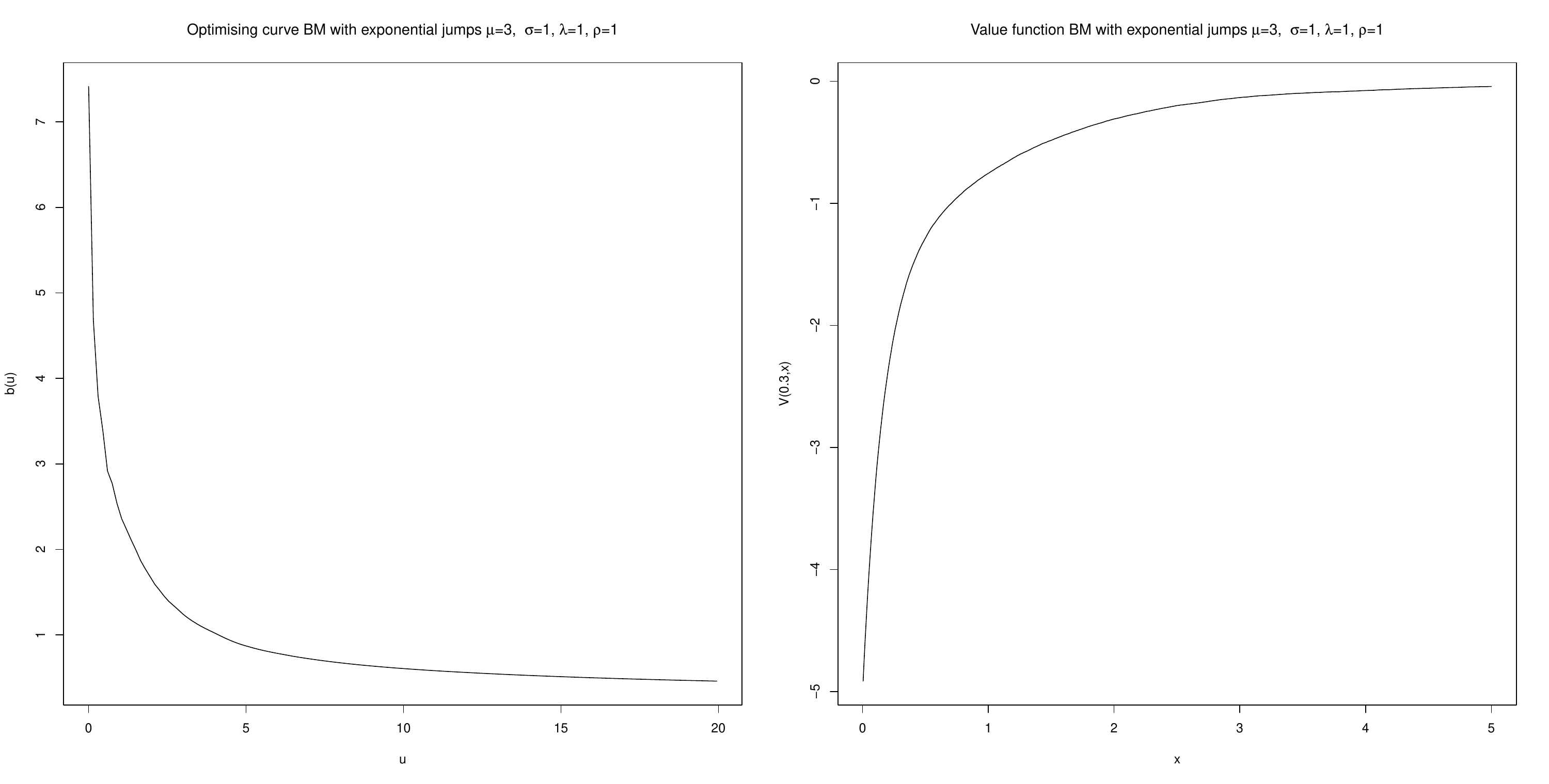}
\caption{Numeric calculation of the optimal boundary and value function $V$ for the Brownian motion with exponential jumps case.}
\label{pic:BMwithexpjumpsexample}
\end{figure}

\begin{appendix}
\section{Technical proofs}
\label{sec:Appendix}
In this Appendix, we include the most technical proofs of the results presented in Sections \ref{sec:Preliminaries}, \ref{sec:optimalpredictionproblem}, \ref{sec:Solutionoptimalstopping} and \ref{sec:proofofmainmainthm}.
\begin{proof}[Proof of Lemma \ref{lemma:boundaryformomentsoftau0+p}]
From equation (\ref{eq:laplacetransformtau0}) we know that
\begin{align*}
F(\theta,x):=\E_x(e^{-\theta \tau_0^+})=e^{\Phi(\theta)x} ,\qquad x\leq 0.
\end{align*}
Then using the formula of Fa\`a di Bruno (see for example \cite{spindler2005short}) we have that for any $n\geq 1$,

\begin{align}
&\frac{\partial^n }{\partial \theta ^n} F(\theta,x)\nonumber\\
\label{eq:derivativesofLaplacetransformoftau+}
& =\sum_{k=1}^n e^{\Phi(\theta)x } x^k \sum_{\substack{k_1+\cdots +k_n=k,\\k_1+\cdots +nk_n=n}} \frac{n!}{k_1! k_2!\cdots k_n!}\left(\frac{\Phi'(\theta)}{1!} \right)^{k_1}\left(\frac{\Phi''(\theta)}{2!} \right)^{k_2} \cdots \left(\frac{\Phi^{(n)}(\theta)}{n!} \right)^{k_n}.
\end{align}
Then evaluating at zero the above equation, using $\Phi(0)=0$ and the fact that $\Phi^{(i)}(0)<\infty$ for $i=1,\ldots,\lfloor p \rfloor +1$, we can find constants $A_r, C_r \geq 0$ such that $\E_x( (\tau_0^+)^r)\leq A_r+C_r|x|^{r}$ for any $r\in \{ 1,\ldots,\lfloor p \rfloor +1\}$. For any non integer $r< \lfloor p \rfloor+1 $ we can use H\"older's inequality to obtain
\begin{align*}
\E_x((\tau_0^+)^r) \leq [\E_x((\tau_0^+)^{\lfloor r \rfloor +1} )]^{\frac{r}{\lfloor r \rfloor +1}} \leq  (A_{\lfloor r \rfloor +1} +C_{\lfloor r \rfloor +1} |x|^{\lfloor r \rfloor +1} )^{\frac{r}{\lfloor r \rfloor +1}}.
\end{align*}
The result follows from the inequality in Lemma \ref{lemma:basicinequality}. Now we show that the second inequality holds. From the strong Markov property, we get that for any $x<0$,

\begin{align*}
\E_x (g^{r}) \leq 2^{r} \E(g^{r})+2^{r} \E_x((\tau_0^+)^{r}) \leq 2^{r} [\E(g^{r})+A_{r} ] +2^{r} C_{r} |x|^{r}.
\end{align*}
The proof is now complete.
\end{proof}

\begin{proof}[Proof of Lemma \ref{lemma:propertiesofExgn}]
It follows from the definition of $g$ that $x\mapsto \E_x(g^p)=\E(g^{(-x)})$ is non-negative and non-increasing. In order to check continuity notice that by integration by parts, we get
\begin{align*}
\E_x(g^p)&=p\int_0^{\infty} s^{p-1} \P_x(g>s)\dd s\\
&=p\int_0^{\infty} s^{p-1} \E_x(1-\psi'(0+)W(X_s))\dd s,
\end{align*}
where the last equality follows from (\ref{distributiong}). Take $x\in \R$ and $\delta
\in \R$. Then using the equation above we have that

\begin{align}
\label{eq:provingcontinuityofEgp}
|\E_x(g^p)-\E_{x+\delta}(g^p)|\leq p\psi'(0+) \E\left( \int_0^{\infty} s^{p-1} |W(X_s+x+\delta)-W(X_s+x)|\dd s \right).
\end{align}
First, suppose that $X$ is of infinite variation and thus $W$ is continuous on $\R$. From the fact that $X$ drifts to $\infty$ we know that $W(\infty)=1/\psi'(0+)$ and therefore it follows that $s^{p-1}(1-\psi'(0+)W(X_s))$ is integrable with respect to the product measure $\P_x\times\lambda([0,\infty))$, where $\lambda$ denotes Lebesgue measure.
We can now invoke the dominated convergence theorem to deduce that $x \mapsto \E_x(g^p)$ is continuous.

Next, in the case that $X$ is of finite variation we have that $W$ has a discontinuity at zero. However, the set $\{s\geq 0: X_s=x \}$ is almost surely countable and thus has Lebesgue measure zero. We can again use the dominated convergence theorem to conclude that  continuity also holds in this case.

We prove now the asymptotic behaviour of $\E_x(g^p)$. Note that when $x$ tends to $-\infty$ the random variable $g^{(-x)} \rightarrow \infty$. Then using Fatou's lemma

\begin{align*}
\liminf_{ x\rightarrow -\infty} \E_x(g^p) \geq  \E( \liminf_{ x\rightarrow -\infty} (g^{(-x)})^p)= \infty.
\end{align*}
Therefore, $\lim_{x\rightarrow -\infty} \E_x(g^p )=\infty$. On the other hand, note that for $x >0$,

\begin{align}
\label{eq:convergenceofgninprobability}
\P_x(g^p=0)=\P_x(g=0)=\P_x(\tau_0^-=\infty)=\psi'(0+)W(x) \xrightarrow{x \rightarrow \infty} 1.
\end{align}
Hence we deduce that the sequence $\{ (g^{(-n)})^p \}_{n\geq 1}$ converges in probability to $0$ (under the measure $\P$) when $n$ tends to infinity. Moreover, since the sequence $\{\E((g^{-n})^p)\}_{n\geq 1}$ is a non-increasing sequence of positive numbers we get that

\begin{align*}
\sup_{n\geq 1}\E((g^{-n})^p) \leq \E(g^p)<\infty,
\end{align*}
where the last inequality holds due to Lemma \ref{lemma:finitenessofEgn} and by assumption. Then $\{ (g^{(-n)})^p \}_{n\geq 1}$ is an uniformly integrable family of random variables. Then, together with the convergence in probability, allows us to conclude that $\E_x(g^p) \rightarrow 0$ when $x\rightarrow \infty$ as claimed.
\end{proof}

\begin{proof}[Proof of Lemma \ref{lemma:finitenessofintExgn}]
First, notice that due to the spatial homogeneity of L\'evy processes and that $x\mapsto \E_x(g^{p-1})$ is non-increasing, it suffices to prove the assertion for $x\leq 0$. Using Fubini's theorem we have that for all $x\leq 0$,

\begin{align*}
\E_x\left( \int_0^{\infty} \E_{X_s} (g^{p-1}) \dd s\right)&=\int_{(-\infty,\infty)} \E_z(g^{p-1}) \int_0^{\infty} \P_x(X_s\in \dd z).
\end{align*}
Since $X$ drifts to infinity we can use the density for the $0$-potential measure of $X$ without killing (see equation (\ref{eq:qpotentialdensitywithoutkilling})) to obtain

\begin{align}
\E_x\left( \int_0^{\infty} \E_{X_s} (g^{p-1}) \dd s\right)
&=\int_{-\infty}^{\infty} \E_z(g^{p-1}) \left[ \frac{1}{\psi'(0+)}-W(x-z) \right]\dd z \nonumber\\
&=\frac{1}{\psi'(0+)} \int_{-\infty}^x \E_z(g^{p-1}) \left[ 1-\psi'(0+)W(x-z) \right]\dd z \nonumber\\
\label{eq:finitenessintegralExgn}
&\qquad+\frac{1}{\psi'(0+)} \int_{x}^{\infty} \E_z(g^{p-1}) \dd z.
\end{align}
Then, we prove that the above two integrals are finite for all $x\leq 0$. From the fact that $z\mapsto \E_z(g^{p-1})$ is continuous on $\R$ and $W$ is continuous on $(0,\infty)$ we can assume without of loss of generality that $x=0$. \\

First, we show that the first integral on the right-hand side of (\ref{eq:finitenessintegralExgn}) is finite. From Lemma \ref{lemma:boundaryformomentsoftau0+p} we have that
%

\begin{align*}
 \int_0^{\infty} &\E_{-z}(g^{p-1}) \left[ 1-\psi'(0+)W(z) \right]\dd z\\
 &\leq 2^{p-1} \E(-\underline{X}_{\infty})[\E(g^{p-1})+A_{p-1}] + \frac{2^{p-1}}{p}C_{p-1} \E((-\underline{X}_{\infty})^p),
\end{align*}
where $A_{p-1}$ and $C_{p-1}$ are non-negative constants. In the equality above we relied on the fact that $z\mapsto \psi(0+)W(z)$ corresponds to the distribution function of the random variable $-\underline{X}_{\infty}$. We conclude from Lemma \ref{lemma:finitenessofEgn} that

\begin{align*}
\int_0^{\infty} \E_{-z}(g^{p-1}) \left[ 1-\psi'(0+)W(z) \right]\dd z <\infty.
\end{align*}
Now we proceed to check the finiteness of the second integral in (\ref{eq:finitenessintegralExgn}) when $x=0$. Using the strong Markov property we have that

\begin{align*}
\int_0^{\infty}& \E_z(g^{p-1})\dd z\\
&= \int_0^{\infty} \E_z(g^{p-1}\I_{\{\tau_0^-<\infty \}})\dd z\\
&\leq 2^{p-1}\int_0^{\infty} \E_z((\tau_0^-)^{p-1}\I_{\{\tau_0^-<\infty \}})\dd z+2^{p-1} \int_0^{\infty}\E_z( \E_{X_{\tau_0^-}}(g^{p-1})\I_{\{\tau_0^-<\infty \}} )\dd z\\
&\leq 2^{p-1}\int_0^{\infty} \E_z((\tau_0^-)^{p-1}\I_{\{\tau_0^-<\infty \}})\dd z+2^{p-1} \int_0^{\infty}\E_z( \E_{\underline{X}_{\infty}}(g^{p-1})\I_{\{\underline{X}_{\infty}<0 \}} ) \dd z,
\end{align*}
where in the last inequality we used the fact that $\underline{X}_{\infty} \leq X_{\tau_0^{-}}$ and that $x\mapsto \E_x(g^{p-1})$ is a non-increasing function. Using Fubini's theorem we have that

\begin{align*}
\int_0^{\infty}\E_z( \E_{\underline{X}_{\infty}}(g^{p-1})\I_{\{\underline{X}_{\infty}<0 \}} )\dd z&=\int_0^{\infty} \int_{(-\infty,0)} \E_{y}(g^{p-1}) \P_z(\underline{X}_{\infty}\in \dd y)\dd z\\
&=\int_{(-\infty,0)}\E_y(g^{p-1}) \int_0^{\infty}\P_z(\underline{X}_{\infty}\in \dd y) \dd z \\
&=\int_{0}^{\infty}\E_{-y}(g^{p-1}) [1-\psi'(0+)W(y)] \dd y\\
&<\infty.
\end{align*}
It thus only remains to show that
\begin{align*}
\int_0^{\infty} \E_z((\tau_0^-)^{p-1}\I_{\{\tau_0^-<\infty \}})\dd z<\infty.
\end{align*}
For this, define the function $F_1(q):=\int_0^{\infty} \E_z(e^{-q \tau_0^-}\I_{\{\tau_0^-<\infty \}})\dd z$. Differentiating equation (\ref{eq:laplacetransformofrunninginfimumexptime}) with respect to $\beta$ and evaluating at zero we obtain that

\begin{align*}
F_1(q)=\int_0^{\infty} \P(-\underline{X}_{\e_q}>z)\dd z=\E(-\underline{X}_{\e_q})=\frac{1}{\Phi(q)}-\frac{\psi'(0+)}{q},
\end{align*}
where $\e_q$ is an independent exponential random variable with parameter $q>0$. On the other hand, define the function $F_2(q)=\int_0^{\infty} \E_{-z}(e^{-q \tau_0^+})[1-\psi'(0+)W(z)]\dd z$. Using the expression for the Laplace transform of $\tau_0^+$ in \eqref{eq:laplacetransformtau0} and the definition of $W$, we have that

\begin{align*}
F_2(q)=\int_0^{\infty} e^{-\Phi(q) z}[1-\psi'(0+)W(z)]\dd z=\frac{1}{\Phi(q)}-\frac{\psi'(0+)}{q}=F_1(q).
\end{align*}
The fact that $F_2=F_1$ implies that, when $\alpha$ is a natural number, we can take derivatives of order $\alpha$ (with the help of the dominated convergence theorem) at $q=0$ and conclude that

\begin{align*}
\int_0^{\infty} \E_z((\tau_0^-)^{\alpha}\I_{\{\tau_0^-<\infty \}})\dd z<\infty
\end{align*}
if and only if  
\begin{align*}
\int_0^{\infty} \E_{-z}( (\tau_0^+)^{\alpha})[1-\psi'(0+)W(z)]\dd z<\infty.
\end{align*}
Furthermore, if $\alpha=k+\lambda$, with $k$ a positive integer and $0< \lambda <1$, we can draw the same conclusion using the Marchaud derivative (see e.g. \cite{laue1980remarks}). Using Lemma \ref{lemma:boundaryformomentsoftau0+p} we have that
\begin{align*}
\int_0^{\infty} \E_{-z}( (\tau_0^+)^{p-1})[1-\psi'(0+)W(z)]\dd z<\infty.
\end{align*}
and the proof is complete.
\end{proof}

\begin{proof}[Proof of Lemma \ref{lemma:finitenesofEtauD}]
Let $x\leq 0$ and take $\delta>0$. Then
\begin{align*}
\E_{0,x}((\tau_{D})^p)=\E_x((\tau_b^{g,0})^{p})& \leq \E_x((\tau_b^{g,0})^{p}\I_{\{g +\delta<\tau_b^{g,0} \}})+\E_x((g+\delta)^{p}\I_{\{g +\delta>\tau_b^{g,0} \}}).
\end{align*}
Note that on the event $\{g+\delta<\tau_b^{g,0} \}$ we have

\begin{align*}
\tau_b^{g,0}&=\inf\{t>g+\delta: X_t \geq b(U_t) \}\\
&=\inf\{t>0: X_{t+g+\delta} \geq b(t+\delta) \}+g+\delta\\
&\leq \inf\{t>0: X_{t+g+\delta} \geq b(\delta) \}+g+\delta,
\end{align*}
where the second equality follows from the fact that after $g$, the process $X$ never goes back below zero, and the last inequality holds since $b$ is non-increasing. Then, we have that 

\begin{align*}
\E_x((\tau_b^{g,0})^{p})&\leq \E_x( (\inf\{t>0: X_{t+g+\delta} \geq b(\delta) \}+g+\delta)^p \I_{\{g +\delta<\tau_b^{g,0} \}})\\
&\qquad+\E_x((g+\delta)^{p}\I_{\{g +\delta>\tau_b^{g,0} \}})  \\
&\leq 2^p \E_x( (\inf\{t>0: X_{t+g+\delta} \geq b(\delta) \})^{p})+(2^p+1)	\E_x  ((g+\delta)^p   )\\
&=2^p \E( (\inf\{t>0: X_{t+g^{(-x)}+\delta}+x \geq b(\delta) \})^{p})+(2^p+1)	\E_x( (g+\delta)^p   ),
\end{align*}
where $g^{(-x)}=\sup\{t\geq 0: X_t\leq -x \}$. From \cite{bertoin1998levy} (see Corollary VII.4.19) we know that  the law of the process $\{X_{t+g^{(-x)}}-(-x) , t\geq 0  \}$ is the same as that of $\P^{\uparrow}$, where $\P^{\uparrow}=\P^{\uparrow}_0$ and $\P^{\uparrow}_x$, for $x\geq 0$, corresponds to the law of $X$ starting at $x$ conditioned to stay positive. Moreover, from Proposition VII.3.14 in \cite{bertoin1998levy}, we know that the canonical process $X$ is a Feller process for the family $\{\P^{\uparrow}_x, x\geq 0 \}$. Hence, using the Markov property of $X$ under $\{ \P^{\uparrow}_x, x \geq 0 \}$ and equation VII.3.(6) in \cite{bertoin1998levy} we get

\begin{align*}
\E_x((\tau_b^{g,0})^{p})
&\leq 2^p \E( (\inf\{t>0: X_{t+g^{(-x)}+\delta}+x \geq b(\delta) \})^{p})+(2^p+1)	\E_x ( (g+\delta)^p   )\\
&=2^p \E^{\uparrow}( (\inf\{t>0: X_{t+\delta} \geq b(\delta) \})^{p})+(2^p+1)	\E_x(  (g+\delta)^p   )\\
&\leq 2^p \E^{\uparrow}( \E^{\uparrow}_{X_{\delta}} [(\tau_{b(\delta)}^+)^{p}]   )+(2^p+1)\E_x((g+\delta)^{p})\\
&=2^p \E^{\uparrow}\left( \frac{W(b(\delta))}{W(X_{\delta}) }  \E_{X_{\delta}} [(\tau_{b(\delta)}^+)^{p} \I_{\{ \tau_0^->\tau_{b(\delta)}^+ \}} ] \right)+(2^p+1)\E_x((g+\delta)^{p})\\
& \leq 2^p \E [(\tau_{b(\delta)}^+)^{p}]   \E^{\uparrow}\left( \frac{W(b(\delta))}{W(X_{\delta}) }   \right)+(2^p+1)\E_x((g+\delta)^{p})\\
& = 2^p \E [(\tau_{b(\delta)}^+)^{p}]  \int_{(0,\infty)}  \frac{W(b(\delta))}{W(z)} \P^{\uparrow}(X_{\delta} \in \dd z)   +(2^p+1)\E_x((g+\delta)^{p}),
\end{align*}
where the third inequality follows from the fact that $\E_x[(\tau_a^+)^p]\leq \E[(\tau_a^+)^p]$ for all $0\leq x\leq a$ and $X_{\delta}>0$ under $\P^{\uparrow}$. Thus, using that $\P^{\uparrow}(X_{\delta} \in \dd z)=[zW(z)/ \delta] \P(X_{\delta}\in \dd z)$ (see e.g. Corollary VII.3.16 in \cite{bertoin1998levy}) we have that

\begin{align}
\E_x((\tau_b^{g,0})^{p}) &\leq 2^p \E [(\tau_{b(\delta)}^+)^{p}]  \int_{(0,\infty)}  \frac{W(b(\delta))}{W(z)} \P^{\uparrow}(X_{\delta} \in \dd z)   +(2^p+1)\E_x((g+\delta)^{p}) \nonumber \\
\label{eq:finitenessoftaubg}
&=  2^p \E [(\tau_{b(\delta)}^+)^{p}]    \frac{W(b(\delta))}{ \delta} \E(X_{\delta}^+) +2^p(2^p+1)\delta^p+2^p(2^p+1)\E_x((g)^{p}),
\end{align}
where $X_{\delta}^+$ is the positive part of $X_{\delta}$. Thus from Lemma \ref{lemma:finitenessofEgn} we have that $\E_{0,x}((\tau_{D})^p)=\E_x((\tau_b^{g,0})^{p}) $ is finite for $x\leq 0$.\\

Next, we show that $\E_{u,x}((\tau_{D})^p)<\infty$ when $u,x>0$. From the Markov property of L\'evy processes, we have that
\begin{align*}
\E_{u,x}((\tau_{D})^p) &=\E_{x}((\tau_{b}^{u,0} )^p  \I_{\{\tau_b^{u,0}<\sigma_{0}^- \}})+\E_{x}((\tau_{b}^{g,0} )^p  \I_{\{\tau_b^{u,0}>\sigma_{0}^- \}})\\
&\leq \E_{x}((\tau_{b(u)}^+  )^p)+ 2^p \E_x( (\sigma_0^-)^p \I_{\{\sigma_0^- <\infty \}}) + 2^p  \E_{x}(\I_{\{\sigma_{0}^- <\infty\}}   \E_{X_{\sigma_0^-}}[(\tau_{b}^{g,0} )^p]  ).
\end{align*}
Using (\ref{eq:finitenessoftaubg}), the inequality $|X_{\sigma_0^-}|\leq |\underline{X}_{\infty}|$ under the event $\{ \sigma_0^-<\infty\}$ and Lemmas \ref{lemma:finitenessofEgn} and \ref{lemma:boundaryformomentsoftau0+p} we deduce that $\E_{u,x}((\tau_{D})^p) <\infty$ and the proof is complete.
\end{proof}

Using that $b$ is a right-continuous and a non-decreasing function, and that $X$ creeps upwards, it can be shown that for any $u\geq 0$ and $x \in \R$,
\begin{align*}
\lim_{h\rightarrow 0} \tau_b^{u,x+h}=\tau_b^{u,x} \text{ } \text{a.s.}\qquad \text{ and } \qquad\lim_{(h_1,h_2) \rightarrow (0,0)+} \tau_b^{u+h_1,x+h_2} =\tau_b^{u,x} \text{ } \text{a.s.}
\end{align*}
These facts will be useful in the proof of the continuity of the function $V$.

\begin{proof}[Proof of Lemma \ref{lemma:continuityofV0x}]
First, we show that the function $u \mapsto V(u,x)$ is continuous, for all $x>0$ fixed. Take $u_1, u_2 > 0$  and $x> 0$, then since the stopping time $\tau^*_{(u_1,x)}:=\tau_b^{u_1,x} \I_{\{ \tau_b^{u_1,x} <\sigma_{-x}^-\}}+\tau_b^{g,x} \I_{\{ \tau_b^{u_1,x}  \geq \sigma_{-x}^-\}}$ is optimal for $V(u_1,x)$ (under $\P$) we have that
\begin{align*}
V(u_1,x)&=\E \left(\int_0^{\sigma_{-x}^- \wedge \tau_b^{u_1,x}} G(u_1+s,X_{s}+x)\dd s +\I_{\{	 \tau_b^{u_1,x} \geq \sigma_{-x}^- \}} V(0,X_{\sigma_{-x}^-}+x) \right)\\
&=\E_x \left(\int_0^{\sigma_{0}^- \wedge \tau_b^{u_1,0}} G(u_1+s,X_{s})\dd s +\I_{\{	\tau_b^{u_1,0} \geq \sigma_{0}^- \}} V(0,X_{\sigma_{0}^-}) \right).
\end{align*}
On the other hand, from (\ref{eq:VintermsofPx}) we get 
\begin{align*}
V(u_2,x)&\leq \E\left( \int_0^{\tau^*_{(u_1,x)}} \left\{ G(u_2+s,X_s+x)\I_{\{\sigma_{-x}^->s\}}+G(U_s^{(-x)},X_s+x)\I_{\{\sigma_{-x}^-\leq s\}} \right\} \dd s\right)\\
&=\E_x\left( \I_{\{ \tau_b^{u_1,0} <\sigma_{0}^-\}}\int_0^{\tau_b^{u_1,0} } \left\{ G(u_2+s,X_s)\I_{\{\sigma_{0}^->s\}}+G(U_s,X_s)\I_{\{\sigma_{0}^-\leq s\}} \right\} \dd s\right) \\
&\qquad +\E_x\left( \I_{\{ \tau_b^{u_1,0} \geq \sigma_{0}^-\}}\int_0^{\tau_b^{g,0}  } \left\{ G(u_2+s,X_s)\I_{\{\sigma_{0}^->s\}}+G(U_s,X_s)\I_{\{\sigma_{0}^-\leq s\}} \right\} \dd s\right)\\
&=\E_x\left( \int_0^{\tau_b^{u_1,0} \wedge \sigma_0^- }  G(u_2+s,X_s) \dd s\right)+\E_x\left( \I_{\{ \tau_b^{u_1,0} \geq \sigma_{0}^-\}}\int_{\sigma_0^-}^{\tau_b^{g,0}  } G(U_s,X_s)  \dd s\right)\\
&=\E_x\left( \int_0^{\tau_b^{u_1,0} \wedge \sigma_0^- }  G(u_2+s,X_s) \dd s\right)+\E_x\left( \I_{\{ \tau_b^{u_1,0} \geq \sigma_{0}^-\}}V(0,X_{\sigma_0^-})\right),
\end{align*}
where in the first equality we used the definition of $\tau^*_{(u_1,x)}$ given above, in the second equality that $\tau_b^{u_1,0}\leq \tau_b^{g,0}$ and the last equality follows from the strong Markov property applied at time $\sigma_0^-$. Hence, we have that for any $x>0$ fixed and $u_1, u_2>0$, 
\begin{align*}
| V(u_2,x)-V(u_1,x)| &\leq  \E_x\left( \int_0^{\sigma_{0}^-\wedge \tau_b^{u_1,0}} |G(u_2+s,X_s)-G(u_1+s,X_s)|\dd s \right) \\
&\leq \E_x\left( \int_0^{\tau_{b(u_1)}^+} |G(u_2+s,X_s)-G(u_1+s,X_s)|\dd s \right)\\
&\leq \E\left( \int_0^{\tau_{b(u_1)}^+} |(u_2+s)^{p-1}-(u_1+s)^{p-1}|\dd s \right)\\
&=\frac{1}{p} |\E((\tau_{b(u_1)}^++u_2)^{p})-\E((\tau_{b(u_1)}^++u_1)^{p})-[u_2^p-u_1^p]|,
\end{align*}
where $\tau_{b(u_1)}^+=\inf\{t\geq 0: X_t >b(u_1)\}$. Thus, letting $u_2 \mapsto u_1$, together with the dominated convergence theorem and the fact that $\E((\tau_a^++u)^p)<\infty$, for all $u,a \geq 0$, we get that $u \mapsto V(u,x)$ is continuous uniformly over all $x>0$. \\

Next, we show that $x\mapsto V(u,x)$ is continuous. From equation (\ref{eq:expressionforV0xnegative}) we easily deduce that $x\mapsto V(0,x)$ is a continuous function on $(-\infty,0]$. Then, suppose that $u>0$ and $x>0$. Recall from equation (\ref{eq:Vwhenxpositive}) that we can write

\begin{align*}
V(u,x)&=\E \left(\int_{0}^{\sigma_{-x}^- \wedge \tau_b^{u,x}} G(u+s,X_{s}+x)\dd s\right) +\E(V(0,X_{\sigma_{-x}^-}+x)\I_{\{	\sigma_{-x}^- \leq \tau_b^{u,x} \}} ).
\end{align*}
Note that for all $s< \tau_b^{u,x} \wedge \sigma_{-x}^-$, it holds that $ 0< X_s+x\leq b(u+s)\leq b(u)$, and for all $x\in \R$ (see equation (\ref{eq:lowerboundforV})),  $ V(0,X_{\sigma_{-x}^-}+x)\I_{\{	\sigma_{-x}^- \leq \tau_b^{u,x} \}}\geq V(0,\underline{X}_{\infty}+x) \geq -A'_{p-1}-C'_{p-1} |\underline{X}_{\infty}+x|^p+V(0,0)$, where the last expression is integrable from Lemma \ref{lemma:finitenessofEgn}. Moreover, it can be shown that $\lim_{h \rightarrow 0} \sigma_{x+h}^-=\sigma_x^-$ a.s. and that  $\lim_{h \rightarrow 0} \tau_b^{u,x+h}=\tau_b^{u,x}$ a.s., for any $x\in \R$. Then, by the dominated convergence theorem, the fact that $V$ is continuous on $(-\infty,0]$ and $x\mapsto G(u,x)$ is continuous on $(0,\infty)$ we conclude that, for each $u>0$, the mapping $x\mapsto V(u,x)$ is continuous on $(0,\infty)$. Note that when $X$ is of infinite variation, $\lim_{h\downarrow 0} \sigma_{-h}^-=\tau_0^-=0$ a.s. and the previous argument also tells us that for all $u>0$,
\begin{align*}
\lim_{h \downarrow 0} V(u,h)=V(0,0).
\end{align*}
Note that the limit above implies that $\lim_{(u,x)\rightarrow (0,0)^+} V(u,x)=V(0,0)$ in the infinite variation case. Then we proceed to prove that this also holds when $X$ is of finite variation. In this case we know that $\tau_0^->0$ and then, due to the strong Markov property,
\begin{align*}
V(0,0)&=\E\left(\int_0^{\tau_b^{0,0} \wedge \tau_0^-} G(s,X_s)\dd s \right)+\E(\I_{\{ \tau_0^-<\tau_b^{0,0} \}}V(0,X_{\tau_0^-} )),
\end{align*}
where $\tau_b^{0,0}=\inf\{t>0: X_t \geq b(s)\}$. Taking limits in (\ref{eq:Vwhenxpositive}), we have from the dominated convergence theorem,
\begin{align*}
\lim_{(u,x)\rightarrow (0,0)^+} V(u,x)&=\lim_{(u,x)\rightarrow (0,0)^+} \E \left(\int_{0}^{\sigma_{-x}^- \wedge \tau_b^{u,x}} G(u+s,X_{s}+x)\dd s\right)\\
&\qquad +\lim_{(u,x)\rightarrow (0,0)^+}  \E(V(0,X_{\sigma_{-	x}^-}+x)\I_{\{	\sigma_{-x}^- \leq \tau_b^{u,x} \}} )\\
&=\E\left(\int_0^{\tau_0^- \wedge \tau_b^{0,0} } G(s,X_s)\dd s \right)+\E(\I_{\{ \tau_0^-<\tau_b^{0,0} \}}V(0,X_{\tau_0^-} ))\\
&=V(0,0),
\end{align*}
where we used that $\lim_{x \downarrow 0} \sigma_{-x}^-=\tau_0^-$ and $\lim_{(u,x)\rightarrow (0,0)+} \tau_b^{u,x}=\tau_b^{0,0}$ a.s. Therefore, $V$ is continuous on the set $E$.
\end{proof}

Before proving Lemma \ref{lemma:smoothfit} we first consider a technical lemma involving the derivative of the potential measure killed on exiting $[0,a]$. More specifically, for fixed $a> 0$, $x\in (0,a)$ and $r\in \mathbb{N}\cup\{0\}$ define the measure
\begin{align*}
U_r(a,x,\dd y)=\int_0^{\infty} t^r \P_x(X_t\in \dd y, t<\sigma_0^-\wedge \tau_a^+)\dd t.
\end{align*}

\begin{lemma}
\label{lemma:derivativeofpotentialmeasure}
Let $q\in \mathbb{N}\cup\{0\}$ such that $\int_{(-\infty,-1)} |x|^{q}\Pi(\dd x)<\infty$. Fix $a>0$ and $0\leq x\leq a$. We have that for all $r\in \{0,1,\ldots,q\}$, the measure $U_r(a,x,\dd y)$ is absolutely continuous with respect to the Lebesgue measure. It has a density $u_r(a,x,y)$ given by
\begin{align*}
u_r(a,x,y)=\lim_{q\downarrow 0} (-1)^r \frac{\partial^r }{\partial q^r} \left[ \frac{W^{(q)}(x) W^{(q)}(a-y) }{W^{(a)}(a)}-W^{(q)}(x-y) \right],
\end{align*}
for $y\in (0,a]$. Moreover, for a fixed $a>0$, the functions $x\mapsto \E_x((\tau_a^+)^r \I_{\{\sigma_0^-<\tau_a^+ \}})$ and $x\mapsto  u_r(a,x,y)$ have finite left derivatives on $(0,a]$ for all $y\in (0,a)$ and $r\in \{ 0,1,\ldots,q\}$.
\end{lemma}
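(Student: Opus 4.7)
The plan is to reduce the statement to the known $q$-potential density \eqref{eq:qpotentialdensitytkillingonexiting0,a} and differentiate $r$ times in $q$ before sending $q\downarrow 0$. First I observe that $\sigma_0^-=\tau_0^-$ $\P_x$-a.s.\ for $x\in(0,a)$: in the infinite variation case this is recorded in the preliminaries, while in the finite variation case $X$ can leave $(0,\infty)$ from $x>0$ only by a downward jump (there are no positive jumps), which lands strictly inside $(-\infty,0)$ a.s.\ because the L\'evy measure is assumed to have no atoms. Writing
$$f^{(q)}(x,y):=\frac{W^{(q)}(x)W^{(q)}(a-y)}{W^{(q)}(a)}-W^{(q)}(x-y),$$
identity \eqref{eq:qpotentialdensitytkillingonexiting0,a} then reads
$$\int_0^\infty e^{-qt}\,\P_x\bigl(X_t\in\dd y,\,t<\sigma_0^-\wedge\tau_a^+\bigr)\dd t = f^{(q)}(x,y)\,\dd y.$$

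By the convolution expansion \eqref{eq:convolutionrepresentationofWq}, $q\mapsto W^{(q)}(z)$ is entire analytic for every fixed $z\geq 0$, so $q\mapsto f^{(q)}(x,y)$ is real-analytic on $[0,\infty)$, with $\frac{\partial^r}{\partial q^r}\big|_{q=0}W^{(q)}(z)=r!\,W^{*(r+1)}(z)$. Differentiating $r$ times in $q$ inside the time integral is legitimate for $q>0$ via the exponential factor; letting $q\downarrow 0$ is then handled by monotone convergence on the nonnegative factor $t^r$, using the bound $\int_0^\infty t^r\P_x(t<\tau_a^+)\dd t=\E_x((\tau_a^+)^{r+1})/(r+1)<\infty$, which follows from the moment assumption on $\Pi$ via Fa\`a di Bruno applied to $\E_x(e^{-q\tau_a^+})=e^{-\Phi(q)(a-x)}$ together with the analyticity of $\Phi$ at $0$. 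Testing against an arbitrary Borel set yields
$$(-1)^r U_r(a,x,\dd y)=\Bigl(\lim_{q\downarrow 0}\frac{\partial^r}{\partial q^r}f^{(q)}(x,y)\Bigr)\dd y,$$
so $U_r(a,x,\cdot)$ is absolutely continuous with the stated density.

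For the regularity claims, the formula $\frac{\partial^r}{\partial q^r}\big|_{q=0}W^{(q)}(z)=r!\,W^{*(r+1)}(z)$ expresses $u_r(a,x,y)$ as a finite linear combination of products of the form $W^{*(i)}(x)\,W^{*(j)}(a-y)$ and $W^{*(k)}(x-y)$, $i,j,k\leq r+1$, with coefficients rational in $\{W^{*(j)}(a)\}_{j\leq r+1}$ (well-defined since $W(a)>0$). Under the paper's standing hypothesis $W\in C^1((0,\infty))$, and by standard convolution smoothing each $W^{*(j)}$ inherits $C^1$-regularity on $(0,\infty)$. Consequently, for every fixed $y\in(0,a)$, $x\mapsto u_r(a,x,y)$ is differentiable on $(0,a)$ and has a finite left derivative at $x=a$, since all arguments ($x$, $x-y$, $a-y$) appearing in the $W^{*(j)}$ stay strictly positive as $x\uparrow a$. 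For $x\mapsto \E_x((\tau_a^+)^r\I_{\{\sigma_0^-<\tau_a^+\}})$ the same scheme applies to
$$\E_x\bigl(e^{-q\tau_a^+}\I_{\{\sigma_0^-<\tau_a^+\}}\bigr)=e^{-\Phi(q)(a-x)}-\frac{W^{(q)}(x)}{W^{(q)}(a)},$$
obtained from \eqref{eq:laplacetransformtau0}, spatial homogeneity and \eqref{eq:laplacetransformoftaua+beforecrossingthelevelzero}; differentiating $r$ times in $q$ at $0$ represents the moment as a polynomial combination of the $W^{*(j)}$ evaluated at $x$ and $a$ together with powers of $a-x$, from which the same differentiability on $(0,a)$ and existence of a finite left derivative at $x=a$ follow.

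The main obstacle is the careful bookkeeping of the triple interchange (differentiation in $q$, integration in space, and the limit $q\downarrow 0$), together with a uniform-in-$z$ control on $\frac{\partial^r}{\partial q^r}W^{(q)}(z)$ near $q=0$ sufficient to carry the derivatives under the time integral; once these analytic facts are in place, the regularity assertions reduce to the smoothness of finitely many convolutions of $W$ evaluated at strictly positive arguments.
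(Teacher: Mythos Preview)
Your proposal is correct and follows essentially the same route as the paper: both arguments start from the $q$-potential density \eqref{eq:qpotentialdensitytkillingonexiting0,a}, differentiate $r$ times in $q$, pass to the limit $q\downarrow 0$, and then read off the regularity in $x$ from the convolution representation \eqref{eq:convolutionrepresentationofWq} of $W^{(q)}$ together with $W\in C^1((0,\infty))$. Your treatment is in fact a little more explicit than the paper's, which simply writes ``the proof follows by induction and implicit differentiation.''

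One small point: the bound you invoke, $\E_x\bigl((\tau_a^+)^{r+1}\bigr)<\infty$, is not furnished by the hypothesis $\int_{(-\infty,-1)}|x|^q\,\Pi(\dd x)<\infty$ when $r=q$ (Lemma~\ref{lemma:finitenessofEgn} only delivers the $q$-th moment of $\tau_a^+$). This is harmless, however: monotone convergence does not require finiteness of the limit a priori, and in any case the exit time $\tau_a^+\wedge\sigma_0^-$ from the bounded interval $[0,a]$ has exponential tails, so all its moments are finite. The paper's own appeal to dominated convergence via $\E_x((\tau_a^+)^r)<\infty$ is similarly loose at this step.
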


\begin{proof}
Let $a>0$ and $x\in (0,a)$. First, we show that for all $r\in \{0,1,\ldots ,q\}$, the measure $U_r(a,x,\dd y)$ is absolutely continuous with respect to the Lebesgue measure. Take any measurable set $A\subset (0,a)$, thus by Fubini's theorem,

\begin{align*}
\int_A U_r(a,x,\dd y) &=\int_0^{\infty } t^r \P_x(X_t \in A, t<\sigma_0^- \wedge \tau_a^+) \dd t\\
&=\E_x\left(\int_0^{\tau_a^+\wedge \sigma_0^-} t^r \I_{\{X_t\in A \}} \dd t \right).
\end{align*}
From Lemma \ref{lemma:finitenessofEgn} we know that $\E_x((\tau_a^+)^r )<\infty$ for all $r\in \{0,1,\ldots,q \}$. Then, by dominated convergence theorem we have that
\begin{align*}
\int_A U_r(a,x,\dd y) &=\lim_{q\downarrow 0} \E_x\left(\int_0^{\tau_a^+\wedge \sigma_0^-}  t^r e^{-q t}   \I_{\{X_t\in A \}} \dd t \right)\\
&= \int_A  \lim_{q\downarrow 0} (-1)^r \frac{\partial^r }{\partial q^r} \left[ \frac{W^{(q)}(x) W^{(q)}(a-y) }{W^{(a)}(a)}-W^{(q)}(x-y) \right]\dd y,
\end{align*}
where the last equality follows from (\ref{eq:qpotentialdensitytkillingonexiting0,a}). From the convolution representation of $W^{(q)}$ (see equation (\ref{eq:convolutionrepresentationofWq})), the derivatives in the last equation above exist and indeed $u_r(a,x,y)$ is a density of $U_r(a,x,\dd y)$ for all $y\in (0,a)$.
Now we proceed to show the differentiation statements. Note that from equations (\ref{eq:laplacetransformtau0}) and (\ref{eq:laplacetransformoftaua+beforecrossingthelevelzero}) we have that
\begin{align*}
f_x(q):=\E_x(e^{-q\tau_a^+}\I_{\{\sigma_0^-<\tau_a^+ \}})=e^{\Phi(q)(x-a) } -\frac{W^{(q)}(x)}{W^{(q)}(a)},
\end{align*}
for any $x\in (0,a)$. Hence, we have that 
\begin{align*}
\E_x((\tau_a^+)^r \I_{\{\sigma_0^-<\tau_a^+ \}})=\lim_{q\downarrow 0} (-1)^r \frac{\partial^r }{\partial q^r}\left[e^{\Phi(q)(x-a) } -\frac{W^{(q)}(x)}{W^{(q)}(a)}\right]
\end{align*}
for any $x\in (0,a)$. From \eqref{eq:derivativesofLaplacetransformoftau+} we know that the first term above is differentiable with respect to the variable $x$ (note that is also possible to calculate directly the derivatives of $\Phi(q)$ by using the identity $\psi(\Phi(q))=q$ and the chain rule). Moreover, from \eqref{eq:convolutionrepresentationofWq} we can see that for any $q\geq 0$, $x\geq 0$ and $r\in \{0,1,2,\ldots\}$, 
\begin{align*}
\lim_{q\downarrow 0} \frac{\partial^r }{\partial q^r}W^{(q)}(x) = r!W^{*(r+1)}(x).
\end{align*}
Since $W$ is $C^1((0,\infty))$ and has left and right derivatives at zero (see discussion above equation \ref{eq:convolutionrepresentationofWq}), we conclude that $x\mapsto \E_x((\tau_a^+)^r \I_{\{\sigma_0^-<\tau_a^+ \}})$ has finite left derivatives on $(0,a]$, for any $r\in \{0,1,\ldots,q\}$. A similar argument works for the function $x\mapsto u_r(a,x,y)$.
\end{proof}
Before we prove Lemma \ref{lemma:smoothfit} we show that $V(0,x)$ is smooth on $(-\infty,0)$. This auxiliary result will also be useful later for the proof of Lemma \ref{lemma:proofofequationforV0}.

\begin{lemma}
\label{lemma:diferrentiabilityofV(0,x)}
We have that the function $V(0,x)$ is continuously differentiable on $(-\infty,0)$ and has left derivative at $0$. Moreover, we have that $\frac{\partial}{\partial x} V(0,x)$ is non-increasing and non-negative on $(-\infty,0)$, and for any $x<0$, 
\begin{align}
\label{eq:boundaryforderivativeofVbelowzero}
\frac{\partial}{\partial x} V(0,x)=\int_{0}^{\infty}  \E_{x-u}(g^{p-1}) W'(u) \dd u \leq \alpha_{p-1}+\gamma_{p-1}|x|^{p-1},
\end{align}
where $\alpha_{p-1}$ and $\gamma_{p-1}$ are non-negative constants.
\end{lemma}
\begin{proof}
We start showing that the inequality in \eqref{eq:boundaryforderivativeofVbelowzero} holds. Indeed, using Lemma \ref{lemma:boundaryformomentsoftau0+p} and the fact that $\P(-\underline{X}_{\infty} \in \dd u)=\psi'(0+)W'(u) \dd u$, we get that for all $x<0$ and $u\geq 0$,
\begin{align*}
\int_{0}^{\infty} & \E_{x-u}(g^{p-1}) W'(u) \dd u \\
&\leq \int_0^{\infty} (2^{p-1}[\E(g^{p-1})+A_{p-1}] +2^{p-1} C_{p-1} |x-u|^{p-1}) W'(u) \dd u  \\
&\leq \frac{2^{p-1}[ \E(g^{p-1})+A_{p-1}]+4^{p-1} C_{p-1} \E((-\underline{X}_{\infty})^{p-1}) }{\psi'(0+)}  +\frac{4^{p-1}  C_{p-1}}{\psi'(0+)}  |x|^{p-1},
\end{align*}
where the second inequality follows by using Lemma \ref{lemma:basicinequality}. Therefore, we conclude that the inequality in \eqref{eq:boundaryforderivativeofVbelowzero} holds by noticing that $\E((-\underline{X}_{\infty})^{p-1})<\infty$ (see Lemma \ref{lemma:finitenessofEgn}). Next, we can easily deduce from the continuity of $x\mapsto \E_x(g^{p-1})$ (see Lemma \ref{lemma:propertiesofExgn}) and the dominated convergence theorem that the mapping $x\mapsto \int_{0}^{\infty}  \E_{x-u}(g^{p-1}) W'(u) \dd u$ is continuous on $(-\infty,0]$. Thus, from (\ref{eq:expressionforV0xnegative}) we deduce that for any $x< 0$,
\begin{align*}
\frac{\partial}{ \partial x} V(0,x)=  \int_{0}^{\infty}  \E_{x-u}(g^{p-1}) W'(u) \dd u.
\end{align*}
Therefore, we conclude that $x\mapsto V(0,x)$ is continuously differentiable on $(-\infty,0)$ and has left derivative at zero as claimed. Lastly, since $x\mapsto \E_x(g^{p-1})$ is non-increasing and non-negative (see Lemma \ref{lemma:propertiesofExgn}) and $W'(u)> 0$ for all $u>0$, we deduce that $\frac{\partial}{\partial x} V(0,x)$ is a non-increasing and non-negative function. The proof is now complete.
\end{proof}

We are now ready to prove that the partial derivatives of $V$ at $(u,b(u))$ exist and are equal to zero.  
\begin{proof}[Proof of Lemma \ref{lemma:smoothfit}]
We first show that for all $u>0$ such that $b(u)>0$,
\begin{align*}
\frac{\partial}{\partial u} V(u,b(u))=0.
\end{align*}
From the proof of Lemma \ref{lemma:continuityofV0x} we know that for any $h>0$,
\begin{align*}
0\leq \frac{ V(u,b(u))-V(u-h,b(u))}{h}
\leq \E_{b(u)}\left( \int_0^{\tau_{b(u-h)}^+} \frac{[(u+s)^{p-1}-(u-h+s)^{p-1}]}{h}\dd s \right).
\end{align*}
The result then follows by taking $h\downarrow  0$, from the fact that the function $u\mapsto u^{p}$ is differentiable on $[0,\infty)$, the dominated convergence theorem and since $b$ is continuous.\\

Next, we proceed to show that the smooth fit condition on the spatial argument holds for $u<u_b$, that is, 
\begin{align*}
\frac{\partial}{\partial x} V(u,b(u))=0.
\end{align*}
Let $x>0$, $u>0$ and $0<\varepsilon<1$ such that $x-\varepsilon>0$ and $b(u)>0$. From equation (\ref{eq:Vwhenxpositive}) we know that 
\begin{align*}
V(u,x-\varepsilon)
&=\E\left( \int_0^{\tau_{b}^{u,x-\varepsilon} \wedge \sigma_{\varepsilon-x}^-} G(u+s,X_s+x-\varepsilon)\dd s\right)\\
&\qquad +\E\left( \I_{\{\sigma_{\varepsilon-x}^- <\tau_b^{u,x-\varepsilon} \}} \int_{\sigma_{\varepsilon-x}^-}^{\tau_b^{g,x-\varepsilon}} G(U_s^{(\varepsilon-x)}, X_s+x-\varepsilon)\dd s \right)\\
&=\E_x\left( \int_0^{\tau_{b}^{u,-\varepsilon} \wedge \sigma_{\varepsilon}^-} G(u+s,X_s-\varepsilon)\dd s\right)\\
&\qquad+\E_x\left( \I_{\{\sigma_{\varepsilon}^- <\tau_b^{u,-\varepsilon} \}} \int_{\sigma_{\varepsilon}^-}^{\tau_b^{g,-\varepsilon}} G(U_s^{(\varepsilon)}, X_s-\varepsilon)\dd s \right)\\
&=\E_x\left( \int_0^{\tau_{b}^{u,-\varepsilon} \wedge \sigma_{\varepsilon}^-} G(u+s,X_s-\varepsilon)\dd s\right)\\
&\qquad+\E_x\left( \I_{\{\sigma_{\varepsilon}^- <\tau_b^{u,-\varepsilon} \}} \int_{\sigma_{\varepsilon}^-}^{\tau_b^{g,-\varepsilon}\wedge \sigma_0^-} G(U_s^{(\varepsilon)}, X_s-\varepsilon)\dd s \right)\\
&\qquad +\E_x\left(\I_{\{\sigma_{0}^- <\tau_b^{g,-\varepsilon} \}}\I_{\{\sigma_{\varepsilon}^-< \tau_{b}^{u,-\varepsilon} \}}  \int_{\sigma_{0}^-}^{\tau_b^{g,-\varepsilon}} G(U_s^{(\varepsilon)}, X_s-\varepsilon)\dd s \right),
\end{align*}
where in the last inequality we used that $\sigma_{\varepsilon}<\sigma_0^-$ under the measure $\P_x$. On the other hand, define the stopping time $\tau_*:=   \tau_b^{u,-\varepsilon} \I_{\{ \sigma_{\varepsilon}^- >\tau_b^{u,-\varepsilon} \}} + \tau_b^{g,-\varepsilon}\I_{\{ \sigma_{\varepsilon}^- <\tau_b^{u,-\varepsilon} \}}$. From equation (\ref{eq:VintermsofPx}) we have that
%
\begin{align*}
V(u,x) &\leq \E_x\left( \int_0^{\tau_* \wedge \sigma_0^-} G(u+s, X_s)\dd s\right)+\E_x \left( \I_{\{\sigma_0^-< \tau_* \}} \int_{\sigma_0^-}^{\tau_*}G(U_s,X_s)\dd s \right)\\
&=\E_x\left( \int_0^{\tau_b^{u,-\varepsilon} \wedge \sigma_{\varepsilon}^- } G(u+s, X_s)\dd s \right)\\
&\qquad+\E_x\left( \I_{\{\sigma_{\varepsilon}^- <\tau_b^{u,-\varepsilon} \}} \int_{\sigma_{\varepsilon}^-}^{\tau_b^{g,-\varepsilon} \wedge \sigma_0^-} G(u+s, X_s)\dd s \right)\\
&\qquad +\E_x \left( \I_{\{\sigma_0^-< \tau_{b}^{g,-\varepsilon} \}}  \I_{\{\sigma_{\varepsilon}^-< \tau_{b}^{u,-\varepsilon} \}}\int_{\sigma_0^-}^{\tau_b^{g,-\varepsilon}}G(U_s,X_s)\dd s \right),
\end{align*}
where we again used that $\sigma_{\varepsilon}^-\leq \sigma_0^-$. Hence, for any $u>0$, $0<x\leq b(u)$ and $0<\varepsilon<1$ such that $x-\varepsilon>0$,
\begin{align*}
0\leq \frac{V(u,x)-V(u,x-\varepsilon)}{\varepsilon} \leq R_1^{(\varepsilon)}(u,x)+R_2^{(\varepsilon)}(u,x)+R_3^{(\varepsilon)}(u,x),
\end{align*}
where

\begin{align*}
R_1^{(\varepsilon)}(u,x)&:=\frac{1}{\varepsilon}\E_x\left( \int_0^{\tau_b^{u,-\varepsilon} \wedge \sigma_{\varepsilon}^- } [G(u+s, X_s)- G(u+s,X_s-\varepsilon)]\dd s \right),\\
R_2^{(\varepsilon)}(u,x) &:=\frac{1}{\varepsilon} \E_x\left( \I_{\{\sigma_{\varepsilon}^- <\tau_b^{u,-\varepsilon} \}} \int_{\sigma_{\varepsilon}^-}^{\tau_b^{g,-\varepsilon} \wedge \sigma_0^-} [G(u+s, X_s)-G(U_s^{(\varepsilon)}, X_s-\varepsilon)   ]\dd s \right),\\
R_3^{(\varepsilon)}(u,x)&:= \frac{1}{\varepsilon} \E_x \left( \I_{\{\sigma_0^-< \tau_{b}^{g,-\varepsilon} \}}  \I_{\{\sigma_{\varepsilon}^-< \tau_{b}^{u,-\varepsilon} \}}\int_{\sigma_0^-}^{\tau_b^{g,-\varepsilon}}[G(U_s,X_s)-G(U_s^{(\varepsilon)}, X_s-\varepsilon)]\dd s \right).
\end{align*}
We show that $\lim_{\varepsilon \downarrow 0} R_i^{(\varepsilon)}(u,b(u))=0$ for $i=1,2,3$. From the fact that $b$ is non-increasing we have that $ \tau_b^{u,-\varepsilon}\leq \tau_{b(u)+\varepsilon}^+$. Then, for all $u\in (0,u_b)$ we have
\begin{align*}
0&\leq R_1^{(\varepsilon)}(u,b(u))\\
&\leq \frac{1}{\varepsilon}\E_{b(u)}\left( \int_0^{\tau_{b(u)+\varepsilon}^+ \wedge \sigma_{\varepsilon}^- } (u+s)^{p-1}\psi'(0+)[W(X_s)-W(X_s-\varepsilon)] \dd s\right)\\
&\qquad -\frac{1}{\varepsilon}\E_{b(u)-\varepsilon}\left( \int_0^{\tau_{b(u)}^+ \wedge \sigma_{0}^- } [\E_{X_s+\varepsilon}(g^{p-1})-\E_{X_s}(g^{p-1})] \dd s\right) \\
&=\frac{1}{\varepsilon}\E_{b(u)}\left( \int_0^{\tau_{b(u)+\varepsilon}^+ \wedge \sigma_{\varepsilon}^- } (u+s)^{p-1}\psi'(0+)[W(X_s)-W(X_s-\varepsilon)] \dd s\right)\\
&\qquad -\frac{1}{\varepsilon} \int_{(0,b(u))} [\E_{z+\varepsilon}(g^{p-1})-\E_{z}(g^{p-1})] \int_0^{\infty} \P_{b(u)-\varepsilon} (X_s \in \dd z, t< \tau_{b(u)}^+ \wedge \sigma_{0}^- ) \dd s.
\end{align*}
Using the density of the $0$-potential measure of $X$ exiting the interval $[0,b(u)]$ given in equation (\ref{eq:qpotentialdensitytkillingonexiting0,a}), we obtain that
\begin{align*}
0&\leq R_1^{(\varepsilon)}(u,b(u))\\
&\leq \E_{b(u)}\left( \int_0^{\tau_{b(u)+\varepsilon}^+ \wedge \sigma_{\varepsilon}^- } (u+s)^{p-1}\psi'(0+)\frac{W(X_s)-W(X_s-\varepsilon)}{\varepsilon} \dd s\right)\\
&\qquad - \int_{0}^{b(u)-\varepsilon} [\E_{z+\varepsilon}(g^{p-1})-\E_{z}(g^{p-1})] \\
&\qquad\qquad \times \frac{1}{\varepsilon} \left[ \frac{W(b(u)-\varepsilon)W(b(u)-z)}{W(b(u))}-W(b(u)-\varepsilon-z)\right]\dd z \\
&\qquad -\frac{1}{\varepsilon} \int_{b(u)-\varepsilon}^{b(u)} [\E_{z+\varepsilon}(g^{p-1})-\E_{z}(g^{p-1})] \left[ \frac{W(b(u)-\varepsilon)W(b(u)-z)}{W(b(u))}\right]\dd z.
\end{align*}
Note that for all $s<\tau_{b(u)+\varepsilon}^+ \wedge \sigma_{\varepsilon}^-$, we have $X_s \in (\varepsilon, b(u)+\varepsilon)$. Then, using the fact that $W \in C^1 ((0,\infty)$), the function $z\mapsto \E_z (g^{p-1})$ is continuous, $ \lim_{ \varepsilon \downarrow 0} \tau_{b(u)+\varepsilon}^+ \wedge \sigma_{\varepsilon}^-  =\tau_{b(u)}^+ \wedge \sigma_{0}^-=0$ a.s., under $\P_{b(u)}$, and the dominated convergence theorem, we conclude that

\begin{align*}
\lim_{\varepsilon \downarrow 0}R_1^{(\varepsilon)}(u,b(u))=0.
\end{align*}
Now we show that $\lim_{\varepsilon \downarrow 0}R_2^{(\varepsilon)}(u,b(u))=0$. Take $0<x\leq b(u)$. Then, using the inequality $G(u,x)\leq u^{p-1}$, the fact that for $s<\sigma_0^-$, $X_s > 0$ (then $-\E_{-1}(g^{p-1})=G(0,-1)\leq G(U_s^{(\varepsilon)}, X_s-\varepsilon)$) and the strong Markov property at time $\sigma_{\varepsilon}^-$, we get that
\begin{align*}
0&\leq R_2^{(\varepsilon)}(u,x)\\
 &\leq  \frac{1}{\varepsilon}\E_x\left(\I_{\{\sigma_{\varepsilon}^- <\tau_b^{u,-\varepsilon} \}}[\tau_b^{g,-\varepsilon} \wedge \sigma_0^--\sigma_{\varepsilon}^-] [(u+\tau_b^{g,-\varepsilon} \wedge \sigma_0^-)^{p-1}+\E_{-1}(g^{p-1})   ]\right)\\
&\leq \frac{1}{\varepsilon}\E_x\left(\I_{\{\sigma_{\varepsilon}^- <\tau_{b(u)+\varepsilon}^+ \}}f(\sigma_{\varepsilon}^-,X_{\sigma_{\varepsilon}^-})  \right),
\end{align*}
where $f$ is given for all $t\geq 0$ and $x\in \R$ by
\begin{align*}
f(t,x):= [2^{p-1}(u+t)^{p-1}+\E_{-1}(g^{p-1})]\E_x(\tau_b^{g,-\varepsilon} \wedge \sigma_0^-)+2^{p-1}\E_x((\tau_b^{g,-\varepsilon} \wedge \sigma_0^-)^p)<\infty,
\end{align*}
where the last inequality follows due to Lemma \ref{lemma:finitenesofEtauD}. Note that $\E_x(\tau_b^{g,-\varepsilon} \wedge \sigma_0^-)=\E_x((\tau_b^{g,-\varepsilon} \wedge \sigma_0^-)^p)=0$ for all $x\leq 0$. Thus, from (\ref{eq:finitenessoftaubg}) there exists $M>0$ such that
\begin{align*}
\max\{ \E_x(\tau_b^{g,-\varepsilon} \wedge \sigma_0^-), \E_x((\tau_b^{g,-\varepsilon} \wedge \sigma_0^-)^p) \} \leq M
\end{align*}
for all $x\leq \varepsilon$. Hence, from the compensation formula for Poisson random measures, we get that

\begin{align*}
0\leq  &R_2^{(\varepsilon)}(u,x) \\
 &\leq \max\{ \E_{\varepsilon}(\tau_b^{g,-\varepsilon} \wedge \sigma_0^-), \E_{\varepsilon}((\tau_b^{g,-\varepsilon} \wedge \sigma_0^-)^p)\}  \\
 &\qquad \qquad \times \frac{1}{\varepsilon}\E_x\left(\I_{\{\sigma_{\varepsilon}^- <\tau_{b(u)+\varepsilon}^+ \}} [2^{p-1}(u+\tau_{b(u)+\varepsilon}^+)^{p-1}+\E_{-1}(g^{p-1})+2^{p-1}] \right)\\
 &\qquad + M\frac{1}{\varepsilon} \E_x\left(\I_{\{\sigma_{\varepsilon}^- <\tau_{b(u)+\varepsilon}^+ \}}[2^{p-1}(u+\sigma_{\varepsilon}^-)^{p-1}+\E_{-1}(g^{p-1})+2^{p-1}] \I_{\{0<X_{ \sigma_{\varepsilon}^-} <\varepsilon \}} \right)\\
 &=
 \max\{ \E_{\varepsilon}(\tau_b^{g,-\varepsilon} \wedge \sigma_0^-), \E_{\varepsilon}((\tau_b^{g,-\varepsilon} \wedge \sigma_0^-)^p)\} \\
 &\qquad \qquad \times \frac{1}{\varepsilon}\E_x\left(\I_{\{\sigma_{\varepsilon}^- <\tau_{b(u)+\varepsilon}^+ \}} [2^{p-1}(u+\tau_{b(u)+\varepsilon}^+)^{p-1}+\E_{-1}(g^{p-1})+2^{p-1}] \right)\\
 &\qquad + M\frac{1}{\varepsilon} \E_{x-\varepsilon} \bigg( \int_0^{\tau_{b(u)}^+ \wedge \sigma_0^-} \int_{(-\infty,0)} [2^{p-1}(u+t)^{p-1}-G(0,-1)+2^{p-1}] \\
 &\qquad \qquad \times \I_{\{-\varepsilon <X_t +y <0 \}}\Pi(\dd y)\dd t \bigg)\\
 &=
 \max\{ \E_{\varepsilon}(\tau_b^{g,-\varepsilon} \wedge \sigma_0^-), \E_{\varepsilon}((\tau_b^{g,-\varepsilon} \wedge \sigma_0^-)^p)\} \\
 &\qquad \qquad \times \frac{1}{\varepsilon}\E_x\left(\I_{\{\sigma_{\varepsilon}^- <\tau_{b(u)+\varepsilon}^+ \}} [2^{p-1}(u+\tau_{b(u)+\varepsilon}^+)^{p-1}+\E_{-1}(g^{p-1})+2^{p-1}] \right)\\
 &\qquad +   \int_0^{b(u)} \int_{(-\varepsilon-z,-z)}   \frac{M}{\varepsilon} \int_0^{\infty} [2^{p-1}(u+t)^{p-1}+\E_{-1}(g^{p-1})+2^{p-1}] \\
 &\qquad\qquad \times \P_{x-\varepsilon}(X_t\in \dd z, t<\tau_{b(u)}^+\wedge \sigma_0^-)\dd t  \Pi(\dd y).
\end{align*}
Letting $x=b(u)$ and tending $\varepsilon \downarrow 0$ we get from Lemma \ref{lemma:derivativeofpotentialmeasure} that

\begin{align*}
\lim_{\varepsilon \downarrow 0} R_2^{(\varepsilon)}(u,b(u))=0.
\end{align*}

Lastly, using the Markov property at time $\sigma_0^-$ and the fact that $\tau_{b}^{g,0} \leq \tau_b^{g,-\varepsilon}$, we get that
\begin{align*}
0 &\leq R_3^{(\varepsilon)}(u,x) \\
&=\frac{1}{\varepsilon} \E_{x} \left(  \I_{\{\sigma_0^-< \tau_{b}^{g,-\varepsilon} \}}  \I_{\{\sigma_{\varepsilon}^-< \tau_{b}^{u,-\varepsilon} \}} \E_{X_{\sigma_0^-}} \left[\int_{0}^{\tau_b^{g,-\varepsilon}}[G(U_s,X_s)-G(U_s^{(\varepsilon)}, X_s-\varepsilon)]\dd s  \right] \right)\\
&= \frac{1}{\varepsilon} \E_x \left( \I_{\{\sigma_0^-< \tau_{b}^{g,-\varepsilon} \}}  \I_{\{\sigma_{\varepsilon}^-< \tau_{b}^{u,-\varepsilon} \}}[V(0,X_{\sigma_0^-})-V(0,X_{\sigma_0^-}-\varepsilon)] \right)\\
&\qquad +\frac{1}{\varepsilon} \E_{x} \left(  \I_{\{\sigma_0^-< \tau_{b}^{g,-\varepsilon} \}}  \I_{\{\sigma_{\varepsilon}^-< \tau_{b}^{u,-\varepsilon} \}} \E_{X_{\sigma_0^-}} \left[\int_{\tau_b^{g,0}}^{\tau_b^{g,-\varepsilon}}G(U_s,X_s)\dd s  \right] \right)\\
&\leq  \frac{1}{\varepsilon} \E_x \left(   \I_{\{\sigma_{\varepsilon}^-< \tau_{b}^{u,-\varepsilon} \}}[V(0,X_{\sigma_0^-})-V(0,X_{\sigma_0^-}-\varepsilon)] \right)\\
& \qquad + \frac{1}{\varepsilon} \E_x \left( \I_{\{\sigma_0^-< \tau_{b}^{g,-\varepsilon} \}}  \I_{\{\sigma_{\varepsilon}^-< \tau_{b}^{u,-\varepsilon} \}} \E_{X_{\sigma_0^-}} \left( [\tau_b^{g,-\varepsilon}-\tau_b^{g,0}] (\tau_b^{g,-\varepsilon})^{p-1}\right)\right),
\end{align*}
where we used the fact that $G(U_s,X_s)\leq s^{p-1} \leq (\tau_b^{g,-\varepsilon})^{p-1}$ for all $s\in [\tau_b^{g,0}, \tau_b^{g,-\varepsilon}]$. 
Thus, since $|X_{\sigma_0^-}|\leq |\underline{X}_{\infty}|$, under the event $ \{\sigma_0^-<\infty\}$, and $\E_x((-\underline{X}_{\infty})^{p-1})<\infty$ for all $x\in \R$, we deduce from Lemma \ref{lemma:diferrentiabilityofV(0,x)} that the mapping $x \mapsto \E_x(\frac{\partial}{\partial x} V(0,X_{\sigma_{0}^-}) \I_{\{\sigma_0^-<\infty\}})$ is locally bounded. Hence, by the dominated convergence theorem and the right continuity of $b$, we have that
\begin{align*}
\lim_{\varepsilon \downarrow 0} \frac{1}{\varepsilon} \E_x \left(   \I_{\{\sigma_{\varepsilon}^-< \tau_{b}^{u,-\varepsilon} \}}[V(0,X_{\sigma_0^-})-V(0,X_{\sigma_0^-}-\varepsilon)] \right)=\E_x \left(   \I_{\{\sigma_{0}^-< \tau_{b}^{u,0} \}} \frac{\partial}{ \partial x} V(0,X_{\sigma_0^-}) \right).
\end{align*}
In particular, when $x=b(u)$, we have that equation above is equal to zero. On the other hand, by conditioning on $\sigma_{\varepsilon}^-$ we obtain that
\begin{align*}
\frac{1}{\varepsilon} & \E_x \left( \I_{\{\sigma_0^-< \tau_{b}^{g,-\varepsilon} \}}  \I_{\{\sigma_{\varepsilon}^-< \tau_{b}^{u,-\varepsilon} \}} \E_{X_{\sigma_0^-}} \left( [\tau_b^{g,-\varepsilon}-\tau_b^{g,0}] (\tau_b^{g,-\varepsilon})^{p-1}\right)\right)\\
&=\frac{1}{\varepsilon} \E_x \left(   \I_{\{\sigma_{\varepsilon}^-< \tau_{b}^{u,-\varepsilon} \}} f_2(\varepsilon, X_{\sigma_{\varepsilon}^-})\right),
\end{align*}
where
\begin{align*}
0 \leq f_2(\varepsilon, x)=\E_x\left(\I_{\{\sigma_0^-< \tau_{b}^{g,-\varepsilon} \}}  \E_{X_{\sigma_0^-}} \left( [\tau_b^{g,-\varepsilon}-\tau_b^{g,0}] (\tau_b^{g,-\varepsilon})^{p-1} \right)\right).
\end{align*}
We show that $f_2$ is a finite function. Conditioning with respect to $\tau_0^+$ and the strong Markov property of L\'evy processes we have for all $y\leq 0$,
\begin{align*}
\E_y&\left( [\tau_b^{g,-\varepsilon}-\tau_b^{g,0}] (\tau_b^{g,-\varepsilon})^{p-1} \right)\\
&\leq 2^p \E((\tau_b^{g,-\varepsilon})^{p})+2^p\E_y((\tau_0^+)^p) \leq 2^p \E((\tau_b^{g,-\varepsilon})^{p})+2^pA_p +2^pC_p |y|^p.
\end{align*}
where the last inequality follows from Lemma \ref{lemma:boundaryformomentsoftau0+p}. Hence, since $|X_{\sigma_0^-}|\leq |\underline{X}_{\infty}|$ on the event $\{\sigma_0^-<\infty \}$, we have that
\begin{align}
\label{eq:boundforauxiliaryfunctiong}
f_2(\varepsilon,x)\leq \left\{
\begin{array}{lc}
2^p \E((\tau_b^{g,-\varepsilon})^{p})+2^pA_p +2^pC_p \E_x( |\underline{X}_{\infty}|^p), & \text{ for }x>0,\\
2^p \E((\tau_b^{g,-\varepsilon})^{p})+2^pA_p +2^pC_p |x|^p, & \text{ for } x\leq 0.
\end{array}
\right.
\end{align}
From Lemmas \ref{lemma:finitenessofEgn} and \ref{lemma:finitenesofEtauD} we conclude that $f_2(\varepsilon,x)$ is a finite function. Moreover, from the fact that $b$ is continuous and $x\mapsto U_t^{(x)}$ is right continuous, we can show that $\lim_{\varepsilon \downarrow 0} \tau_b^{g,\varepsilon}=\tau_b^{g,0}$ a.s., and then, by the dominated convergence theorem, $\lim_{\varepsilon \downarrow 0}f_2(\varepsilon,x)=0$ for all $x\in \R$. Moreover, using the compensation formula for Poisson random measures, we get that
\begin{align*}
&\frac{1}{\varepsilon} \E_x \left(   \I_{\{\sigma_{\varepsilon}^-< \tau_{b}^{u,-\varepsilon} \}} f_2(\varepsilon, X_{\sigma_{\varepsilon}^-})\right)\\
&=\frac{1}{\varepsilon} \E_x \left(   \I_{\{\sigma_{\varepsilon}^-< \tau_{b(u)+\varepsilon}^+ \}} f_2(\varepsilon, X_{\sigma_{\varepsilon}^-})\right)\\
&=f_2(\varepsilon,\varepsilon)\frac{\P_x(\sigma_{\varepsilon}^-<\tau_{b(u)+\varepsilon}^+, X_{\sigma_{\varepsilon}^-}=\varepsilon)}{\varepsilon}\\
& \qquad +\frac{1}{\varepsilon} \E_x\left( \int_{[0,\infty)} \int_{(-\infty,0)} f_2(\varepsilon,X_{t-}+y) \I_{\{\overline{X}_{t-}< b(u)+\varepsilon \}}\I_{\{\underline{X}_{t-}> \varepsilon \}}\I_{\{X_{t-} +y \leq \varepsilon \}} N(\dd t,\dd y)  \right)\\
&\leq f_2(\varepsilon,\varepsilon)\frac{\P_x(\sigma_{\varepsilon}^-<\tau_{b(u)+\varepsilon}^+)}{\varepsilon}\\
& \qquad +\frac{1}{\varepsilon} \E_{x-\varepsilon}\left( \int_{0}^{\infty} \int_{(-\infty,0)} f_2(\varepsilon,X_{t}+\varepsilon+y) \I_{\{t< \tau_{b(u)}^+ \wedge \sigma_{0}^- \}}\I_{\{X_{t} +y \leq 0 \}} \Pi(\dd y) \dd t  \right).
\end{align*}
From the $0$-potential density of the process killed on exiting $[0,b(u)]$ (see equation (\ref{eq:qpotentialdensitytkillingonexiting0,a})) and from equation (\ref{eq:laplacetransformoftaua+beforecrossingthelevelzero}) we obtain
\begin{align*}
\frac{1}{\varepsilon} \E_x &\left(   \I_{\{\sigma_{\varepsilon}^-< \tau_{b}^{u,-\varepsilon} \}} f_2(\varepsilon, X_{\sigma_{\varepsilon}^-})\right)\\
& \leq  f_2(\varepsilon,\varepsilon)\frac{W(b(u))-W(x-\varepsilon)}{\varepsilon W(b(u))}\\
& \qquad +\frac{1}{\varepsilon} \int_{(0,b(u))} \int_{(-\infty,0)}  f_2(\varepsilon,z+\varepsilon+y)\I_{\{z +y \leq 0 \}} \Pi(\dd y)   \\
&\qquad\qquad \times   \int_{0}^{\infty} \P_{x-\varepsilon}(X_t \in \dd z,t< \tau_{b(u)}^+ \wedge \sigma_{0}^- ) \dd t    \\
&=f_2(\varepsilon,\varepsilon)\frac{W(b(u))-W(x-\varepsilon)}{\varepsilon W(b(u))}\\
& \qquad +\frac{1}{\varepsilon} \int_{0}^{(x-\varepsilon) \vee 0}  \left[ \frac{W(x-\varepsilon)W(b(u)-z)}{W(b(u))}-W(x-\varepsilon-z)\right] \\
&\qquad\qquad \times  \int_{(-\infty,-z)}  f_2(\varepsilon,z+\varepsilon+y)  \Pi(\dd y)\dd z  \\
& \qquad +\frac{1}{\varepsilon} \int_{(x-\varepsilon)\vee 0}^{b(u)}   \frac{W(x-\varepsilon)W(b(u)-z)}{W(b(u))}  \int_{(-\infty,-z)}  f_2(\varepsilon,z+\varepsilon+y)  \Pi(\dd y)\dd z.
\end{align*}
Note that since $\Pi$ is finite on sets of the form $(-\infty,-\delta)$, for all $\delta>0$, Lemma \ref{lemma:finitenessofEgn} and equation (\ref{eq:boundforauxiliaryfunctiong}), we have that the integrals above with respect to $\Pi$ are finite and bounded. Hence, taking $x=b(u)$ and from the dominated convergence theorem we conclude that
\begin{align*}
\lim_{\varepsilon \downarrow 0} \frac{1}{\varepsilon} \E_{b(u)} &\left(   \I_{\{\sigma_{\varepsilon}^-< \tau_{b}^{u,-\varepsilon} \}} g(\varepsilon, X_{\sigma_{\varepsilon}^-})\right) \leq 0.
\end{align*}
Therefore, we also have that
\begin{align*}
\lim_{\varepsilon  \downarrow 0} R_3{(\varepsilon)}(u,b(u))=0.
\end{align*}
The conclusion of the lemma holds.
\end{proof}

\begin{proof}[Proof of Lemma \ref{lemma:integrativilityconditionsofGbandVb}]
Let $(u,x)\in E$, we first show that (\ref{eq:integrabilityofGbelowb}) is satisfied. Indeed, using that $|G(u,x)|<u^{p-1}+\E_x(g^{p-1})$ and that $U_s\leq u+ s$ (under $\P_{u,x}$) we obtain that
\begin{align}
 \E_{u,x}& \left( \int_0^{\infty} |G(U_s,X_s)| \I_{\{ X_s <b(U_s) \}}\dd s \right) \nonumber\\
\label{eq:boundaryforGIbusinabsolutevalue}
& \leq   \E_{u,x} \left( \int_0^{\infty} (u+s)^{p-1} \I_{\{ X_s <b(U_s) \}}\dd s \right)+ \E_{x} \left( \int_0^{\infty} \E_{X_s}(g^{p-1})\dd s \right). 
\end{align}
From Lemma \ref{lemma:finitenessofintExgn} we know that the second integral above is finite. Now we check that the first integral above is also finite. Consider $\delta>0$ and let $g^{(b(\delta))}$ be the last time $X$ is below the level $b(\delta)$. Then, we have that $g^{(b(\delta))} \geq g$ and $X_{s+g^{(b(\delta))} +\delta} \geq b(\delta)\geq b(U_s)$ for all $s \geq 0$. Hence, since $b$ is non-increasing and $p>1$ we get
\begin{align}
\E_{u,x} \left( \int_0^{\infty} (u+s)^{p-1} \I_{\{ X_s <b(U_s) \}}\dd s \right) & = \E_{u,x} \left( \int_0^{g^{(b(\delta))}+\delta} (u+s)^{p-1} \I_{\{ X_s <b(U_s) \}}  \dd s \right) \nonumber\\
\label{eq:integrabilityofGIbusinabsolutevalue}
&\leq [\E_x((g^{(b(\delta))}+\delta+u)^p)-u^p]\\
&<\infty \nonumber,
\end{align}
where the last expectation is finite by Lemma \ref{lemma:finitenessofEgn}. Therefore, we conclude that (\ref{eq:integrabilityofGbelowb}) holds. Moreover, assume that $X$ is of finite variation with $\Pi (-\infty,0)<\infty$ and take $u>u_b$. Then, we have that $b(u+s)=0$ for all $s\geq 0$, and for any $\delta>0$ we have that 
\begin{align*}
\E_{u,x} \left( \int_0^{\infty}G(U_s,X_s) \I_{\{ X_s <b(U_s) \}}\dd s \right)
&=\E_{x} \left( \int_{\sigma_0^-}^{\infty}G(U_s,X_s) \I_{\{ X_s <b(U_s) \}}\dd s \right) \\
&\leq \E_{x} \left( \int_{0}^{\infty}|G(U_s,X_s)| \I_{\{ X_s <b(U_s) \}}\dd s \right)\\
&\leq \E_x((g^{(b(\delta))}+\delta)^p)+ \E_{x} \left( \int_0^{\infty} \E_{X_s}(g^{p-1})\dd s \right),
\end{align*}
where the last inequality follows from equations \eqref{eq:boundaryforGIbusinabsolutevalue} and \eqref{eq:integrabilityofGIbusinabsolutevalue}. From the fact that $x\mapsto \E_x(g^{p})$ is non increasing, that $\lim_{x\rightarrow \infty} \E_x(g^{p})=0$ (see Lemma \ref{lemma:propertiesofExgn}) and the dominated convergence theorem we then deduce that 
\begin{align*}
 \lim_{u,x \rightarrow \infty} & \E_{u,x} \left( \int_0^{\infty}G(U_s,X_s) \I_{\{ X_s <b(U_s) \}}\dd s \right) \\
 &\leq \lim_{x \rightarrow \infty} \E_x((g^{(b(\delta))}+\delta)^p)+ \lim_{x \rightarrow \infty}\E_{x} \left( \int_0^{\infty} \E_{X_s}(g^{p-1})\dd s \right) \\
 &= \delta
\end{align*}
for any $\delta>0$. Hence, by letting $\delta \downarrow 0$, we conclude that when $X$ is of finite variation with finite activity,
\begin{align*}
\lim_{u,x \rightarrow \infty}  \E_{u,x} \left( \int_0^{\infty}G(U_s,X_s) \I_{\{ X_s <b(U_s) \}}\dd s \right) =0.
\end{align*}

Next, we prove that (\ref{eq:integrabilityVPidy}) also holds. Take $(u,x)\in E$ and let $N>0$ be any positive number. Then, we have that
\begin{align}
\int_{(-\infty,0)} & \widetilde{V}(u,x+y)\Pi(\dd y)\I_{\{x>b(u) \}}\nonumber\\
&=\int_{(-\infty,0)}  \widetilde{V}(u,x+y)\Pi(\dd y)\I_{\{u\leq N \}}\I_{\{x>b(u) \}} \nonumber\\
&\qquad +\int_{(-\infty,0)}  \widetilde{V}(u,x+y)\Pi(\dd y) \I_{\{u>N \}}\I_{\{x>b(N) \}}\nonumber\\
\label{eq:decompositionofK2onthreefunctions}
&\qquad +\int_{(-\infty,0)}  \widetilde{V}(u,x+y)\Pi(\dd y) \I_{\{u>N \}}\I_{\{b(N)\geq x>b(u) \}}.
\end{align}
Using that $\int_{(-\infty,0)}V(u,x+y)\Pi(\dd y)+G(u,x)\geq 0$ for all $u>0$ and $x>b(u)$ (see Lemma \ref{lemma:auxiliaryfunctionLambda}), that $G(u,x)\leq u^{p-1}$ for all $(u,x)\in E$ and that $b$ is non increasing, we obtain the lower bound
\begin{align}
\int_{(-\infty,0)}  \widetilde{V}(u,x+y)\Pi(\dd y)\I_{\{u\leq N \}}\I_{\{x>b(u) \}} 
&\geq - G(u,x)  \I_{\{u\leq N \}}\I_{\{x>b(u) \}} \nonumber\\
&\geq - u^{p-1}  \I_{\{u\leq N \}}\I_{\{x>b(u) \}} \nonumber\\
\label{eq:lowerboundforK3}
&\geq - N^{p-1}  \I_{\{u\leq N \}} .
\end{align}
Then, we have that for any $(u,x)\in E$, 
\begin{align}
\E_{u,x}&\left(\int_0^{\infty} \int_{(-\infty,0)} \widetilde{V}(U_s,X_s+y) \Pi(\dd y) \I_{\{ U_s \leq  N\}}\I_{\{X_s >  b(U_s) \}} \dd s  \right)\nonumber\\
& \geq - N^{p-1} \E_{u,x}\left(\int_0^{\infty}  \I_{\{ U_s \leq  N\}} \dd s  \right)\nonumber\\
&\geq - N^{p-1} [\E_x(g)+N]\nonumber\\
\label{eq:integrabilityoflwforK3}
&>-\infty,
\end{align}
where in the second last inequality we used the fact that $U_s>N$ for all $s\geq g+N$. Similarly,
\begin{align}
\int_{(-\infty,0)}  \widetilde{V}(u,x+y)\Pi(\dd y) \I_{\{u>N \}}\I_{\{x>b(N) \}}
&\geq - u^{p-1} \I_{\{b(N)\geq x>b(u) \}} \nonumber\\
\label{eq:lowerboundforK4}
&\geq - u^{p-1}\I_{\{x\leq b(N) \}}.
\end{align}
Thus, we can see that for any $(u,x)\in E$,
\begin{align}
\E_{u,x} &\left(\int_0^{\infty} \int_{(-\infty,0)} \widetilde{V}(U_s,X_s+y)  \Pi(\dd y) \I_{\{ U_s >  N\}} \I_{\{b(N) \geq X_s >  b(U_s) \}} \dd s   \right)\nonumber\\
&\geq - \E_{u,x}\left(\int_0^{g^{(b(N)) }}  (u+s)^{p-1}   \I_{\{ X_s \leq b(N) \}} \dd s   \right)\nonumber\\
&\geq - \frac{1}{p}\E_x((u+g^{(b(N))})^p ) \nonumber\\
\label{eq:integrabilityoflwforK4}
&> -\infty,
\end{align}
where we used that $U_s\leq u+s$ and that $g^{(b(N))}=\sup\{t\geq 0: X_t \leq b(N) \}$ has moments of order $p$ (see Lemma \ref{lemma:finitenessofEgn}). Moreover, since $V$ is non-decreasing in each argument we have that
\begin{align}
\label{eq:lowerboundforK5}
\int_{(-\infty,0)}  \widetilde{V}(u,x+y)\Pi(\dd y) \I_{\{u>N \}}\I_{\{x>b(N) \}} \geq \int_{(-\infty,0)}  \widetilde{V}(N,x+y)\Pi(\dd y) \I_{\{x>b(N) \}} .
\end{align}

Hence, since $b$ is non-decreasing and by Fubini's theorem deduce that

\begin{align*}
\E_{u,x}&\left(\int_0^{\infty} \int_{(-\infty,0)} \widetilde{V}(U_s,X_s+y) \Pi(\dd y) \I_{\{ U_s >  N\}} \I_{\{X_s >  b(N) \}}\dd s   \right)\\
& \geq \E_x\left(\int_0^{\infty} \int_{(-\infty,0)} \widetilde{V}(N,X_s+y) \Pi(\dd y)  \I_{\{X_s >  b(N) \}}\dd s   \right)\\
&= \int_{(b(N),\infty)} \int_{(-\infty,0)} \widetilde{V}(N,z+y)  \Pi(\dd y) \int_0^{\infty}\P_x(X_s \in \dd z) \dd s \\
&\geq  \Phi'(0)  \int_{b(N)}^{\infty} \int_{(-\infty,0)} \widetilde{V}(N,z+y)  \Pi(\dd y)\dd z,
\end{align*}
where in the last inequality we used a density of the $0$-potential measure of $X$ without killing (see (\ref{eq:qpotentialdensitywithoutkilling})), that $V\leq 0$ and that $W$ is a non-negative function that vanishes on $(-\infty,0)$. From Fubini's theorem and since $V$ is a non-decreasing function in each argument that vanishes on $D$, we obtain that 

\begin{align}
\E_x&\left(\int_0^{\infty} \int_{(-\infty,0)} \widetilde{V}(U_s,X_s+y) \Pi(\dd y) \I_{\{ U_s >  N\}} \I_{\{X_s >  b(N) \}}\dd s   \right)\nonumber\\
&\geq \Phi'(0)  \int_{b(N)}^{b(N)+1} \int_{(-\infty,0)} \widetilde{V}(N,z+y)  \Pi(\dd y)\dd z \nonumber\\
&\qquad+\Phi'(0) \int_{b(N)+1}^{\infty} \int_{(-\infty,0)} \widetilde{V}(N,z+y)  \Pi(\dd y) \dd z	\nonumber \\
& \geq   \Phi'(0)\int_{(-\infty,0)}  \widetilde{V}(N,b(N)+y)  \Pi(\dd y) \nonumber\\
&\qquad +\Phi'(0) \int_{(-\infty,-1)} \int_{b(N)+1}^{b(N)-y}  \widetilde{V}(N,z+y) \dd z \Pi(\dd y)	\nonumber \\
& \geq   \Phi'(0) \int_{(-\infty,0)} \widetilde{V}(N,b(N)+y)  \Pi(\dd y)  -\Phi'(0) \int_{(-\infty,-1)} (y+1)  \widetilde{V}(0,y)  \Pi(\dd y)	\nonumber \\
\label{eq:integrabilityoflwforK5}
& >-\infty,
\end{align}
where the finiteness of the last integrals follow from Lemmas \ref{lemma:finitenessofEgn} and \ref{lemma:auxiliaryfunctionLambda} and equation (\ref{eq:lowerboundforV}). Therefore, we have that \eqref{eq:integrabilityVPidy} follows from equations \eqref{eq:decompositionofK2onthreefunctions}-\eqref{eq:integrabilityoflwforK5}. Moreover, from the dominated convergence theorem we have that 
\begin{align*}
&\lim_{u,x\rightarrow \infty} \E_{u,x}\left(\int_0^{\infty} \int_{(-\infty,0)} \widetilde{V}(U_s,X_s+y)\Pi(\dd y) \I_{\{X_s >  b(U_s) \}}   \right)\\
&=\lim_{u,x\rightarrow \infty} \E\left(\int_0^{\infty} \int_{(-\infty,0)} \widetilde{V}(u+s,X_s+x+y)\Pi(\dd y) \I_{\{X_s+x >  b(u+s) \}} \I_{\{s<\sigma_{-x}^- \}}   \right)\\
&\qquad +\lim_{x\rightarrow \infty} \E\left(\int_0^{\infty} \int_{(-\infty,0)} \widetilde{V}(U_s^{(-x)},X_s+x+y)\Pi(\dd y) \I_{\{X_s+x >  b(U_s^{(-x)}) \}} \I_{\{s\geq \sigma_{-x}^- \}}   \right)\\
&\geq  \E\left(\int_0^{\infty} \int_{(-\infty,0)}\lim_{u,x\rightarrow \infty} \widetilde{V}(u+s,X_s+x+y)\Pi(\dd y) \I_{\{X_s+x >  b(u+s) \}}   \right)\\
&\qquad +\E\left(\int_0^{\infty} \int_{(-\infty,0)} \lim_{x\rightarrow \infty} \widetilde{V}(U_s^{(-x)},X_s+x+y)\Pi(\dd y) \I_{\{X_s+x >  b(U_s^{(-x)}) \}}    \right).
\end{align*}
Note that $b$ is a decreasing function and then $\lim_{u,x\rightarrow \infty} V(u,x)=0$ and $\lim_{x\rightarrow \infty} V(u,x)=0$ for any $u>0$. Moreover, for any $s\geq 0$, $x\mapsto U_s^{(-x)}$ is increasing and bounded so then $\lim_{x\rightarrow \infty}U_s^{(-x)} $ exists. Then we have that 
\begin{align*}
\lim_{u,x\rightarrow \infty} &\E_{u,x}\left(\int_0^{\infty} \int_{(-\infty,0)} \widetilde{V}(U_s,X_s+y)\Pi(\dd y) \I_{\{X_s >  b(U_s) \}}   \right)=0
\end{align*}
as claimed.
\end{proof}

The next two Lemmas are auxiliary results for the proof of Lemma \ref{lemma:proofofequationforV0}. For ease of notation, we define for any $(u,x)\in E$,
\begin{align*}
K_1(u,x)&:=G(u,x)\I_{\{x\leq b(u) \}},\\
K_2(u,x)&:= -\int_{(-\infty,0)} \widetilde{V}(u,x+y)\Pi(\dd y)\I_{\{x>b(u) \}}.
\end{align*}
The following result is an application of formula \eqref{eq:calculationofUsXsintegral} to the functions $K_1$ and $K_2$.
\begin{lemma}
\label{lemma:formulaforK1andK2}
Assume that $\sigma>0$. Then for any $\delta>0$ we have that 
\begin{align*}
\E\bigg( \int_0^{\infty} [K_1&(U_s+\delta,X_s)+ K_2(U_s+\delta,X_s)]\dd s \bigg)\\
&= \lim_{\varepsilon \downarrow 0}\frac{ \E_{\varepsilon} \left(\I_{\{\tau_0^-<\infty \}}K^-(\delta, X_{\tau_0^-}-\varepsilon ) \right) +K^+(\delta,\varepsilon) }{\psi'(0+)W(\varepsilon)}, 
\end{align*}
where for all $\delta >  0$ and $x\leq 0$,
\begin{align*}
K^-(\delta, x)=\E_x\left(\int_0^{\tau_0^+} [K_1(\delta,X_r)+K_2(\delta,X_r)] \dd r \right),
\end{align*}
and for all $\delta,x>0$,
\begin{align*}
K^+(\delta,x)=\E_x\left(\int_0^{\tau_0^-} [K_1(\delta+s,X_r)+K_2(\delta+s,X_r)] \dd r \right).
\end{align*}
\end{lemma}
 \begin{proof}
 For $N>0$ fixed and $(u,x)\in E$, we define the functions 
\begin{align*}
K_3(u,x)&:=-\int_{(-\infty,0)}  \widetilde{V}(u,x+y)\Pi(\dd y)\I_{\{u\leq N \}}\I_{\{x>b(u) \}}, \\
K_4(u,x)&:=-\int_{(-\infty,0)}  \widetilde{V}(u,x+y)\Pi(\dd y) \I_{\{u>N \}}\I_{\{x>b(N) \}},\\
K_5(u,x)&:=-\int_{(-\infty,0)}  \widetilde{V}(u,x+y)\Pi(\dd y) \I_{\{u>N \}}\I_{\{b(N)\geq x>b(u) \}}. 
\end{align*} 
Then, it follows that for any $\delta>0$,
\begin{align*} 
 \E\bigg( \int_0^{\infty} [K_1&(U_s+\delta,X_s)+ K_2(U_s+\delta,X_s)]\dd s \bigg)\\
&=  \E\left( \int_0^{\infty} K_1(U_s+\delta,X_s)\dd s \right)+ \sum_{i=3}^5 \E\left( \int_0^{\infty} K_i(U_s+\delta,X_s)\dd s \right).
\end{align*}
Since $G$, $V$ and $b$ are continuous functions and $b$ is non-increasing, it is easy to see that $K_i$ is left-continuous in each argument, for each $i\in \{1,2,3,4,5\}$. Moreover, since $|G(u,x)|\leq u^{p-1}+\E_x(g^{p-1})$ for all $x\in \R$ and $u\geq 0$, and $b$ is non-increasing, we have that $|K_1(u+\delta,x)|\leq C_1(u,x)$, where $C_1(u,x):=  (u+\delta)^{p-1}\I_{\{x<b(\delta) \}}+\E_{x}(g^{p-1})$. Note that for each $x\in \R$, $u\mapsto C_1(u,x)$ is monotone and, for each $(u,x)\in E$ and $y\in \R$,
\begin{align*}
\E_{u,x}&\left( \int_0^{\infty}  C_1(U_r,X_r+y) \dd r\right)\\
&\leq \E_{x+y-b(\delta)}\left( \int_0^{\infty}  (u+r+\delta)^{p-1}\I_{\{X_r<0 \}} \dd r\right)+\E_{x+y}\left( \int_0^{\infty}  \E_{X_r}(g^{p-1}) \dd r\right)\\
&= \E_{x+y-b(\delta)}\left( \int_0^{g}  (u+r+\delta)^{p-1} \dd r\right)+\E_{x+y}\left( \int_0^{\infty}  \E_{X_r}(g^{p-1}) \dd r\right)\\
&\leq \E_{x+y-b(\delta)} \left((u+g+\delta)^{p}\right) +\E_{x+y}\left( \int_0^{\infty}  \E_{X_r}(g^{p-1}) \dd r\right)\\
&<\infty,
\end{align*}
where in the first inequality we used that $U_r\leq u+r$ for all $r\geq 0$ (under the measure $\P_{u,x}$) and the last inequality follows from Lemmas \ref{lemma:finitenessofEgn} and \ref{lemma:finitenessofintExgn}. Similarly, from equations \eqref{eq:lowerboundforK3}-\eqref{eq:integrabilityoflwforK5} we see that for each $i\in \{3,4,5\}$, there exists a non-negative function $C_i:\R_+ \times \R \mapsto \R$ such that $u\mapsto C_i(u,x)$ is a monotone function for all $x\in \R$, $|K_i(u+\delta,x)|\leq C_i(u,x)$ and $\E_{u,x}\left( \int_0^{\infty}  C_i(U_r,X_r+y) \dd r\right)<\infty$ for all $(u,x)\in E$ and $y\in \R$. \\
Hence, using formula (\ref{eq:calculationofUsXsintegral}), applied to the functions $K_i$, for $i \in \{1,3,4,5\}$, we get that 
\begin{align*}
\E\bigg( \int_0^{\infty} [K_1&(U_s+\delta,X_s)+ K_2(U_s+\delta,X_s)]\dd s \bigg)\\
&= \lim_{\varepsilon \downarrow 0}\frac{ \E_{\varepsilon} \left(\I_{\{\tau_0^-<\infty \}}K^-(\delta, X_{\tau_0^-}-\varepsilon ) \right) +K^+(\delta,\varepsilon) }{\psi'(0+)W(\varepsilon)}, 
\end{align*}
as claimed.
\end{proof}
The following result is also used in the proof of Lemma \ref{lemma:proofofequationforV0}. 

\begin{lemma}
\label{lemma:convergencederivativeofVXtaue}
Suppose that $\sigma>0$. Then we have that 
\begin{align*}
\lim_{\varepsilon \downarrow 0}\frac{\E(V(0,X_{\tau_{-\varepsilon}^-}+\varepsilon) \I_{\{\tau_{-\varepsilon}^-<\infty\}})-\E( V(0,X_{\tau_{-\varepsilon}^-})\I_{\{\tau_{-\varepsilon}^-<\infty\}})}{\varepsilon}=\frac{\partial}{\partial x} V_-(0,0).
\end{align*}
\end{lemma}

\begin{proof}
Recall from Lemma \ref{lemma:diferrentiabilityofV(0,x)} that $V(0,x)$ is  continuously differentiable on $(-\infty,0)$ with 
\begin{align*}
\frac{\partial}{\partial x} V(0,x)= \int_{0}^{\infty}  \E_{x-u}(g^{p-1})W'(u) \dd u
\end{align*} 
for $x<0$. Moreover, from the continuity of $x\mapsto \int_{0}^{\infty}  \E_{x-u}(g^{p-1})W'(u) \dd u$, we see that
\begin{align}
\label{eq:limittoleftderivative}
\frac{\partial}{\partial x} V_-(0,0)= \int_{0}^{\infty}  \E_{u}(g^{p-1})W'(u) \dd u=\lim_{x\uparrow 0}\frac{\partial}{\partial x} V(0,x).
\end{align} 
Hence, from \eqref{eq:expressionforV0xnegative} we have that for any $\varepsilon>0$ and $x\leq -\varepsilon$, 
\begin{align*}
V(0,x+\varepsilon)-V(0,x)&= \int_{-x-\varepsilon}^{-x} \int_{0}^{\infty}  \E_{-u-z}(g^{p-1})W'(u)\dd u  \dd z \\
&= \int_{-\varepsilon}^{0} \int_{0}^{\infty}  \E_{-u-z+x}(g^{p-1})W'(u)\dd u  \dd z \\
&= \int_{-\varepsilon}^{0}  \frac{\partial}{\partial x} V(0,x-z)\dd z.
\end{align*}
Thus, since $\frac{\partial}{\partial x} V(0,x)$ is non-increasing (see Lemma \ref{lemma:diferrentiabilityofV(0,x)}) we see that
\begin{align*}
\varepsilon \frac{\partial}{\partial x} V(0,x+\varepsilon)\leq V(0,x+\varepsilon)-V(0,x)\leq \varepsilon \frac{\partial}{\partial x} V(0,x)
\end{align*}
Hence, for any $\varepsilon>0$ we obtain 
\begin{align} 
\E&\left(\frac{\partial}{\partial x} V(0,X_{\tau_{-\varepsilon}^-}+\varepsilon)\I_{\{\tau_{-\varepsilon}^-<\infty\}} \right) \nonumber\\
& \qquad\leq 
\frac{   \E(V(0,X_{\tau_{-\varepsilon}^-}+\varepsilon) \I_{\{\tau_{-\varepsilon}^-<\infty\}})-\E( V(0,X_{\tau_{-\varepsilon}^-})\I_{\{\tau_{-\varepsilon}^-<\infty\}})}{\varepsilon}\nonumber\\
\label{eq:boundsforV(x+e)-V(x)/e}
&\qquad\qquad\leq \E\left(\frac{\partial}{\partial x} V(0,X_{\tau_{-\varepsilon}^-})\I_{\{\tau_{-\varepsilon}^-<\infty\}} \right).
\end{align}
Note that, under the event $\{ \tau_{-\varepsilon}^-<\infty\}$, we have $|X_{\tau_{-\varepsilon}^-}|\leq |\underline{X}_{\infty}|$. Then, from \eqref{eq:boundaryforderivativeofVbelowzero} we deduce that 
\begin{align*}
0\leq \frac{\partial}{\partial x} V(0,X_{\tau_{-\varepsilon}^-}+\varepsilon)\I_{\{\tau_{-\varepsilon}^-<\infty\}} \leq \frac{\partial}{\partial x} V(0,X_{\tau_{-\varepsilon}^-})\I_{\{\tau_{-\varepsilon}^-<\infty\}}\leq  \alpha_{p-1}+\gamma_{p-1} |\underline{X}_{\infty}|^{p-1}.
\end{align*}
Moreover, since $\E(|\underline{X}_{\infty}|^{p-1})<\infty$ (see Lemma \ref{lemma:finitenessofEgn}) we see that the random variables above are bounded by an integrable random variable. Furthermore, it is easy to show that $ \tau_{-\varepsilon}^- \downarrow \tau_0^-=0$ a.s., when $\varepsilon\downarrow 0$ (where the equality follows since $X$ is of infinite variation), and, since $X$ is right-continuous, we have that $\lim_{\varepsilon \downarrow 0}X_{\tau_{-\varepsilon}^-}=X_{0}=0$ almost surely. Therefore, by letting $\varepsilon \downarrow 0$ in \eqref{eq:boundsforV(x+e)-V(x)/e}, the dominated convergence theorem and \eqref{eq:limittoleftderivative}, we have that 
\begin{align*}
\lim_{\varepsilon \downarrow 0}\frac{\E(V(0,X_{\tau_{-\varepsilon}^-}+\varepsilon) \I_{\{\tau_{-\varepsilon}^-<\infty\}})-\E( V(0,X_{\tau_{-\varepsilon}^-})\I_{\{\tau_{-\varepsilon}^-<\infty\}})}{\varepsilon}=\frac{\partial}{\partial x} V_-(0,0)
\end{align*}
as claimed.
 
\end{proof}

\section{Variational inequalities for spectrally negative L\'evy processes}

\label{appendix:variationaliniequalities}
In \cite{lamberton2008critical} (see Section 2), variational inequalities in the sense of distributions are studied for general L\'evy processes, and such results are applied to characterise the price of an American option. It turns out that, although there are many similarities to our setting, their assumptions do not entirely fit our optimal stopping problem (cf. Proposition 2.5 in \cite{lamberton2008critical}), and their results are not directly applicable. This section is dedicated to relaxing the assumptions on Proposition 2.5 in \cite{lamberton2008critical} imposed to the value function. This extension is natural and most of the proofs remain the same, but for completeness, some of them are included in this section.\\

Let $X$ be a spectrally negative L\'evy process with the following representation:

\begin{align*}
X_t=-\mu t+\sigma B_t+\int_0^{t}\int_{(-\infty,-1)} x N(\dd s,\dd x)+\int_0^t\int_{(-1,0)} x(N(\dd s,\dd x)-\dd s\Pi(\dd x)) ,
\end{align*}
where $\mu  \in \R$, $\sigma\geq 0$, $\{ B_t, t\geq 0\}$ is a standard Brownian motion and $N$ is a Poisson random measure on $\R_+ \times \R \{0\}$ with intensity $\dd t \times \Pi(\dd y)$, where $\Pi$ is a L\'evy measure, i.e., $\Pi$ satisfies $\int_{\R} (1 \wedge |x|^2 )\Pi(\dd x)<\infty$.\\

Fix $f \in C^{1,2}_b (\R_+\times \R)$, the set of all bounded $C^{1,2}(\R_+\times	\R)$ functions with bounded derivatives. By applying It\^o formula we obtain the following decomposition 
\begin{align*}
f(t,X_t)=f(0,X_0)+M_t+\int_0^t \mathcal{A}_{(t,X)} (f)(s,X_s)\dd s,\qquad t\geq 0,
\end{align*}
where $M$ is a martingale starting at zero and $\mathcal{A}_{(t,X)}(f)$ is the infinitesimal generator of $(t,X)$, applied to $f$, given by 
\begin{align*}
\mathcal{A}_{(t,X)}(f)(t,x)
&=\frac{\partial }{\partial	t} f(t,x)  -\mu \frac{\partial }{\partial	x} f(t,x) +\frac{1}{2} \sigma^2  \frac{\partial^2}{\partial x^2} f(t,x) +B_X(f)(t,x),
\end{align*}
with
\begin{align*}
B_X(f)(t,x)=\int_{(-\infty,0)} \left( f(t,x+y) -f(t,x)-y\I_{\{y>-1\}}\frac{\partial }{\partial x} f(t,x) \right)\Pi(\dd y).
\end{align*}
Note that for the derivatives in the operator $\mathcal{A}_{(t,X)}$ to be defined it is only needed that $f\in C^{1,2}(\R_+\times \R$). In the next Lemma, we show that $B_X$ can be defined in a subset $B\subset \R_+ \times \R$ provided some conditions are met in the set $B$. 

\begin{lemma}
\label{lemma:jumpspartofgeneratorofX}
Let $B\subset \R_+ \times \R$ an open set. Assume that $f$ is a $C^{1,2}(\R_+\times \R)$ function and that 
\begin{align*}
 \int_{(-\infty,-1)} |f(t,x+y)|\Pi(\dd y)<\infty,
\end{align*}
for all $(x,y)\in B$. We have that $|B_X(f)(t,x)|<\infty$ for all $(t,x)\in B$. Moreover if $f$, its derivatives and $(t,x)\mapsto \int_{(-\infty,-1)} |f(t,x+y)|\Pi(\dd y)$ are bounded functions in $B$, we have that, for any $T>0$, $B_X(f)$ is bounded in $B\cap([0,T]\times \R)$ and continuous in $B$.
\end{lemma}

\begin{proof}
Take $(t,x)\in B$. By Taylor's theorem we know that for each $y\in (-1,0)$, there exists $c_y\in [x+y,x]\subset [x-1,x]$ such that 
\begin{align*}
f(t,x+y) -f(t,x)-y \frac{\partial }{\partial x} f(t,x)=y^2 \frac{1}{2} \frac{\partial^2}{\partial x^2} f(t,c_y).
\end{align*}
Hence, we have that for any $(t,x)\in B$, 
\begin{align*}
|B_X(f)(t,x)|&=\left|\int_{(-1,0)} \left( f(t,x+y) -f(t,x)-y\frac{\partial }{\partial x} f(t,x) \right)\Pi(\dd y)\right.\\
&\qquad+\left.\int_{(-\infty,-1]} \left( f(t,x+y) -f(t,x)\right)\Pi(\dd y)\right|\\
&\leq \int_{(-1,0)} y^2 \frac{1}{2} \left|\frac{\partial^2}{\partial x^2} f(t,c_y)\right|\Pi(\dd y)+\int_{(-\infty,-1]} \left| f(t,x+y) -f(t,x) \right|\Pi(\dd y)\\
&\leq  \sup_{z\in [x-1,x]}\frac{1}{2} \left|\frac{\partial^2}{\partial x^2} f(t,z)\right|   \int_{(-1,0)} y^2 \Pi(\dd y)+\int_{(-\infty,-1]} \left| f(t,x+y)\right| \Pi(\dd y)\\
&\qquad +|f(t,x)|\Pi(-\infty,-1]\\
&<\infty,
\end{align*}
where we used that $\Pi$ is finite on any set away from zero and that the derivatives of $f$ are continuous on $B$ (then bounded on compact sets). The second assertion follows by the fact that the second derivative is continuous and bounded on the compact set containing the set $\tilde{B}=\{(t,x-y)\in [0,T]\times \R: (t,x)\in B, \text{ }y\in (0,1)\text{ and } (t,x-y)\notin B \}$, and since $f$ and $(t,x)\mapsto \int_{(-\infty,-1)} |f(t,x+y)|\Pi(\dd y)$ are bounded in $B$. The continuity of $B_X(f)$ in $B$ follows from the fact that $f$ is $C^{1,2}$ and the dominated convergence theorem.
\end{proof}

Consider the stopping time $\tau_B$ as the first time the process $(t,X)$ exits the open set $B$, i.e.,
\begin{align*}
\tau_B^{(s,x)}=\inf \{t\geq 0: (s+t,X_t+x) \notin B \}.
\end{align*}

\begin{lemma}
\label{lemma:martingaledecompositionofftX}
Let $B\subset \R_+ \times \R$ an open set. Assume that $f$ is a $C^{1,2}(\R_+\times \R)$ function such that $f$, its derivatives and $(t,x)\mapsto \int_{(-\infty,-1)} |f(t,x+y)|\Pi(\dd y)$ are bounded in $B$. Then, for any $t\geq 0$, we have the following decomposition
\begin{align}
\label{eq:semimtgdecomp}
f(s+t\wedge \tau_B^{(s,x)},X_{t\wedge \tau_B^{(s,x)}}+x)=f(s,x)+M_t+\int_0^{t\wedge \tau_B^{(s,x)} }\mathcal{A}_{(t,X)}(f)(u+r,X_r+x)\dd r,
\end{align}
where $\{M_t, t\geq 0 \}$ is a zero mean $\P$-martingale.
\end{lemma}

\begin{proof}
Let $(s,x)\in B$ and $t\geq 0$. Since $f$ is a $C^{1,2}(\R_+\times \R)$ function we have by It\^o formula,
\begin{align*}
f(s+t,X_{t}+x)-f(s,x)
&=M_t^{(1)}+M_t^{(2)}+\int_0^t \mathcal{A}_{(t,X)}(f)(s+r,X_r+x)\dd r,
\end{align*}
where 
\begin{align*}
M_t^{(1)}&=\sigma \int_0^{t } \frac{\partial f}{\partial x} (s+r,X_{r-}+x) \dd B_r+\int_0^{t } \int_{(-1,0)} y \frac{\partial f}{\partial x} (s+r,X_{r-}+x)   \widetilde{N}(\dd r, \dd x),\\
M_t^{(2)}&=\int_0^t \int_{(-\infty,0)}  \bigg[ f(s+r,X_{r-}+x+y)-f(s+r,X_{r-}+x)\\
&\qquad \qquad\qquad -y \I_{\{y>-1 \}} \frac{\partial f}{\partial x} (s+r,X_{r-}+x)  \bigg] \widetilde{N}(\dd r, \dd y).
\end{align*}
Note that for any $r<\tau_B^{(s,x)}$ we have $(s+r,X_r+x)\in B$. Hence, since $\frac{\partial f}{ \partial x} $ is bounded in the set $B$, we have that the stopped process $\{ M^{(1)}_{t\wedge \tau_B^{(s,x)}}, t \geq 0\}$ is a martingale. Moreover, from Lemma \ref{lemma:jumpspartofgeneratorofX} we have that $B_X(f)$ is a bounded function on $B\cap ([0,t]\times \R)$. Hence, we have that
\begin{align*}
\E\left( \int_0^{t\wedge \tau_B^{(s,x)}} B_X(f)(s+r,X_r+x) \dd r \right)<\infty
\end{align*}
for all $t\geq 0$. Thus, the process $\{ M^{(2)}_{t\wedge \tau_B^{(s,x)}}, t \geq 0\}$ is also a martingale.
\end{proof}

Let $G$ be a continuous function. Define the process $Z^{(s,x)}=\{Z_t^{(s,x)},t\geq 0\}$, where 
\begin{align*}
Z_t^{(s,x)}=f(s+t,X_{t}+x)+\int_0^{t } G(r+s,X_r+x)\dd r, \qquad t\geq 0.
\end{align*}
We have the following proposition.

\begin{prop}
\label{prop:submtgcharactforc2}
Let $B\subset \R_+ \times \R$ an open set. Assume that $f$ is a $C^{1,2}(\R_+\times \R)$ function such that $f$, its derivatives and $(t,x)\mapsto \int_{(-\infty,-1)} |f(t,x+y)|\Pi(\dd y)$ are bounded in $B$, and $G:\R_+\times \R \mapsto \R$ is a continuous function bounded in $B$. Then, for all $(s,x)\in B$, the process $\{ Z_{t\wedge \tau_B^{(s,x)}}^{(s,x)}, t\geq 0\}$ is a submartingale if and only if $\mathcal{A}_{(t,X)}(f)+G \geq 0$ in $B$.
\end{prop}

\begin{proof}
Fix $(s,x) \in B$. Suppose that $\{ Z_{t\wedge \tau_B^{(s,x)}}^{(s,x)},t\geq 0\}$ is a submartingale. We prove that $\mathcal{A}_{(t,X)}(f)(s,x)+G(s,x)\geq 0$. From the submartingale property, we have that for every $t\geq 0$,

\begin{align*}
\E\left[ \frac{1}{t}(Z_{t\wedge \tau_B^{(s,x)}}^{(s,x)}-Z_0^{(s,x)}) \right]\geq 0
\end{align*}
which implies that 
\begin{align*}
\E&\left[ \frac{1}{t}[f(s+t\wedge \tau_B^{(s,x)},X_{t\wedge \tau_B^{(s,x)}}+x)-f(s,x)] \right]\\
&\qquad+ \E\left[\frac{1}{t}\int_0^{t \wedge \tau_B^{(s,x)}} G(s+r,X_r+x)\dd r \right]\geq 0.
\end{align*}
By the decomposition \eqref{eq:semimtgdecomp} we then get,
\begin{align*}
\E\left[ \frac{1}{t} \int_0^{t\wedge \tau_B^{(s,x)}} \mathcal{A}_{(t,X)} (f)(s+r,X_r+x)\dd s \right]+ \E\left[ \frac{1}{t}\int_0^{t \wedge \tau_B^{(s,x)}} G(s+r,X_r+x) \dd r \right] \geq 0.
\end{align*}

Due to the right continuity of $(t,X)$ and since $B$ is open, we have $\tau_B^{(s,x)}>0$ almost surely. Therefore, tending $t$ to zero in the above inequality, by Fubini's theorem and the fundamental theorem of calculus (since $r\mapsto X_r$ is right continuous) we deduce that
\begin{align*}
\mathcal{A}_{(t,X)}(f)(s,x)+G(s,x)\geq 0.
\end{align*}
Next, suppose that $\mathcal{A}_{(t,X)}(f)(s,x)+G(s,x) \geq 0$ for all $(s,x)\in B$. We show that, for any $(s,x)\in B$, the process $\{ Z_{t\wedge \tau_B^{(s,x)}}^{(s,x)},t\geq 0 \}$ is a submartingale. For any $(s,x) \in B$ and $0\leq r\leq t$, we have that

\begin{align*}
\E(Z_{t\wedge\tau_B^{(s,x)}}^{(s,x)}|\F_r)&=\E\left[ f(s,x)+M_{t\wedge \tau_B^{(s,x)}}+ \int_0^{t\wedge \tau_B^{(s,x)}} \mathcal{A}_{(t,X)} (f)(v+s,X_v+x)\dd v \bigg| \F_r \right]\\
&\qquad + \E\left[\int_0^{t \wedge \tau_B^{(s,x)}} G(v+s,X_v+x)\dd v\bigg| \F_r\right]\\
&=Z_{r\wedge \tau_B^{(s,x)}}^{(s,x)}+\E\left[ \int_{r\wedge \tau_B^{(s,x)}}^{t\wedge \tau_B^{(s,x)}} \mathcal{A}_{(t,X)} (f)(v+s,X_v+x)\dd v\bigg| \F_r\right]\\
&\qquad+ \E\left[ \int_{r\wedge \tau_B^{(s,x)}}^{t\wedge \tau_B^{(s,x)}}G(v+s,X_v+x)\dd v \bigg| \F_r\right]\\
& \geq Z_{s\wedge \tau_B^{(s,x)}}^{(s,x)},
\end{align*}
where the last inequality follows from the fact that $(v+s,X_v+x) \in B$ for all $v\in (r \wedge \tau_B^{(s,x)}, t\wedge \tau_B^{(s,x)})$ and that $\mathcal{A}_{(t,X)}(f)(s,x)+G (s,x)\geq 0$ in $B$. Therefore the process $Z_{t\wedge \tau_B}^{(s,x)}$ is a submartingale.
\end{proof}

It turns out that the above proposition can be extended to a more general class of functions, provided that the inequality $\mathcal{A}_{(t,X)}(f)+G\geq 0$ is taken in the sense of distributions. For this, we recall some facts and notation from the theory of distributions (see e.g. \cite{friedlander1998introduction} or \cite{rudin1991functional} for further details). We introduce the multi-index notation. A multi-index is a $n$-tuplet $\alpha=(\alpha_1,\ldots,\alpha_n)$ of non-negative integers with order $|\alpha|=\alpha_1+\cdots+\alpha_n$. We set the notation
\begin{align*}
\partial^{\alpha} f= \frac{\partial^{|\alpha|}}{\partial x_1^{\alpha_1} \cdots \partial x_n^{\alpha_n}}.
\end{align*}
If $\mathcal{O}$ is an open subset of $\R^d$, we denote by $\mathcal{D(O)}$ the set of test functions in $\mathcal{O}$, i.e., the set of all $C^{\infty}$ functions with compact support in $\mathcal{O}$, and by $\mathcal{D'(O)}$ the space of distributions on $\mathcal{O}$. That is, $\mathcal{D'(O)}$ is the space of linear forms $u$ in $\mathcal{D(O)}$ such that, for every compact set $K\subset O$, there is a real number $C\geq 0$ and a non-negative integer $N$ that satisfy 
\begin{align*}
|\langle u,\psi \rangle| \leq C \sum_{|\alpha|\leq N} \sup |\partial^{\alpha} \psi|
\end{align*}
for all $\psi \in \mathcal{D(O)}$, where $\langle u, \varphi\rangle$ denotes the evaluation of the distribution $u$ on the test function $\varphi$. Inspired by the integration by parts formula, the derivative of the distribution $u$ is defined by
\begin{align*}
\langle \partial ^{\alpha}u,\varphi \rangle =(-1)^{|\alpha|}  \langle  u, \partial^{\alpha} \varphi \rangle	 , \qquad \varphi \in \mathcal{D}(\mathcal{O}).
\end{align*}
If $u$ is a locally integrable function on $\mathcal{O}$ ($u$ is a measurable function and $\int_{K}|u(x)|\dd x<\infty$ for any compact set $K\subset \mathcal{O}$), we can define the distribution 
\begin{align*}
\langle u, \varphi \rangle = \int u(x) \varphi(x) \dd x, \qquad \varphi \in \mathcal{D}(\mathcal{O}),
\end{align*}
which is usually identified only with the function $u$. Hence, if $g$ is a locally integrable function on $(0,\infty)\times \R$, the differential operator, $\mathcal{A}_{(t,X)}^0(g)$, given by

\begin{align*}
\mathcal{A}_{(t,X)}^0(g)(u,x):= \frac{\partial }{\partial	t} g(t,x)  -\mu \frac{\partial }{\partial	x} g(t,x) +\frac{1}{2} \sigma^2  \frac{\partial^2}{\partial x^2} g(t,x) 
\end{align*}
can be defined in the sense of distributions. Indeed, for any test function $\varphi$ with compact support in $O\subset \R_+ \times \R$, we define

\begin{align}
\label{eq:generatorindistributionderivativspart}
\langle \mathcal{A}_{(t,X)}^0(g), \varphi \rangle:=\int_{\R_+} \int_{\R} g(t,x)\left[- \frac{\partial}{\partial t } \varphi(t,x)  +\mu \frac{\partial }{\partial x} \varphi(t,x)+\frac{1}{2}\sigma^2 \frac{\partial^2 }{\partial x^2} \varphi(t,x) \right] \dd x \dd t.
\end{align}
Moreover, \cite{lamberton2008critical} showed (see Proposition 2.1) that the operator defined by 
\begin{align*}
B_X(g)(t,x):=\int_{(-\infty,0)} (g(t,x+y)-g(t,x)-y\frac{ \partial}{\partial x} g(t,x)\I_{\{y>-1 \}})\Pi(\dd y)
\end{align*}
can be also defined in the sense of distributions, when $g$ is a bounded Borel measurable function. For $\varphi \in \mathcal{D}((0,\infty) \times \R)$, consider the operator $B^*_X$ given by
\begin{align}
\label{eq:definitionofBXstar}
B_X^*(\varphi)(t,x)=\int_{(-\infty,0)} [\varphi(t,x-y)-\varphi(t,x)+y \frac{\partial}{\partial x}\varphi(t,x) \I_{\{ y>- 1\}}]\Pi(\dd y),
\end{align}
for any $ (t,x)\in (0,\infty)\times \R$. From Proposition 2.1 in \cite{lamberton2008critical}, we know that $B^*_X(\varphi)$ is continuous and integrable on $(0,\infty)\times \R$ and that the operator
\begin{align}
\label{eq:jumpspartgeneratorindistrb}
\langle B_{X}(g), \varphi\rangle= \int_{\R_+}\int_{\R} g(u,x) B^*_X(\varphi)(u,x)\dd x \dd u
\end{align}
defines a distribution. The following lemma shows that the boundedness condition imposed on $g$ can be relaxed.

\begin{lemma}
\label{lemma:distributionofthejumpspart}
Let $g$ be a locally integrable function in $\R_+ \times \R$ such that 
\begin{align}
\label{eq:integconditionforjumpsdistribution}
(u,x) \mapsto \int_{(-\infty,-1)} |g(u,x+y)|\Pi(\dd y)
\end{align} 
is locally integrable. The linear operator $B_{X}(g)$ defined in \eqref{eq:jumpspartgeneratorindistrb} defines a distribution on any open set $ O\subset \R_+ \times \R$. Moreover, if in addition we suppose that $g$ is a $C^{1,2}(\R_+\times \R)$ function we have that 
\begin{align*}
\int_{\R_+}\int_{\R} B_X(g)(t,x)\varphi(t,x)\dd x \dd t=\int_{\R_+}\int_{\R} g(t,x) B_X^*(\varphi)(t,x)\dd x \dd t
\end{align*}
for any $\varphi \in \mathcal{D}(\mathcal{\R_+\times \R})$.
\end{lemma}

\begin{proof}
It is clear that the operator defined in \eqref{eq:jumpspartgeneratorindistrb} is linear. Take a test function $\varphi$ with support in a compact set $H \times K   \subset \R_+ \times \R$. We have

\begin{align*}
|\langle B_{X}(g), \varphi\rangle| &\leq \int_{\R_+} \int_{\R}| g(u,x)|| B^*_X(\varphi)(u,x)|\dd x \dd u\\
& \leq \int_{H} \int_{\R}| g(u,x)| \int_{(-1,0)}  |\varphi(u,x-y)-\varphi(u,x)+y \frac{\partial }{\partial x} \varphi(u,x)| \Pi(\dd y)     \dd x \dd u\\
& \qquad+ \int_{H}\int_{\R}| g(u,x)| \int_{(-\infty,-1)}  |\varphi(u,x-y)-\varphi(u,x)| \Pi(\dd y)     \dd x \dd u.
\end{align*}
Note that if $ x\notin K+(-1,0]$ we have that $x\notin K$ (if we assume that $x\in K$ then $x=x+0 \in K+(-1,0]$, which is a contradiction) and $x-y \notin K$ for all $y\in (-1,0)$ (if $z=x-y \in K$, then $x=z+y \in K+(-1,0)\subset K+(-1,0] $ and we have got a contradiction), which implies $\varphi(u,x-y)-\varphi(u,x)+y \frac{\partial }{\partial x} \varphi(u,x)=0$. Denote $k_*= \inf K$, since $x\mapsto \varphi(u,x)$ has support in $K$ and using Taylor's formula we obtain 
\begin{align*}
|&\langle B_{X}(g), \varphi\rangle| \\
& \leq \int_H \int_{K+(-1,0]}| g(u,x)| \int_{(-1,0)}  |\varphi(u,x-y)-\varphi(u,x)+y \frac{\partial }{\partial x} \varphi(u,x)| \Pi(\dd y)     \dd x \dd u\\
& \qquad+ \int_H \int_{K} |g(u,x)| \int_{(-\infty,-1)}  |\varphi(u,x-y)-\varphi(u,x)| \Pi(\dd y)     \dd x \dd u\\
& \qquad+ \int_H \int_{-\infty}^{k_*}| g(u,x)| \int_{(-\infty,-1)}  |\varphi(u,x-y)| \Pi(\dd y)     \dd x \dd u\\
& \leq  \frac{1}{2} \sup | \frac{\partial^2}{\partial x^2}\varphi| \int_{(-1,0)} y^2 \Pi(\dd y)   \int_H \int_{K+(-1,0]}| g(u,x)|    \dd x \dd u  \\
& \qquad+ 2\sup |\varphi| \Pi((-\infty,-1)) \int_H \int_{K}| g(u,x)| \dd x \dd u\\
& \qquad+  \sup |\varphi| \int_H \int_{K} \int_{(-\infty,-1)}  | g(u,x+y)| \Pi(\dd y)     \dd x \dd u,
\end{align*}
which proves the assertion, since $\Pi$ is a L\'evy measure and $(u,x)\mapsto \int_{(-\infty,-1)} |g(u,x+y)|\Pi(\dd y)$ is locally integrable by assumption. The last assertion follows the same argument in \cite{lamberton2008critical} (see Proposition 2.1), so the proof is omitted.
\end{proof}
Therefore, if $g$ is a locally integrable function in $\R_+ \times \R$ such that the function defined in  \eqref{eq:integconditionforjumpsdistribution} is locally integrable, we can define the distribution $\mathcal{A}_{(t,X)}(g)= \mathcal{A}_{(t,X)}^0(g)+B_{X}(g)$ in any open set $ \mathcal{O}\subset \R_+ \times \R$.\\

Let $u$ be a distribution and $\theta \in \mathcal{D}(\R_+\times \R)$. Then the function 

\begin{align*}
(\theta*u)(t,x)=\langle u(s,y), \theta(t-s,x-y) \rangle
\end{align*}
is a member of $C^{\infty}(\R_+\times \R)$ and defines a distribution given by 

\begin{align*}
\langle \theta *u, \phi \rangle=\int_{\R_+}\int_{\R} \langle u(s,y), \theta(t-s,x-y) \rangle \phi(t,x)\dd x \dd t,
\end{align*}
for any $\phi\in \mathcal{D}(\R_+\times\R)$. It turns out that Proposition 2.3 in \cite{lamberton2008critical} can also be extended to this case. The proof remains the same so it is omitted.

\begin{prop}
\label{prop:convolutionpropertyofdistributions}
Let $g$ be a Borel and locally integrable function in $\R_+ \times \R$ such that the function $\int_{(-\infty,-1)} |g(u,x+y)|\Pi(\dd y)$ is locally integrable. We have that for every $\theta$ and $\varphi$ in $ \mathcal{D}(\R_+\times \R)$,
\begin{align*}
\langle \mathcal{A}_{(t,X)}(g*\theta), \varphi \rangle=\langle \mathcal{A}_{(t,X)}(g), \varphi	* \check{\theta}\rangle  
&=\langle\mathcal{A}_{(t,X)}(g)*\theta, \varphi  \rangle,
\end{align*}
where $\check{\theta}(u,x)=\theta(-u,-x)$ for any $(u,x)\in \R_+\times \R$.
\end{prop}
Let $u$ be a distribution in $\mathcal{O}$, we say that $u$ is non-negative if for any non-negative test function $\varphi\in \mathcal{D}(\mathcal{O})$, 
\begin{align*}
\langle u , \varphi \rangle \geq 0.
\end{align*}
The next result is an extension of Proposition 2.5 in \cite{lamberton2008critical}. The proof is essentially the same but we include it for completeness. 
%
%
\begin{prop}
\label{prop:variationalinequality}
Let $B$ be an open set in $\R_+\times \R$. Suppose that $f:\R_+\times \R\mapsto \R$, $G:\R_+\times \R\mapsto \R$  and $(u,x) \mapsto \int_{(-\infty,-1)} |f(u,x+y)|\Pi(\dd y)$ are locally integrable functions in $\R_+\times \R$ and bounded in $B$. Assume that the process $\{ Z_{t\wedge \tau_B^{(s,x)}}^{(s,x)},t\geq 0\}$ is a submartingale for every $(s,x)\in B$, where $Z_t^{(s,x)}=f(s+t,X_t+x)+\int_0^t G(s+r,X_r+x)\dd r$ and $ \tau_B^{(s,x)}=\inf\{t\geq 0: (t+s,X_t+x) \notin B \}$. Then, $\mathcal{A}_{(t,X)}(f)+G$ is a non-negative distribution on $B$. 
\end{prop}
 
\begin{proof}
Take $z_0=(u_0,x_0)\in B$ and choose $a>0$ such that $\textbf{B}(z_0,2a)\subset B$, where $\textbf{B}(z_0,2a)$ is the open ball with center $z_0$ and radius $2a$. We define the stopping time 
\begin{align*}
\tau_B=\inf\{t\geq 0: \text{ there exists } z\in \textbf{B}(z_0,a) \text{ such that } z+(t,X_t)\notin B \}.
\end{align*}
Note that for every $(u,x) \in \textbf{B}(z_0,a/2)$ and $(v,y)\in \textbf{B}(0,a/2)$ we have that $(u-v,x-y)\in \textbf{B}(z_0,a)\subset B$ and then $\tau_B\leq \tau_B^{(u-v,x-y)}$. Hence, the process $\{ Z_{t\wedge \tau_B}^{(u-v,x-y)},t\geq 0\}$ is a submartingale, and then, for any $t\geq 0$,
\begin{align*}
f(u-v&,x-y)\\
&\leq \E\left( f(u-v+t\wedge \tau_B,X_{t\wedge \tau_B}+x-y)+\int_0^{t\wedge \tau_B} G(u-v+r,X_r+x-y)\dd r \right) .
\end{align*}
Next, we consider a sequence of even non-negative functions $\{ \rho_n,n\geq 1 \}$ in $C^{\infty}$ such that, for each $n\geq 1$, the support of $\rho_n$ is in $\textbf{B}(0,a/(2n))$ and $\int_{\R^2}\rho_n(v,y)\dd v \dd y=1$. Then, by integrating the equation above with respect to $\rho_n(v,y)$ and Fubini's theorem, we get that 
\begin{align}
(f*\rho_n)&(u,x) \nonumber\\
\label{eq:submartingfstarrhoninequality}
&\leq \E\left( (f*\rho_n)(u+t\wedge \tau_B,X_{t\wedge \tau_B}+x)\right)+\E\left(\int_0^{t\wedge \tau_B} (G*\rho_n)(u+r,X_r+x)\dd r \right).
\end{align}
Fix $(u,x) \in \textbf{B}(z_0,a/2)$. Note that, since $f$ is bounded, we have that for all $n\geq 1$, the function $(s,w)\mapsto	f*\rho_n (u+s,w+x)$ is $C^{\infty}(\R_+\times \R)$ and has bounded derivatives in the open set $\widetilde{B}=\{ (s,w) \in \R_+\times \R: z+(s,w)\in B \text{ for all }z\in \textbf{B}(z_0,a)  \}$.
Moreover, since $(u,x) \mapsto \int_{(-\infty,-1)} |f(u,x+y)|\Pi(\dd y)$ is bounded in $B$, the function $(s,w)\mapsto \int_{(-\infty,-1)} (f*\rho_n) (u+s,w+x+y)\Pi(\dd y)$ is bounded in $\widetilde{B}$. Thus, since $\tau_B$ is the first exit time of $(s,X_s)$ from the set $\widetilde{B}$, we get from Lemma \ref{lemma:martingaledecompositionofftX} that 
\begin{align*}
(f*\rho_n)&(u+t\wedge \tau_B,X_{t\wedge \tau_B}+x)\\
&\qquad=(f*\rho_n)(u,x)+M_{t}^{(u,x)}+\int_0^{t\wedge \tau_B} \mathcal{A}_{(t,X)}(f*\rho_n)(u+s,X_s+x) \dd s,
\end{align*}
where $\{ M_{t}^{(u,x)}, t\geq 0\}$ is a martingale. Therefore equation \eqref{eq:submartingfstarrhoninequality} reads 
\begin{align*}
\E\left( \int_0^{t\wedge \tau_B} \left[\mathcal{A}_{(t,X)}(f*\rho_n)(u+r,X_r+x) +(G*\rho_n)(u+r,X_r+x)\right]\dd r \right) \geq 0.
\end{align*}
Note that $\tau_B>0$ a.s. (since $\textbf{B}(0,a)\subset \widetilde{B}$), dividing by $t>0$ the equation above and taking $t\downarrow 0$ we obtain that
\begin{align}
\label{eq:freeboundaryproblemconvolution}
 \mathcal{A}_{(t,X)}(f*\rho_n)(u,x)+ (G*\rho_n)(u,x) \geq 0
\end{align}
for all $n\geq 1$ and $(u,x)\in \textbf{B}(z_0,a/2)$. That implies that for any non-negative test function $\psi$ in $\textbf{B}(z_0,a/2)$ 
\begin{align*}
\langle \mathcal{A}_{(t,x)}(f*\rho_n)+ G*\rho_n, \psi \rangle \geq 0.
\end{align*}
Then, from Proposition \ref{prop:convolutionpropertyofdistributions} we conclude that $\mathcal{A}_{(t,X)}(f)*\rho_n+ G* \rho_n\geq 0$ in the sense of distributions on $\textbf{B}(z_0,a/2)$. By letting $n$ go to infinity, we conclude that $\mathcal{A}_{(t,X)}(f)+ G \geq 0$ on $\textbf{B}(z_0,a/2)$ in the sense of distributions. Since $z_0$ is any arbitrary point in $B$, using a partition of unity argument, we conclude that $\mathcal{A}_{(t,X)}(f)+ G\geq 0$ in the sense of distributions on $B$.

\end{proof}

\end{appendix}
%
%

\begin{acks}[Acknowledgments]
Support from the Department of Statistics of LSE and the LSE PhD Studentship is gratefully acknowledged by Jos\'e M. Pedraza. The authors would like to thank two anonymous referees for their helpful comments and suggestions.
\end{acks}
\bibliographystyle{imsart-nameyear.bst} 
\bibliography{Lppredicintg}       


\end{document}